\newcolumntype{C}{>{\centering\arraybackslash}X}
\numberwithin{equation}{section}
\let\c@figure\c@equation
\let\c@table\c@equation
\newtheorem{theorem}[equation]{Theorem}
\newtheorem{lemma}[equation]{Lemma}
\newtheorem{proposition}[equation]{Proposition}
\newtheorem{conjecture}[equation]{Conjecture}
\newtheorem{corollary}[equation]{Corollary}
\theoremstyle{definition}
\newtheorem{definition}[equation]{Definition}
\newtheorem{example}[equation]{Example}
\newtheorem{remark}[equation]{Remark}
\newtheorem{notation}[equation]{Notation}
\newcommand{\taglink}[1]{\href{http://stacks.math.columbia.edu/tag/#1}{#1}}
\renewcommand{\tag}[1]{\cite[{\taglink{#1}}]{stacks}}
\newcommand{\C}{\mathbb{C}}
\newcommand{\Z}{\mathbb{Z}}
\newcommand{\Q}{\mathbb{Q}}
\renewcommand{\P}{\mathbb{P}}
\newcommand{\D}{\mathbb{D}}
\newcommand{\N}{\mathbb{N}}
\renewcommand{\O}{\mathscr{O}}
\newcommand{\scrD}{\mathscr{D}}
\newcommand{\m}{\mathfrak{m}}
\newcommand{\Robba}{\mathscr{R}}
\newcommand{\an}{\mathrm{an}}
\newcommand{\alg}{\mathrm{alg}}
\newcommand{\dR}{\mathrm{dR}}
\newcommand{\rig}{\mathrm{rig}}
\newcommand{\frX}{\mathfrak{X}}
\newcommand{\GL}{\mathrm{GL}}
\newcommand{\rightderived}{\mathrm{R}}
\DeclareMathOperator{\codim}{codim}
\DeclareMathOperator*\colim{colim}
\DeclareMathOperator{\im}{\textnormal{im}}
\DeclareMathOperator{\coker}{\textnormal{coker}}
\DeclareMathOperator{\tr}{\textnormal{tr}}
\DeclareMathOperator{\rank}{\textnormal{rank}}
\DeclareMathOperator{\Hom}{\textnormal{Hom}}
\DeclareMathOperator{\Ext}{\textnormal{Ext}}
\DeclareMathOperator{\Der}{\textnormal{Der}}
\DeclareMathOperator{\PDer}{\textnormal{PDer}}
\DeclareMathOperator{\End}{\textnormal{End}}
\DeclareMathOperator{\Aut}{\textnormal{Aut}}
\DeclareMathOperator{\Inf}{\textnormal{Inf}}
\DeclareMathOperator{\Lift}{\textnormal{Lift}}
\DeclareMathOperator{\MC}{\textnormal{MC}}
\DeclareMathOperator{\Sym}{\textnormal{Sym}}
\DeclareMathOperator{\Def}{\textnormal{Def}}
\DeclareMathOperator{\Frac}{\textnormal{Frac}}
\DeclareMathOperator{\Stab}{\textnormal{Stab}}
\DeclareMathOperator{\Cone}{\textnormal{Cone}}
\DeclareMathOperator{\Hoch}{\textnormal{Hoch}}
\newcommand{\tube}[1]{]#1[}
\newcommand{\Art}{\textnormal{\textsc{Art}}}
\newcommand{\Set}{\textnormal{\textsc{Set}}}
\newcommand{\scF}{\textnormal{\textsc{F}}}
\newcommand{\Isoc}{\textnormal{\textsc{Isoc}}}
\newcommand{\Loc}{\textnormal{\textsc{Loc}}}
\newcommand{\Grp}{\textnormal{\textsc{Grp}}}
\newcommand{\Mod}{\textnormal{\textsc{Mod}}}
\newcommand{\DMod}{\textnormal{\textsc{DMod}}}
\newlength{\perspective}
\title{Deformations of overconvergent isocrystals on the projective line}
\author{Shishir Agrawal}
\begin{document}
	
	\begin{abstract}
		Let $k$ be a perfect field of positive characteristic and $Z$ an effective Cartier divisor in the projective line over $k$ with complement $U$. In this note, we establish some results about the formal deformation theory of overconvergent isocrystals on $U$ with fixed ``local monodromy'' along $Z$. En route, we show that a Hochschild cochain complex governs deformations of a module over an arbitrary associative algebra. We also relate this Hochschild cochain complex to a de Rham complex in order to understand the deformation theory of a differential module over a differential ring. 
	\end{abstract}
	
	\maketitle
	
	\setcounter{tocdepth}{1}
	\tableofcontents
	
	\section{Introduction}

\subsection{Rigid local systems} \label{section:rigid local systems}

Let $Z$ be an effective Cartier divisor on the complex projective line $\P^1$ and $U = \P^1 \setminus Z$. Choosing an appropriate numbering $Z = \{z_1,  \dotsc, z_m\}$ for the puncture points, a \emph{local system} of rank $n$ on $U$ is any of the following types of objects:
\begin{enumerate}[label=(LS\arabic*),leftmargin=12mm]
	\item\label{enumitem:local system as sheaf} A locally constant sheaf of complex vector spaces on $U^\an$ of rank $n$. 
	\item\label{enumitem:local system as representation} An $n$-dimensional representation of the fundamental group $\pi_1(U^\an, *)$, where $*$ is a basepoint in $U^\an$. 
	\item\label{enumitem:local system as matrices} A list $A_1, \dotsc, A_m$ of invertible $n \times n$ matrices such that \[  A_1 \dotsb A_m = 1. \]
\end{enumerate}
If a rank $n$ local system $L$ on $U$ corresponds to the list of matrices $A_1, \dotsc, A_m$, the \emph{local monodromy} of $L$ at $z_i$ is the conjugacy class of $A_i$ (that is, ``the'' Jordan form of $A_i$).

\begin{definition}[Physical rigidity] \label{definition:physically rigid}
	A local system $L$ on $U$ is \emph{physically rigid} if it is determined up to isomorphism by its local monodromies: that is, whenever there exists a local system $L'$ on $U$ such that the local monodromies of $L$ and $L'$ along $Z$ are equal, then $L$ and $L'$ are isomorphic.
\end{definition}

More concretely, the local system corresponding to a list of matrices $A_1, \dotsc, A_m$ is physically rigid if and only if, whenever there is a list of matrices $A_1', \dotsc, A_m'$ such that $A_1' \dotsb A_m' = 1$ and $A_i$ is conjugate to $A_i'$ for each $i$ individually, then in fact the two lists $A_1, \dotsc, A_m$ and $A_1', \dotsc, A_m'$ are simultaneously conjugate. 

Aside from the trivial cases $n = 1$ or $m \leq 2$, it is difficult to check physical rigidity directly from the definition. It turns out, however, that there is a simple characterization of \emph{irreducible} physically rigid local systems. 

\begin{theorem}[Katz's cohomological criterion for rigidity] \label{result:katz cohomological criterion}
	Suppose $L$ is an irreducible local system on $U$. Then $L$ is physically rigid if and only if $H^1(\P^{1,\an}, j_{!+}\End(L)) = 0$, where $j$ denotes the open embedding $U^\an \hookrightarrow \P^{1,\an}$.\footnotemark\ 
\end{theorem}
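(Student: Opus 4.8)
Following Katz, the plan is to realize $H^1(\P^{1,\an}, j_{!+}\End(L))$ as the tangent space at $[L]$ to a moduli space of local systems carrying the prescribed local monodromies, to compute its dimension by an Euler-characteristic count, and then to invoke smoothness of that moduli space at irreducible points to pass from ``the tangent space vanishes'' to ``$[L]$ is the unique such local system''.

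First I would pin down $H^0$ and $H^2$. Since $j_{!+}\End(L)$ is a subsheaf of $Rj_*\End(L)$ that restricts to $\End(L)$ on the dense open $U^\an$, one has $H^0(\P^{1,\an}, j_{!+}\End(L)) = H^0(U^\an, \End(L)) = \End_{\pi_1(U^\an,*)}(L)$, and this is $\C$ because $L$ is irreducible (Schur's lemma). The trace form $(\phi,\psi)\mapsto\tr(\phi\psi)$ is a nondegenerate $\pi_1$-invariant pairing on $\End(L)$, so $\End(L)^\vee\cong\End(L)$ as local systems; since on a curve the middle extension commutes with this duality and with Poincar\'e--Verdier duality on $\P^{1,\an}$, it follows that $H^2(\P^{1,\an}, j_{!+}\End(L))\cong H^0(\P^{1,\an}, j_{!+}\End(L))^\vee\cong\C$. (Equivalently one may work with the trace-zero part $\End^0(L)$, for which both $H^0$ and $H^2$ vanish; this is the deformation-theoretic shadow of the smoothness used below, since it presents the relevant deformation functor---that of the associated $\mathrm{PGL}_n$-local system---as unobstructed.)

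Next I would compute the Euler characteristic. On the curve $\P^{1,\an}$ the middle extension of a local system is computed by its ordinary (underived) pushforward, so, writing $T_i$ for the local monodromy of $\End(L)$ at $z_i$, there is a short exact sequence of sheaves
\[ 0 \to j_!\End(L) \to j_{!+}\End(L) \to \bigoplus_{i=1}^m (\iota_{z_i})_*\!\left(\End(L)^{T_i}\right) \to 0, \]
where $\iota_{z_i}\colon\{z_i\}\hookrightarrow\P^{1,\an}$. Since $\chi_c(U^\an)=2-m$, the Euler--Poincar\'e formula gives $\chi(\P^{1,\an}, j_!\End(L))=(2-m)n^2$, while $\End(L)^{T_i}$ is the centralizer of $A_i$ in $\mathfrak{gl}_n$, of dimension $n^2-\dim\mathcal O_i$ where $\mathcal O_i$ is the conjugacy class of $A_i$. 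Combining these with $h^0=h^2=1$,
\[ h^1(\P^{1,\an}, j_{!+}\End(L)) = 2 - \chi(\P^{1,\an}, j_{!+}\End(L)) = \sum_{i=1}^m \dim\mathcal O_i - 2(n^2-1), \]
so $H^1$ vanishes if and only if $\sum_i\dim\mathcal O_i = 2(n^2-1)$.

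It would then remain to match this numerical criterion with physical rigidity. Let $\mathcal Z = \{(A_1,\dots,A_m)\in\mathcal O_1\times\dots\times\mathcal O_m : A_1\cdots A_m = 1\}$ and $\mathcal M = \mathcal Z/\!/\GL_n$, the quotient by simultaneous conjugation; by \ref{enumitem:local system as matrices} the points of $\mathcal M$ are exactly the isomorphism classes of local systems on $U$ with local monodromies $\mathcal O_1,\dots,\mathcal O_m$, so $L$ is physically rigid precisely when $\mathcal M$ is a single reduced point. The key geometric input is that near a point representing an irreducible local system the multiplication map $\mathcal O_1\times\dots\times\mathcal O_m\to\mathrm{SL}_n$ is submersive and $\GL_n$ acts with stabilizer exactly its center (Schur again), so $\mathcal Z$, hence $\mathcal M$, is smooth there of dimension $\sum_i\dim\mathcal O_i - 2(n^2-1) = h^1$, and this tangent space is canonically $H^1(\P^{1,\an}, j_{!+}\End(L))$. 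Granting this, $H^1 = 0$ forces $[L]$ to be an isolated reduced point of $\mathcal M$, whereas $H^1\neq 0$ produces a positive-dimensional family of local systems through $L$ with the same local monodromies, none isomorphic to $L$, so $L$ is not physically rigid. The main obstacle will be exactly this last paragraph: establishing the smoothness of $\mathcal M$ at irreducible points together with the identification of its tangent space with $H^1(j_{!+}\End(L))$ (via Goldman's symplectic description of character varieties, or a direct analysis of the differential of the multiplication map), and then arguing that an isolated reduced point of $\mathcal M$ must be its only point---the delicate issues being the connectivity of $\mathcal M$ (one expects $\mathcal Z$ to be an irreducible variety, in the spirit of the Deligne--Simpson problem) and the comparison with an a priori reducible competitor $L'$.
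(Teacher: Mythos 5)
Your treatment of the direction ``physically rigid $\Rightarrow H^1 = 0$'' is essentially the paper's argument transposed from the stack $\Loc^M$ to the GIT quotient $\mathcal M = \mathcal Z /\!/ \GL_n$: both proceed by showing that if $H^1 \neq 0$, then $L$ sits on a positive-dimensional smooth moduli and so cannot be the unique local system with its monodromies. The paper works with the stack, whose dimension at $L$ is $-1 + h^1$ (since $\Inf_L = \C$ by Schur), and therefore needs the observation that $h^1$ is \emph{even} (symplectic form from the trace pairing) to pass from $-1 + h^1 \leq 0$ to $h^1 = 0$; your coarse-space computation $\dim_{[L]}\mathcal M = h^1$ sidesteps the parity step, at the cost of having to establish smoothness of $\mathcal Z$ at $L$ and triviality of the stabilizer modulo center directly.

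The genuine gap is in the other direction, ``$H^1 = 0 \Rightarrow$ physically rigid,'' which you close the proposal by acknowledging as delicate. Your moduli approach there does not work without substantial further input: an isolated reduced point of $\mathcal M$ is \emph{not} the same as a unique point of $\mathcal M$ (you would need connectedness, i.e. a resolution of a Deligne--Simpson-type irreducibility question for $\mathcal Z$), and the GIT quotient does not even see a potentially reducible competitor $L'$ faithfully (its semisimplification would be what appears as a closed point). Katz's actual proof of this implication, which the paper cites rather than reproduces, avoids the moduli space entirely: for an arbitrary $L'$ with the same local monodromies as $L$, the Euler--Poincar\'e formula shows $\chi\bigl(\P^{1,\an}, j_{!+}\mathcal{H}\!\mathit{om}(L,L')\bigr) = \chi\bigl(\P^{1,\an}, j_{!+}\End(L)\bigr)$, which equals $h^0 + h^2 = 2$ when $h^1(j_{!+}\End(L)) = 0$; hence $h^0\bigl(j_{!+}\mathcal{H}\!\mathit{om}(L,L')\bigr) + h^2\bigl(j_{!+}\mathcal{H}\!\mathit{om}(L,L')\bigr) \geq 2$, and Poincar\'e duality identifies these as $\dim\Hom(L,L') + \dim\Hom(L',L)$, so at least one is nonzero, which forces $L \cong L'$ by irreducibility of $L$ and equality of ranks. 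You should replace the ``connectivity of $\mathcal M$'' step with this direct cohomological comparison.
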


\footnotetext{Note that, for a local system $L$ on $U$, we have $H^1(\P^{1,\an}, j_{!+}L) = H^1(\P^{1,\an}, j_*L)$, where $j_*$ denotes the usual underived direct image functor.}

It follows from the Euler-Poincar\'e formula that, if a local system $L$ of rank $n$ on $U$ corresponds to the list of matrices $A_1, \dotsc, A_m$ in $\GL_n(\C)$, then
\begin{equation} \label{eqn:euler poincare dimension}
\dim H^1(\P^{1,\an}, j_{!+}\End(L)) = 2(1-n^2) + \sum_{i = 1}^m \codim \mathfrak{z}(A_i)
\end{equation}
where $\mathfrak{z}(A_i)$ denotes the centralizer of $A_i$, regarded as a subspace of the $n^2$-dimensional vector space $\mathfrak{gl}_n$. In this way, \cref{result:katz cohomological criterion} makes it very easy to check rigidity of irreducible local systems. 

\begin{example} \label{result:irreducible rank 2}
	We have $\codim(\mathfrak{z}(A)) = 2$ for any non-scalar $A \in \GL_2(\C)$. Thus an irreducible local system of rank 2 on $U$, all of whose local monodromies are non-scalar, is physically rigid if and only if we have precisely $m = 3$ punctures. 
\end{example}

To explain where Katz's cohomological criterion for rigidity comes from, let us first give a moduli theoretic reinterpretation of physical rigidity. 

\subsection{Deligne-Simpson map}

Consider the functor $H_n$ which sends a commutative $\C$-algebra $R$ to the set
\[ \Hom_{\textsc{Grp}}(\pi_1(U^\an, *), \GL_n(R)). \]
For each $i = 1, \dotsc, m$, we have a ``image of $\gamma_i$'' morphism $\pi_i : H_n \to \GL_n$. The product $(\pi_1, \dotsc, \pi_m)$ defines a closed embedding of $H_n$ into the $m$-fold product $\GL_n \times \dotsb \times \GL_n$, with image the closed subscheme
\[ R \mapsto \{ A_1, \dotsc, A_m \in \GL_n(R) \; | \; A_1  \dotsb A_m = 1 \}. \]
Thus $H_n$ is an affine scheme.\footnote{Note that the projection $(\pi_2, \dotsc, \pi_m)$ defines an isomorphism of $H_n$ onto the $(m-1)$-fold product $\GL_n \times \dotsb \times \GL_n$. In particular, $H_n$ is smooth.}
Observe that $\GL_n$ acts on $H_n$ on the left by conjugation. The quotient stack
\[ \Loc_n := [H_n/\GL_n] \]
is the moduli of local systems of rank $n$ on $U$.\footnotemark\ Taking an infinite coproduct, we obtain the algebraic stack
\[ \Loc := \coprod_{n \geq 1} \Loc_n \]
which parametrizes all local systems on $U$.  

\footnotetext{The algebraic stack $\Loc_n$ is quasi-separated of finite type over $\C$, because $H_n$ and $\GL_n$ are both affine of finite type over $\C$.  
In fact, it is even smooth, since $H_n$ is smooth. 
}

Now $\GL_n$ also acts on itself by conjugation, and the ``image of $\gamma_i$'' map $\pi_i : H_n \to \GL_n$ is $\GL_n$-equivariant. It therefore induces a morphism 
\[ \begin{tikzcd} \Loc_n \ar{r} & J_n := [\GL_n/\GL_n] \end{tikzcd} \]
which we abusively denote $\pi_i$ again. The tuple $(\pi_1, \dotsc, \pi_m)$ defines a morphism 
\begin{equation} \label{eqn:deligne simpson map fixed rank} \begin{tikzcd} \Loc_n \ar{r} & (J_n)^m. \end{tikzcd} \end{equation}
Now let $J := \coprod_{n \geq 1} J_n$ and take the infinite disjoint union of these maps as $n$ varies. 

\begin{definition}
	The \emph{Deligne-Simpson map} is the morphism
	\[ \begin{tikzcd} \Loc \ar{r}{\pi} & J^m \end{tikzcd} \]
	defined by taking the infinite disjoint union of the map \eqref{eqn:deligne simpson map fixed rank} over all $n \geq 1$. If $L$ is a local system of rank $n$ on $U$, then $\pi(L)$ is the local monodromy data of $L$ along $Z$. 
\end{definition}

When $M$ is a finite type point of $J^m$, we let $\Gamma_M$ be the residual gerbe of $J$ at $M$ and we define $\Loc^M := \pi^{-1}(\Gamma_M)$. 
\begin{equation} \label{eqn:locm cartesian diagram} \begin{tikzcd} \Loc^M \ar[hookrightarrow]{r} \ar{d} & \Loc \ar{d}{\pi} \\ \Gamma_M \ar[hookrightarrow]{r} & J^m \end{tikzcd}
\end{equation}
This is a quasi-separated algebraic stack of finite type over $\C$.\footnotemark\ Roughly speaking, $M$ is an $m$-tuple of conjugacy classes invertible matrices over $\C$, and $\Loc^M$ is the moduli of local systems on $U$ whose local monodromy along $Z$ is $M$. 

\footnotetext{Observe that $\Loc^M$ is empty unless $M$ is a point of $(J_n)^m$ for some $n$, in which case the finite type monomorphism $\Loc^M \hookrightarrow \Loc$ factors through $\Loc_n$, which is quasi-separated of finite type over $\C$. 
}

\begin{theorem} \label{result:tangent of trivializable}
	Let $L$ be a local system on $U$ and $M := \pi(L)$ its local monodromy data. Then we have
	\[ \Inf_L(\Loc^M) = \End_{\C}(L) \text{ and } T_L(\Loc^M) = H^1(\P^{1,\an}, j_{!+}\End(L)), \]
	where $j$ denotes the open embedding $U^\an \hookrightarrow \P^{1,\an}$, and where $\Inf_L(\Loc^M)$ and $T_L(\Loc^M)$ respectively denote the space of infinitesimal automorphisms and the tangent space of the stack $\Loc^M$ at the point $L$.\footnotemark\ 
	
	Moreover, the tangent space $H^1(\P^{1,\an}, j_{!+}\End(L))$ carries a natural symplectic form. Finally, if $L$ is irreducible, then $\Loc^{M}$ is smooth at $L$. 
\end{theorem}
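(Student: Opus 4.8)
The plan is to compute $\Inf_L(\Loc^M)$ and $T_L(\Loc^M)$ by unwinding the Cartesian square \eqref{eqn:locm cartesian diagram} over the dual numbers, reducing everything to the deformation theory of the monodromy representation of $L$, and then to translate the answer into sheaf cohomology on $\P^{1,\an}$. Write $\rho\colon\pi_1(U^\an,*)\to\GL_n(\C)$ for the monodromy of $L$ and $A_i=\rho(\gamma_i)$. Because each conjugacy class $\mathcal{O}_{A_i}=\GL_n/\Stab(A_i)$ is smooth, an $R$-point of $\Loc^M$ deforming $L$ (for $R$ an Artin local $\C$-algebra with residue field $\C$) amounts to a lift $\rho_R$ of $\rho$ to $\GL_n(R)$ together with, for each $i$, an element of $\GL_n(R)$ conjugating $\rho_R(\gamma_i)$ to $A_i$, with the evident morphisms. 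Taking $R=\C[\epsilon]$ and running the standard Weil computation, a lift $\rho_\epsilon=(1+\epsilon u)\rho$ is a homomorphism iff $u$ is a $1$-cocycle of $\pi_1(U^\an,*)$ valued in $\mathrm{Ad}\,\rho$, the conjugacy constraint at $z_i$ says precisely that $u|_{\langle\gamma_i\rangle}$ is a coboundary, and equivalence of deformations is addition of a global coboundary; intrinsically this reflects that $T_M(\Gamma_M)=0$ while $\Inf_M(\Gamma_M)\xrightarrow{\sim}\Inf_M(J^m)=\bigoplus_i\mathfrak{z}(A_i)$, so only these groups survive in the exact sequence for the tangent and automorphism spaces of the fibre product. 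Thus $\Inf_L(\Loc^M)=(\mathrm{Ad}\,\rho)^{\pi_1}$ and $T_L(\Loc^M)=\ker\bigl(H^1(\pi_1(U^\an,*),\mathrm{Ad}\,\rho)\to\bigoplus_i H^1(\langle\gamma_i\rangle,\mathrm{Ad}\,\rho)\bigr)$, i.e.\ the parabolic cohomology $H^1_{\mathrm{par}}$.

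Next I would pass to sheaves. Since $U^\an$ is aspherical, $(\mathrm{Ad}\,\rho)^{\pi_1}=\End_\C(L)$, which is the first asserted identity. For the tangent space, the triangle $j_!\End(L)\to j_*\End(L)\to i_*i^*j_*\End(L)$ on $\P^{1,\an}$, with $i\colon Z^\an\hookrightarrow\P^{1,\an}$, together with the vanishing of $H^{>0}$ of the finite set $Z^\an$, exhibits $H^1(\P^{1,\an},j_*\End(L))$ as a quotient of $H^1_c(U^\an,\End(L))$, while the truncation triangle $j_*\End(L)\to Rj_*\End(L)\to(R^1j_*\End(L))[-1]$ identifies the same group with $\ker\bigl(H^1(U^\an,\End(L))\to\bigoplus_i H^1(\langle\gamma_i\rangle,\End(L))\bigr)=H^1_{\mathrm{par}}$. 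With the footnoted equality $H^1(\P^{1,\an},j_*\End(L))=H^1(\P^{1,\an},j_{!+}\End(L))$ this gives $T_L(\Loc^M)=H^1(\P^{1,\an},j_{!+}\End(L))$.

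For the symplectic structure I would invoke Poincar\'e--Verdier duality on the proper curve $\P^{1,\an}$: the trace pairing $\End(L)\otimes\End(L)\to\C$ makes the local system $\End(L)$ self-dual, and since $j_{!+}$ commutes with Verdier duality the perverse sheaf $j_{!+}(\End(L)[1])$ is self-dual, so $\mathbb{H}^0=H^1(\P^{1,\an},j_{!+}\End(L))$ acquires a perfect pairing. It is alternating because it is a cup product of two degree-one classes combined with the symmetric trace form; this is the parabolic incarnation of the Atiyah--Bott--Goldman symplectic form. (Concretely, it is the cup-product pairing on $\im\bigl(H^1_c(U^\an,\End(L))\to H^1(U^\an,\End(L))\bigr)$, whose radical is exactly $\ker\bigl(H^1_c(U^\an,\End(L))\to H^1(U^\an,\End(L))\bigr)$ because ordinary Poincar\'e duality $H^1_c\times H^1\to\C$ is perfect.)

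Finally, for smoothness of $\Loc^M$ at an irreducible $L$, I would use the characteristic-zero splitting $\End(L)=\C\oplus\End^0(L)$ into scalars and trace-zero endomorphisms. As $\C$ is central in $\End(L)$, this decomposes the cochain complex computing $H^*(\P^{1,\an},j_{!+}\End(L))$ compatibly with its natural Lie bracket, the commutator, the $\C$-summand being abelian. The $\C$-summand is rigid: $j_{!+}\C_{U^\an}=\C_{\P^{1,\an}}$ and $H^1(\P^{1,\an},\C)=0$, and it contributes only the scalar infinitesimal automorphisms, i.e.\ the residual $B\mathbb{G}_m$, and nothing to the tangent space. For the $\End^0(L)$-summand, irreducibility and Schur's lemma give $H^0(\P^{1,\an},j_{!+}\End^0(L))=\End^0_\C(L)=0$, hence $H^2(\P^{1,\an},j_{!+}\End^0(L))=0$ by the self-duality of the previous paragraph. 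Since the commutator bracket takes values in the trace-zero part, every obstruction to deforming $L$ inside $\Loc^M$ lands in $H^2(\P^{1,\an},j_{!+}\End^0(L))=0$, so the deformation functor is unobstructed and $\Loc^M$ is smooth at $L$. The hard part is this last step: to show that the constrained deformation problem for $L$ is genuinely controlled by a differential graded Lie algebra whose bracket is the commutator, so that the a priori nonzero obstruction space $H^2(\P^{1,\an},j_{!+}\End(L))\cong\C$ in fact receives no obstruction --- this is precisely the purpose of the Hochschild-cochain description of module deformations, and of its comparison with a de Rham complex, developed in the body of the paper.
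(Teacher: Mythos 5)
Your proposal follows the same roadmap the paper itself sketches for this theorem: run the exact sequence \tag{07X2} for the Cartesian square \eqref{eqn:locm cartesian diagram} to reduce the infinitesimal invariants of $\Loc^M$ to parabolic cohomology of $\mathrm{Ad}\,\rho$, translate that into $H^1(\P^{1,\an}, j_*\End(L)) = H^1(\P^{1,\an}, j_{!+}\End(L))$ via the excision/truncation triangles, get the symplectic form from the trace pairing and Poincar\'e--Verdier duality, and kill the obstruction using irreducibility. The paper gives exactly this outline and defers the details to the literature and to its own $p$-adic analogs (\cref{result:main part 1,result:main part 2}), so your write-up is essentially a fleshing-out of the paper's intended proof, and the identifications you give are correct: in particular the excision triangle $j_!\to j_*\to i_*i^*j_*$ forces $H^2(\P^{1,\an},j_*\End(L))=H^2_c(U^\an,\End(L))$, which is what the paper calls the obstruction space.

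One point worth tightening is the last step. You write that ``since the commutator bracket takes values in the trace-zero part, every obstruction lands in $H^2(\P^{1,\an},j_{!+}\End^0(L))=0$.'' The obstruction cocycle is represented by $Q(\tilde x)=d\tilde x+\tfrac12[\tilde x,\tilde x]$, and while $[\tilde x,\tilde x]$ is literally trace-zero, $d\tilde x$ is not; it is only trace-zero \emph{in cohomology}, because $\tr(d\tilde x)=d(\tr \tilde x)$ is exact. That small sleight of hand is harmless, but note that the paper's $p$-adic argument (\cref{obstructions annihilated}, used in \cref{result:main part 2}) sidesteps it by a slightly different route: it shows that $\langle 1, o(\pi,(F,\theta))\rangle = o(\pi, \det(F,\theta))$ via compatibility of obstruction spaces with the determinant morphism, and then uses that the rank-1 determinant problem is unobstructed (\cref{rank 1 smoothness}). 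That ``determinant-compatibility'' route and your ``$\End=\C\oplus\End^0$ plus self-duality'' route prove the same vanishing, but the former pushes the trace-of-$d\tilde x$ issue onto the cleanly formulated functoriality of obstruction theory, which is why the paper prefers it. You already flag, correctly, that the genuinely hard content --- exhibiting the constrained problem as $\Def$ of an explicit DGLA with commutator bracket so that the obstruction formalism applies --- is precisely what \cref{chapter:deformations of modules,chapter:deformations of differential modules} supply.
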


\footnotetext{In the notation of \cite[\taglink{07WY}]{stacks}, we have $\Inf_L(\Loc^M) = \Inf(F_{\Loc^M, \C, L})$ and $T_L(\Loc^M) = TF_{\Loc^M, \C, L}$. }

One can prove \cref{result:tangent of trivializable} roughly as follows. We produce an exact sequence \tag{07X2} using the cartesian diagram \eqref{eqn:locm cartesian diagram}. This exact sequence identifies $\Inf_L(\Loc^M)$ with $\End_{\C}(L)$ and $T_L(\Loc^M)$ with $H^1(\P^{1,\an}, j_{!+}\End(L))$.\footnote{Cf. \cite[theorem 4.10]{bloch}, \cite[proposition 2.3 and remark 2.4]{esnault-groechenig}, and \cref{result:main part 1} below.} 
The trace pairing on $\End(L)$ induces a symplectic form on this vector space.\footnote{Cf. \cite[page 3]{katz} and \cref{result:main part 1} below. }
Obstructions to smoothness live in $H^2_c(U^\an, \End(L))$, and the obstruction classes vanish when $L$ is irreducible.\footnote{Cf. \cite[theorem 4.10]{bloch} and \cref{result:main part 2} below.}

\subsection{Sketch of a proof of Katz's cohomological criterion} \label{katz-proof-sketch}

Let $L$ be an irreducible local system on $U$ and $M = \pi(L)$ its monodromy data. The proof that $H^1(\P^{1,\an}, j_{!+}\End(L)) = 0$ implies physical rigidity is an application of the Euler-Poincar\'e formula; see the first part of the proof of \cite[theorem 1.1.2]{katz} for details. 

Here, let us focus on the converse; let us give a slightly different argument than that given in \cite[theorem 1.1.2]{katz}. More specifically, we will see that the fact that physical rigidity implies $H^1(\P^{1,\an}, j_{!+}\End(L)) = 0$ is a relatively formal consequence of the infinitesimal structure of $\Loc^M$ near $L$ as described by \cref{result:tangent of trivializable}, plus the fact that $\Loc^M$ is a quasi-separated algebraic stack of finite type over $\C$.   

Notice that
\[ \Inf_L(\Loc^M) = \End_{\C}(L) = \C \]
since $L$ is irreducible. We know from \cref{result:tangent of trivializable} that $\Loc^M$ is smooth at $L$, so $\dim_L(\Loc^M)$ coincides with the Euler characteristic of the tangent complex at $L$. In other words, we have
\[ \begin{aligned} 
\dim_L(\Loc^M) &= -\dim \Inf_L(\Loc^M) + \dim T_L(\Loc^M) \\
&= -1 + \dim H^1(\P^{1,\an}, j_{!+}\End(L)). \end{aligned} \]
Since $H^1(\P^{1,\an}, j_{!+}\End(L))$ carries a symplectic form, its dimension must be even. We conclude that $H^1(\P^{1,\an}, j_{!+}\End(L)) = 0$ if and only if $\dim_L(\Loc^M) \leq 0$. 

By definition, $L$ is physically rigid if and only if $L$ is the unique finite type point of $\Loc^M$, but this implies that $\dim_L(\Loc^M) \leq0$ (cf. \cref{result:unique ftpt,result:isolated implies nonpositive dimension}).  

\subsection{Simpson's conjecture}

\Cref{result:irreducible rank 2} suggests that the physically rigid local systems form a somewhat sparse class of local systems. Despite this, interest in this class of local systems stems in part from the following result. 

\begin{theorem}[{\cite[theorem 8.4.1]{katz}}] \label{result:katz motivicity}
	Suppose $L$ is an irreducible local system on $U$ such that $\pi(L)$ is quasi-unipotent. If $L$ is physically rigid, then it is motivic; in other words, there exists dense open subset $V \subseteq U$, a morphism of algebraic varieties $f : T \to V$ over $\C$, and a non-negative integer $i$, such that $L|_V$ is a subquotient of the higher direct image sheaf $R^i f_*\underline{\C}_{T^\an}$.
\end{theorem}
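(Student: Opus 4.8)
This result is \cite[theorem 8.4.1]{katz}; I would reconstruct its proof following Katz's middle-convolution method. Fix $L$ irreducible and physically rigid (so, by \cref{result:katz cohomological criterion}, $H^1(\P^{1,\an}, j_{!+}\End(L)) = 0$) with $\pi(L)$ quasi-unipotent. After an automorphism of $\P^1$ we may assume $\infty \in Z$, so that $U \subseteq \A^1$ and $L$ is encoded by the irreducible perverse sheaf $j_{!+}(L[1])$ on $\A^1$. The plan has three parts: (i) reduce $L$, through geometric operations preserving irreducibility, quasi-unipotence and physical rigidity, to a rank-one local system $L_0$; (ii) check that $L_0$ is motivic; (iii) check that each operation in (i) preserves motivicity, so that running the reduction backwards exhibits $L$ itself as motivic over a dense open $V \subseteq U$.

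For (i), the key input is Katz's \emph{existence algorithm}: if $\rank(L) > 1$, there are a finite-order multiplicative character $\chi$ and a rank-one local system $\mathcal{L}$ on $U$ with finite monodromy such that $L' := \MC_\chi(\mathcal{L} \otimes L)$ --- where $\MC_\chi$ is middle ($!+$-)convolution with the Kummer sheaf $\mathcal{L}_\chi$ --- is again irreducible, physically rigid and quasi-unipotent, but satisfies $\rank(L') < \rank(L)$. Establishing this is the substance of the proof: one computes explicitly the effect of $\MC_\chi$ on the Jordan type of the local monodromy at $0$, at the finite singular points and at $\infty$, and then an Euler--Poincar\'e bookkeeping shows that the rigidity index (hence physical rigidity) is preserved while, for a suitable choice of $\chi$, the rank strictly drops and the process cannot stall above rank one. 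Quasi-unipotence is exactly what keeps $\chi$ and $\mathcal{L}$ of finite order at each stage. Iterating, after finitely many steps one reaches a rank-one local system $L_0$ with finite monodromy on some open $U_0 \subseteq \A^1$.

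For (ii), a rank-one local system with finite monodromy on $\P^1$ minus $w_1, \dots, w_r$ is a tensor product $\bigotimes_\nu [x \mapsto x - w_\nu]^* \mathcal{L}_{\chi_\nu}$ of translated Kummer sheaves, and such a sheaf occurs as a direct summand of $R^1 g_* \underline{\C}$ for $g$ the smooth locus of the cyclic cover $y^N = \prod_\nu (x - w_\nu)^{a_\nu}$ (discarding the finitely many singular fibres), hence is motivic. For (iii), I would verify that ``being, over a dense open, a subquotient of some $R^i f_* \underline{\C}$'' is stable under the two operations used in (i). Tensoring with a rank-one motivic sheaf: apply the K\"unneth formula to the fibre product of the two families over the base, shrinking the base so that all relevant higher direct images are local systems. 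Middle convolution: $\MC_\chi(N)$ is, by construction, a perverse subquotient of the total pushforward along $\mathrm{pr}_t$ of $\mathrm{pr}_x^* N \otimes \delta^* \mathcal{L}_\chi$ on (an open of) $U \times \A^1$, where $\delta(x,t) = t - x$; since $\mathcal{L}_\chi$ is motivic by (ii), restricting $t$ to a dense open over which the $R^\bullet \mathrm{pr}_t$ are local systems exhibits $\MC_\chi(N)$ there as a subquotient of $R^i \mathrm{pr}_{t,*}$ of a motivic sheaf, and Leray for the composite family together with the tensoring step rewrites this as a subquotient of $R^i h_* \underline{\C}$ for a single morphism $h$. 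Running the algorithm backwards from $L_0$ and intersecting the finitely many dense opens produced yields the desired $V$.

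I expect the main obstacle to be part (i) --- the existence algorithm --- which requires the precise local-monodromy transformation formulas for middle convolution and the combinatorial argument that the reduction terminates at rank one; this is where physical rigidity, in its cohomological form (\cref{result:katz cohomological criterion}), is used in an essential way. Parts (ii) and (iii) are comparatively soft; the only real care there is the bookkeeping of shrinking opens and the passage between the perverse middle extension on $\A^1$ and honest higher direct images of constant sheaves on smooth families, which is precisely why the conclusion is claimed only over a dense open $V$. (An alternative route, not via middle convolution, would pass through the $\ell$-adic companions circle of ideas underlying the Esnault--Groechenig results cited earlier, but Katz's argument is the one tailored to the projective line.)
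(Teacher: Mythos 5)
The paper does not prove this statement; it is cited verbatim as Katz's theorem 8.4.1, serving only as motivation in the introduction. There is therefore no ``paper's own proof'' to compare against.

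That said, your reconstruction is a faithful outline of Katz's actual argument in \emph{Rigid Local Systems}: the three-part structure (the descent algorithm via tensoring with a rank-one local system of finite order and applying middle convolution $\MC_\chi$ for a finite-order $\chi$; the base case of translated Kummer sheaves realized in the cohomology of cyclic covers; and the stability of ``motivic over a dense open'' under tensoring and middle convolution) is exactly how Katz proceeds. You are also right that the hard technical content is part (i), where one needs the local-monodromy transformation laws for $\MC_\chi$ (Katz's chapter 6), the invariance of the rigidity index $\chi(\P^1, j_{!+}\End)$ under $\MC_\chi$, and the numerical argument that for a physically rigid quasi-unipotent irreducible $L$ of rank $\ge 2$ a suitable $\chi$ exists that strictly drops the rank. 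One small point worth stressing if you were to fill in details: Katz works with the \emph{cohomologically} rigid hypothesis $H^1(\P^1, j_{!+}\End(L)) = 0$ throughout; the equivalence with physical rigidity for irreducible $L$ is \cref{result:katz cohomological criterion}, so you are correct to invoke it up front, and quasi-unipotence is needed precisely to keep all the auxiliary characters of finite order (hence the auxiliary sheaves motivic) at each step of the descent. Your remark about the alternative route via integrality/companions is also apt, though as you say it belongs to the circle of ideas around Simpson's conjecture rather than to Katz's $\P^1$-specific argument.
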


\begin{example}
	Let $U = \P^1 \setminus \{0, 1, \infty\}$ and consider the local system on $U$ whose sections are local solutions on $U^\an$ to the hypergeometric differential equation
	\[ z(z-1)f'' + (2z-1)f' + \frac{f}{4} = 0. \]
	It turns out that, up to simultaneous conjugation,
	\[
	A_0 = \begin{pmatrix} 2 & 1 \\ -1 & 0 \end{pmatrix}, \quad
	A_1 = \begin{pmatrix} 1 & 0 \\ -3 &  1 \end{pmatrix}, \text{ and }
	A_\infty = \begin{pmatrix} 0 & -1 \\ 1 & -2 \end{pmatrix}\,.  \]
	All of these matrices are conjugate to Jordan blocks of size 2; the first two have eigenvalue 1, and the third has eigenvalue $-1$. The only nontrivial subspace that is invariant under $A_0$ is the one spanned by the eigenvector $(-1, 1)$. This is not stable under either $A_1$ or $A_\infty$, so $L$ is irreducible. It follows from \cref{result:irreducible rank 2} that $L$ is physically rigid, so Katz's motivicity \cref{result:katz motivicity} guarantees that $L$ is motivic. 
	
	In fact, this is recovering a classical result. Let $f : E \to U$ be the Legendre family of elliptic curves: the fiber above any closed point $u \in U$ is the projective closure of the affine curve on $U$ defined by the Weierstrass equation
	\[ y^2 = x(x-1)(x-u).  \]
	Then $L \simeq R^1f_*\underline{\C}_{E^\an}$. 
\end{example}

Suppose now that $X$ is any smooth, connected, and projective scheme over $\C$. Let $Z$ be a strict normal crossings divisor in $X$ with irreducible components $Z_1, \dotsc, Z_m$ and let $U = X \setminus Z$. We can then form the algebraic stack $\Loc$ of local systems on $U$. Fix a rank 1 local system $D$ and let $\Loc_D$ be the moduli of local systems $L$ on $U$ equipped with an isomorphism $D \simeq \det(L)$. As before, there is a monodromy map 
\[ \begin{tikzcd} \Loc_D \ar{r}{\pi} & J^m. \end{tikzcd} \]
Choosing a finite type point $M$ of $J^m$, the inverse image of the residual gerbe of $J^m$ at $M$ defines a substack $\Loc_D^M \subseteq \Loc_D$. 

\begin{conjecture}[Simpson] \label{simpsons conjecture}
	Suppose $L$ is an irreducible local system on $U$ such that $D := \det(L)$ is torsion and $M := \pi(L)$ is quasi-unipotent. If $L$ is isolated in $\Loc_D^M$, then it is motivic. 
\end{conjecture}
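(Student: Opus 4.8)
We conclude by sketching one possible route toward Simpson's conjecture. It necessarily departs from Katz's proof of the case $X = \P^1$ (\cref{result:katz motivicity}), which runs through an explicit algorithm of middle convolutions and middle tensor products that is special to the punctured line and has no evident analogue over a higher-dimensional base. Instead the plan is arithmetic. Since an irreducible $L$ that is isolated in $\Loc_D^M$ is already defined over $\overline{\Q}$, one may choose a finitely generated $\Z$-subalgebra $A \subseteq \C$ over which $X$, $Z$, the torsion determinant $D$, and the quasi-unipotent monodromy data $M$ are defined, over which $L$ spreads out to a lisse $\overline{\Q}_\ell$-sheaf for some prime $\ell$ invertible in $A$, and over which $L$ remains isolated in the relative moduli $\Loc_D^M$. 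For a closed point $s \in \Spec A$ of residue characteristic $p \neq \ell$, write $(X_s, Z_s, U_s)$ for the reduction and $L_s$ for the restriction of the spread-out sheaf to $U_s$.

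The first step is to propagate rigidity through the reduction. On the $\ell$-adic side this is constructibility and base change: the comparison with the Betti picture over $\C$ forces the cohomology group $H^1(X_{\overline{s}}, j_{!+}\End(L_s))$ to vanish, so $L_s$ is cohomologically rigid. One wants, in addition, that the $p$-adic companion of $L_s$ (introduced in the next step) is rigid \emph{as an overconvergent $F$-isocrystal}. For $X = \P^1$ the deformation theory developed above applies directly: our identification of the tangent complex of the moduli of overconvergent isocrystals on $U_s$ with fixed local monodromy — in terms of the rigid cohomology of $j_{!+}\End$ — together with independence of $\ell$ for the relevant Euler characteristics, forces the corresponding $H^1_{\rig}$ to vanish once $L_s$ is isolated. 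For general $X$ one first needs the analogue of \cref{result:tangent of trivializable} for overconvergent isocrystals on the complement of a normal crossings divisor in a higher-dimensional smooth projective variety, which we expect to follow from the same Hochschild-cochain-to-de-Rham machinery.

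Granting rigidity of the reduction, apply the theory of companions (conjectured by Deligne; established for curves by L.~Lafforgue and Abe, and in general by Drinfeld, Abe--Esnault, and Kedlaya): the cohomologically rigid lisse sheaf $L_s$ admits a $p$-adic companion $\mathcal{E}_s$, an overconvergent $F$-isocrystal on $U_s$ with matching Frobenius characteristic polynomials and, by the previous step, again rigid. One then argues that $\mathcal{E}_s$ is of geometric origin — a subquotient of $R^i f_* \O^{\rig}$ for some family $f : T_s \to V_s$ — and transports this back across the companions correspondence, its compatibility with cohomological operations, and a spreading-out and specialization argument in the style of Katz, to conclude that $L|_V$ is a subquotient of $R^i f_* \underline{\C}_{T^{\an}}$ for a dense open $V \subseteq U$.

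The genuine obstacle is the geometric-origin step: producing, out of a cohomologically rigid overconvergent $F$-isocrystal on an arbitrary base, an honest family of varieties whose rigid cohomology realizes it. This is the $p$-adic shadow of \cref{result:katz motivicity}, and away from $X = \P^1$ it is open; the strongest results currently available (Esnault--Groechenig, and work building on them) show that such local systems are defined over $\overline{\Z}$ and underlie polarizable $\Z$-variations of Hodge structure, but the passage from a $\Z$-variation of Hodge structure to a motive is essentially a case of the relative Hodge conjecture. The deformation-theoretic results of this paper are meant to feed the other half of the argument — that rigidity is preserved under reduction and under passage to the $p$-adic companion, and that the characteristic-$0$ and characteristic-$p$ moduli agree infinitesimally near the relevant point — so that a sufficiently strong geometricity theorem for rigid $F$-isocrystals would close the gap.
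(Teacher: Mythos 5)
What you are being asked to prove is a statement that the paper itself records as an open conjecture; the paper gives no proof, and indeed your proposal does not claim to give one either. Viewed as a review, the honest assessment is that your sketch correctly identifies a plausible line of attack but does not constitute a proof, because its decisive step is logically equivalent to the conjecture being proved.

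Concretely, your plan runs: spread $L$ out over a finitely generated $\Z$-algebra; reduce at a place of good reduction; transport the cohomological rigidity across the reduction; pass to the $p$-adic companion $\mathcal{E}_s$; show $\mathcal{E}_s$ is of geometric origin; and transport geometric origin back. The last step before the final transport --- producing, from a cohomologically rigid overconvergent $F$-isocrystal on $U_s$, an actual family $f : T_s \to V_s$ realizing $\mathcal{E}_s$ in the rigid cohomology of its fibers --- is precisely the $p$-adic shadow of Simpson's conjecture. You name this yourself as ``the genuine obstacle,'' and that diagnosis is correct; but it means the proposal reduces the conjecture to another open problem of comparable strength rather than resolving it. In Katz's $X = \P^1$ argument, this gap is closed by the explicit middle convolution algorithm, which reconstructs the local system concretely; no analogue is available in higher dimension, and nothing in this paper supplies one.

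There are two further soft spots worth flagging even in the reduction part. First, the deformation-theoretic results of this paper are established only for $U$ open in $\P^1$; the assertion that the same Hochschild-to-de-Rham machinery handles the complement of a normal crossings divisor in a higher-dimensional smooth projective variety is a reasonable expectation but not a theorem here, and the present paper's results cannot be cited for it. Second, propagating the isolatedness of $L$ in $\Loc_D^M$ through the spreading-out and through the passage between $\overline{\Q}_\ell$- and $p$-adic coefficients requires a comparison of moduli (or at least of tangent complexes) across these settings; the paper's own discussion stops at a heuristic picture (cf.\ \cref{fig:moduli}) and explicitly declines to make the stack $S$ rigorous geometrically. So even the portion of your argument meant to ``feed'' a hypothetical geometricity theorem is only sketched, not proved. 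In short: a sensible roadmap, accurately self-assessed as incomplete, and no proof.
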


Katz's motivicity theorem \ref{result:katz motivicity} is precisely the $X = \P^1$ case of Simpson's conjecture; it was not necessary to fix determinants in this case roughly because $\P^{1,\an}$ is simply connected. Some evidence towards the general conjecture is provided in \cite{esnault-groechenig}, where it is proved that, under the same hypotheses as \cref{simpsons conjecture}, $L$ must be integral.\footnote{A local system $L$ being integral means that $L$ comes from base changing a local system of finite projective modules over the ring of integers in a number field.} 

\subsection{Positive characteristic}

Suppose now that $k$ is an algebraically closed field of characteristic $p > 0$. Let $\P^1$ be the projective line over $k$ and let $Z = \{z_1, \dotsc, z_m\}$ be an effective Cartier divisor in $\P^1$ with complement $U = \P^1 \setminus Z$. There are several kinds of objects living on $U$ that can rightfully be called analogs of local systems, indexed by an auxiliary prime $\ell$.  

For any $\ell \neq p$, we can consider \emph{$\ell$-adic local systems} on $U$, by which we mean continuous finite dimensional representations of the \'etale fundamental group $\pi_1^{\text{\'et}}(U,*)$ over $\bar{\Q}_\ell$, where $*$ is a fixed basepoint in $U$. For every $z \in Z$, there is a natural group homomorphism
\[ \begin{tikzcd} G_z := \textnormal{Gal}(\Frac \hat{\O}_{\P^1, z}) \ar{r} & \pi_1^{\text{\'et}}(U, *). \end{tikzcd} \]
If $L$ is a $\ell$-adic local system on $U$, the isomorphism class of the continuous representation of $G_z$ obtained by restricting along the above homomorphism plays the role of the \emph{local monodromy} of $L$ around $z$. This lets us formulate a notion of \emph{physical rigidity} for $\ell$-adic local systems on $U$. Katz proved that physical rigidity of an irreducible $\ell$-adic local system $L$ is implied by the vanishing of $H^1(\P^1, j_{!+}\End(E))$ \cite[theorem 5.0.2]{katz}. The role of the Euler-Poincar\'e formula in this situation is played by the Grothendieck-Ogg-Shafarevich formula. More recent work of Fu proves that the converse implication is true as well \cite{fu}.

When $\ell = p$, there are ``too many'' continuous finite dimensional representations of $\pi_1^{\text{\'et}}(U, *)$ over $\bar{\Q}_p$, because the image in $\GL_n(\bar{\Q}_p)$ of the ``pro-$p$ part'' of $\pi_1^{\text{\'et}}(U, *)$ can be quite large. By requiring that the image of the ``pro-$p$ part'' is not too large, we obtain a slightly better behaved category, and we can ``unsolve'' these representations to obtain certain kinds of modules with integrable connections. More precisely, a theorem of Tsuzuki's tells us that continuous finite dimensional representations of $\pi_1^{\text{\'et}}(U, *)$ with finite local monodromy are equivalent to overconvergent isocrystals on $U$ equipped with unit-root Frobenius structures \cite{tsuzuki}. But now, this category turns out to be too small to contain all objects that can ``come from geometry over $U$.'' 
	
A better category seems to be that of \emph{all} overconvergent isocrystals on $U$. It seems to contain objects that ``come from geometry over $U$'' \cite[th\'eor\`eme 5]{berthelot_rigide}, though there remain some open questions about its suitability as an analog of local systems \cite{lazda_incarnations}. Hereafter, we will refer to overconvergent isocrystals simply as \emph{isocrystals}. We will not insist that our isocrystals come equipped with Frobenius structures, though at various points it will be necessary to assume that some isocrystals in question \emph{can} be equipped with a Frobenius structure (or to assume some more technical hypothesis that is implied by the existence of a Frobenius structure) in order to use some finite dimensionality results. In any case, the isocrystals that ``come from geometry over $U$'' come naturally equipped with Frobenius structures, so this will not be too serious an assumption when it does come up. 

\subsection{Rigid isocrystals}

Let $K_0$ be the fraction field of the Witt vectors $W(k)$ and $\bar{K}$ an algebraic closure of $K_0$. For a finite extension $K$ of $K_0$ inside $\bar{K}$, let $\Isoc^\dagger(U/K)$ denote the category of overconvergent isocrystals on $U$ over the spectrum of the ring of integers in $K$ \cite[d\'efinition 2.3.6]{berthelot_rigide2}. Taking a colimit over all such $K$, we obtain
\[ \Isoc^\dagger(U/\bar{K}) := \colim_K \Isoc^\dagger(U/K). \]
The analog of the local monodromy at a point $z \in Z$ of an $E \in \Isoc^\dagger(U/K)$ is played by the isomorphism class of the \emph{Robba fiber} $E_z$ of $E$ at $z$ (where the Robba fiber $E_z$ is a differential module over the Robba ring $\mathscr{R}_z$, as in \cite[definition 3.3]{lestum}). We can thus formulate a notion of physical rigidity for isocrystals: we say that $E \in \Isoc^\dagger(U/K)$ is \emph{physically rigid} if it is determined up to isomorphism as an object of $\Isoc^\dagger(U/\bar{K})$ by the isomorphism classes of its Robba fibers along $Z$.\footnotemark\ 

\footnotetext{It is worth drawing attention to the fact that we are requiring that $E$ be determined up to isomorphism as an object of $\Isoc^\dagger(U/\bar{K})$ by its Robba fibers, not as an object of $\Isoc^\dagger(U/K)$. This condition is therefore slightly stronger than the notion formulated by Crew \cite{crew_rigidity} in that we have forcibly stabilized Crew's definition under finite extensions of $K$. Since the goal we have in mind is a $p$-adic analog of Katz's cohomological criterion and since the condition that the relevant cohomology space $H^1_{p,\rig}(U, \End(E))$ vanish is invariant under finite extensions of $K$, this notion seemed better suited for our purposes. }

Crew proves that an absolutely irreducible $E \in  \Isoc^\dagger(U/K)$ is physically rigid if a certain ``parabolic cohomology'' space $H^1_{p,\rig}(U, \End(E))$ vanishes \cite[theorem 1]{crew_rigidity}. The proof is similar to the ones given by Katz in the complex (resp. $\ell$-adic) settings; the role of the Euler-Poincar\'e formula (resp. Grothendieck-Ogg-Shafarevich formula) is played by the Christol-Mebkhout index formula \cite[th\'eor\`eme 8.4--1]{christol_mebkhout3}. To strengthen the analogy of Crew's result with its analogs in the complex and $\ell$-adic settings, we prove in \cref{result:parabolic and middle} below that Crew's parabolic cohomology $H^1_{p,\rig}(U, \End(E))$ coincides with $H^1(\P^1, j_{!+}\End(E))$, where $j_{!+}$ is the intermediate extension operation in Berthehlot's theory of arithmetic D-modules. 

The contents of this note were motivated by a search for the converse implication: that physical rigidity of an absolutely irreducible $E \in \Isoc^\dagger(U/K)$ implies the vanishing of $H^1_{p,\rig}(U, \End(E))$. An approach like the one we described above in sections \ref{section:rigid local systems} through \ref{katz-proof-sketch} above cannot work verbatim: isocrystals on $U$ with prescribed Robba fibers along $Z$ seem not to be the finite type points of an algebraic stack. 

However, we can still consider the stack $S$ over $K_0$ whose $K$-points are modules with integrable connections on the \emph{tube}\footnotemark\ $\tube{U}$. 
\footnotetext{See \cite[equation (2.3)]{lestum} for a definition of tubes.}
Moreover, if we fix a tuple $M$ of isomorphism classes of differential modules over the Robba rings along $Z$, we can also form the substack $S^M \subseteq S$ whose $K$-points are modules with integrable connection on $\tube{U}$ whose Robba fibers are prescribed by $M$. These stacks are likely not algebraic. Nevertheless, we will see in our main results (\cref{result:main part 1,result:main part 2}) that the stack $S^M$ has satisfying properties infinitesimally, completely analogous to the infinitesimal properties of the stack $\Loc^M$ we saw in \cref{result:tangent of trivializable} above. For instance, if $M$ is the tuple of Robba fibers of $E \in \Isoc^\dagger(U/K)$ along $Z$, and we regard $E$ as a finite type point of $S^M$, we will show that the tangent space to the stack $S^M$ at the point $E$ is precisely Crew's parabolic cohomology $H^1_{p,\rig}(U, \End(E))$, and that this space carries a natural symplectic form, forcing its dimension to be even (when this dimension is finite). 

We should note that the stack $S$ has finite type points corresponding to modules with integrable connection over $K_0$ which do not satisfy the overconvergence condition. In other words, the stack $S$ is classifying objects that are not necessarily canonically attached to $U$ itself. Rather, it is classifying objects that are canonically related to the geometry of the \emph{tube} of $U$ inside the adic projective line. In fact, the objects which are overconvergent are somewhat sparse in $S$. For example, when $U = \P^1 \setminus \{0, \infty\}$ with coordinate $t$, the differential operators $t\partial - \lambda$ for $\lambda \in \bar{K}$ all determine finite type points of $S$, but these are only overconvergent when $\lambda \in \Z_p$. However, if $M$ is a tuple of \emph{solvable} differential modules over the Robba rings along $Z$, then in fact all of the finite type points of $S^M$ do satisfy the overconvergence property and define isocrystals on $U$. 

The picture this suggests is something like \cref{fig:moduli}. There is a ``Robba fibers'' map from the moduli $S$ of all modules with integrable connection on $\tube{U}$, down to the moduli $R^m$ of $m$-tuples of differential modules over the Robba rings along $Z$. The objects of $\Isoc^\dagger(U/\bar{K})$ are precisely the ones living over solvable differential modules over the Robba rings along $Z$.  

\begin{figure}[ht]
	\centering
	
	\begin{overpic}[width=0.3\textwidth]{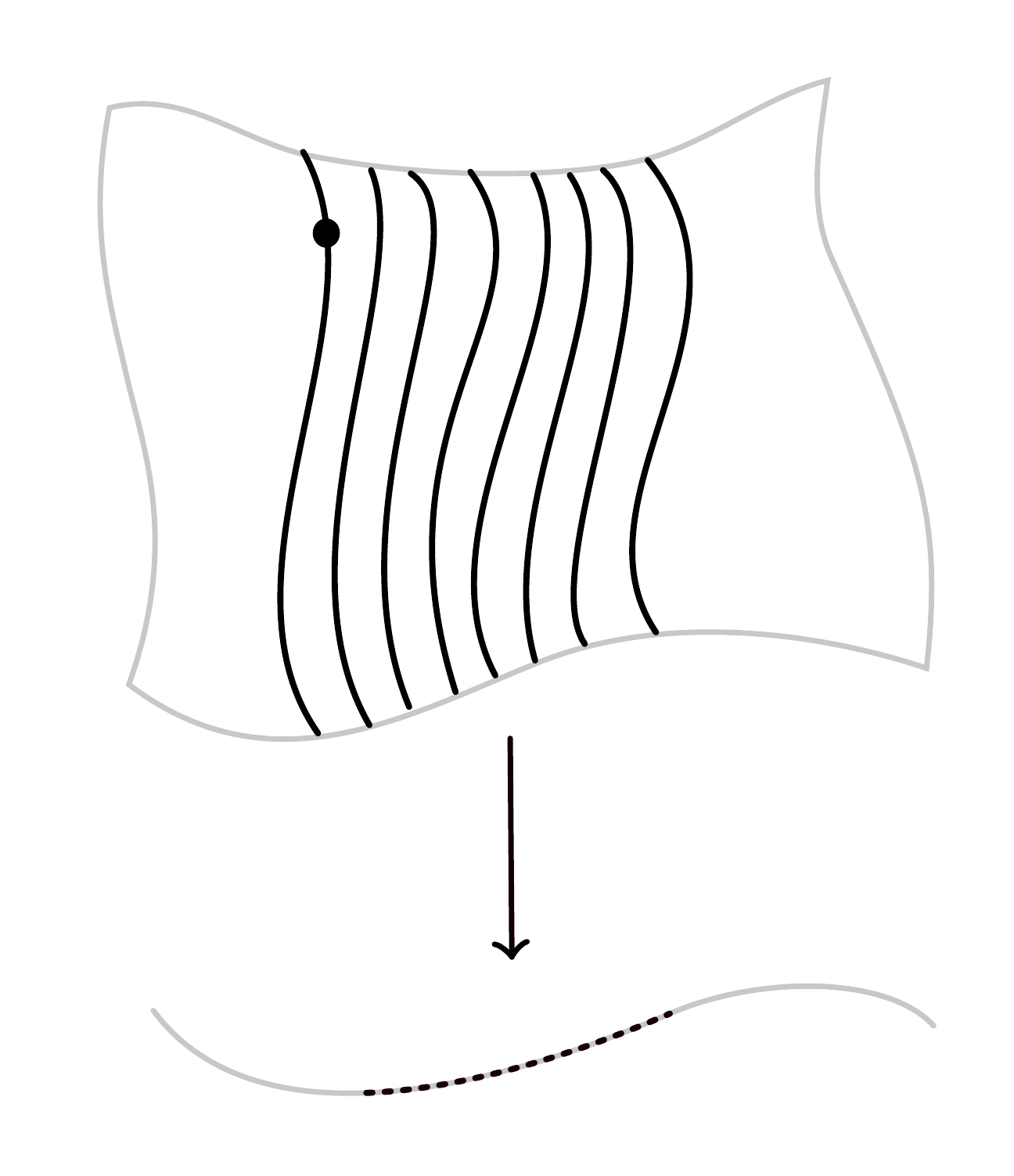}
		\put(82,10) {$R^m$}
		\put(82,47) {$S$}
		\put(20,77) {$E$}
	\end{overpic}

	\caption[Moduli of overconvergent isocrystals]{The moduli $S$ of modules with connections on $\tube{U}$. There is a natural ``Robba fibers'' map $S \to R^m$, and the preimage of the ``solvable locus'' of $R^m$ (indicated by the black dots) is the space of isocrystals on $U$ (indicated by the black fibers). Given $E \in \Isoc^\dagger(U/\bar{K})$, its tangent space in $S$ is $H^1_\rig(U, \End(E))$, and its tangent space in the fiber is $H^1_{p,\rig}(U, \End(E))$. Note that $S$ is formally smooth, but the fibers need not be. However, when $E$ is irreducible, it is a formally smooth point of the fiber.}
	\label{fig:moduli}
\end{figure}

In future work, I hope to explore ways in which this heuristic picture can be given rigorous geometric meaning. By realizing this picture in a sufficiently robust geometric setting, it may be possible to recreate the proof of Katz's criterion described earlier in the setting of overconvergent isocrystals, but this is not accomplished in this paper. 

\subsection{Contents}

\Cref{chapter:formal deformation theory} is background material on formal deformation theory. It is mostly expository, though at a few points we categorify and slightly expand some results that exist in the literature. This categorification makes for a cleaner story conceptually, and also permits us to prove a certain algebraization theorem for infinitesimal deformations of isocrystals later on (cf. \cref{result:algebraizing infinitesimal deformations}). 

In \cref{chapter:deformations of modules}, we apply the background material on formal deformation theory to studying deformations of a module over an arbitrary associative $K$-algebra $D$, where $K$ is a field of characteristic zero. The main result in this chapter is \cref{result:hochschild governs module deformations}, which shows that the Hochschild cochain complex $\Hoch_K(D, \End_K(E))$ governs the deformations of a left $D$-module $E$. This categorifies observations of Yau \cite{yau}. 

Then in \cref{chapter:deformations of differential modules}, we specialize further and study the deformations of a differential module $E$ over a differential commutative $K$-algebra $\O$, where $K$ is still a field of characteristic zero. The key here is \cref{result:de rham and hochschild}, which constructs an explicit quasi-isomorphism of differential graded algebras between the de Rham complex $\dR(\O, \End_\O(E))$ and the Hochschild cochain complex $\Hoch_K(D, \End_K(E))$, where $D$ is the associated ring of differential operators.\footnotemark

\footnotetext{The concepts involved in \cref{result:de rham and hochschild} bear some resemblance to concepts involved in the  Hochschild-Kostant-Rosenburg (HKR) theorem (cf. \cite[theorem 9.4.7]{weibel}, Kontsevich's formality theorem \cite{kontsevich}, and a version of the HKR theorem with coefficients \cite[theorem 4.5]{yekutieli_hochschild}). Note, however, that the Hochschild complexes involved in the HKR theorem are taken over a commutative algebra (and the coefficients involved in \cite{yekutieli_hochschild} are modules over said commutative algebra). In the result we discuss here, the relevant Hochschild complex is over a non-commutative algebra of differential operators, and the coefficients involved are two different endomorphism rings of a differential module over a differential ring. It is unclear if either the theorem here or the HKR theorem can be derived from the other.}

\Cref{isocrystal prelims} is mostly expository material on isocrystals on open subsets of the projective line, but there are some new results. \Cref{pid}, for instance, proves that the ring of functions on the tube of the complement of an effective Cartier divisor in $\P^1$ is a principal ideal domain; this is likely well-known to experts, but I know of no reference in the literature, and it has a number of important consequences that are described in \cref{isocrystal prelims}. Also, \cref{result:parabolic and middle} is the aforementioned comparison between Crew's parabolic cohomology and the intermediate extension operation in Berthelot's theory of arithmetic D-modules. Note that \cref{isocrystal prelims} is largely independent of everything that precedes it; starting in \cref{section:compactly supported and parabolic}, we do use some \emph{notation} that is introduced in \cref{chapter:deformations of differential modules}, but we do not yet use any of the \emph{results} of \cref{chapter:deformations of differential modules}. 

Finally, we put everything together in \cref{chapter:deformations of isocrystals}. \Cref{result:main part 1,result:main part 2} describe the infinitesimal deformation theory of isocrystals; combined, they provide a $p$-adic analog to \cref{result:tangent of trivializable} above. \Cref{result:algebraizing infinitesimal deformations} is a kind of ``algebraization'' theorem that falls out almost automatically as a result of our categorified approach to deformation theory. We conclude with a calculation in \cref{example:regular singularities} which is analogous to \cref{result:irreducible rank 2} above.

\subsection{Acknowledgements}

This work is based on my doctoral dissertation \cite{agrawal_thesis}. During the writing of my dissertation, I had countless conversations with my advisor, David Nadler, and also with Matthias Strauch and Christine Huyghe about this topic and many related ones; I cannot thank the three of them effusively enough. I would also like to thank Harrison Chen, Richard Crew, H\'el\`ene Esnault, Kiran Kedlaya, Adriano Marmora, Martin Olsson, Andrea Pulita, and Amnon Yekutieli for enlightening discussions. Part of this work was written at the University of Strasbourg while I was a Chateaubriand fellow; I would like to thank the University for their hospitality during my stay there, and the Embassy of France for their financial assistance. This work was also supported in part by NSF RTG grant DMS-1646385. Finally, I would like to thank the anonymous referee for many helpful remarks improving readability. 
	\section{Formal deformation theory} \label{chapter:formal deformation theory}

We fix a field $K$ of characteristic 0 and let $\Art_K$ denote the category of artinian local $K$-algebras with residue field $K$. In this section, we recount some generalities about deformation theory. The foundational work here is due to Schlessinger \cite{schlessinger}, but we use here the categorified version of Rim \cite[Expos\'e VI]{rim}, which considers opfibrations in groupoids over $\Art_K$ in place of functors $\Art_K \to \Set$. We generally\footnotemark\ follow terminology and notation set up for formal deformation theory in the Stacks Project \tag{06G7}, and we provide references therein whenever possible. 

\footnotetext{The only exceptions are that we use the slightly less sesquipedalian phrase \emph{opfibration in groupoids} in place of ``category cofibered in groupoids,'' and similarly \emph{hull} in place of ``miniversal formal object.''}

\subsection{Hulls and prorepresentability}
The definition of a deformation category is given in \tag{06J9}. To any deformation category $\scF$, one can associate a decategorified functor $\overline{\scF} : \Art_K \to \Set$ \tag{07W5}, as well as two vector spaces: its tangent space $T(\scF)$ \tag{06I1} and its space $\Inf(F)$ of infinitesimal automorphisms \tag{06JN}. Let us also draw attention to the fact that the 2-category of deformation categories has 2-fiber products \tag{06L4}, as we will use this several times. 

The following could be called the ``fundamental theorem of deformation theory.''

\begin{theorem} \label{deformation theory}
	Suppose $\scF$ is deformation category.
	\begin{enumerate}[label=(\alph{*})]
		\item $\scF$ has a hull if and only if its tangent space $T(\scF)$ is finite dimensional.
		\item The decategorified functor $\overline{\scF}$ is prorepresentable if and only if $T(\scF)$ is finite dimensional and $\Aut_{R'}(x') \to \Aut_R(x)$ is surjective whenever $x' \to x$ is a morphism in $\scF$ lying over a surjective homomorphism $R' \to R$ in $\Art_K$.
		\item $\scF$ is prorepresentable if and only if $T(\scF)$ is finite dimensional and $\Inf(\scF) = 0$.
	\end{enumerate}
\end{theorem}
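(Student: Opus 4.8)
The plan is to reduce the three statements to Schlessinger's criteria \cite{schlessinger} in Rim's categorified form \cite{rim}, following the development in \cite{stacks}. The structural input is that a deformation category $\scF$ satisfies the Rim--Schlessinger condition: for morphisms $A_1 \to A_0 \leftarrow A_2$ in $\Art_K$ with $A_2 \to A_0$ a small extension, the natural functor $\scF(A_1 \times_{A_0} A_2) \to \scF(A_1) \times_{\scF(A_0)} \scF(A_2)$ is an equivalence of categories. Decategorifying, this shows $\overline{\scF}$ automatically satisfies Schlessinger's conditions (H1) and (H2), and it is also what endows $T(\scF)$ with its $K$-vector space structure. Thus in each of (a), (b), (c) the hypothesis on $T(\scF)$ is exactly Schlessinger's (H3), and the remaining content is to match the categorical hypotheses against the missing condition (H4) and against triviality of automorphisms.

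For (a), the ``only if'' direction is immediate: a hull is a versal formal object over a complete Noetherian local $K$-algebra $R$ with residue field $K$, and one identifies $T(\scF) \cong \Hom_K(\m_R/\m_R^2, K)$, which is finite dimensional because $R$ is Noetherian. For ``if,'' one runs the classical Schlessinger construction: build versal objects over an increasing tower $R_0 = K \leftarrow R_1 \leftarrow R_2 \leftarrow \cdots$ of artinian quotients of a power series ring in $\dim_K T(\scF)$ variables, at each stage choosing a minimal small extension through which the versal object from the previous stage lifts compatibly, using the Rim--Schlessinger equivalence both to produce the lift and to glue it; finite dimensionality of $T(\scF)$ guarantees that the number of generators needed stabilizes, so that $R := \varprojlim R_n$ is Noetherian and carries a versal formal object. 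This inductive construction is where essentially all the work lies; everything else is bookkeeping and will be the main obstacle.

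For (b), fix a hull $(R,\xi)$, which exists by (a). Then $\overline{\scF}$ is prorepresented by $R$ precisely when the versal map $\underline{R} \to \overline{\scF}$ is injective on $A$-points for every $A \in \Art_K$, and by a standard argument with small extensions this is equivalent to Schlessinger's (H4): for every small extension $A' \to A$ the map $\overline{\scF}(A' \times_A A') \to \overline{\scF}(A') \times_{\overline{\scF}(A)} \overline{\scF}(A')$ is bijective. The Rim--Schlessinger condition makes the analogous functor of groupoids an equivalence, so the failure of (H4) measures exactly the extent to which isomorphisms over $A$ fail to lift to isomorphisms over $A'$: two objects of $\scF(A')$ together with a chosen isomorphism of their restrictions to $A$ determine the same point of $\overline{\scF}(A')$ only up to the action of $\coker(\Aut_{A'}(x') \to \Aut_A(x))$. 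Hence (H4) holds if and only if $\Aut_{R'}(x') \to \Aut_R(x)$ is surjective for all $x' \to x$ lying over a surjection $R' \to R$ in $\Art_K$, which is the stated condition; Schlessinger's theorem applied to $\overline{\scF}$ (which has (H1)--(H3)) then concludes.

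For (c), if $\scF$ is prorepresentable then $\scF \simeq \underline{R}$ for a complete Noetherian local $R$; the groupoids $\underline{R}(A)$ are discrete, so every automorphism group vanishes and in particular $\Inf(\scF) = 0$, while finite dimensionality of $T(\scF)$ again follows from $R$ Noetherian. Conversely, suppose $T(\scF)$ is finite dimensional and $\Inf(\scF) = 0$. One first upgrades the latter to the statement that $\Aut_A(x)$ is trivial for every $A \in \Art_K$ and every $x \in \scF(A)$: this goes by induction on the length of $A$, writing $A$ as an iterated small extension and using the Rim--Schlessinger equivalence to express $\Aut_{A'}(x')$ as a fiber product over $\Aut_A(x)$ of automorphism groups of restrictions, with kernel controlled by $\Inf(\scF)$ tensored with the ideal of the extension, hence zero. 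With all automorphism groups trivial, the automorphism-lifting hypothesis of (b) is vacuous, so $\overline{\scF}$ is prorepresentable; and since $\scF \to \overline{\scF}$ is then an equivalence of opfibrations in groupoids, $\scF$ itself is prorepresentable. The delicate point here is precisely that induction upgrading $\Inf(\scF) = 0$ to triviality of all automorphism groups, which is another application of the Rim--Schlessinger condition.
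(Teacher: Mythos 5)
Your proposal is correct in outline, but it takes a genuinely different route from the paper. The paper's proof is a one-paragraph chain of citations: the Rim--Schlessinger condition gives axioms (S1) and (S2), whereupon (a) is Stacks tag 06IX, (b) follows from tags 06JM and 06J8, and (c) follows from (b) together with tag 06K0. You instead unpack the underlying mathematics: in (a) you run Schlessinger's inductive construction of a hull over a tower of artinian quotients of a power series ring; in (b) you identify prorepresentability of $\overline{\scF}$ (given a hull) with Schlessinger's (H4) and then translate (H4) into the automorphism-lifting condition via the Rim--Schlessinger equivalence; in (c) you reduce to (b) by showing $\Inf(\scF)=0$ forces triviality of all automorphism groups by induction on the length of $A$. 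Each of these is the content of the cited tags, so the logical structure agrees; what you gain is a self-contained argument that exposes exactly where each hypothesis is used, at the cost of substantially more bookkeeping that you are (correctly) leaving as a sketch. One small imprecision worth tightening if you expand this: in (b), the relation between failure of (H4) and the cokernel of $\Aut_{R'}(x')\to\Aut_R(x)$ should be phrased as a statement about when two objects of $\scF(R')$ equipped with a chosen isomorphism of their restrictions to $R$ yield the same class in $\overline{\scF}(R'\times_R R')$; the action is by the automorphism group of the restriction, and surjectivity of the restriction map on automorphisms is what makes the resulting map of sets injective. As written, the sentence gestures at this but does not quite say it.
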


\begin{proof}
	Since $\scF$ is a deformation category, it satisfies the Rim-Schlessinger condition (cf. \tag{06J2} for a definition) and therefore also axioms (S1) and (S2) (cf. \tag{06J7} for a definition). Thus (a) is precisely \tag{06IX}, and (b) follows from \tag{06JM} and \tag{06J8}. Finally, (c) follows from statement (b) together with \tag{06K0}.
\end{proof}

\subsection{Residual gerbes}

\begin{definition}
	Suppose $\scF$ is a predeformation category and fix an object $x_0$ lying above $K$. The \emph{residual gerbe} of $\scF$ is the full subcategory $\Gamma$ of $\scF$ spanned by objects $x$ such that there exists a morphism $x_0 \to x$. 
\end{definition}

In other words, the fiber of the residual gerbe $\Gamma$ over $R \in \Art_K$ is a connected groupoid. Moreover, if $x \in \Gamma$ lies over $R$, then the automorphism group $\Aut_R(x)$ is the same when is regarded as an object both of $\Gamma(R)$ and of $\scF(R)$. It is clear that $\Gamma$ is also a predeformation category.

\begin{lemma} \label{residual gerbe is deformation category}
	Let $\scF$ be a deformation category and $x_0$ an object lying above $K$. Suppose further that, for any morphism $x' \to x$ in the residual gerbe $\Gamma$ that lies above a surjective homomorphism in $\Art_K$, the map \[ \begin{tikzcd} \Hom(x_0, x') \ar{r} & \Hom(x_0, x) \end{tikzcd} \] is surjective. Then $\Gamma$ is a deformation category.
\end{lemma}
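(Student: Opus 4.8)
\subsection*{Proof plan}

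Since $\Gamma$ is already a predeformation category, the whole problem is to verify the Rim--Schlessinger condition for $\Gamma$ (cf. \tag{06J2}). So fix a diagram $R_1 \to R_0 \leftarrow R_2$ in $\Art_K$ with (at least) one of the two maps surjective, say $R_2 \to R_0$, write $R_{12} = R_1 \times_{R_0} R_2$, and consider the comparison functor $\Gamma(R_{12}) \to \Gamma(R_1) \times_{\Gamma(R_0)} \Gamma(R_2)$. Because $\Gamma \subseteq \scF$ is a full subcategory (and, being closed under pushforward, a subfibration), the hom-sets in both $2$-fibre products are computed using morphisms of $\scF$, and this functor is simply the restriction of the analogous comparison functor for $\scF$. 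Since $\scF$ is a deformation category, the latter is an equivalence; in particular it is fully faithful, and hence so is the comparison functor for $\Gamma$. Thus everything reduces to proving \emph{essential surjectivity} of $\Gamma(R_{12}) \to \Gamma(R_1) \times_{\Gamma(R_0)} \Gamma(R_2)$.

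To that end, start with an object $(x_1, x_2, \phi)$ of $\Gamma(R_1) \times_{\Gamma(R_0)} \Gamma(R_2)$. Applying essential surjectivity of the Rim--Schlessinger equivalence for $\scF$ yields an object $x_{12} \in \scF(R_{12})$ together with isomorphisms $x_{12}|_{R_i} \cong x_i$ in $\scF(R_i)$ compatible with $\phi$; what remains is to check that $x_{12}$ actually lies in $\Gamma$, i.e. that there is a morphism $x_0 \to x_{12}$ in $\scF$. Since $x_1 \in \Gamma$, choose a morphism $x_0 \to x_1 \cong x_{12}|_{R_1}$; by the universal property of (op)cartesian arrows this is the same datum as an isomorphism $u_1 \colon x_0|_{R_1} \xrightarrow{\ \sim\ } x_{12}|_{R_1}$ in $\scF(R_1)$. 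Likewise, as $x_{12}|_{R_2} \cong x_2$ and $x_0|_{R_2}$ both lie in the connected groupoid $\Gamma(R_2)$, pick any isomorphism $v_2 \colon x_0|_{R_2} \xrightarrow{\ \sim\ } x_{12}|_{R_2}$ in $\scF(R_2)$. The only obstruction to gluing $u_1$ and $v_2$ into an isomorphism $x_0|_{R_{12}} \xrightarrow{\ \sim\ } x_{12}$ (via full faithfulness of the Rim--Schlessinger equivalence for $\scF$, applied to the images of $x_0|_{R_{12}}$ and $x_{12}$ in the $\scF$-$2$-fibre product) is that the restrictions of $u_1$ and $v_2$ to $R_0$ need not agree: their discrepancy is the automorphism $\theta_0 := u_1|_{R_0} \circ (v_2|_{R_0})^{-1}$ of $x_{12}|_{R_0}$. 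If we can lift $\theta_0$ along the (op)cartesian arrow $x_{12}|_{R_2} \to x_{12}|_{R_0}$ to an automorphism $\tilde\theta$ of $x_{12}|_{R_2}$, then $u_2 := \tilde\theta \circ v_2$ satisfies $u_2|_{R_0} = u_1|_{R_0}$, the pair $(u_1,u_2)$ is an isomorphism in $\scF(R_1)\times_{\scF(R_0)}\scF(R_2)$, and it lifts to the required isomorphism $x_0|_{R_{12}} \cong x_{12}$; precomposing with the (op)cartesian arrow $x_0 \to x_0|_{R_{12}}$ then exhibits $x_{12} \in \Gamma$.

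So the single nontrivial input is the surjectivity of the restriction map $\Aut_{R_2}(x_{12}|_{R_2}) \to \Aut_{R_0}(x_{12}|_{R_0})$, and this is exactly where the hypothesis of the lemma is used. Both $x_{12}|_{R_2}$ and $x_{12}|_{R_0}$ lie in $\Gamma$, so the (op)cartesian arrow between them is a morphism of $\Gamma$ over the surjection $R_2 \to R_0$, and the hypothesis gives surjectivity of $\Hom(x_0, x_{12}|_{R_2}) \to \Hom(x_0, x_{12}|_{R_0})$. On objects of $\Gamma(R)$ the functor $\Hom(x_0,-)$ is a (noncanonically trivial) torsor under the automorphism group, and the above restriction map on $\Hom$-sets is equivariant for the group homomorphism $\Aut_{R_2}(x_{12}|_{R_2}) \to \Aut_{R_0}(x_{12}|_{R_0})$; a surjective equivariant map of nonempty torsors forces the underlying group homomorphism to be surjective, which is precisely what the gluing argument needed. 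I expect the logical skeleton above to be short, with the only real friction being the routine-but-delicate bookkeeping with (op)cartesian arrows, restrictions of morphisms and of automorphisms along pushforwards, and the torsor structure on $\Hom(x_0,-)$; none of these presents a genuine obstacle, but they must be set up carefully.
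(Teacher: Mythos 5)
Your proof is correct and follows the same skeleton as the paper's: reduce the Rim--Schlessinger condition for $\Gamma$ to showing that the object of $\scF$ produced by (RS) for $\scF$ again lies in $\Gamma$, then produce a morphism $x_0 \to x_{12}$ using the lifting hypothesis. The execution of the last step, however, is more circuitous than it needs to be. You pick an \emph{arbitrary} isomorphism $v_2 \colon x_0|_{R_2} \to x_{12}|_{R_2}$, compute the discrepancy automorphism $\theta_0$ of $x_{12}|_{R_0}$, and then need surjectivity of $\Aut_{R_2}(x_{12}|_{R_2}) \to \Aut_{R_0}(x_{12}|_{R_0})$ to correct $v_2$, which you deduce from the stated hypothesis by a torsor argument. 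That torsor argument is exactly the content of the remark immediately following the lemma in the paper (the equivalence of the $\Hom(x_0,-)$-surjectivity hypothesis with surjectivity of automorphism groups), so you are effectively proving that remark as an intermediate step. The paper's proof avoids this: it uses the hypothesis \emph{as stated}, lifting the composite $x_0 \xrightarrow{\tau_1} x_1 \to x$ directly along $x_2 \to x$ to get $\tau_2 \colon x_0 \to x_2$, and then $(\tau_1,\tau_2)$ immediately yields the required morphism into the fiber product. You could shorten your own argument the same way: rather than choosing $v_2$ arbitrarily and correcting it, apply the hypothesis to the morphism $x_{12}|_{R_2} \to x_{12}|_{R_0}$ (a morphism in $\Gamma$ over the surjection $R_2 \twoheadrightarrow R_0$) to lift the morphism $x_0 \to x_{12}|_{R_0}$ corresponding to $u_1|_{R_0}$ directly to a morphism $x_0 \to x_{12}|_{R_2}$, which gives $u_2$ with $u_2|_{R_0} = u_1|_{R_0}$ in one step, with no torsor argument and no discrepancy automorphism needed.
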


\begin{proof}
	Suppose we are given a diagram
	\[ \begin{tikzcd} & x_2 \ar{d} \\ x_1 \ar{r} & x \end{tikzcd} \]
	in $\Gamma$ where $x_2 \to x$ lies over a surjective homomorphism in $\Art_K$. Since $\scF$ is a deformation category, we can form the fiber product $x_1 \times_x x_2$ in $\scF$. Let us show that this fiber product lies in $\Gamma$. 
	
	Since $x_1$ is in $\Gamma$, there is a morphism $\tau_1 : x_0 \to x_1$ in $\scF$. Composing with the map $x_1 \to x$ gives us a map $x_0 \to x$, which, by our hypotheses, we can lift to a morphism $\tau_2 : x_0 \to x_2$ such that the following diagram commutes. 
	\[ \begin{tikzcd} x_0 \ar[bend left]{drr}{\tau_2} \ar[bend right]{ddr}[swap]{\tau_1} \\ & x_1 \times_x x_2 \ar{d} \ar{r} & x_2 \ar{d} \\ & x_1 \ar{r} & x \end{tikzcd} \]
	Thus $\tau = (\tau_1, \tau_2)$ defines a morphism $x_0 \to x_1 \times_x x_2$, proving that $x_1 \times_x x_2$ is in $\Gamma$. 
\end{proof}

\begin{remark}
	Observe that the hypothesis of the lemma is equivalent to the following: for every surjective homomorphism $R' \to R$ in $\Art_K$, there exist $x' \in \Gamma(R')$ and $x \in \Gamma(R)$ such that, for every morphism $x' \to x$, the induced group homomorphism $\Aut_{R'}(x') \to \Aut_R(x)$ is surjective. 
\end{remark}

\subsection{Obstruction theory}

We also need some basic definitions about obstruction theory. 

\begin{definition}
	Suppose $\scF$ is a deformation category. An \emph{obstruction space} for $\scF$ is a vector space $V$ over $K$ equipped with a collection of \emph{obstruction maps} $o(\pi, -) : \overline{\scF}(R) \to \ker(\pi) \otimes_K V$ for every small extension $\pi : R' \to R$ in $\Art_K$ \tag{06GD}, subject to the following conditions.
	\begin{enumerate}[label=(O\arabic{*})]
		\item Suppose we have a commutative diagram
		\[ \begin{tikzcd} R_2' \ar{r}{\pi_2} \ar{d}[swap]{\alpha'} & R_2 \ar{d}{\alpha} \\ R'_1 \ar{r}[swap]{\pi_1} & R_1 \end{tikzcd} \]
		in $\Art_K$ with $\pi_1$ and $\pi_2$ small extensions. Then for every $x \in \scF(R_2)$,
		\[ o(\pi_1, \alpha(x)) = (\alpha' \otimes 1_V)(o(\pi_2, x)). \]
		\item Suppose $\pi : R' \to R$ is a small extension and $x \in \scF(R)$. There exists a $x' \in \scF(R')$ and a morphism $x' \to x$ lying over $\pi$ if and only if $o(\pi, x) = 0$.
	\end{enumerate}
\end{definition}

\begin{lemma} \label{obstruction spaces exist}
	Any deformation category $\scF$ has an obstruction space.
\end{lemma}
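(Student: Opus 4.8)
This is a standard fact (see \cite{stacks}, and the classical references \cite{schlessinger,rim}); here is how I would establish it. The plan is to exhibit a canonical obstruction space attached to $\scF$. Since $\scF$ is a deformation category it satisfies the Rim--Schlessinger condition \tag{06J2}, hence axioms (S1) and (S2) \tag{06J7}. The structural input I would use is the resulting description of lifts along a small extension $\pi\colon R'\to R$: for $x\in\scF(R)$, the groupoid whose objects are pairs consisting of a lift $x'\in\scF(R')$ together with an isomorphism between the restriction of $x'$ and $x$ has, when nonempty, its set of isomorphism classes a torsor under $T(\scF)\otimes_K\ker(\pi)$. In particular, whether $x$ lifts along $\pi$ depends only on the class of $x$ in $\overline{\scF}(R)$, and --- this is the crucial point --- this property is compatible with morphisms of small extensions and with the Baer sum of small extensions of a fixed $R$, both compatibilities being instances of the fact that $\scF$ carries fiber products of artinian algebras to $2$-fiber products of groupoids.

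Every small extension is a finite composition of small extensions whose kernel is one-dimensional over $K$, so by (O1) it suffices to construct the obstruction maps for the latter. I would then take $V$ to be the $K$-vector space freely generated by symbols $(\pi,x)$ --- with $\pi$ a small extension in $\Art_K$ with one-dimensional kernel and $x\in\overline{\scF}(R_\pi)$ --- modulo the relations that (i) set $(\pi,x)=0$ whenever $x$ lifts along $\pi$, (ii) impose naturality along morphisms of such extensions, and (iii) make $(\pi,x)$ additive in $\pi$ under the Baer sum; the obstruction class $o(\pi,x)\in\ker(\pi)\otimes_K V$ is the tautological image of the symbol $(\pi,x)$ (for one-dimensional $\ker(\pi)$ the target is identified with $V$, and the symbol records which line $\ker(\pi)$ is). For a general small extension, define $o(\pi,x)$ by (O1)-transport through a chosen filtration of $\ker(\pi)$. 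With this setup, (O2) in the direction ``$x$ lifts $\Rightarrow o(\pi,x)=0$'' holds by construction, and (O1) holds for this particular $V$ essentially because of relation (ii).

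The crux --- and the place where the Rim--Schlessinger condition genuinely enters --- is checking that the relations are not \emph{too many}, i.e.\ that the converse half of (O2) survives: $o(\pi,x)=0$ must really force $x$ to lift. Concretely, one must verify that the lifting problem is itself additive and natural in exactly the sense encoded by relations (ii) and (iii); the key instance is that $x$ lifts along the Baer sum $\pi_1+\pi_2$ as soon as it lifts along $\pi_1$ and along $\pi_2$, which follows from $\scF(R_1'\times_R R_2')\simeq\scF(R_1')\times_{\scF(R)}\scF(R_2')$ together with the construction of $\pi_1+\pi_2$ by pullback and pushout, and similarly for the naturality relations. I expect this consistency check --- together with the routine verification that the definition for a general small extension is independent of the chosen filtration of its kernel --- to be the main obstacle; everything else is formal, and there is no harm in $V$ being infinite-dimensional, since the definition of an obstruction space imposes no finiteness.
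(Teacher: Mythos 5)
The paper's proof is a translation-plus-citation: it checks (via the Rim--Schlessinger condition and the argument of \tag{06I0}) that $\overline{\scF}$ is a ``Gdot functor'' in the sense of Fantechi--Manetti, and then invokes their theorems asserting the existence of a complete linear obstruction theory. Your proposal is, in effect, a from-scratch reconstruction of the result being cited --- namely the construction of the universal obstruction space $O_{\scF}$ of \cref{universal obstruction} as a free $K$-vector space on symbols $(\pi,x)$ modulo the natural relations --- so the underlying approach is the same; the difference is that the paper outsources the hard part and you attempt it.

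That hard part is exactly the step you flag as ``the crux,'' and as written there is a genuine gap there. Establishing that $o(\pi,x)=0$ in $V$ forces $x$ to lift is not a matter of checking a few local moves. You propose to verify that the lifting problem is itself additive and natural, with the ``key instance'' that $x$ lifts along a Baer sum $\pi_1+\pi_2$ whenever it lifts along $\pi_1$ and along $\pi_2$; but that only confirms one direction of one type of relation. What must actually be ruled out is that a symbol $(\pi,x)$ with non-liftable $x$ becomes a $K$-linear combination of relations in the free vector space after arbitrary cancellations spread across many different small extensions and many different objects --- a global statement about the span of the relations, not reducible to case-by-case closure under single moves. This is precisely the content of Fantechi--Manetti's completeness theorem (their theorem~3.2), and its proof is substantive. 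The rest of your outline --- factoring small extensions into ones with one-dimensional kernel, defining $o$ for a general small extension by transport along a filtration and checking independence of the filtration, (O1), and the easy half of (O2) --- is correct or routinely fillable.
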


\begin{proof}
	An obstruction space for $\scF$ as we have defined it is the same as a ``complete linear obstruction theory'' for the decategorified functor $\overline{\scF}$ in the sense of \cite[definitions 3.1, 4.1, and 4.7]{fantechi}. 
	
	Note that the natural map
	\[ \begin{tikzcd} \overline{\scF}(R \times_K R') \ar{r} & \overline{\scF}(R) \times \overline{\scF}(R') \end{tikzcd} \]
	is bijective for all $R, R' \in \Art_K$. This is a consequence of the Rim-Schlessinger condition; the proof is identical to the one in \tag{06I0}. By \cite[lemma 2.11]{fantechi}, it follows that $\scF$ is a ``Gdot functor'' in the sense of \cite[definition 2.10]{fantechi}. Combining \cite[theorem 3.2, corollary 4.4, and theorem 6.11]{fantechi}, we are done. 
\end{proof}

\begin{definition} \label{definition:compatible}
	Suppose that $\phi : \scF \to \textsc{G}$ is a morphism of deformation categories, that $V$ is an obstruction space for $\scF$ and $W$ is an obstruction space for $\textsc{G}$. A $K$-linear map $\gamma : V \to W$ is \emph{compatible} with $\phi$ if $o(\pi, \phi(x)) = (1_{\ker(\pi)} \otimes \gamma)(o(\pi, x))$ for all small extensions $\pi : R' \to R$ and all $x \in \scF(R)$.
\end{definition}

\begin{lemma}[Standard smoothness criterion] \label{result:standard smoothness criterion}
	Suppose $\phi : \scF \to \textsc{G}$ is a morphism of deformation categories such that
	\begin{enumerate}[label=(\roman{*})]
		\item $T(\scF) \to T(\textsc{G})$ is surjective, and
		\item there exists an injective compatible homomorphism $\gamma : V \to W$ of obstruction spaces, where $V$ is an obstruction space for $\scF$ and $W$ for $\textsc{G}$.
	\end{enumerate}
	Then $\phi$ is smooth \tag{06HG}.
\end{lemma}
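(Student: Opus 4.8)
The plan is to verify the characterization of smoothness for morphisms of deformation categories given in \tag{06HG}, which (given that $\scF$ and $\textsc{G}$ satisfy the Rim--Schlessinger condition) amounts to checking the following lifting property: for every small extension $\pi : R' \to R$ in $\Art_K$, every object $x' \in \scF(R)$, every $y' \in \textsc{G}(R')$, and every morphism $\phi(x') \to y'$ in $\textsc{G}$ lying over $\pi$ (together, of course, with the implicit compatible data $\phi$ carries), there exists $x'' \in \scF(R')$ with a morphism $x'' \to x'$ over $\pi$ and an identification of $\phi(x'')$ with $y'$ compatible with the given morphism. Equivalently, by the standard dictionary between smoothness and surjectivity of relative deformation/obstruction sequences, it suffices to check that for every small extension $\pi$ the natural map from the set of lifts of $x'$ along $\pi$ in $\scF$ to the set of lifts of $\phi(x')$ along $\pi$ in $\textsc{G}$ (compatible with the given $y'$-data) is surjective.

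First I would reduce to the case where the target lift $y'$ exists — i.e., where $o(\pi, \phi(x')) = 0$ in $\textsc{G}$. By compatibility of $\gamma$ (\cref{definition:compatible}) we have $o(\pi, \phi(x')) = (1_{\ker \pi} \otimes \gamma)(o(\pi, x'))$, and since $\gamma$ is injective and $\ker\pi$ is a one-dimensional $K$-vector space (as $\pi$ is a small extension), the map $1_{\ker\pi}\otimes\gamma$ is injective; hence $o(\pi, \phi(x')) = 0$ forces $o(\pi, x') = 0$, so $x'$ admits \emph{some} lift $x''$ in $\scF$ by axiom (O2). This handles existence of a lift. What remains is to arrange the lift so that its image under $\phi$ matches the prescribed $y'$, and here I would invoke the standard principle that the set of lifts of $x'$ (resp. $\phi(x')$) along a small extension $\pi$, when nonempty, is a torsor under $\ker(\pi)\otimes_K T(\scF)$ (resp. $\ker(\pi)\otimes_K T(\textsc{G})$); this is \tag{06I8} and its analogues. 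The map $\phi$ on lift-torsors is equivariant along the map $\ker(\pi)\otimes T(\scF) \to \ker(\pi)\otimes T(\textsc{G})$ induced by $T(\scF)\to T(\textsc{G})$, which is surjective by hypothesis (i) (tensoring with the line $\ker\pi$ preserves surjectivity). A torsor map that is equivariant along a surjective group homomorphism is itself surjective once the target is nonempty; therefore, given $x''$ and the prescribed $y'$, there is an element of $\ker(\pi)\otimes T(\scF)$ translating $\phi(x'')$ to $y'$, and translating $x''$ by that element produces the desired lift. This verifies the lifting criterion and hence smoothness.

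The main obstacle I anticipate is purely bookkeeping rather than conceptual: one must be careful that the torsor structures on lifts of $x'$ and of $\phi(x')$ are genuinely compatible under $\phi$ — that is, that $\phi$ intertwines the action of $\ker(\pi)\otimes T(\scF)$ on $\scF$-lifts with the action of $\ker(\pi)\otimes T(\textsc{G})$ on $\textsc{G}$-lifts via the differential $T(\phi)$. This compatibility is built into the construction of the torsor structure from the Rim--Schlessinger/(S2) axioms, but tracing it through the 2-categorical fiber-product formalism of \tag{06L4} and the definition of $T$ in \tag{06I1} requires some care; I would likely cite the relevant Stacks Project lemmas (around \tag{06I8}) rather than reprove the torsor compatibility from scratch. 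A secondary subtlety is that, strictly speaking, \tag{06HG} is a statement about lifting along \emph{all} surjections in $\Art_K$, not just small extensions; but every surjection in $\Art_K$ factors as a composition of small extensions, and smoothness is stable under such compositions, so the reduction to small extensions is harmless.
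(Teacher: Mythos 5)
Your proof is correct and follows essentially the same route as the paper's (which is a rephrasing of Manetti's argument): use the injective compatible homomorphism of obstruction spaces to deduce from (O2) that $x$ admits \emph{some} lift, then use the free and transitive action of $\ker(\pi)\otimes T(-)$ on lifts together with surjectivity of $T(\scF)\to T(\textsc{G})$ to translate that lift so its image under $\phi$ matches the prescribed target. One small slip: the morphism witnessing the smoothness test should run $y'\to\phi(x')$ (from the fiber over $R'$ to the fiber over $R$), not $\phi(x')\to y'$, since these are opfibrations in groupoids; this does not affect the substance of your argument.
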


\begin{proof}
	This is simply a rephrasing of \cite[proposition 2.17]{manetti_basics} in the setting of deformation categories. Suppose $\pi : R' \to R$ is a small extension, $y \in \textsc{G}(R')$, $x \in \scF(R)$ and $y \to \phi(x)$ is a morphism in $\textsc{G}$ lying over $\pi$. Then
	\[ 0 = o(\pi, \phi(x)) = (1_{\ker(\pi)} \otimes \gamma)(o(\pi, x)) \]
	so injectivity of $\gamma$ implies that $o(\pi, x) = 0$. Thus condition (O2) guarantees that there exists a morphism $x'_0 \to x$ lying over $\pi$.
	
	Observe that the morphisms $\phi(x'_0) \to \phi(x)$ and $y \to \phi(x)$ in $\textsc{G}$ lying over $\pi$ both define elements in the set $\Lift(\phi(x), \pi)$ \tag{06JE}. There is a free and transitive action of $T(\textsc{G}) \otimes_K \ker(\pi) = T(\textsc{G})$ on $\Lift(\phi(x), \pi)$ \tag{06JI}, so there exists $w \in T(\textsc{G})$ such that $(\phi(x'_0) \to \phi(x)) \cdot w = (y \to x)$. Since $T(\scF) \to T(\textsc{G})$ is surjective, we lift $w$ to some $v \in T(\scF)$ and then define $x' \to x$ to be a representative of the isomorphism class $(x'_0 \to x) \cdot v$. The functoriality of the action \tag{06JJ} guarantees that the isomorphism classes of $\phi(x') \to \phi(x)$ and $y \to \phi(x)$ in $\Lift(\phi(x), \pi)$ are equal. In other words, there exists an morphism $\phi(x') \to y$ in $\textsc{G}$ lying over $R$ and making the following diagram commute.
	\[ \begin{tikzcd} \phi(x') \ar{r} \ar[bend right]{dr} & y \ar{d}  \\ & \phi(x)  \end{tikzcd} \]
	This proves that $\phi$ is smooth \tag{06HH}.
\end{proof}

\begin{remark} \label{universal obstruction}
	For a deformation category $F$, consider the category whose objects are obstruction spaces for $F$ and whose morphisms are linear maps amongst obstruction spaces that are compatible with the identity on $F$ in the sense of \cref{definition:compatible} above. This category has an initial object $O_F$ and, for any obstruction space $V$, the unique map $O_F \to V$ is injective \cite[theorems 3.2 and 6.6]{fantechi}.  
\end{remark}

\subsection{Quotients by group actions}

\begin{definition}
	Suppose $F : \Art_K \to \Set$ and $G : \Art_K \to \Grp$ are deformation functors (in the sense of \tag{06JA}, which means in particular that they satisfy Schlessinger's homogeneity axiom (H4) \cite[page 213]{schlessinger}). Suppose further that $G$ acts on $F$. Let $[F/G]$ be the category whose objects are pairs $(R, x)$ where $R \in \Art_K$ and $x \in F(R)$, and whose morphisms $(R',x') \to (R,x)$ are pairs $(\pi, g)$ where $\pi : R' \to R$ is a homomorphism in $\Art_K$, $g \in G(R)$ and $g \cdot \pi(x') = x$, where $\pi(x')$ denotes the image of $x'$ under the map $F(\pi) : F(R') \to F(R)$. Composition is defined using the group operation on $G$. Specifically, given another morphism $(\pi', g') : (R'', x'') \to (R', x')$, we define 
	\[ (\pi, g) \circ (\pi', g') = (\pi \circ \pi', g \cdot \pi(g')). \]
	Evidently the functor $(R, x) \mapsto R$ presents $[F/G]$ as an opfibration in groupoids over $\Art_K$. 
\end{definition}

\begin{lemma} \label{quotient is deformation category}
	$[F/G]$ is a deformation category.
\end{lemma}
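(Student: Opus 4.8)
The plan is to check directly that $[F/G]$ is a predeformation category satisfying the Rim--Schlessinger condition (RS) of \tag{06J2}. We have already observed that $[F/G]$ is an opfibration in groupoids over $\Art_K$. Since $F$ is a deformation functor, $F(K)$ is a one-element set, and since $G$ is a deformation functor, $G(K)$ is the trivial group; hence $[F/G](K)$ has a single object and only the identity morphism, so $[F/G]$ is a predeformation category. It remains to verify (RS).

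Fix a diagram $R_1 \to R \leftarrow R_2$ in $\Art_K$ with $R_2 \to R$ surjective, and set $R' = R_1 \times_R R_2$. I would begin by unwinding the relevant $2$-fiber product of groupoids: an object of $[F/G](R_1) \times_{[F/G](R)} [F/G](R_2)$ is a triple $(x_1, x_2, g)$ with $x_i \in F(R_i)$ and $g \in G(R)$ satisfying $g \cdot \bar{x}_1 = \bar{x}_2$, where $\bar{x}_i \in F(R)$ denotes the image of $x_i$; and a morphism $(x_1, x_2, g) \to (y_1, y_2, g')$ is a pair $(h_1, h_2) \in G(R_1) \times G(R_2)$ with $h_i \cdot x_i = y_i$ and $g' \cdot \bar{h}_1 = \bar{h}_2 \cdot g$ in $G(R)$, the last relation being dictated by the composition law in $[F/G]$. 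The comparison functor $c$ sends an object $x' \in F(R')$ of $[F/G](R')$ to the triple $(x'|_{R_1}, x'|_{R_2}, 1)$ and a morphism $h \in G(R')$ to the pair $(h|_{R_1}, h|_{R_2})$; one must show $c$ is an equivalence of categories.

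For essential surjectivity, given $(x_1, x_2, g)$ I would use that $G(R_2) \to G(R)$ is surjective --- a standard property of deformation functors, since $R_2 \to R$ is surjective --- to choose a preimage $\tilde{g} \in G(R_2)$ of $g$; then $(1, \tilde{g}^{-1})$ is an isomorphism $(x_1, x_2, g) \xrightarrow{\sim} (x_1, \tilde{g}^{-1}\cdot x_2, 1)$ in the $2$-fiber product, and since $\bar{x}_1 = \overline{\tilde{g}^{-1}\cdot x_2}$ in $F(R)$ the pair $(x_1, \tilde{g}^{-1}\cdot x_2)$ represents an element of $F(R_1)\times_{F(R)} F(R_2)$; because $F$ is a deformation functor the canonical map $F(R') \to F(R_1)\times_{F(R)} F(R_2)$ is bijective, so this element lifts to some $x' \in F(R')$ with $c(x') \cong (x_1,x_2,g)$. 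For full faithfulness, a morphism $x' \to x''$ in $[F/G](R')$ is an $h \in G(R')$ with $h\cdot x' = x''$, while a morphism $c(x') \to c(x'')$ is a pair $(h_1,h_2) \in G(R_1)\times G(R_2)$ with $h_i\cdot x'|_{R_i} = x''|_{R_i}$ and $\bar{h}_1 = \bar{h}_2$ in $G(R)$, i.e.\ an element of $G(R_1)\times_{G(R)} G(R_2) \cong G(R')$ (now using the Rim--Schlessinger bijectivity for $G$); the resulting $h \in G(R')$ satisfies $h\cdot x' = x''$ because both sides have the same image in $F(R_1)\times_{F(R)} F(R_2)$, on which $F(R')$ injects. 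Hence $c$ is an equivalence and (RS) holds.

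The only genuinely fiddly part --- and the step to watch --- is the bookkeeping with the gluing elements: one must read off the compatibility relation $g'\cdot\bar{h}_1 = \bar{h}_2\cdot g$ correctly from the definition of composition in $[F/G]$, and check that the reduction to trivial gluing used in the essential-surjectivity step is consistent with it. Once this is in place, both halves of the equivalence are immediate consequences of the Rim--Schlessinger bijectivity for $F$ and for $G$.
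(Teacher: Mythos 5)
Your proof is correct and follows essentially the same route as the paper: both establish the Rim--Schlessinger condition by combining smoothness of $G$ (so that group elements lift along surjections) with the Rim--Schlessinger bijections for $F$ and for $G$. The only cosmetic difference is that you phrase the check as ``the comparison functor to the $2$-fiber product of groupoids is an equivalence,'' whereas the paper constructs the cartesian square in $[F/G]$ directly over $R_1\times_R R_2$; these are equivalent formulations of the same condition, and the underlying manipulations with the gluing elements are identical.
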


\begin{proof}
	Since $F$ and $G$ are both deformation functors, $F(K)$ is a singleton set and $G(K)$ is the trivial group. Thus $[F/G](K)$ has just one object and no nontrivial morphisms, so $[F/G]$ is a predeformation category \tag{06GS}. We need to verify the Rim-Schlessinger condition \tag{06J2}. This is straightforward once we recall that $G$ must be smooth \cite[theorem 7.19]{fantechi}. Suppose we have a diagram as follows in $[F/G]$, where $\pi_2 : R_2 \to R$ is surjective.
	\[ \begin{tikzcd} & (R_2, x_2) \ar{d}{(\pi_2, g_2)} \\ (R_1, x_1) \ar{r}[swap]{(\pi_1, g_1)} & (R, x) \end{tikzcd} \]
	Since $G$ is smooth, there exist $\tilde{g}_1, \tilde{g}_2 \in G(R_2)$ lifting $g_1, g_2 \in G(R)$ respectively. Let $S = R_1 \times_R R_2$. Since $F$ is a deformation category, we know that
	\[ \begin{tikzcd} F(S) \ar{r} & F(R_1) \times_{F(R)} F(R_2) \end{tikzcd} \]
	is bijective. The pair $(x_1, \tilde{g}_1^{-1} \tilde{g}_2 \cdot x_2)$ is an element of $F(R_1) \times_{F(R)} F(R_2)$, so let $y$ be the corresponding element of $F(S)$. Then we have a commutative diagram as follows, where $\rho_i : S \to R_i$ are the canonical maps in $\Art_K$.
	\[ \begin{tikzcd} (S, y) \ar{rr}{(\rho_2, \tilde{g}_2^{-1} \tilde{g}_1)} \ar{d}[swap]{(\rho_1, 1)} & & (R_2, x_2) \ar{d}{(\pi_2, g_2)} \\ (R_1, x_1) \ar{rr}[swap]{(\pi_1, g_1)} & & (R, x) \end{tikzcd} \]
	To check that this square is cartesian, suppose we have a diagram of solid arrows as follows.
	\[ \begin{tikzcd} (T, z) \ar[bend left]{drrr}{(\tau_2, h_2)} \ar[bend right]{ddr}[swap]{(\tau_1,  h_1)} \ar[dotted]{dr} \\
	& (S, y) \ar{rr}{(\rho_2, \tilde{g}_2^{-1} \tilde{g}_1)} \ar{d}[swap]{(\rho_1, 1)} & & (R_2, x_2) \ar{d}{(\pi_2, g_2)} \\
	& (R_1, x_1) \ar{rr}[swap]{(\pi_1, g_1)} & & (R, x) \end{tikzcd} \]
	Since $S = R_1 \times_R R_2$ in $\Art_K$, there exists $\tau :  T \to S$ such that $\tau \circ \rho_i = \tau_i$ for $i = 1, 2$. The commutativity of the large square tells us that $g_1 \pi_1(h_1) = g_2 \pi_2(h_2)$, so 
	\[ (h_1, \tilde{g}_1^{-1} \tilde{g}_2 h_2) \in G(R_1) \times_{G(R)} G(R_2). \]
	Since $G$ is a deformation functor, we let $h \in G(S)$ be the corresponding element under the bijection
	\[ \begin{tikzcd} G(S) \ar{r} & G(R_1) \times_{G(R)} G(R_2). \end{tikzcd} \]
	It is then easily verified that $(\tau, h)$ defines the desired dotted arrow $(T, z) \to (S, y)$.
\end{proof}

\begin{lemma} \label{smoothness of projection to quotient}
	The natural morphism $F \to [F/G]$ is smooth. Moreover, every obstruction space $V$ for $F$ is canonically an obstruction space for $[F/G]$ in such a way that the identity map on $V$ is compatible with $F \to [F/G]$. 
\end{lemma}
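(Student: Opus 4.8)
The natural morphism $q\colon F\to [F/G]$ sends an object $x\in F(R)$ to the pair $(R,x)$, and sends the (unique) morphism of $F$ lying over a homomorphism $\pi\colon R'\to R$ with $\pi(x')=x$ to the morphism $(\pi,1)\colon(R',x')\to(R,x)$. The plan is to verify smoothness of $q$ directly from the lifting criterion \tag{06HH}, then to equip $V$ with the obvious obstruction maps and check the axioms.

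For smoothness, suppose $\pi\colon R'\to R$ is a small extension, $y'\in[F/G](R')$, $x\in F(R)$, and $y'\to q(x)$ is a morphism in $[F/G]$ lying over $\pi$. Write $y'=(R',z')$ with $z'\in F(R')$; then the given morphism is $(\pi,g)$ for some $g\in G(R)$ with $g\cdot\pi(z')=x$. Since $G$ is a smooth deformation functor (the input already used in the proof of \cref{quotient is deformation category}), lift $g$ to $\tilde g\in G(R')$ and set $x':=\tilde g\cdot z'\in F(R')$. Then $\pi(x')=g\cdot\pi(z')=x$, so there is a morphism $x'\to x$ in $F$ over $\pi$; moreover $(\mathrm{id}_{R'},\tilde g^{-1})\colon q(x')\to y'$ is a morphism over $R'$ whose composite with $(\pi,g)$ is $(\pi,g\cdot\pi(\tilde g^{-1}))=(\pi,1)=q(x'\to x)$. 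By \tag{06HH}, $q$ is smooth.

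For the obstruction space, declare $o_{[F/G]}(\pi,\bar x):=o_F(\pi,x)$ for any representative $x\in F(R)$ of $\bar x\in\overline{[F/G]}(R)=F(R)/G(R)$. Granting that this is independent of the representative (see the next paragraph), axiom (O1) is inherited verbatim from $F$, and (O2) holds because $\bar x$ lifts to $\overline{[F/G]}(R')$ if and only if some representative $x$ lifts to $F(R')$---here the nontrivial direction uses that $G(R')\to G(R)$ is surjective, again by smoothness of $G$---if and only if $o_F(\pi,x)=0$. Finally $o_{[F/G]}(\pi,q(x))=o_F(\pi,x)$ by construction, so $\mathrm{id}_V$ is compatible with $q$ in the sense of \cref{definition:compatible}.

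The one substantive point is the well-definedness just invoked, namely that $o_F(\pi,g\cdot x)=o_F(\pi,x)$ for every $g\in G(R)$; this I expect to be the main obstacle. I would handle it as follows. The action $a\colon G\times F\to F$ and the projection $\mathrm{pr}_2\colon G\times F\to F$ are both morphisms of deformation functors, and both are smooth---for $a$ one argues as for $q$ above, lifting $g$ and translating. Hence $a^{*}V$ and $\mathrm{pr}_2^{*}V$ are obstruction spaces for $G\times F$ with the same underlying vector space $V$, and the asserted identity is exactly the equality of the two associated obstruction maps evaluated at $(g,x)$. One compares these two structures through the universal obstruction space of \cref{universal obstruction}: smoothness of $G$ forces its universal obstruction space to vanish, which lets one see that the section $\iota\colon F\to G\times F$, $x\mapsto(e,x)$, induces an isomorphism on universal obstruction spaces, and since $a\circ\iota=\mathrm{pr}_2\circ\iota=\mathrm{id}_F$ the two pullback structures must coincide. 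Alternatively, and more by hand: since $G(R)$ is pro-nilpotent, filtered by the subgroups $\ker(G(R)\to G(R/\mathfrak m_R^{i}))$ with vector-group quotients, and since the set of $g$ satisfying the identity is a subgroup, it suffices to treat $g$ in a single filtration step; there $g\cdot x$ and $x$ are two lifts of a common object along a small extension, differing by a tangent vector pulled back from $T(G)$, and one checks, applying axiom (O1) to the two projections $R\times_{R/\mathfrak m_R^{i}}R\rightrightarrows R$ (both small extensions, interchanged by an automorphism acting trivially on their kernels), that $o_F(\pi,-)$ is unaffected.
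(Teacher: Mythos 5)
Your smoothness argument is correct and self-contained: where the paper cites \cite[proposition 7.5]{fantechi} and then applies \tag{06HM}, you instead verify the lifting criterion \tag{06HH} directly by lifting $g$ through the smooth functor $G$ and translating. The obstruction-space half, however, has a genuine gap exactly where you anticipated one. Declaring $o_{[F/G]}(\pi,\bar x) := o_F(\pi,x)$ requires $o_F(\pi, g\cdot x) = o_F(\pi, x)$ for every $g\in G(R)$, and neither sketch you offer establishes this. In the first, the assertion that the section $\iota\colon F\to G\times F$, $x\mapsto(e,x)$, induces an isomorphism on universal obstruction spaces is unsupported: $\iota$ is not a smooth morphism (a lift $(g',y')$ of $(e,x)$ along a small extension need not have $g'=e$), and the natural map on universal obstruction spaces associated to a morphism of deformation functors is only available for smooth morphisms, since its well-definedness requires non-liftability to be preserved. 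The smooth maps $a$ and $\mathrm{pr}_2$ do yield two injections $O_{G\times F}\hookrightarrow V$, but whether those injections coincide is precisely the question, and composing with the non-smooth $\iota$ cannot decide it. In the second sketch, the two projections $R\times_{R/\m_R^i}R \rightrightarrows R$ have kernel isomorphic to $\m_R^i$, which is not one-dimensional in general, so they are not small extensions and the proposed (O1) argument does not apply as stated.

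The paper discharges exactly this point by invoking \cite[proposition 7.5]{fantechi}, which asserts that the natural map $O_F\to O_{[F/G]}$ is an isomorphism; composing its inverse with the injection $O_F\hookrightarrow V$ then makes $V$ an obstruction space for $[F/G]$, and compatibility of $\mathrm{id}_V$ is immediate. You have effectively reconstructed everything around that isomorphism except the isomorphism itself, which is the nontrivial input; either cite it or supply a genuine proof that the universal obstruction map on $F$ is $G$-invariant.
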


\begin{proof}
	Observe that $G$ is smooth \cite[theorem 7.19]{fantechi} and $\overline{[F/G]}$ coincides with the functor $\Art_K \to \Set$ that is denoted $F/G$ in \cite[page 570]{fantechi}, so $F \to \overline{[F/G]}$ is smooth \cite[proposition 7.5]{fantechi}. 
	\[ \begin{tikzcd} F \ar{r} \ar{dr} & \left[F/G\right] \ar{d} \\ & \overline{[F/G]} \end{tikzcd} \]
	Since $F \to [F/G]$ is essentially surjective and $[F/G] \to \overline{[F/G]}$ is smooth \tag{06HK}, it follows that $F \to [F/G]$ is smooth \tag{06HM}. 
	
	If $V$ is an obstruction space for $F$, there is a natural injective linear map $O_F \hookrightarrow V$ as in \cref{universal obstruction}. The natural map $O_F \to O_{[F/G]}$ is an isomorphism \cite[proposition 7.5]{fantechi}, so composing its inverse with $O_F \hookrightarrow V$ yields an injective linear map $O_{[F/G]} \hookrightarrow V$. This map makes $V$ an obstruction space for $[F/G]$. 
\end{proof}

\begin{definition}
	Let $0$ denote the unique element of $F(K)$ and also its image in $F(R)$ under the map $F(K) \to F(R)$ for every $R \in \Art_K$. The \emph{stabilizer} of the action of $G$ on $F$, denoted $\Stab_G$, is the subfunctor of $G$ that associates to each $R \in \Art_K$ the subgroup
	\[ \Stab_{G(R)}(0) := \{ g \in G(R) : g \cdot 0 = 0 \}. \]
\end{definition}

\begin{lemma} \label{stabilizer of quotient}
	The stabilizer $\Stab_G$ is a deformation functor. 
\end{lemma}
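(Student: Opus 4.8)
The plan is to identify $\Stab_G$ with a 2-fiber product of deformation categories and then quote \tag{06L4}. To set this up, recall that an action of $G$ on $F$ is by definition a natural transformation $G \times F \to F$. I would introduce the terminal functor $\mathbf{1} \colon \Art_K \to \Set$, $R \mapsto \{\ast\}$, which is trivially a deformation functor, together with its ``base-point'' morphism $e \colon \mathbf{1} \to F$ sending $\ast$ to the canonical element $0 \in F(R)$; this $e$ is a morphism of deformation functors precisely because $0 \in F(R)$ is, by definition, the image of the unique element of $F(K)$ under the structural map. I would then define the \emph{orbit morphism} $a \colon G \to F$ of the underlying $\Set$-valued functors by $a_R(g) = g \cdot 0$, whose naturality is immediate from naturality of the action.

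With these morphisms in place, the tautology $\Stab_{G(R)}(0) = \{\, g \in G(R) : a_R(g) = e_R(\ast)\,\}$ exhibits $\Stab_G$, as a $\Set$-valued functor, as the fiber product $G \times_{F} \mathbf{1}$ formed along $a$ and $e$. Since $G$ (with its underlying set functor), $F$, and $\mathbf{1}$ are deformation functors, hence discrete deformation categories, \tag{06L4} shows that $G \times_F \mathbf{1}$ is again a deformation category; it is moreover discrete, hence given by a deformation functor $\Art_K \to \Set$. Finally, each $\Stab_{G(R)}(0)$ is a subgroup of $G(R)$, and these inclusions are compatible with the transition maps, so $\Stab_G$ refines to a functor $\Art_K \to \Grp$ whose underlying $\Set$-valued functor is a deformation functor; by the conventions recalled in the excerpt this makes $\Stab_G$ a deformation functor, axiom (H4) in particular being automatic since it is the instance of the Rim-Schlessinger condition with $R_1 = R_2$ the source of the given small extension.

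An equivalent and perhaps more idiomatic route, closer in spirit to the preceding lemmas, is to observe that $\Stab_G$ is nothing but the automorphism-group functor of the distinguished object of $[F/G]$ lying over $K$ together with its pullbacks, that is, the 2-fiber product $\mathbf{1} \times_{[F/G]} \mathbf{1}$ of the base point with itself; since $[F/G]$ is a deformation category by \cref{quotient is deformation category}, \tag{06L4} applies verbatim.

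I expect no genuine obstacle here, as the argument is entirely formal. The only points needing minor care are verifying that $a$ and $e$ are honestly natural transformations and that the 2-fiber product of discrete deformation categories computes the naive $\Set$-valued fiber product; one who prefers to avoid citing \tag{06L4} can instead check the Rim-Schlessinger condition for $G \times_F \mathbf{1}$ directly, in one line, from the corresponding property of each of $G$, $F$, and $\mathbf{1}$. It is worth noting that, unlike in \cref{quotient is deformation category}, smoothness of $G$ plays no role in this argument, and indeed $\Stab_G$ need not be smooth.
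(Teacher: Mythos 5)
Your proposal is correct but proceeds differently from the paper. The paper proves the Rim--Schlessinger condition for $\Stab_G$ directly: it writes down, for a diagram $R_1 \to R \leftarrow R_2$ with $R_2 \to R$ surjective, the three-row commutative ladder
\[
\begin{tikzcd}
\Stab_G(R_1 \times_R R_2) \ar{r} \ar[hookrightarrow]{d} & \Stab_G(R_1) \times_{\Stab_G(R)} \Stab_G(R_2) \ar[hookrightarrow]{d} \\
G(R_1 \times_R R_2) \ar{r}{\sim} \ar{d} & G(R_1) \times_{G(R)} G(R_2) \ar{d} \\
F(R_1 \times_R R_2) \ar{r}{\sim} & F(R_1) \times_{F(R)} F(R_2)
\end{tikzcd}
\]
(the vertical maps on the bottom being the same ``act on $0$'' morphism $a : G \to F$ that you introduce), and then extracts the top isomorphism by an explicit diagram chase. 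Your argument instead packages this chase abstractly: $\Stab_G \simeq G \times_F h_K$ along $a$ and the base-point map, and you invoke closure of deformation categories under 2-fiber products \tag{06L4}, together with the observation that the 2-fiber product of discrete (set-valued) deformation categories over a discrete base reduces to the naive set-theoretic fiber product. Both approaches rest on the same two ingredients --- the orbit morphism and the (RS) property of $F$ and $G$ --- but yours makes the fiber-product structure explicit, which is more in keeping with the categorified flavor of the rest of the section, while the paper's direct chase is more elementary and avoids having to justify why 2-fiber products restrict correctly to the discrete setting. Your remark that smoothness of $G$ is irrelevant here (in contrast to \cref{quotient is deformation category}) is correct and worth making. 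The parenthetical about (H4) being ``the instance of (RS) with $R_1 = R_2$'' is not quite what you want --- (H4) in Schlessinger's sense is the injectivity half of the bijection, for small extensions with a common target --- but this is a cosmetic slip that does not affect the argument, since the full (RS) condition is exactly what the fiber-product identification delivers.
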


\begin{proof}
	Observe that we have an ``act on 0'' map $G \to F$ that carries $g \in G(R)$ to $g \cdot 0 \in F(R)$ for all $R \in \Art_K$. Suppose we have homomorphisms
	\[ \begin{tikzcd}& R_2 \ar{d} \\ R_1 \ar{r} & R \end{tikzcd} \]
	in $\Art_K$ with $R_2  \to R$ surjective. We then obtain a commutative diagram as follows, where we write $S := \Stab_G$ to ease notation. 
	\[ \begin{tikzcd} S(R_1 \times_R R_2) \ar{r} \ar[hookrightarrow]{d} & S(R_1) \times_{S(R)} S(R_2) \ar[hookrightarrow]{d} \\ G(R_1 \times_R R_2) \ar{r}{\sim} \ar{d} & G(R_1) \times_{G(R)} G(R_2) \ar{d} \\ F(R_1 \times_R R_2) \ar{r}{\sim} & F(R_1) \times_{F(R)} F(R_2) \end{tikzcd}  \]
	The lower two horizontal arrows are isomorphisms since $F$ and $G$ are deformation functors. 
	
	It is now an elementary diagram chase to prove that the horizontal arrow on top is also an isomorphism. Indeed, it follows immediately from injectivity of $G(R_1 \times_R R_2) \to G(R_1) \times_{G(R)} G(R_2)$ that $S(R_1 \times_R R_2) \to S(R_1) \times_{S(R)} S(R_2)$ is injective. Now suppose we have $(g_1, g_2) \in S(R_1) \times_{S(R)} S(R_2)$. Then there exists $g \in G(R_1 \times_R R_2)$ which maps to $(g_1, g_2)$. Since $g_1 \in S(R_1)$ and $g_2 \in S(R_2)$, we know that the image of $(g_1, g_2)$ in $F(R_1) \times_{F(R)} F(R_2)$ is $(0, 0)$. Thus the image of $g$ in $F(R_1 \times_R R_2)$ is $0$. It follows that $g \in S(R_1 \times_R R_2)$, proving that $S$ is a deformation functor.  
\end{proof}

For any local $K$-algebra $P \in \Art_K$, we write $h_P$ to denote the functor $\Art_K \to \Set$ where $h_P(T)$ is the set of local $K$-algebra homomorphisms $P \to T$. In particular, when $P = K$, observe that $h_K(T)$ is a singleton set for all $T$. 

\begin{definition}
	We set $BG := [h_K/G]$, where $G$ acts trivially on $h_K$. 
\end{definition}

\begin{lemma} \label{factor the point}
	The ``pick out 0'' morphism $h_K \to [F/G]$ factors as
	\[ \begin{tikzcd} h_K \ar{r} & B\!\Stab_G \ar{r} & \left[F/G \right], \end{tikzcd} \]
	and $B\!\Stab_G \to [F/G]$ is fully faithful with essential image the residual gerbe of $[F/G]$. \qed
\end{lemma}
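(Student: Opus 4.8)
The statement is essentially a matter of unwinding the explicit description of $[F/G]$ as an opfibration in groupoids over $\Art_K$. The plan is as follows. First I would build the functor $B\!\Stab_G \to [F/G]$ by hand: an object $(R,*)$ of $B\!\Stab_G = [h_K/\Stab_G]$ (here $*$ is the unique element of $h_K(R)$) is sent to $(R, 0) \in [F/G]$, where $0 \in F(R)$ is the canonical base point, and a morphism $(R',*) \to (R,*)$, which is a pair $(\pi, g)$ with $\pi : R' \to R$ in $\Art_K$ and $g \in \Stab_G(R)$, is sent to the pair $(\pi, g)$ with $g$ now regarded in $G(R)$ via $\Stab_G(R) \hookrightarrow G(R)$. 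This is a legitimate morphism $(R',0) \to (R,0)$ of $[F/G]$ precisely because $F(\pi)(0) = 0$ (the base points over the various artinian algebras are compatible) and $g \cdot 0 = 0$ by the definition of $\Stab_G$; and it respects composition because the composition laws in $[h_K/\Stab_G]$ and $[F/G]$ are given by the identical formula $(\pi,g)\circ(\pi',g') = (\pi\pi', g\cdot\pi(g'))$ while $\Stab_G \hookrightarrow G$ is a morphism of functors. The claimed factorization is then immediate: postcomposing the canonical projection $h_K \to [h_K/\Stab_G] = B\!\Stab_G$ with this functor sends $(R,*)$ to $(R,0)$ and $\pi$ to $(\pi, 1)$, which is exactly the ``pick out $0$'' morphism $h_K \to F \to [F/G]$.

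For full faithfulness, fix objects over $R'$ and $R$ in $B\!\Stab_G$; a morphism in $[F/G]$ between their images $(R',0)$ and $(R,0)$ is a pair $(\pi, g)$ with $\pi : R' \to R$ and $g \in G(R)$ satisfying $g \cdot F(\pi)(0) = g \cdot 0 = 0$, that is, satisfying $g \in \Stab_{G(R)}(0) = \Stab_G(R)$, which is exactly the datum of a morphism in $B\!\Stab_G$; and the resulting bijection is manifest. For the essential image I would compute both sides directly. By definition the residual gerbe of $[F/G]$ is the full subcategory spanned by those $(R,x)$ admitting a morphism from $(K,0)$, and such a morphism is a pair $(\iota_R, g)$ with $\iota_R : K \to R$ the structure map and $x = g\cdot F(\iota_R)(0) = g\cdot 0$; so the residual gerbe consists of the $(R,x)$ with $x$ in the $G(R)$-orbit of $0$. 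On the other hand $(R,x)$ lies in the essential image iff it is isomorphic in $[F/G]$ to some $(R',0)$. One inclusion is clear, since $(\mathrm{id}_R, g^{-1}) : (R, g\cdot 0) \to (R,0)$ is an isomorphism; conversely, since isomorphisms in an opfibration in groupoids lie over isomorphisms, such an isomorphism forces $R \cong R'$ and, after transporting along the $G$-equivariance of $F$ with respect to the resulting isomorphism in $\Art_K$, again puts $x$ in the $G(R)$-orbit of $0$. Hence the essential image and the residual gerbe are the same full subcategory of $[F/G]$.

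There is no single hard step here, as the proof is bookkeeping, but two places warrant care: checking that $B\!\Stab_G \to [F/G]$ genuinely respects composition (so that it is a $1$-morphism of opfibrations over $\Art_K$, not merely an assignment on objects and arrows), and the essential-image computation, where one must invoke the compatibility of the $G$-action on $F$ with pushforward along maps of $\Art_K$ in order to pass from ``$F(\pi)(x)$ lies in an orbit of $0$'' back to ``$x$ lies in an orbit of $0$.'' Both follow routinely from the constructions set up in the preceding lemmas.
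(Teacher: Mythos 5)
Your proof is correct. The paper actually omits a proof of this lemma (it is stated with a bare \verb|\qed|, treating it as immediate from the definitions of $[F/G]$, $B\!\Stab_G$, and the residual gerbe), and your argument is precisely the intended unwinding: the functor is built in the obvious way, full faithfulness is the observation that $g\cdot\pi(0)=0$ if and only if $g\in\Stab_G(R)$, and both the essential image and the residual gerbe are identified with the full subcategory of $(R,x)$ with $x$ in the $G(R)$-orbit of $0$.
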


\begin{corollary} \label{residual gerbe of quotient deformation category}
	The residual gerbe of $[F/G]$ is a deformation category. 
\end{corollary}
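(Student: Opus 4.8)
The plan is to identify the residual gerbe of $[F/G]$ with the ``classifying object'' $B\!\Stab_G$ and then recognize the latter as a deformation category by a direct appeal to \cref{quotient is deformation category}. So the entire argument is a matter of bookkeeping across the lemmas just proved; there is no substantive new content.

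First I would invoke \cref{factor the point}: the morphism $B\!\Stab_G \to [F/G]$ is fully faithful with essential image the residual gerbe $\Gamma$ of $[F/G]$. Hence the induced functor $B\!\Stab_G \to \Gamma$ is an equivalence of categories over $\Art_K$. Since the property of being a deformation category — i.e.\ being a predeformation category that satisfies the Rim--Schlessinger condition \tag{06J2} — only refers to fibre products, surjections, and the fibre over $K$, it is stable under equivalences of opfibrations in groupoids over $\Art_K$. Thus it suffices to show that $B\!\Stab_G$ is a deformation category.

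Next I would unwind the definition $B\!\Stab_G = [h_K/\Stab_G]$, with $\Stab_G$ acting trivially on $h_K$. The functor $h_K$ is visibly a deformation functor: it sends every object of $\Art_K$ to a singleton, so all the comparison maps $h_K(R_1 \times_R R_2) \to h_K(R_1) \times_{h_K(R)} h_K(R_2)$ are bijections and axiom (H4) holds trivially; in particular $h_K$ falls under \tag{06JA}. And $\Stab_G$ is a deformation functor by \cref{stabilizer of quotient}. Therefore \cref{quotient is deformation category}, applied with $h_K$ in place of $F$ and $\Stab_G$ in place of $G$ (the trivial action being a perfectly legitimate instance of that setup), shows that $B\!\Stab_G = [h_K/\Stab_G]$ is a deformation category. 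Combined with the transport along $B\!\Stab_G \simeq \Gamma$ from the previous step, this gives the claim.

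The only point requiring any care — and it is minor — is to make sure one is entitled to transport ``deformation category'' along the equivalence of \cref{factor the point}, and to confirm that the trivial action of $\Stab_G$ on $h_K$ is covered by the hypotheses of \cref{quotient is deformation category}; both are immediate. So there is essentially no obstacle: all the real work was already done in \cref{stabilizer of quotient,quotient is deformation category,factor the point}.
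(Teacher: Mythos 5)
Your argument is correct and follows essentially the same route as the paper's own proof: establish that $\Stab_G$ is a deformation functor (via \cref{stabilizer of quotient}), conclude that $B\!\Stab_G = [h_K/\Stab_G]$ is a deformation category (via \cref{quotient is deformation category}), and transport that property across the equivalence with the residual gerbe (via \cref{factor the point}). Your spelling out that $h_K$ is a deformation functor and that the deformation-category property is invariant under equivalences over $\Art_K$ just makes explicit what the paper leaves implicit.
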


\begin{proof}
	$\Stab_G$ is a deformation functor by \cref{stabilizer of quotient}, so $B\Stab_G$ is a deformation category by \cref{quotient is deformation category}. Since the residual gerbe of $[F/G]$ is equivalent to $B\Stab_G$ by \cref{factor the point}, the result follows.
\end{proof}

\subsection{Differential graded Lie algebras}

A great deal of work has been done relating differential graded Lie algebras to deformation theory. The underlying philosophy (due originally to Deligne, Drinfeld, Feigin, Kontsevich, and others) is that ``reasonable'' deformation problems are governed by differential graded Lie algebras. Work of Lurie formalizes this philosophy in an $\infty$-categorical framework (cf. \cite{lurie_icm} for an overview of this work). 

Here, we recall just a few relevant portions of this theory, avoiding words like ``$\infty$-category.'' To further simplify the exposition of the theory, we will assume that all differential graded Lie algebras are concentrated in nonnegative degrees, as this is the only case that will be relevant for us. 

To lighten notation and decrease verbosity, all unadorned tensor products in this subsection are assumed to be over $K$, and algebras, Lie algberas, differential graded Lie algebras, etc, are also assumed to be over $K$, unless explicitly specified otherwise.

Fix a differential graded Lie algebra $L$ concentrated in nonnegative degrees. 

\begin{definition}[Gauge group]
	For $R \in \Art_K$, let $\m_R$ denote its maximal ideal. We can regard the nilpotent $R$-Lie algebra $G_L(R) = \m_R \otimes L^0$ as a group by defining a group operation $*$ using the Baker-Campbell-Hausdorff formula: we set
	\[ \eta * \eta' = \log\left( \exp(\eta) \exp(\eta')\right)  \]
	for all $\eta, \eta' \in \m_R \otimes L^0$. The formal power series on the right-hand side are computed using the $R$-algebra structure on the universal enveloping $R$-algebra $U(\m_R \otimes_K L^0)$, and \cite[theorem 7.4]{serre} guarantees that the result of these computations is actually in $\m_R \otimes L^0$. Observe moreover that 0 is the unit element of this group structure, and that the additive inverse $-\eta$ of $\eta$ is also the inverse of $\eta$ with respect to this group structure.
	
	This is all evidently natural in $R$, so we obtain a functor $G_L : \Art_K \to \Grp$, which is in fact a deformation functor \cite[section 3]{manetti_basics}. It is called the \emph{gauge group} of $L$. 
\end{definition}

\begin{remark}
	If $L^0$ is itself an algebra (a special case which will be important for us), we obtain a commutative diagram
	\[ \begin{tikzcd} \m_R \otimes L^0 \ar[hookrightarrow]{r} \ar[hookrightarrow, bend right]{dr} & U(\m_R \otimes L^0) \ar{d} \\
	& R \otimes L^0 \end{tikzcd} \]
	where the horizontal maps are the natural inclusions and the vertical map is the $R$-algebra homomorphism induced by the universal property of the universal enveloping algebra. 
	Thus, the power series on the right-hand side of the Baker-Campbell-Hausdorff formula can be computed using the natural $R$-algebra structure on $R \otimes L^0$.
\end{remark}

\begin{definition}[Gauge action] \label{gauge}
	Let $F_L : \Art_K \to \Set$ be the functor $R \mapsto \m_R \otimes L^1$. The \emph{gauge action} of $G_L$ on $F_L$ is defined by the formula
	\[ \eta * x = x + \sum_{n = 0}^\infty \frac{[\eta, -]^n}{(n+1)!} ([\eta, x] - d\eta) \]
	for $\eta \in \m_R \otimes L^0$ and $x \in \m_R \otimes L^1$. Since $\m_R$ is a nilpotent ideal in $R$, the endomorphism $[\eta, -]$ is nilpotent, so the sum in the above formula is finite. One checks that this is, in fact, a group action: in other words, we have 
	\[ \eta' * (\eta * x) = (\eta' * \eta) * x. \]
\end{definition}

\begin{definition}[Maurer-Cartan elements]
	For $x \in \m_R \otimes L^1$, we define
	\[ Q(x) := dx + \frac{1}{2}[x,x]. \]
	If $Q(x) = 0$, then $x$ is a \emph{Maurer-Cartan element} of $\m_R \otimes L^1$. We define $\MC_L : \Art_K \to \Set$ to be the functor $\Art_K \to \Set$ sending $R$ to the set of Maurer-Cartan elements of $\m_R \otimes L^1$. This is a deformation functor \cite[section 3]{manetti_basics}. 
\end{definition}

\begin{definition}[Deformation category associated to a differential graded Lie algebra]
	With $F_L$ as in \cref{gauge} above, the action of $G_L$ on $F_L$ stabilizes $\MC_L$ \cite[section 1]{manetti_basics}, so we can define
	\[ \Def_L := [\MC_L/G_L]. \]
	By \cref{quotient is deformation category}, this is a deformation category.
\end{definition}

\begin{remark}
	In \cite{yekutieli}, the deformation category $\Def_L$ is called the \emph{(reduced) Deligne groupoid} of $L$. Note that, since we have assumed that $L$ is concentrated in nonnegative degrees, the distinction between the reduced Deligne groupoid and the Deligne groupoid is neutralized. 
\end{remark}

\begin{theorem}[{\cite[proposition 2.6]{goldman}}] \label{result:fundamental theorem of dglas}
	The deformation category $\Def_L$ has infinitesimal automorphisms $H^0(L)$, tangent space $H^1(L)$, and obstruction space $H^2(L)$.
\end{theorem}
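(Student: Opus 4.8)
The plan is to compute each of the three invariants of $\Def_L = [\MC_L / G_L]$ directly from the definitions of tangent space, infinitesimal automorphisms, and obstruction space for a quotient deformation category, using the explicit formulas for the gauge group, gauge action, and Maurer--Cartan functor. Throughout, recall that for $R \in \Art_K$ and $\varepsilon$ a square-zero element (so $\m_R = K\varepsilon$ in the basic case $R = K[\varepsilon]$), the tangent space and infinitesimal automorphisms are read off from $\Def_L(K[\varepsilon])$, while the obstruction space is detected by small extensions.

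\medskip

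\noindent\textbf{Step 1: Tangent space.} By definition $T(\Def_L) = \overline{\Def_L}(K[\varepsilon])$, the set of gauge-equivalence classes of Maurer--Cartan elements in $\m_{K[\varepsilon]} \otimes L^1 = \varepsilon L^1$. For $x = \varepsilon \xi$ with $\xi \in L^1$, the Maurer--Cartan equation $Q(x) = dx + \frac12[x,x] = 0$ becomes $\varepsilon \, d\xi = 0$ since $[x,x] \in \varepsilon^2 L^2 = 0$, i.e. $d\xi = 0$; so $\MC_L(K[\varepsilon]) = \varepsilon Z^1(L)$. Two such elements $\varepsilon\xi, \varepsilon\xi'$ are gauge equivalent iff there is $\eta = \varepsilon\theta \in \varepsilon L^0$ with $\eta * (\varepsilon\xi) = \varepsilon\xi'$; the gauge action formula collapses modulo $\varepsilon^2$ to $\eta * x = x - d\eta$, so $\varepsilon\xi' = \varepsilon\xi - \varepsilon\,d\theta$, i.e. $\xi' - \xi \in B^1(L)$. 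Hence $T(\Def_L) = Z^1(L)/B^1(L) = H^1(L)$, with its natural $K$-vector space structure.

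\medskip

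\noindent\textbf{Step 2: Infinitesimal automorphisms.} By definition $\Inf(\Def_L) = \Aut_{K[\varepsilon]}(x_0)$ where $x_0$ is the image over $K[\varepsilon]$ of the unique object over $K$; concretely $x_0$ corresponds to the zero Maurer--Cartan element $0 \in \varepsilon L^1$. An automorphism of $(K[\varepsilon], 0)$ in $[\MC_L/G_L]$ is a pair $(\mathrm{id}, g)$ with $g \in G_L(K[\varepsilon]) = \varepsilon L^0$ and $g * 0 = 0$; writing $g = \varepsilon\theta$, the gauge action gives $g * 0 = -d(\varepsilon\theta) = -\varepsilon\,d\theta$, so the condition is $d\theta = 0$, i.e. $\theta \in Z^0(L) = H^0(L)$ (there are no coboundaries in degree $0$ since $L$ is concentrated in nonnegative degrees). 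The group law on $G_L$ is the Baker--Campbell--Hausdorff product, which modulo $\varepsilon^2$ is just addition on $\varepsilon L^0$; hence $\Inf(\Def_L) \cong H^0(L)$ as $K$-vector spaces.

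\medskip

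\noindent\textbf{Step 3: Obstruction space.} This is the step I expect to require the most care. The claim is that $H^2(L)$, equipped with the standard obstruction maps for the Maurer--Cartan functor, is an obstruction space for $\Def_L$ in the sense of the definition above. Given a small extension $\pi : R' \to R$ with kernel $I$ (so $\m_{R'} I = 0$), and $x \in \overline{\Def_L}(R)$ represented by a Maurer--Cartan element $\bar x \in \m_R \otimes L^1$, lift $\bar x$ arbitrarily to $\tilde x \in \m_{R'} \otimes L^1$; then $Q(\tilde x) \in I \otimes L^2$ and a computation using the Bianchi-type identity $dQ(\tilde x) + [\tilde x, Q(\tilde x)] = 0$ (valid for any DGLA) together with $\m_{R'} I = 0$ shows $d\,Q(\tilde x) = 0$, so $Q(\tilde x)$ defines a class $o(\pi, x) \in I \otimes H^2(L)$. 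One then checks: (i) this class is independent of the choice of lift $\tilde x$ and of the representative $\bar x$ within its gauge class, so it descends to $\overline{\Def_L}(R)$; (ii) functoriality (O1) with respect to morphisms of small extensions, which is immediate from naturality of $Q$; and (iii) the key property (O2), namely $o(\pi, x) = 0$ iff $x$ lifts to $\overline{\Def_L}(R')$ --- the ``if'' direction being trivial, and the ``only if'' following because if $Q(\tilde x)$ is a coboundary $d\beta$ with $\beta \in I \otimes L^1$, then $\tilde x - \beta$ is a genuine Maurer--Cartan element lifting $\bar x$ (using again $\m_{R'} I = 0$ to see that subtracting $\beta$ does not affect the $[\,\cdot\,,\,\cdot\,]$ term modulo what is already controlled). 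The verification that these obstruction maps $K$-linearize correctly over $I$ is where the square-zero hypothesis is used repeatedly, and assembling it cleanly is the main bookkeeping obstacle; everything else is formal. Since $\Def_L = [\MC_L / G_L]$, Lemma~\ref{smoothness of projection to quotient} then promotes this obstruction space for $\MC_L$ to one for $\Def_L$ compatibly, completing the proof.
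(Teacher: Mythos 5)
The paper does not prove this theorem at all: it is stated as a citation to Goldman--Millson and the burden is shifted to the literature. Your proposal instead carries out a direct verification, which is correct and is in fact the classical Goldman--Millson/Manetti argument, so the comparison is ``cite vs.\ prove'' rather than two different proofs.

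A few observations on your Step~3. Your identification of the Bianchi identity $dQ(x) + [x,Q(x)]=0$ as the reason $Q(\tilde{x})$ is a cocycle, and of the small-extension condition $\m_{R'}\ker(\pi)=0$ as what kills the bracket terms, is exactly the right mechanism. One small redundancy in the presentation: you both sketch the check that the class $[Q(\tilde x)]$ is invariant under changing $\bar x$ within its gauge class \emph{and} invoke \cref{smoothness of projection to quotient} to pass from $\MC_L$ to $\Def_L$. Either suffices on its own. If you use the lemma, you only need $H^2(L)$ to be an obstruction space for the plain functor $\MC_L$, so gauge-independence is free; if you verify gauge-independence directly (via the identity $Q(\eta * x) = e^{[\eta,-]}Q(x)$, which shows $Q(\tilde\eta * \tilde x) = Q(\tilde x)$ when $Q(\tilde x) \in \ker(\pi) \otimes L^2$), you get an obstruction theory directly on $\overline{\Def}_L$ and the lemma is unnecessary. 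Choosing one path would make the argument cleaner, but there is no gap.
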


\begin{definition}
	If $\scF$ is a category over $\Art_K$ and $L$ is a differential graded Lie algebra such that there exists an equivalence $\Def_L \to \scF$ of categories over $\Art_K$, we then say that $L$ \emph{governs} $\scF$.
\end{definition}

\begin{example} \label{easy dgla}
	Suppose $V$ is a finite dimensional vector space and consider the differential graded Lie algebra $V[-1]$. In other words, this is nonzero in only degree 1, where it is $V$, and the Lie bracket is necessarily trivial. It is then straightforward to construct a natural equivalence $\Def_{V[-1]} = h_{\hat{P}}$, where $\hat{P}$ denotes the completion of $P = \Sym(V^\vee)$ along the maximal ideal generated by $V^\vee$ and $h_{\hat{P}}$ is the functor $\Art_K \to \Set$ sending $T \in \Art_K$ to the set of local $K$-algebra homomorphisms $\hat{P} \to T$.
\end{example}

\begin{example} \label{residual gerbe governed by h0}
	The residual gerbe $\Gamma$ of $\Def_L$ is also a deformation category by \cref{residual gerbe of quotient deformation category}. In fact, it is easy to see that 
	\[ \Stab_{G_L(R)}(0) = \m_R \otimes Z^0(L) = \m_R \otimes H^0(L) \]
	for every $R \in \Art_K$, so there is a natural equivalence $\Def_{H^0(L)} \simeq \Gamma$. In other words, $\Gamma$ is governed by $H^0(L)$.  
\end{example}

\subsection{Homomorphisms of differential graded Lie algebras}

\subsubsection{Functoriality} \label{section:def construction is 2-functorial}

The construction $L \mapsto \Def_L$ is functorial: i.e., any homomorphism $L \to M$ of differential graded Lie algebras concentrated in nonnegative degrees induces a natural functor $\Def_L \to \Def_M$ \cite[paragraph 2.3]{goldman}.

\begin{lemma}
	The identifications of \cref{result:fundamental theorem of dglas} fit into commutative diagrams as follows.
	\[ \begin{tikzcd} \Inf(\Def_L) \ar{r} \ar{d} & \Inf(\Def_M) \ar{d} & & T(\Def_L) \ar{r} \ar{d} & T(\Def_M) \ar{d} \\ H^0(L) \ar{r} & H^0(M) & & H^1(L) \ar{r} & H^1(M) \end{tikzcd} \]
	Moreover, the  map of obstruction spaces $H^2(L) \to H^2(M)$ is compatible with $\Def_L \to \Def_M$ in the sense of \cref{definition:compatible}. 
\end{lemma}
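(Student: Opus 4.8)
The plan is to unwind the explicit description of $\Def_L$ as the quotient $[\MC_L/G_L]$ and check all the required compatibilities at the level of the cochains, tracing through where the identifications of \cref{result:fundamental theorem of dglas} come from. A homomorphism $\varphi : L \to M$ of differential graded Lie algebras induces, for each $R \in \Art_K$, a group homomorphism $G_L(R) = \m_R \otimes L^0 \to \m_R \otimes M^0 = G_M(R)$ (compatible with the Baker-Campbell-Hausdorff operation since $\varphi$ is a homomorphism of Lie algebras) and a map of sets $F_L(R) = \m_R \otimes L^1 \to \m_R \otimes M^1 = F_M(R)$. Because $\varphi$ commutes with the differentials and the brackets, it carries Maurer-Cartan elements to Maurer-Cartan elements (i.e. $Q_M(\varphi(x)) = \varphi(Q_L(x))$) and intertwines the gauge actions ($\varphi(\eta * x) = \varphi(\eta) * \varphi(x)$, directly from the series in \cref{gauge}). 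This is exactly the data of the induced functor $\Def_L \to \Def_M$ of \cref{section:def construction is 2-functorial}.

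First I would treat the infinitesimal automorphisms. By \cref{easy dgla}-style reasoning, or rather by the computation recalled in \cref{residual gerbe governed by h0}, the infinitesimal automorphism space $\Inf(\Def_L)$ is computed from $\Stab_{G_L(K[\epsilon])}(0) = \m \otimes Z^0(L) = \m \otimes H^0(L)$ — here using $d\eta = 0$ forces $\eta$ to be a cocycle, and in degree $0$ there are no coboundaries. The induced map on stabilizers is visibly induced by $\varphi$ on $Z^0$, hence descends to $H^0(\varphi)$, giving commutativity of the left square. Next, the tangent space $T(\Def_L)$ is the set of gauge-equivalence classes of Maurer-Cartan elements over the dual numbers $K[\epsilon]$; over $K[\epsilon]$ the Maurer-Cartan equation linearizes to $dx = 0$ and the gauge action of $\eta$ sends $x$ to $x - d\eta$, so $T(\Def_L) = Z^1(L)/B^1(L) = H^1(L)$. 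Since $\varphi$ sends cocycles to cocycles and coboundaries to coboundaries, the induced map on tangent spaces is $H^1(\varphi)$, giving the right square. In both cases the point is simply that the isomorphisms of \cref{result:fundamental theorem of dglas} are themselves defined cochain-theoretically via $\MC$ and $G$, and $\varphi$ is a map of the underlying cochain complexes.

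Finally, for the obstruction spaces: the obstruction map attached to a small extension $\pi : R' \to R$ with kernel $I$ (so $\m_R \cdot I = 0$) takes $x \in \MC_L(R)$, lifts it arbitrarily to $\tilde x \in \m_{R'} \otimes L^1$, and records the class of $Q_L(\tilde x) \in I \otimes Z^2(L)$ in $I \otimes H^2(L)$; one checks this is independent of the lift and gives a well-defined obstruction theory (this is the content of \cref{result:fundamental theorem of dglas} via \cref{goldman}). Given $x \in \MC_L(R)$ with lift $\tilde x$, the element $\varphi(\tilde x)$ is a lift of $\varphi(x) = \Def_\varphi(x)$, and $Q_M(\varphi(\tilde x)) = \varphi(Q_L(\tilde x))$ because $\varphi$ commutes with $d$ and $[\,\cdot\,,\cdot\,]$; passing to cohomology, $o(\pi, \Def_\varphi(x)) = (1_I \otimes H^2(\varphi))(o(\pi,x))$, which is precisely the compatibility of \cref{definition:compatible}. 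I do not expect a genuine obstacle here — everything reduces to the fact that $\varphi$ is a morphism of the underlying differential graded objects, so it is compatible with all the cochain-level formulas defining $\Def$, $\MC$, $G$, and the obstruction maps. The only mild care needed is bookkeeping: making sure the identifications in \cref{result:fundamental theorem of dglas} are taken in their explicit cochain form rather than as abstract isomorphisms, so that naturality in $\varphi$ is manifest rather than something to be separately verified.
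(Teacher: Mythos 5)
Your proof is correct and takes essentially the same approach as the paper's: the paper's proof simply says the identifications of \cref{result:fundamental theorem of dglas} are induced cochain-theoretically by ``deleting $\epsilon$'' (and cites \cite[section 2]{manetti_basics} for the construction of obstruction classes), which is exactly what you spell out in detail by unwinding $\Def_L = [\MC_L/G_L]$ and observing that $\varphi$ commutes with all the cochain-level formulas. One typographical note: \texttt{goldman} is a bibliography key, not a cross-reference label, so the reference near the end of your obstruction paragraph should be a \verb|\cite| rather than a \verb|\cref|.
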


\begin{proof}
	The identifications of the infinitesimal deformations and of the tangent space in \cref{result:fundamental theorem of dglas} are given by ``deleting $\epsilon$.'' More precisely, they are induced by the isomorphism $\m_{K[\epsilon]} \simeq K$ given by $\epsilon \mapsto 1$. The commutativity of the two squares follows from this. The fact that $H^2(L) \to H^2(M)$ is compatible with $\Def_L \to \Def_M$ follows from the construction of obstruction classes; see \cite[section 2]{manetti_basics}. 
\end{proof}

\subsubsection{Quasi-isomorphism invariance}

Since $L \mapsto \Def_L$ is 2-functorial, certainly the functor $\Def_L \to \Def_M$ must be an equivalence whenever $L \to M$ is an isomorphism of differential graded Lie algebras. In fact, the same is true when $\phi$ is only a \emph{quasi}-isomorphism as well.  

\begin{theorem} \label{result:quasi-isomorphism invariance}
	If $L \to M$ is a quasi-isomorphism of differential graded Lie algebras, then $\Def_L \to \Def_{M}$ is an equivalence of deformation categories.
\end{theorem}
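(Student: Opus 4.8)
The plan is to reduce the statement about equivalences of deformation \emph{categories} to a statement about equivalences of the decategorified functors together with control of automorphisms, exploiting the structure $\Def_L = [\MC_L/G_L]$. First I would recall the classical fact (Goldman--Millson, Schlessinger--Stasheff) that a quasi-isomorphism $\phi : L \to M$ of differential graded Lie algebras concentrated in nonnegative degrees induces an equivalence of Deligne groupoids; the categorified version we need is exactly the statement that the induced functor $\Def_L \to \Def_M$ is fully faithful and essentially surjective as a functor of categories opfibered in groupoids over $\Art_K$. Since $\Def_L$ and $\Def_M$ are deformation categories, by the standard dictionary it suffices to check: (i) the functor is smooth, and (ii) it induces an isomorphism on tangent spaces and on infinitesimal automorphisms. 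Then smoothness plus bijectivity on tangent spaces forces essential surjectivity and fullness (cf. \tag{06HG} and the criteria in \tag{06HP}), while injectivity on $\Inf$ handles faithfulness.

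The key steps, in order, are as follows. First, factor $\phi$: by the standard structure theory of quasi-isomorphisms, one may assume $\phi$ is either a surjection or an injection with acyclic kernel (respectively cokernel), since any quasi-isomorphism factors through a mapping-cylinder-type object; alternatively, one can reduce to the case where $L$ and $M$ are both minimal or use that $L \to M$ factors as $L \to L \oplus (\text{acyclic}) \to M$. Second, for the surjection-with-acyclic-kernel case, use the obstruction-theoretic smoothness criterion: by \cref{result:fundamental theorem of dglas} the obstruction spaces are $H^2(L)$ and $H^2(M)$, the tangent spaces are $H^1(L)$ and $H^1(M)$, and $\phi$ being a quasi-isomorphism makes $H^1(L) \to H^1(M)$ an isomorphism (in particular surjective) and $H^2(L) \to H^2(M)$ an isomorphism (in particular injective); \cref{result:standard smoothness criterion} then gives smoothness of $\Def_L \to \Def_M$. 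Third, invoke \cref{result:fundamental theorem of dglas} once more: $\Inf(\Def_L) = H^0(L) \to H^0(M) = \Inf(\Def_M)$ is an isomorphism since $\phi$ is a quasi-isomorphism, and likewise on tangent spaces. Fourth, conclude: a smooth morphism of deformation categories inducing an isomorphism on tangent spaces and on infinitesimal automorphisms is an equivalence (this is the categorified analog of ``smooth + bijective on tangent spaces $\Rightarrow$ isomorphism of hulls,'' and follows from \tag{06HP} together with the fact that essential surjectivity and full faithfulness of an opfibration in groupoids over $\Art_K$ can be tested on small extensions).

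I expect the main obstacle to be the reduction step: passing from a general quasi-isomorphism to a manageable surjection or injection, since unlike for complexes of vector spaces one cannot simply split a quasi-isomorphism of differential graded Lie algebras. The cleanest route is probably to use that $\Def_{(-)}$ sends acyclic differential graded Lie algebras concentrated in nonnegative degrees to the trivial (equivalent to a point) deformation category — which itself follows from \cref{result:fundamental theorem of dglas}, as $H^0 = H^1 = H^2 = 0$ forces $\Inf$, $T$, and the obstruction space all to vanish, hence $\Def$ is equivalent to the functor of a point — and then to handle a short exact sequence $0 \to N \to L \to M \to 0$ of differential graded Lie algebras with $N$ acyclic by showing $\Def_L \to \Def_M$ is an equivalence fiber-by-fiber over $\Art_K$, using smoothness of the gauge group $G_N$ and of $\MC_N$ to lift objects and morphisms. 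The symmetric case of an injection with acyclic cokernel is dual. One then stitches the two cases together via the factorization of an arbitrary quasi-isomorphism. The remaining verifications — that the induced maps on $H^i$ really are the maps of \cref{result:fundamental theorem of dglas}, and the compatibility of obstruction classes — are routine given the functoriality recorded in \cref{section:def construction is 2-functorial}.
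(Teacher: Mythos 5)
The paper does not prove this theorem itself; the remark following it simply cites \cite[theorem 2.4]{goldman} and \cite[theorem 4.2]{yekutieli}, noting that the latter corrects an error in the former. Your proposal is therefore an attempt at a genuine alternative proof, and it is worth noting that the existence of an error in Goldman--Millson's original argument is already a warning sign that the ``routine verifications'' in this result hide real subtleties.

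The central gap in your Strategy A is the unestablished criterion that a smooth morphism of deformation categories inducing isomorphisms on $T$ and $\Inf$ is an equivalence of categories (not merely of decategorified functors). Neither \tag{06HG} nor \tag{06HP} says this: both are concerned with the definition and formal properties of smoothness, not with recognizing equivalences. The delicate point is fullness: given $x' \in \scF(R')$ lying over $x \in \scF(R)$ for a small extension $R' \to R$, the restriction map $\Aut_{R'}(x') \to \Aut_R(x)$ is in general \emph{not} surjective (this is easy to see for $\Def_L$ with $L$ concentrated in degrees $0$ and $1$ with a nontrivial bracket $[L^0, L^1] \to L^1$), so an isomorphism on $\Inf$ does not propagate to an isomorphism on automorphism groups by a naive induction on length. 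One must instead show that the map $\Aut_{R'}(x') \to \Aut_{R'}(\phi(x'))$ is an isomorphism directly, which requires more structure than what you have written down. Either locate and cite a lemma that establishes this criterion precisely, or supply the argument. As a secondary point: the factorization of $\phi$ into a surjection with acyclic kernel (or dually) is not needed for the smoothness half of your argument --- the standard smoothness criterion \cref{result:standard smoothness criterion} applies to an arbitrary quasi-isomorphism, since $H^1(L) \to H^1(M)$ is surjective and $H^2(L) \to H^2(M)$ is injective and compatible by the lemma in \cref{section:def construction is 2-functorial} --- so you should either drop the factorization or make clear that it is serving Strategy B only. Your Strategy B (reduction to short exact sequences with acyclic kernel, then direct lifting via smoothness of $G_N$ and $\MC_N$) is the right direction of travel and is close in spirit to Goldman--Millson's own argument, but as written it is a plan rather than a proof; the fiber-by-fiber lifting of objects and, especially, morphisms is where the real work lies and is precisely where one must not get fullness wrong.
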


\begin{remark}
	Proofs of this can be found in \cite[theorem 2.4]{goldman} or \cite[theorem 4.2]{yekutieli}. The former reference calls this the ``equivalence theorem'' and attributes it to Deligne; the latter generalizes the former in addition to correcting a mistake in the former (cf. \cite[proof of lemma 3.11]{yekutieli}). In fact, it is not necessary to assume that $L \to M$ is a quasi-isomorphism: it is sufficient to assume that it induces isomorphisms on cohomology in degrees 0 and 1, and an injective map on cohomology in degree 2. We will not have need to use the theorem under these weaker hypotheses. 
\end{remark}

\subsubsection{Fiber products}

In what follows, we fix homomorphisms $\phi_1 : L_1 \to M$ and $\phi_2 : L_2 \to M$ of differential graded Lie algebras concentrated in nonnegative degrees, and we let $\phi$ denote the pair $(\phi_1, \phi_2)$. The following generalizes and categorifies the main construction of \cite{manetti_morphisms}. 

\begin{definition}
	Define the functor $\MC_{\phi} : \Art_K \to \Set$ where $\MC_\phi(R)$ is the set of triples $(x_1, x_2, \tau)$ where $x_i \in \MC_{L_i}(R)$ for $i = 1, 2$, $\tau \in G_M(R)$, and 
	\[ \tau * \phi_1(x_1) = \phi_2(x_2). \]
	There is an action of $G_{L_1} \times G_{L_2}$ on $\MC_\phi$ where $(\eta_1, \eta_2) \in G_{L_1}(R) \times G_{L_2}(R)$ acts on $(x_1, x_2, \tau) \in \MC_\phi(R)$ by 
	\[ (\eta_1, \eta_2) * (x_1, x_2, \tau) = (\eta_1 * x_1, \eta_2 * x_2, \phi_2(\eta_2) * \tau * (-\phi_1(\eta_1)). \]
	We then define $\Def_\phi := [\MC_\phi/(G_{L_1} \times G_{L_2})]$. 
\end{definition}

The above definition is cooked up precisely so that we have the following.

\begin{lemma} \label{fiber product}
	The forgetful functors $\Def_\phi \to \Def_{L_1}$ and $\Def_\phi \to \Def_{L_2}$ fit into a 2-cartesian diagram as follows.
	\[ \begin{tikzcd} \Def_\phi \ar{r} \ar{d} &  \Def_{L_2} \ar{d} \\ \Def_{L_1} \ar{r} &  \Def_M \end{tikzcd} \]
\end{lemma}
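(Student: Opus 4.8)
The plan is to verify the universal property of the 2-fiber product directly, using the explicit description of morphisms in a quotient deformation category $[F/G]$. By construction, an object of $\Def_\phi$ over $R$ is a tuple $(x_1, x_2, \tau)$ with $x_i \in \MC_{L_i}(R)$ and $\tau * \phi_1(x_1) = \phi_2(x_2)$, and a morphism over $\pi : R' \to R$ is given by a pair $(\eta_1, \eta_2) \in G_{L_1}(R) \times G_{L_2}(R)$ conjugating the pushed-forward tuple appropriately. The forgetful functor $\Def_\phi \to \Def_{L_i}$ sends $(x_1, x_2, \tau)$ to $x_i$ and $(\eta_1, \eta_2)$ to $\eta_i$. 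The 2-commutativity of the square is witnessed by the natural transformation whose component at $(x_1, x_2, \tau)$ is the morphism $\tau$ in $\Def_M$: by definition $\tau * \phi_1(x_1) = \phi_2(x_2)$, so $\tau$ really is a morphism from the image of $x_1$ to the image of $x_2$ in $\Def_M(R)$, and one checks this is natural in $(x_1, x_2, \tau)$ using the cocycle-like formula for the $G_{L_1} \times G_{L_2}$-action on $\MC_\phi$.

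For the universal property, recall (e.g. from \cite[\taglink{06L4}]{stacks}) that the 2-fiber product $\Def_{L_1} \times_{\Def_M} \Def_{L_2}$ has as objects over $R$ the triples $(y_1, y_2, \sigma)$ with $y_i \in \Def_{L_i}(R)$ and $\sigma$ an isomorphism in $\Def_M(R)$ between their images; but an object of $\Def_{L_i}(R)$ is just an element of $\MC_{L_i}(R)$, and an isomorphism in $\Def_M(R)$ between $\phi_1(x_1)$ and $\phi_2(x_2)$ is exactly an element $\tau \in G_M(R)$ with $\tau * \phi_1(x_1) = \phi_2(x_2)$. So the object sets of $\Def_\phi(R)$ and of the 2-fiber product are literally equal. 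First I would write down the comparison functor $\Def_\phi \to \Def_{L_1} \times_{\Def_M} \Def_{L_2}$ sending $(x_1, x_2, \tau)$ to $(x_1, x_2, \tau)$ and a morphism $(\eta_1, \eta_2)$ to the pair $(\eta_1, \eta_2)$ together with the (automatically commuting) triangle in $\Def_M$; this is well-defined precisely because the action formula was rigged so that $\phi_2(\eta_2) * \tau * (-\phi_1(\eta_1))$ is the conjugate of $\tau$, which is the compatibility condition a morphism into the 2-fiber product must satisfy. Then I would check this functor is fully faithful and essentially surjective over each $R$, which amounts to matching up the morphism sets: a morphism in the 2-fiber product is a pair of morphisms $\eta_i$ in $\Def_{L_i}$ together with a commuting square of $G_M$-translations, and unwinding the definition of morphisms in $[\,\cdot/\cdot\,]$ shows this is exactly the data of a morphism in $\Def_\phi$.

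The main obstacle is bookkeeping rather than conceptual: one must be careful that the action of $G_{L_1} \times G_{L_2}$ on $\MC_\phi$ is genuinely a group action (using that $\phi_1, \phi_2$ are Lie algebra homomorphisms, hence induce group homomorphisms $G_{L_i} \to G_M$ compatible with the gauge actions) and that the "twisted" third coordinate $\phi_2(\eta_2) * \tau * (-\phi_1(\eta_1))$ composes correctly, i.e. that applying $(\eta_1', \eta_2')$ then $(\eta_1, \eta_2)$ agrees with applying $(\eta_1 * \eta_1', \eta_2 * \eta_2')$ — this is where the noncommutative Baker--Campbell--Hausdorff group law interacts with the conjugation, and it is the one place a sign or ordering error would hide. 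Once that is pinned down, both the 2-commutativity and the universal property are formal consequences of the construction, since $\Def_\phi$ was defined with exactly this output in mind. It is worth noting that $\Def_\phi$ is automatically a deformation category by \cref{quotient is deformation category}, since $\MC_{L_1}, \MC_{L_2}, G_M$ are deformation functors and hence so is $\MC_\phi$ (it is cut out inside $\MC_{L_1} \times \MC_{L_2} \times G_M$ by a condition that behaves well under the fiber products appearing in the Rim--Schlessinger axiom), and likewise $G_{L_1} \times G_{L_2}$ is a deformation functor; so the statement is really just the identification of this deformation category with the 2-fiber product.
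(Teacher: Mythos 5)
Your proposal is correct and takes essentially the same approach as the paper, which simply says that unwinding the general construction of 2-fiber products of categories over $\Art_K$ yields exactly the category $\Def_\phi$ as defined; you have just written out the bookkeeping (matching objects, matching morphisms, and checking the composition law for the twisted third coordinate) that the paper leaves implicit. Your observation that the one nontrivial verification is that $-\phi_1(\eta_1 * \eta_1') = (-\phi_1(\eta_1')) * (-\phi_1(\eta_1))$, via the BCH group law and the fact that $\phi_i$ induce group homomorphisms $G_{L_i} \to G_M$, is exactly right.
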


\begin{proof}
	Unwinding the construction of 2-fiber products in the $(2,1)$-category of categories over $\Art_K$ described in \tag{0040}, we find exactly the category $\Def_\phi$ described above.
\end{proof}

The pair $\phi = (\phi_1, \phi_2)$ defines a homomorphism $\phi_1 - \phi_2 : L_1 \oplus L_2 \to M$ of differential graded Lie algebras. We set
\[ C := \Cone(\phi_1 - \phi_2 : L_1 \oplus L_2 \to M)[-1]. \]

\begin{theorem} \label{deformations and fiber product}
	The deformation category $\Def_\phi$ has infinitesimal automorphisms $H^0(C)$ and tangent space $H^1(C)$. Moreover, if 
	\[ \phi_2(\m_R \otimes L_2^1) \subseteq \MC_M(R) \]
	for all $R \in \Art_K$, then $H^2(C)$ is an obstruction space for $\Def_\phi$. 
\end{theorem}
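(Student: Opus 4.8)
The plan is to treat the three assertions in turn: the identifications of the infinitesimal automorphisms and of the tangent space are unconditional, while the obstruction statement is where the hypothesis on $\phi_2$ is used, and that is the part I expect to demand the most care.

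First I would compute $\Inf(\Def_\phi)$ and $T(\Def_\phi)$ by evaluating everything over the dual numbers $K[\epsilon]$. Since $\m_{K[\epsilon]}^2 = 0$, the Maurer--Cartan equation on $\m_{K[\epsilon]} \otimes L_i^1$ degenerates to $d x_i = 0$, and the gauge action of $\tau \in \m_{K[\epsilon]} \otimes M^0$ degenerates to $\tau * z = z - d\tau$. Hence, after the standard ``delete $\epsilon$'' identification, an object of $\MC_\phi(K[\epsilon])$ is exactly a triple $(x_1, x_2, m)$ with $d x_i = 0$ and $\phi_1(x_1) - \phi_2(x_2) = \pm d m$ --- that is, a $1$-cocycle of $C = \Cone(\phi_1 - \phi_2)[-1]$ --- and the residual $(G_{L_1} \times G_{L_2})$-action translates such a triple by an arbitrary $1$-coboundary of $C$, namely $(d\eta_1, d\eta_2, \phi_2(\eta_2) - \phi_1(\eta_1))$. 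This gives $T(\Def_\phi) = H^1(C)$. Likewise an automorphism of the distinguished object over $K[\epsilon]$ is a pair $(\eta_1, \eta_2)$ with $d\eta_i = 0$ and $\phi_1(\eta_1) = \phi_2(\eta_2)$, i.e.\ a $0$-cocycle of $C$; since $C$ is concentrated in nonnegative degrees this is $H^0(C)$, so $\Inf(\Def_\phi) = H^0(C)$. (One could instead deduce both identifications from the $2$-cartesian square of \cref{fiber product}, matching the Mayer--Vietoris sequence for the infinitesimal automorphisms and tangent spaces of a $2$-fibre product against the long exact cohomology sequence of the triangle $C \to L_1 \oplus L_2 \to M \to C[1]$ and using the values supplied by \cref{result:fundamental theorem of dglas}; but the direct unwinding seems cleaner.)

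For the obstruction statement, by \cref{smoothness of projection to quotient} it is enough to equip the functor $\MC_\phi$ with an obstruction space whose underlying vector space is $H^2(C)$. Given a small extension $\pi : R' \to R$ with kernel $J$ --- so $J \cdot \m_{R'} = 0$ --- and a point $(x_1, x_2, \tau) \in \MC_\phi(R)$, I would pick set-theoretic lifts $\tilde x_i \in \m_{R'} \otimes L_i^1$ and $\tilde\tau \in \m_{R'} \otimes M^0$, form
\[
u \;:=\; \tilde\tau * \phi_1(\tilde x_1) - \phi_2(\tilde x_2) \;\in\; J \otimes M^1
\]
(the membership holding because all three terms reduce mod $J$ to the identity $\tau * \phi_1(x_1) = \phi_2(x_2)$), and set $o(\pi, (x_1, x_2, \tau))$ to be the class of $\bigl(Q(\tilde x_1),\, Q(\tilde x_2),\, -u\bigr) \in J \otimes C^2$ in $J \otimes H^2(C)$. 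The verifications are then: (i) this triple is a $2$-cocycle --- the $L_i$-slots because $d\,Q(\tilde x_i) = [Q(\tilde x_i), \tilde x_i] = 0$ in $J\m_{R'} \otimes L_i^3 = 0$, and the matching condition $\phi_1(Q(\tilde x_1)) - \phi_2(Q(\tilde x_2)) = d u$ by applying $Q$ to $\tilde\tau * \phi_1(\tilde x_1) = \phi_2(\tilde x_2) + u$, using the curvature-transformation rule $Q(\eta * z) = e^{\mathrm{ad}_\eta} Q(z)$ together with the vanishing of $\mathrm{ad}_{\tilde\tau}$ on $J \otimes M$ and of the bracket $[\phi_2(\tilde x_2), u]$; (ii) the class is independent of the chosen lifts --- changing $\tilde x_i$ by $a_i \in J \otimes L_i^1$ changes the cocycle by $d_C$ applied to $a_i$ in the $i$-th slot, and changing $\tilde\tau$ by $c \in J \otimes M^0$ changes it by $d_C(0,0,c)$; (iii) axiom (O1), immediate from the naturality of $Q$ and of the gauge action in the Artinian ring; (iv) axiom (O2) --- the ``if a lift exists'' direction is trivial since then $Q(\tilde x_i) = 0$ and $u = 0$, and for the converse, writing $\bigl(Q(\tilde x_1), Q(\tilde x_2), -u\bigr) = d_C(a_1, a_2, b)$ one checks that $\tilde x_i - a_i$ are Maurer--Cartan and that, after replacing $\tilde\tau$ by $\tilde\tau + b$, the corrected triple $(\tilde x_1 - a_1, \tilde x_2 - a_2, \tilde\tau + b)$ lies in $\MC_\phi(R')$ and lifts $(x_1, x_2, \tau)$, the gauge-action manipulation here again only using $J\m_{R'} = 0$.

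The crux, and the place I would spend real effort, is pinning down exactly where the hypothesis $\phi_2(\m_R \otimes L_2^1) \subseteq \MC_M(R)$ is indispensable: it forces $\phi_2(\tilde x_2)$ to be Maurer--Cartan for every lift $\tilde x_2$, not merely for Maurer--Cartan ones, which is precisely what makes the $M$-component of the obstruction cocycle the canonical element $u$ --- so that the class genuinely lands in $H^2(C)$ rather than only in its quotient by the image of the connecting map $H^1(M) \to H^2(C)$ --- and what keeps the correction in step (iv) inside $\MC_\phi(R')$. Equivalently, the hypothesis lets one factor $\phi_2$ through the Maurer--Cartan locus and thereby reduce to the single-morphism situation of \cite{manetti_morphisms}. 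The statement's asymmetry between $\phi_1$ and $\phi_2$ is the signal that this point is essential, and I would be careful both to verify that the curvature-transformation rule is invoked only where valid and to confirm that the construction truly degenerates in the absence of the hypothesis rather than merely becoming less transparent.
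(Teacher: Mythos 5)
Your identifications of $\Inf(\Def_\phi)$ and $T(\Def_\phi)$ over the dual numbers, and the obstruction construction (reduce to $\MC_\phi$ via \cref{smoothness of projection to quotient}, choose lifts, form the triple built from the curvatures $Q(\tilde x_i)$ and the defect in the gauge equation), are the paper's proof. The sign in the $M$-slot ($-u$ for you, $+s$ in the paper) is just a cone convention and your two verifications (i) and (ii) are internally consistent with each other. The one genuine methodological difference is how the cocycle condition is checked: the paper expands $\phi_1(h_1) - \phi_2(h_2) - ds$ by hand to arrive at $Q((-\tilde\tau) * \phi_2(\tilde x_2)) - Q(\phi_2(\tilde x_2))$ and then uses the hypothesis on $\phi_2$ to declare both summands zero, whereas you apply the curvature-transformation identity $Q(\eta * z) = e^{\mathrm{ad}_\eta}Q(z)$ directly to both sides of $\tilde\tau * \phi_1(\tilde x_1) = \phi_2(\tilde x_2) + u$.

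But your closing paragraph is inconsistent with your own step (i), and you should resolve that. Tracing through your argument: $Q(\tilde\tau * \phi_1(\tilde x_1)) = e^{\mathrm{ad}_{\tilde\tau}}\phi_1(Q(\tilde x_1)) = \phi_1(Q(\tilde x_1))$, because $\phi_1(Q(\tilde x_1)) \in J \otimes M^2$ and $\mathrm{ad}_{\tilde\tau}$ annihilates $J \otimes M$ (using $J\m_{R'} = 0$); and $Q(\phi_2(\tilde x_2) + u) = \phi_2(Q(\tilde x_2)) + du$, because $[\phi_2(\tilde x_2), u]$ and $[u,u]$ both land in $J\m_{R'} \otimes M^2 = 0$. \emph{Nowhere} did you use that $\phi_2(\tilde x_2)$ is Maurer--Cartan. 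The same observation applied to the paper's final identity shows its right-hand side vanishes automatically: $Q((-\tilde\tau) * \phi_2(\tilde x_2)) = e^{\mathrm{ad}_{-\tilde\tau}}\phi_2(Q(\tilde x_2)) = \phi_2(Q(\tilde x_2)) = Q(\phi_2(\tilde x_2))$, with no hypothesis. The paper's remark after the theorem explicitly concedes that the author does not know whether the hypothesis can be dropped and only establishes $C^2/B^2(C)$ unconditionally; the curvature-transformation rule (valid for arbitrary, not merely Maurer--Cartan, elements of $\m_{R'}\otimes M^1$) closes exactly that gap. Your last paragraph --- asserting that the hypothesis is indispensable, that without it the class would live only in a quotient of $H^2(C)$ by the image of $H^1(M)$, and that it is needed to keep the step-(iv) correction inside $\MC_\phi(R')$ --- contradicts your step (i) and contradicts your own parenthetical remark in step (iv) that only $J\m_{R'} = 0$ is used there. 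Either your step (i) is wrong in some way you should identify, or (as I believe) the theorem holds without the hypothesis and you have discovered a small improvement; but you cannot have written a proof that dispenses with the hypothesis and simultaneously claim it is essential.
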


\begin{proof}
	The argument is essentially identical to the one \cite[section 2]{manetti_morphisms}, but we record it here for completeness. We will write elements of $\m_{K[\epsilon]} \otimes V$ as $\epsilon v$ where $v \in V$. First, the stabilizer of $(0,0,0) \in \MC_\phi(K[\epsilon])$ is given by pairs 
	\[ (\epsilon \eta_1, \epsilon \eta_2) \in G_{L_1}(K[\epsilon]) \times G_{L_2}(K[\epsilon]) \]
	such that
	\[ (\epsilon \eta_1, \epsilon \eta_2) * (0,0,0) = (\epsilon \eta_1 * 0, \epsilon \eta_2 * 0,  \phi_2(\epsilon \eta_2) * (- \phi_1(\epsilon \eta_1))) \]
	equals $(0,0,0)$. Using the fact that $\epsilon^2 = 0$, we find that this condition is equivalent to $\eta_1 \in Z^0(L_1)$, $\eta_2 \in Z^0(L_2)$, and 
	\[ \phi_2(\eta_2) = \phi_1(\eta_1), \]
	so $(\epsilon \eta_1, \epsilon \eta_2)$ stabilizes $(0,0,0)$ if and only if
	\[ (\eta_1, \eta_2) \in (Z^0(L_1) \oplus Z^0(L_2)) \cap \ker(\phi_1 - \phi_2) = Z^0(C) = H^0(C) \]
	where we have used the fact that $L_1, L_2$ and $M$ all vanish in negative degrees. This shows that ``deleting $\epsilon$'' defines an isomorphism
	\[ \Inf(\Def_\phi) \simeq H^0(C). \]
	
	Next, let us compute the tangent space. Since $\epsilon^2 = 0$, we have
	\[ \MC_{L_i}(K[\epsilon]) = Z^1(\m_{K[\epsilon]} \otimes L_i) = \m_{K[\epsilon]} \otimes Z^1(L_i). \]
	Suppose now that $\epsilon x_i \in \MC_{L_i}(K[\epsilon])$. Then $\epsilon \tau \in G_M(K[\epsilon])$ satisfies
	\[ (\epsilon \tau) * \phi_1(\epsilon x_1) = \phi_2(\epsilon x_2) \]
	if and only if
	\[ \phi_1(x_1) - d\tau = \phi_2(x_2). \]
	Thus we see that
	\[ \MC_\phi(K[\epsilon]) \simeq \{ (x_1, x_2, \tau) : x_i \in Z^1(L_i) \text{ and } \phi_1(x_1) - \phi_2(x_2) = d\tau \} = Z^1(C). \]
	Now note that an element $(\epsilon x_1, \epsilon x_2, \epsilon \tau)$ is gauge equivalent to $(0,0,0)$ precisely if there exists 
	\[ (\epsilon \eta_1, \epsilon \eta_2) \in G_{L_1}(K[\epsilon]) \times G_{L_2}(K[\epsilon]) \]
	such that
	\[ (\epsilon \eta_1 * 0, \epsilon \eta_2 * 0, \phi_2(\epsilon \eta_2) * (-\phi_1(\epsilon \eta_1))) = (\epsilon x_1, \epsilon x_2, \epsilon \tau). \]
	Note that $\epsilon \eta_i * 0 = -\epsilon d\eta_i$ and 
	\[ \phi_2(\epsilon \eta_2) * (-\phi_1(\epsilon \eta_1)) = \epsilon(\phi_2(\eta_2) - \phi_1(\eta_1)), \]
	so $(\epsilon x_1, \epsilon x_2, \epsilon \tau)$ is gauge equivalent to $(0,0,0)$ precisely if there exists $(\eta_1, \eta_2)$ such that
	\[ d_C(-\eta_1, -\eta_2) = (x_1, x_2, \tau). \]
	Thus ``deleting $\epsilon$'' defines an isomorphism
	\[ T(\Def_\phi) \simeq H^1(C).  \]
	
	Finally, we want to define on $H^2(C)$ the structure of an obstruction space for $\Def_\phi$. By \cref{smoothness of projection to quotient}, it is sufficient to define the structure of an obstruction space for $\MC_\phi$. Let $\pi : R' \to R$ be a small extension in $\Art_K$ and suppose $(x_1, x_2,\tau) \in \MC_\phi(R)$. Since the functor $\Art_K \to \Set$ given by $R \mapsto \m_R \otimes L_i^1$ is evidently smooth, there exists $x_i' \in \m_{R'} \otimes L^1_i$ such that $\pi(x_i') = x_i$ for $i = 1,2$. Also, since $G_M$ is smooth, there exists $\tau' \in G_M(R')$ such that $\pi(\tau') = \tau$. We now define
	\[ h_i := Q(x_i') = dx_i' + \frac{1}{2}[x_i', x_i'] \text{ for $i = 1, 2$, and } s := \tau' * \phi_1(x_1') - \phi_2(x_2'). \]
	Since $(x_1, x_2, \tau) \in \MC_\phi$, we have $\pi(h_i) = 0$ and $\pi(s) = 0$, so $(h_1, h_2, s)$ is an element of \[ \ker(\pi) \otimes C^2 = \ker(\pi) \otimes (L^2_1 \oplus L^2_2 \oplus M^1), \]
	and $(x'_1, x'_2, \tau') \in \MC_\phi(R')$ if and only if $(h_1, h_2, s) = 0$. We will show the following. 
	\begin{enumerate}[label=(\alph{*})]
		\item Replacing $(x_1', x_2', \tau')$ with different lifts corresponds precisely to shifting $(h_1, h_2, s)$ by a 2-coboundary in $\ker(\pi) \otimes C^2$.
		\item $(h_1, h_2, s)$ is a 2-cocycle in $\ker(\pi) \otimes C^2$.
	\end{enumerate}
	Once we have proved these facts, we can then define $o(\pi, (x_1, x_2, \tau))$ to be the class in $\ker(\pi) \otimes H^2(C)$ represented by $(h_1, h_2, s)$. This class is independent of choices and measures exactly the obstruction to lifting $(x_1, x_2, \tau)$.
	
	Let us first look at point (a). Given two lifts $(x_1', x_2', \tau')$ and $(x_1'', x_2'', \tau'')$ of $(x_1, x_2, \tau)$, their difference
	\[ (\epsilon_1, \epsilon_2, \delta) := (x_1'', x_2'', \tau'') - (x_1', x_2', \tau') \]
	is an element $\ker(\pi) \otimes C^1$. Then the difference between the corresponding $h_i$'s is exactly $d\epsilon_i$. The proof of this is identical to the arguments in \cite[paragraph 2.7]{goldman} or \cite[section 3]{manetti_basics}. Now consider the difference between the corresponding $s$'s. 
	\begin{equation} \label{difference of s's} \left( \tau'' * \phi_1(x_1'') - \phi_2(x_2'') \right) - \left(\tau' * \phi_1(x_1') - \phi_2(x_2') \right) \end{equation}
	Since $\tau'' = \tau' + \delta$, we find by applying \cite[lemma 2.8]{goldman} that 
	\[ \tau'' * \phi_1(x_1'') = \tau' * \phi_1(x_1'') - d\delta. \]
	Now note that
	\[ \tau' * \phi_1(x_1'') - \tau' * \phi_1(x_1') = \phi_1(\epsilon_1) + \sum_{n = 0}^\infty \frac{[\tau', -]^n}{(n+1)!}(\phi(\epsilon_1)) = \phi_1(\epsilon_1), \]
	where the sum vanishes because $\tau' \in \m_{R'} \otimes M^0$, $\phi(\epsilon_1) \in \ker(\pi) \otimes M^1$, and $\ker(\pi)\m_R = 0$. 
	Putting all of this together, we find
	\[ \eqref{difference of s's} = \phi_1(\epsilon_1) - \phi_2(\epsilon_2) - d\delta. \]
	This shows that replacing $(x_1', x_2', \tau')$ with $(x_1'', x_2'', \tau'')$ corresponds to replacing $(h_1, h_2, s)$ with 
	\[ (h_1 + d\epsilon_1, h_2 + d\epsilon_2, s + \phi_1(\epsilon_1) - \phi_2(\epsilon_2) - d\delta) = (h_1, h_2, \delta) + d(\epsilon_1, \epsilon_2, \delta). \]
	This concludes the proof of point (a).
	
	For point (b), we compute
	\[ d(h_1, h_2, s) = (dh_1, dh_2, \phi_1(h_1) - \phi_2(h_2) - ds). \]
	The proof that $dh_i = 0$ is identical to the arguments in \cite[paragraph 2.7]{goldman} or \cite[section 3]{manetti_basics}, so we just need to show that
	\[ \phi_1(h_1) - \phi_2(h_2) = ds. \]
	Since $\tau' * \phi_1(x_1') = \phi_2(x_2') + s$ by definition of $s$, we have
	\[ \begin{aligned} \phi_1(x_1') &= (-\tau') * (s + \phi_2(x_2')) \\
	&= \exp([-\tau', -])(s) + (-\tau') * \phi_2(x_2') \\
	&= s + (-\tau')*\phi_2(x_2') \end{aligned} \]
	where for the last step, we have used the fact that $[-\tau', s] = 0$ since $\ker(\pi) \m_{R'} = 0$. Then
	\[ \begin{aligned} \phi_1(h_1) &= \phi_1 \left(dx_1' + \frac{1}{2}[x_1', x_1'] \right) \\
	&= d\phi_1(x_1') + \frac{1}{2}[\phi_1(x_1'), \phi_1(x_1')]  \\
	&= d\left(s + (-\tau') * \phi_2(x_2') \right) + \frac{1}{2}\left[s + (-\tau')*\phi_2(x_2'), s + (-\tau')*\phi_2(x_2') \right] \\
	&= ds + d((-\tau') * \phi_2(x_2')) + \frac{1}{2}[(-\tau')*\phi_2(x_2'), (-\tau')*\phi_2(x_2')] \\
	&= ds + Q((-\tau') * \phi_2(x_2')) \end{aligned} \]
	so it follows that
	\[ \phi_1(h_1) - \phi_2(h_2) - ds = Q((-\tau') * \phi_2(x_2')) - Q(\phi_2(x_2')). \]
	Since we have assumed that $\phi_2(\m_{R'} \otimes L^1_2) \subseteq \MC_{M}(R')$, we see that the right-hand side of the above equation vanishes, completing the proof.  
\end{proof}

\begin{remark}
	In the theorem above, we prove that $H^2(C)$ is an obstruction space for $\Def_\phi$ only under the technical hypothesis that $\phi_2(\m_R \otimes L^1_2) \subseteq \MC_M(R)$ for all $R \in \Art_K$. We do not know if $H^2(C)$ is an obstruction space for $\Def_\phi$ without this assumption, but the proof above does at least show that $C^2/B^2(C)$ is always an obstruction space even without this technical hypothesis. In any case, we will only use the above result when $L_2^1 = 0$, in which case this hypothesis is automatically satisfied. 
\end{remark}

\begin{remark} \label{producing a long exact sequence}
	With $C = \Cone(\phi_1 - \phi_2)[-1]$ and $\phi_2(\m_R \otimes L^1_2) \subseteq \MC_M(R)$ for all $R \in  \Art_K$ as above, we have a distinguished triangle
	\[ \begin{tikzcd} C \ar{r} & L_1 \oplus L_2 \ar{r}{\phi_1 - \phi_2} & M \ar{r}{+} & \mbox{} \end{tikzcd} \]
	in the derived category of vector spaces. The long exact sequence on cohomology associated to this distinguished triangle then relates the infinitesimal automorphisms, tangent spaces, and obstruction spaces of $\Def_\phi$, $\Def_{L_1}$, $\Def_{L_2}$, and $\Def_M$.
\end{remark}

\begin{definition}
	If $\scF$ is category over $\Art_K$, we say that the pair $\phi = (\phi_1, \phi_2)$ \emph{governs} $\scF$ if there exists an equivalence $\Def_\phi \to \scF$ of categories over $\Art_K$.
\end{definition}

\begin{remark}
	One could say that $\Def_\phi$ is ``governed'' by $C = \Cone(\phi_1 - \phi_2)[-1]$, but $C$ is not a differential graded Lie algebra. Readers with inclinations towards higher category theory may appreciate knowing that $C$ is naturally an $L_\infty$-algebra \cite{fiorenza}, but we will not need this. It is also possible to find a differential graded Lie algebra that does govern $\Def_\phi$ \cite[section 7]{manetti_morphisms}, but we will not need this either.
\end{remark}

\begin{example} \label{backbone example}
	We will now discuss an extended example that will serve as the backbone for the discussion in \cref{section:trivialized and trivializable}. Let $\alpha : L \to M$ be a homomorphism of differential graded Lie algebras concentrated in nonnegative degrees, and let $\Gamma$ denote the residual gerbe of $\Def_M$. We then form the following diagram of deformation categories.
	\[ \begin{tikzcd} \Def_{(\alpha, 0)} \ar{r} \ar{d} & \Def_{(\alpha, i)} \ar{r} \ar{d} & \Def_L \ar{d} \\ h_K \ar{r} & \Gamma \ar{r} & \Def_M \end{tikzcd} \]
	Here $i$ denotes the inclusion $H^0(M) \hookrightarrow M$. Note that $\Gamma = \Def_{H^0(M)}$ by \cref{residual gerbe governed by h0} and clearly $h_K = \Def_0$, so \cref{fiber product} implies that
	\[ \Def_{(\alpha, 0)} = \Def_L \times_{\Def_M} h_K \text{ and } \Def_{(\alpha, i)} = \Def_L \times_{\Def_M} \Gamma. \]
	Since $h_K \to \Gamma$ is essentially surjective, its pullback $\Def_{(\alpha, 0)} \to \Def_{(\alpha, i)}$ is also essentially surjective. 
	
	Now define the following.
	\[ \begin{aligned}
	C^+ &= \Cone(\alpha : L \to M)[-1] \\
	C &= \Cone(\alpha - i : L \oplus H^0(M) \to M)[-1] \\
	\end{aligned} \]
	Then we have two distinguished triangles as in \cref{producing a long exact sequence}, and a morphism between them as follows.
	\[ \begin{tikzcd} C^+ \ar{r} \ar[dotted]{d} & L \ar{r}{\alpha} \ar{d}[swap]{(1, 0)} & M \ar{d}{1} \ar{r}{+} & \mbox{} \\
	C \ar{r} & L \oplus H^0(M) \ar{r}{\alpha - i} & M \ar{r}{+} & \mbox{} \end{tikzcd} \]
	We then get a morphism of long exact sequences.
	\[ \begin{aligned} \begin{tikzcd} 0 \ar{r} & H^0(C^+) \ar{r} \ar{d} & H^0(L) \ar{r}{\alpha} \ar{d}{(1,0)} & H^0(M) \ar{r} \ar{d}{1} & \dotsb \\
	0 \ar{r} & H^0(C) \ar{r} & H^0(L) \oplus H^0(M) \ar{r}{\alpha - 1} & H^0(M) \ar{r} & \dotsb \end{tikzcd} \\
	\mbox{} \\
	\begin{tikzcd}
	\dotsb \ar{r} & H^0(M) \ar{r} \ar{d}{1} & H^1(C^+) \ar{d} \ar{r} & H^1(L) \ar{d}{1} \ar{r}{\alpha} & H^1(M) \ar{r} \ar{d}{1} & \dotsb  \\
	\dotsb \ar{r} & H^0(M) \ar{r} & H^1(C) \ar{r} & H^1(L) \ar{r}{\alpha} & H^1(M) \ar{r} & \dotsb
	\end{tikzcd}
	\end{aligned} \]
	Clearly we have $H^i(C^+) \simeq H^i(C)$ for all $i \geq 2$. Note moreover that $i : H^0(M) \hookrightarrow M$ and $0 \hookrightarrow M$ are both zero in degree 1, so $H^2(C) \simeq H^2(C^+)$ is an obstruction space for $\Def_{(\alpha, i)}$ and $\Def_{(\alpha, 0)}$, respectively, using \cref{deformations and fiber product}. 
	
	Consider the map $\alpha - 1 : H^0(L) \oplus H^0(M) \to H^0(M)$. Its kernel is $H^0(C)$, so
	\[ \begin{aligned} \Inf(\Def_{(\alpha, \iota)}) &= H^0(C) \\
	&= \ker(\alpha - 1 : H^0(L) \oplus H^0(M) \to H^0(M)) \\
	&= \{ (x, \alpha(x)) : x \in H^0(L) \} \\
	&= H^0(L) \\
	&= \Inf(\Def_L). \end{aligned} \]
	Moreover, clearly $\alpha - 1$ is surjective. This means that the connecting map $H^0(M) \to H^1(C)$ is the zero map, so
	\[ \begin{aligned} T(\Def_{(\alpha, \iota)}) &= H^1(C) \\
	&= \ker(\alpha : H^1(L) \to H^1(M)) \\
	&= \im(H^1(C^+) \to H^1(L)) \\
	&= \im(T(\Def_{(\alpha, 0)}) \to T(\Def_L)). \end{aligned} \]
	In particular, the map on tangent spaces induced by $\Def_{(\alpha, 0)} \to \Def_{(\alpha, i)}$ is surjective. It is also straightforward to check directly from the construction of obstruction classes that the isomorphism $H^2(C^+) \simeq H^2(C)$ is compatible with $\Def_{(\alpha, 0)} \to \Def_{(\alpha, i)}$, so the standard smoothness criterion \cref{result:standard smoothness criterion} implies that $\Def_{(\alpha, 0)} \to \Def_{(\alpha, i)}$ is smooth.
\end{example}

	\section{Deformations of modules}  \label{chapter:deformations of modules}

Throughout this section, let $K$ be a field of characteristic 0, $D$ an associative $K$-algebra, and $E$ a left $D$-module. For any $R \in \Art_K$, we let $D_R := R \otimes_K D$ and $E_R := R \otimes_K E$. In this chapter, we study the following deformation problem. 

\begin{definition}
	Let $\Def_{D,E}$ be the category of tuples $(R, F, \theta)$ where $R \in \Art_K$, $F$ is a left $D_R$-module flat over $R$, and $\theta : F \to E$ is a left $D_R$-module homomorphism such that the induced homomorphism $K \otimes_R F \to E$ of left $D$-modules is an isomorphism.
	Morphisms $(R', F', \theta') \to (R, F, \theta)$ in $\Def_{D,E}$ are pairs $(\pi, u)$ consisting of a homomorphism $\pi : R' \to R$ in $\Art_K$ and a homomorphism $u : F' \to F$ of left $D_{R'}$-modules such that the corresponding left $D_R$-module homomorphism $R \otimes_{R'} F' \to F$ is an isomorphism, and such that $\theta \circ u = \theta'$.
	\[ \begin{tikzcd} F' \ar{r}{u} \ar[bend right]{dr}[swap]{\theta'} & F \ar{d}{\theta} \\ & E \end{tikzcd} \]
	When $D$ can be inferred from context, we will write $\Def_E$ instead of $\Def_{D,E}$. The forgetful functor $\Def_E \to \Art_K$ defined by $(R, E) \mapsto R$ presents $\Def_{E}$ as an opfibration in groupoids over $\Art_K$. It is straightforward to check directly that $\Def_E$ is a deformation category, but in any case this is a consequence of \cref{result:hochschild governs module deformations} below. For $R \in \Art_K$, we will abusively write 1 for the canonical map $E_R \to E$. Regarding $E_R$ as a left $D_R$-module in the natural way, the triple $(R, E_R, 1)$ becomes an object of $\Def_E$.
\end{definition}

\subsection{Hochschild complex}

For any $D$-bimodule $P$, let $\Hoch_K(D,P)$ denote the Hochschild cochain complex, so
\[ \Hoch^p_K(D, P) = \Hom_K(D^{\otimes p}, P) \]
for all non-negative integers $p$, where $D^{\otimes p}$ is the $p$-fold tensor product of $D$ over $K$. The differential is defined via a simplicial construction (cf. \cite[chapter 9]{weibel} for details). When $P$ is a $K$-algebra equipped with a $K$-algebra homomorphism $D \to P$, the cup product makes $\Hoch_K(D,P)$ a differential graded $K$-algebra (cf. \cite[section 7, page 278]{gerstenhaber}). 

In particular, $P = \End_K(E)$ is a $K$-algebra under composition equipped with a homomorphism $D \to \End_K(E)$, so $\Hoch_K(D, \End_K(E))$ is a differential graded $K$-algebra. We can then regard this as a differential graded $K$-Lie algebra under the graded commutator bracket.

\begin{theorem} \label{result:hochschild governs module deformations}
	$\Hoch_K(D, \End_K(E))$ governs $\Def_{E}$.
\end{theorem}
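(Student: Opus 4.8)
The plan is to exhibit an equivalence $\Def_{\Hoch_K(D,\End_K(E))} \to \Def_{D,E}$ of opfibrations in groupoids over $\Art_K$. Recall that by definition $\Def_{\Hoch_K(D,\End_K(E))} = [\MC_L/G_L]$ where $L = \Hoch_K(D,\End_K(E))$, so an object over $R \in \Art_K$ is a Maurer-Cartan element $x \in \m_R \otimes L^1 = \m_R \otimes \Hom_K(D, \End_K(E))$, i.e., an $R$-linear map $\xi \colon D_R \to \End_R(E_R)$ supported in $\m_R$ satisfying $Q(\xi) = d\xi + \tfrac12[\xi,\xi] = 0$. First I would unwind the Hochschild differential and the cup-product bracket in low degree to see that this Maurer-Cartan equation says exactly that $\rho_0 + \xi$ is an associative-algebra action, where $\rho_0 \colon D_R \to \End_R(E_R)$ is the $R$-linear extension of the given $D$-action on $E$. (Concretely: $d\xi(a,b) = a\cdot\xi(b) - \xi(ab) + \xi(a)\cdot b$ using the $D$-bimodule structure on $\End_K(E)$ coming from $\rho_0$, and $\tfrac12[\xi,\xi](a,b) = \xi(a)\xi(b)$, so $Q(\xi)=0$ unwinds to $(\rho_0+\xi)(ab) = (\rho_0+\xi)(a)(\rho_0+\xi)(b)$.) Thus a Maurer-Cartan element is the same data as a left $D_R$-module structure on the $R$-module $E_R$ that reduces mod $\m_R$ to the given structure on $E$; since $E_R$ is $R$-free, this gives an object $(R, E_R^{\xi}, 1)$ of $\Def_{D,E}$, where $E_R^\xi$ denotes $E_R$ with the deformed action.

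Next I would check that the gauge group $G_L(R) = \m_R \otimes L^0 = \m_R \otimes \End_K(E)$, with its Baker–Campbell–Hausdorff group law computed inside $R\otimes\End_K(E) = \End_R(E_R)$, is precisely the group $1 + \m_R\otimes\End_K(E) \subseteq \Aut_R(E_R)$ of $R$-automorphisms of $E_R$ reducing to the identity mod $\m_R$, via $\eta \mapsto \exp(\eta)$. Then I would verify that the gauge action of \cref{gauge} translates under these identifications into conjugation: $\exp(\eta)$ carries the action $\rho_0+\xi$ to $\exp(\eta)\circ(\rho_0+\xi)\circ\exp(\eta)^{-1}$ (after absorbing the constant term appropriately). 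This is the standard computation that the formula $\eta * x = x + \sum \frac{[\eta,-]^n}{(n+1)!}([\eta,x]-d\eta)$ is the infinitesimal/exponentiated form of conjugation by $\exp(\eta)$ on the space of "flat sections" type data; I would cite or reproduce the relevant identity (cf. \cite{manetti_basics} or \cite{goldman}). Consequently an isomorphism $(R, E_R^{\xi}, 1) \to (R, E_R^{\xi'}, 1)$ in $\Def_{D,E}$ lying over $\mathrm{id}_R$ — which is an element $u \in \Aut_R(E_R)$ intertwining the two actions and reducing to $1$ — is exactly the datum of a gauge transformation carrying $x$ to $x'$.

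The remaining routine points: (i) every object of $\Def_{D,E}$ over $R$ is isomorphic to one of the form $(R, E_R^\xi, 1)$, because the hypothesis that $F$ is $R$-flat (hence $R$-free, $R$ being artinian local) together with $K\otimes_R F \simeq E$ lets us choose an $R$-module isomorphism $F \simeq E_R$ lifting a chosen isomorphism $K\otimes_R F \simeq E$ and transport the $D_R$-action — this uses $\mathrm{char}\, K = 0$ only insofar as it is the running hypothesis, the real input is artinian local flatness; (ii) morphisms over a general $\pi\colon R'\to R$ unwind compatibly, so the functor is fully faithful and essentially surjective, hence an equivalence; (iii) this equivalence is visibly over $\Art_K$. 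I expect the main obstacle to be step (ii) of the second paragraph — pinning down that the gauge action formula really is conjugation by $\exp(\eta)$ with all constant terms and signs correct — since this is where the somewhat opaque BCH-type series in \cref{gauge} must be matched against the transparent conjugation action on module structures; everything else is bookkeeping. Once the equivalence is established, \cref{result:fundamental theorem of dglas} immediately endows $\Def_{D,E}$ with infinitesimal automorphisms $H^0 = \mathrm{HH}^0_K(D,\End_K(E)) = \End_D(E)$, tangent space $H^1 = \mathrm{HH}^1_K(D,\End_K(E))$, and obstruction space $H^2$, recovering and categorifying the observations of \cite{yau}.
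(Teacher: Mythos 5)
Your proposal matches the paper's proof essentially step for step: the Maurer--Cartan equation unwinds to multiplicativity of $\rho_0+\xi$ (the paper's $\phi + s_R$), the gauge group exponentiates to the automorphisms of $E_R$ congruent to $1$ mod $\m_R$, and the gauge action becomes conjugation by $\exp(\eta)$. The computation you flag as the main obstacle is precisely the one the paper carries out explicitly, using the identity $d\eta = -[\eta, s_R]$ to collapse the gauge-action series to $e^\eta(\phi + s_R)e^{-\eta} - s_R$.
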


\begin{proof}
	Let $L := \Hoch_K(D, \End_K(E))$. We are trying to construct an equivalence of categories  
	\[ \begin{tikzcd} \Def_{L} \ar{r}{\Theta} & \Def_{E} \end{tikzcd} \]
	over $\Art_K$. Observe that, for any $R \in \Art_K$, 
	\[ L_R := R \otimes_K L = \Hoch_R(D_R, \End_R(E_R)). \]
	In degree 1, this complex contains a canonical element $s_R : D_R \to \End_R(E_R)$ which is the $R$-algebra homomorphism defining the natural $D_R$-module structure on $E_R$. We can compute that $\phi \in L_R^1$ satisfies the Maurer-Cartan equation $d\phi + \frac{1}{2}[\phi,\phi] = 0$ if and only if
	\[ \phi(d_1)d_2 + d_1\phi(d_2) + \phi(d_1)\phi(d_2) = \phi(d_1d_2) \]
	for all $d_1, d_2 \in D_R$, if and only if the $R$-module homomorphism $\phi + s_R$ is actually an $R$-algebra homomorphism $D_R \to \End_R(E_R)$. Then note that
	\[ \m_R \otimes_K L = \ker(L_R \to L) \]
	so $\phi \in \m_R \otimes_K L^1$ if and only if $\phi + s_R$ maps to the $D$-module structure map $s_K \in L^1 = \Hom_K(D, \End_K(E))$.
	
	In other words, we conclude that objects $\Def_L(R)$ are in bijection with left $D_R$-module structures on $E_R$ which reduce to the given left $D$-module structure on $E$. If $\phi \in \Def_L(R)$, we write $E_{R,\phi}$ for $E_R$ regarded as a left $D_R$-module via $\phi$. Then $(R, \phi) \mapsto (R, E_{R,\phi}, 1)$ defines the functor $\Theta : \Def_L \to \Def_{E}$ on the level of objects.
	Since any $(R, F, \theta) \in \Def_E$ must have $F$ isomorphic \emph{as an $R$-module} to $E_R$, this also shows that the functor $\Theta$, once we have finished constructing it, must be essentially surjective.
	
	Next up, let's compute the gauge action. Observe that
	\[ \m_R \otimes_K L^0 = \m_R \otimes_K \End_K(E) = \ker(\End_R(E_R) \to \End_K(E)). \]
	If $\eta \in \m_R \otimes_K L^0$, then one checks that $d\eta = -[\eta, s_R]$, so for $\phi \in L^1_R$, we have
	\begin{align*}
	\eta * \phi &= \phi + \sum_{n = 0}^{\infty} \frac{[\eta, -]^n}{(n+1)!}([\eta, \phi] - d\eta) \\
	&= \phi + \sum_{n = 0}^\infty \frac{[\eta, -]^n}{(n+1)!} ([\eta, \phi + s_R]) \\
	&= \phi + \sum_{n = 0}^\infty \frac{[\eta, -]^{n+1}}{(n+1)!}(\phi + s_R) \\
	&= e^{[\eta, -]}(\phi + s_R) - s_R \\
	&= e^\eta(\phi + s_R)e^{-\eta} - s_R
	\end{align*}
	where $e^{[\eta, -]}$ and $e^\eta$ denote exponentiation of nilpotent endomorphisms. Note that $e^\eta$ is an $R$-module automorphism of $E_R$ with inverse $e^{-\eta}$, and the last equality is \cref{result:exponential of commutator} below.
	
	Now suppose that $(\pi, \eta)$ is a morphism $(R', \phi') \to (R, \phi)$ in $\Def_L$. In other words, $\pi : R' \to R$ is a homomorphism in $\Art_K$ and $\eta \in \m_R \otimes_K L^0$ satisfies
	\[ \eta * \pi(\phi') = \phi. \]
	We define $\Theta(\pi, \eta)$ to be the pair consisting of $\pi$ together with the following composite.
	\[ \begin{tikzcd} E_{R',\phi'} \ar{r}{\pi \otimes 1} & E_{R,\pi(\phi')} \ar{r}{e^\eta} & E_{R,\phi}. \end{tikzcd} \]
	This composite is a $D_R$-module homomorphism if and only if $e^\eta : E_{R,\pi(\phi')} \to E_{R,\phi}$ is a left $D_R$-module homomorphism, if and only if the following diagram commutes.
	\[ \begin{tikzcd} D_R \ar{r}{\pi(\phi') + s_R} \ar{d}[swap]{\phi + s_R} & \End_R(E_R) \ar{d}{e^\eta \cdot } \\ \End_R(E_R) \ar{r}[swap]{\cdot e^\eta} & \End_R(E_R) \end{tikzcd} \]
	This commutativity is equivalent to
	\[ (\phi + s_R)e^\eta = e^\eta (\pi(\phi') + s_R) \]
	which is exactly the condition $\eta * \pi(\phi') = \phi$. Finally, we observe that $e^\eta \equiv 1 \bmod{\m_R}$, so $\Theta(\pi, \eta)$ is indeed a morphism $\Theta(R', \phi') \to \Theta(R, \phi)$ in $\Def_{E}$. Now if $(\pi', \eta') : (R'', \phi'') \to (R', \phi')$ is another morphism in $\Def_L$, we need to check that the following diagram commutes.
	\[ \begin{tikzcd} E_{R, \pi'(\pi(\phi''))} \ar{r}{e^{\pi(\eta')}} \ar[bend right]{dr}[swap]{e^{\eta * \pi(\eta')}} & E_{R, \pi(\phi')} \ar{d}{e^{\eta}} \\ & E_{R, \phi} \end{tikzcd} \]
	But note the equality
	\[ \log(e^\eta e^{\pi(\eta')}) = \eta *  \pi(\eta') \]
	so taking the exponential of both sides yields the desired equality. This completes our construction of the functor $\Theta : \Def_L \to \Def_{E}$.
	
	To complete the proof, we must check that $\Theta$ is fully faithful. This requires showing that, for any $R \in \Art_K$ and any $\phi, \phi' \in \Def_L(R)$, every left $D_R$-module isomorphism $\alpha : E_{R,\phi'} \to E_{R,\phi}$ reducing to the identity on $E$ modulo $\m_R$ is of the form $e^\eta$ for some $\eta \in \End_R(E_R)$ satisfying $\eta * \phi' = \phi$, and moreover that there is only one such $\eta$. We clearly must take
	\[ \eta = \log(\alpha) = \log(1 + (\alpha - 1)) = \sum_{n = 1}^\infty (-1)^{n+1} \frac{(\alpha - 1)^n}{n}, \]
	which is well-defined since $\alpha - 1$ is nilpotent. The fact that $\eta* \phi' = \phi$ is then a translation of the fact that $\alpha$ is a left $D_R$-module homomorphism, as we saw above.
\end{proof}

\begin{lemma} \label{result:exponential of commutator}
	Let $R$ be a commutative ring and $A$ an $R$-algebra. Suppose that $a, b \in A$ and that $a$ is nilpotent. Then $[a, -]$ is a nilpotent $R$-module endomorphism of $A$ and
	\[ e^a b a^{-a}  = \sum_{n = 0}^\infty \sum_{k = 0}^n \frac{(-1)^k a^{n-k}ba^k}{(n-k)!k!} = e^{[a, -]}(b).  \]
\end{lemma}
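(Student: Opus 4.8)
The plan is to reduce the whole statement to the elementary fact that exponentials of commuting nilpotent operators multiply. First I would introduce the left and right multiplication operators $L_a, R_a \in \End_R(A)$, given by $L_a(x) = ax$ and $R_a(x) = xa$; these are $R$-linear, they commute with one another (both composites send $x$ to $axa$), and $[a,-] = L_a - R_a$. If $a^N = 0$ then $L_a^N = R_a^N = 0$, so both are nilpotent; since they commute, the binomial theorem gives $(L_a - R_a)^{2N-1} = 0$ (in each term one of $L_a^j$ or $R_a^{2N-1-j}$ has exponent $\geq N$), which proves that $[a,-]$ is nilpotent. In particular every exponential series appearing below is a finite sum, so there is nothing to check about convergence.

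Next I would note that $L_a^k$ is left multiplication by $a^k$ and $R_a^k$ is right multiplication by $a^k$, whence $\exp(L_a)$ is left multiplication by $e^a$ and $\exp(-R_a)$ is right multiplication by $e^{-a}$. Since $L_a$ and $-R_a$ are commuting nilpotents, $\exp([a,-]) = \exp(L_a - R_a) = \exp(L_a)\exp(-R_a)$, the usual formal identity, which holds because the binomial theorem is available for the commuting pair and the relevant factorials are invertible in $R$. Evaluating at $b$ gives $\exp([a,-])(b) = e^a b e^{-a}$, which is the rightmost equality of the statement (the notation $a^{-a}$ there being an evident typo for $e^{-a}$).

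For the middle expression I would simply expand $\exp(L_a)\exp(-R_a)(b)$ as the product $\bigl(\sum_{m\ge 0} a^m/m!\bigr)\,b\,\bigl(\sum_{\ell\ge 0}(-1)^\ell a^\ell/\ell!\bigr) = \sum_{m,\ell\ge 0}\tfrac{(-1)^\ell}{m!\,\ell!}\,a^m b a^\ell$, and then reindex via $n = m+\ell$, $k = \ell$ to obtain $\sum_{n\ge 0}\sum_{k=0}^n \tfrac{(-1)^k}{(n-k)!\,k!}\,a^{n-k} b a^k$, which is exactly the displayed double sum.

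There is essentially no obstacle: the only points requiring a little care are the identity $\exp(X+Y) = \exp(X)\exp(Y)$ for commuting nilpotent $X,Y$ and the reindexing of the double sum, both purely formal. The one hypothesis to keep in mind is that the factorials up to $(N-1)!$ must be invertible in $R$ for the exponentials to make sense; in the intended application $R \in \Art_K$ with $\characteristic K = 0$, so $\Q \subseteq R$ and this is automatic.
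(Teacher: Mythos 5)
Your proof is correct, complete, and cleanly packaged. The paper's own proof is the one-liner ``Both equalities are direct calculations,'' so you have simply supplied the calculation; your route through the commuting operators $L_a$ and $R_a$, the identity $\exp(L_a - R_a) = \exp(L_a)\exp(-R_a)$, and the reindexing $n = m+\ell$, $k = \ell$ is exactly the standard way to do it. Two small but worthwhile observations you made: $a^{-a}$ in the display is indeed a typo for $e^{-a}$, and the lemma as stated over an arbitrary commutative ring $R$ tacitly requires the relevant factorials to be invertible (so really one wants $R$ to be a $\Q$-algebra, which holds automatically in the intended application $R \in \Art_K$ with $\characteristic K = 0$).
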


\begin{proof}
	Both equalities are direct calculations.
\end{proof}

\begin{corollary} \label{result:fundamental theorem of deformation of modules}
	\begin{enumerate}[label=(\alph{*})]
		\item $\Inf(\Def_{E}) = \End_D(E)$.
		\item $T(\Def_{E}) = \Ext^1_D(E,E)$.
		\item $\Ext^2_D(E,E)$ is an obstruction space for $\Def_{E}$.
	\end{enumerate}
\end{corollary}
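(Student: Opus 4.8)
The plan is to deduce the corollary from the two structural results already in hand. By \cref{result:hochschild governs module deformations}, there is an equivalence $\Def_L \to \Def_E$ of categories over $\Art_K$, where $L := \Hoch_K(D, \End_K(E))$ is viewed as a differential graded Lie algebra under the graded commutator bracket. Infinitesimal automorphisms, tangent spaces, and obstruction spaces are all preserved by equivalences of deformation categories, so \cref{result:fundamental theorem of dglas} immediately yields $\Inf(\Def_E) = H^0(L)$, $T(\Def_E) = H^1(L)$, and exhibits $H^2(L)$ as an obstruction space for $\Def_E$. The only thing left is to identify the Hochschild cohomology groups $H^p(\Hoch_K(D, \End_K(E)))$ with $\Ext^p_D(E,E)$ for $p = 0, 1, 2$ --- and in particular to observe that in degree $0$ this gives $\End_D(E)$.

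For this identification I would use the bar resolution $B_\bullet \to E$ of $E$ as a left $D$-module, with $B_p = D^{\otimes(p+1)} \otimes_K E = D \otimes_K (D^{\otimes p} \otimes_K E)$. Since $K$ is a field, each $B_p$ is a free, hence projective, left $D$-module, and $B_\bullet \to E$ is a genuine resolution. The Hom-tensor adjunction over $K$ then gives, for each $p$, a natural isomorphism
\[ \Hoch^p_K(D, \End_K(E)) = \Hom_K(D^{\otimes p}, \Hom_K(E,E)) \simeq \Hom_K(D^{\otimes p} \otimes_K E, E) \simeq \Hom_D(B_p, E), \]
and a routine check shows that these isomorphisms intertwine the Hochschild coboundary with the differential induced by the bar differential. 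Hence $H^p(\Hoch_K(D, \End_K(E))) \simeq \Ext^p_D(E,E)$ for all $p$; taking $p = 0$ recovers $H^0(L) = \Hom_D(E,E) = \End_D(E)$, which together with the first paragraph proves (a), (b), and (c). Equivalently, one may cite the classical change-of-rings isomorphism $\Ext^p_{D \otimes_K D^{\mathrm{op}}}(D, \Hom_K(E,E)) \simeq \Ext^p_D(E,E)$, of which the above is a hands-on incarnation.

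The main --- and still fairly minor --- obstacle is precisely the compatibility of the adjunction isomorphisms with the two differentials: this is a bookkeeping computation with the standard simplicial formulas for the Hochschild coboundary and the bar differential. It requires no finiteness hypothesis on $E$, since the Hom-tensor adjunction and the projectivity of the $B_p$ hold unconditionally over the field $K$.
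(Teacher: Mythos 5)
Your proof is correct and follows essentially the same route as the paper: apply Theorems \ref{result:hochschild governs module deformations} and \ref{result:fundamental theorem of dglas}, then identify Hochschild cohomology of $D$ with coefficients in $\End_K(E)$ with $\Ext^\bullet_D(E,E)$. The only difference is cosmetic: the paper cites \cite[lemma 9.1.9]{weibel} together with the observation that $K$ is a field to pass from relative to absolute Ext, whereas you unwind that citation into the explicit bar-resolution/Hom-tensor-adjunction computation, which is precisely the content of that lemma.
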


\begin{proof}
	This follows immediately from \cref{result:hochschild governs module deformations,result:fundamental theorem of dglas}, and the fact that
	\[ H^i(\Hoch_K(D, \End_K(E))) = \Ext^i_{D/K}(E, E) = \Ext^i_D(E, E) \]
	for all $i$. Here, we use \cite[lemma 9.1.9]{weibel} to calculate the cohomology of the Hochschild complex, and the fact that $K$ is a field to identify $\Ext^i_{D/K}$ with $\Ext^i_D$. 
\end{proof}

\begin{remark} \label{remark:sheaf theoretic upgrade}
	In light of the fundamental theorem of deformation theory \ref{deformation theory}, we see that $\Def_E$ has a hull as long as $\Ext^1_D(E, E)$ is finite dimensional. We will also have a prorepresentability result when $\End_D(E) = K$; cf. \cref{automorphisms lift for endomorphically simples} below. Taking $D$ to be commutative, these results would give us statements about deformations of coherent sheaves on affine $K$-schemes, but we rarely have finite dimensionality of $\Ext^1_D(E,E)$ over $K$ when $D$ is commutative. 
	
	However, it is quite likely that there is a generalization of \cref{result:hochschild governs module deformations} with $D$ an associative algebra in a general topos. Then, taking $D$ to be the structure sheaf of a proper $K$-scheme would allow us to recover standard results about the deformation theory of coherent sheaves on proper $K$-schemes (cf. \cite[theorem 3.6--8]{nitsure_deformation}). We do not pursue this generalization here since this will not be necessary for our intended applications. 
\end{remark}

\begin{remark}  \label{remarks on dimension 1}
	Suppose that
	\[ H^i( \Hoch_K(D, \End_K(E))) = \Ext^i_D(E,E) = 0 \]
	for all $i \geq 2$. For example, this could be because $E$ has projective dimension at most 1 as a left $D$-module, or, more strongly, because $D$ has left global dimension at most 1. In this situation, evidently the natural inclusion $\tau_{\leq 1}\Hoch_K(D, \End_K(E)) \to \Hoch_K(D, \End_K(E))$ is a quasi-isomorphism. Moreover, $\tau_{\leq 1}\Hoch_K(D, \End_K(E))$ is a differential graded subalgebra of $\Hoch_K(D, \End_K(E))$. Since $L \mapsto \Def_L$ factors through quasi-isomorphisms of differential graded Lie algberas by the equivalence \cref{result:quasi-isomorphism invariance}, we see that the two-term differential graded Lie algebra
	\[ \tau_{\leq 1}\Hoch_K(D, \End_K(E)) \]
	governs $\Def_{E}$. Recall the explicit description of $\tau_{\leq 1}\Hoch_K(D, \End_K(E))$. 
	\[ \begin{tikzcd} \End_K(E) \ar{r}{d} & \Der_K(D, \End_K(E)) \ar{r} & 0 \ar{r} & \dotsb \end{tikzcd} \]
	Here $\Der_K(D, \End_K(E))$ are \emph{derivations}, i.e. $K$-linear maps $s : D \to \End_K(E)$ satisfying the Leibniz rule $s(ab) = s(a)b + as(b)$ for all $a, b \in D$. The differential $d$ is given by 
	\[ d(\rho)(a) = a\rho - \rho a = [a, \rho] \]
	for $\rho \in \End_K(E)$ and $a \in D$  \cite[section 9.2]{weibel}. The image of the differential $d$ is the set of \emph{principal derivations}, denoted $\PDer_K(D, \End_K(E))$ \cite[definition 6.4.2]{weibel}. Multiplication is simply composition: in degree 0 it is composition of $K$-endomorphisms of $E$, and the multiplication maps
	\[ \begin{tikzcd} \End_K(E) \times \Der_K(D, \End_K(E)) \ar{r} & \Der_K(D, \End_K(E))  \\
	\Der_K(D, \End_K(E)) \times \End_K(E) \ar{r} & \Der_K(D, \End_K(E)) \end{tikzcd} \]
	are also composition: the first takes $(\rho, \delta)$ to the derivation $d \mapsto \rho \circ \delta(d)$, and the second takes $(\delta, \rho)$ to the derivation $d \mapsto \delta(d) \circ \rho$.
\end{remark}

\subsection{Derivations and extensions} \label{derivations and extensions}

If $E$ and $F$ are left $D$-modules, note that $\Hom_K(E, F)$ is a $D$-bimodule and 
\[ H^1(\Hoch_K(D, \Hom_K(E, F))) = \Ext^1_D(E, F) \]
using \cite[lemma 9.1.9]{weibel}. Let us work out explicitly how to regard Hochschild cohomology classes on the left-hand side as extensions of $E$ by $F$ on the right-hand side. We will apply this in the case when $E = F$, but it is less confusing to work in greater generality. 

For any $s \in \Hoch_K^1(D, \Hom_K(E, F)) = \Hom_K(D, \Hom_K(E, F))$, let $F \oplus_s E$ denote the vector space $F \oplus E$ endowed with an ``action'' of $D$ by the formula 
\[ a \cdot (f, e) = (af + s(a)(e), ae) \]
for all $a \in D$, $f \in F$ and $e \in E$. If $b$ is another element of $D$, then 
\[ ab \cdot (f, e) = a \cdot (b \cdot (f, e)) \text{ if and only if } s(ab) = s(a)b + a s(b) \]
so $F \oplus_s E$ is a left $D$-module if and only if $s$ is a derivation. When this is the case, note that the inclusion $F \hookrightarrow F \oplus_s E$ and the projection $F \oplus_s E \to E$ are both $D$-linear, so $F \oplus_s E$ is an extension of $E$ by $F$. In other words, we have defined a map
\[ \begin{tikzcd} \Der_K(D, \Hom_K(E, F)) \ar{r}{\alpha} & \Ext^1_D(E, F). \end{tikzcd} \]
We now claim that $\alpha$ is $K$-linear, that it is surjective, and that its kernel is exactly the set of principal derivations $\PDer_K(D, \Hom_K(E, F))$.

\subsubsection{Linearity}

If $s$ and $t$ are two derivations, we first need to show that the Baer sum of $F \oplus_s E$ and $F \oplus_t E$ is isomorphic to $F \oplus_{s + t} E$. Note that the pullback of
\[ \begin{tikzcd} & F \oplus_s E \ar{d} \\ F \oplus_t E \ar{r} &  E \end{tikzcd} \]
is isomorphic to $F \oplus F \oplus E$ as a vector space, and under this isomorphism, the action of $D$ is defined by
\[ a \cdot (f, f', e) = (af + s(a)(e), af' + t(a)(e), ae). \]
The Baer sum is defined to be the quotient of this pullback by the submodule \[ \{(f, -f, 0) : f \in F\}. \] Observe that the map from $F \oplus_{s + t} E$ into this quotient given by $(f, e) \mapsto (f, 0, e)$ is clearly $K$-linear and bijective; in fact, it is even $D$-linear, because
\[ \begin{aligned} a \cdot (f, 0, e) &= (af + s(a)(e), t(a)(e), e) \\
&= (af +  s(a)(e), 0, e) + (0, t(a)(e), 0) \\
&= (af + s(a)(e), 0, e) + (t(a)(e), 0, 0) \\
&= (af + (s+t)(a)(e), 0, e). \end{aligned} \]
This proves that $\alpha$ commutes with addition. 

To show that it commutes with scalar multiplication, we need to show that for any $\lambda \in K$, $F \oplus_{\lambda s} E$ is isomorphic to the pushout of the following. 
\[ \begin{tikzcd} F \ar{r} \ar{d}[swap]{\lambda} & F \oplus_s E \\ F \end{tikzcd} \]
More precisely, consider the map from $F \oplus_{\lambda s} E$ into this pushout given by $(f, e)  \mapsto (f, 0, e)$. This map is certainly $K$-linear and bijective. If $\lambda = 0$, $D$-linearity is clear. For $\lambda$ nonzero, we note the following.
\[ \begin{aligned} a \cdot (f, 0, e) &= a \cdot (0, \lambda^{-1}f, e) \\ 
&= (0, \lambda^{-1}af + s(a)(e), ae) \\
&= (af + (\lambda s)(a)(e), 0, ae). \end{aligned}  \]

\subsubsection{Surjectivity} 

Suppose $Q$ is an extension of $E$ by $F$. 
\[ \begin{tikzcd}
0 \ar{r} & F \ar{r} & Q \ar{r} & E \ar{r} & 0
\end{tikzcd} \]
This exact sequence splits in the category of vector spaces, so, after fixing $K$-linear splittings, we have $Q = F \oplus E$ as a vector space. For any $a \in D$ and $e \in E$, define $s(a)(e)$ to be the projection of $a \cdot (0, e)$ onto $F$. Then for any $f \in F$ we see that
\[ a \cdot (f, e) = a \cdot (f, 0) + a \cdot (0, e) = (af, 0) + (s(a)(e), ae) = (af + s(a)(e), ae) \]
using the fact that $F \to Q$ and $Q \to E$ are $D$-linear. Since $Q$ is a left $D$-module, we know from above that $s$ must be a derivation and $Q = F \oplus_s E$ as extensions of $E$ by $F$. This proves surjectivity. 

\subsubsection{Kernel is principal derivations}

Suppose that $s \in \Der_K(D, \Hom_K(E, F))$ and $F \oplus_s E$ is a trivial extension of $E$ by $F$. Choose an isomorphism $\phi :  F \oplus_s E \to F \oplus E$ of extensions. 
\[ \begin{tikzcd} 0 \ar{r} & F \ar{r} \ar[equals]{d} & F \oplus_s E \ar{r} \ar{d}{\phi} & E \ar{r} \ar[equals]{d} & 0 \\ 0 \ar{r} & F \ar{r} & F \oplus E \ar{r} & E \ar{r} & 0 \end{tikzcd} \]
The commutativity of this diagram means that $\phi$ must be given by $\phi(f, e) = (f + \sigma(e), e)$ for some $\sigma \in \Hom_K(E, F)$. Since $\phi$ is $D$-linear, for any $a \in D$ we have
\begin{align*}
(a\sigma(e), ae) &= a(\sigma(e), e) \\
&= a\phi(0, e) \\
&= \phi(a \cdot (0, e)) \\
&= \phi(s(a)(e), ae) \\
&= (s(a)(e) + \sigma(ae), ae)
\end{align*}
which means that
\[ s(a)(e) = a\sigma(e) - \sigma(ae) \]
or, in other words,
\[ s(a) = a\sigma - \sigma a = [a, \sigma] = d\sigma(a) \]
for all $a \in D$. In other words, we have $s = d\sigma$, so $s$ is principal. Conversely, it is also clear from this calculation that any principal derivation does in fact give rise to a trivial extension of $E$ by $F$. 

\subsection{Prorepresentability for irreducibles}

\begin{proposition} \label{ends-and-auts}
	Suppose $\End_D(E) = K$. Then for any $(R, F, \theta) \in \Def_E$, we have 
	\[ \End_{D_R}(F) = R \text{ and } \Aut_R(F, \theta) = 1 + \m_R. \]
\end{proposition}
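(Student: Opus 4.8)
The plan is to deduce everything from the single identity $\End_{D_R}(F) = R$, which I would prove by induction on $\dim_K R$ along small extensions; the statement about automorphisms then follows almost formally.

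Let me dispatch the automorphism claim first, assuming $\End_{D_R}(F) = R$. The map $\theta$ induces an isomorphism $F/\m_R F \xrightarrow{\sim} E$, so $\ker\theta = \m_R F$, and a $D_R$-automorphism $u$ of $F$ satisfies $\theta u = \theta$ precisely when $(u - 1)(F) \subseteq \m_R F$. Every $r \in 1 + \m_R$ acts on $F$ as such an automorphism, so $1 + \m_R \subseteq \Aut_R(F, \theta)$; conversely, any $u \in \Aut_R(F,\theta)$ is scalar multiplication by some $r \in R$, and $(r-1)F \subseteq \m_R F$ forces $r - 1 \in \m_R$ (otherwise $r - 1$ would be a unit). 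Hence $\Aut_R(F,\theta) = 1 + \m_R$.

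For the core identity, note first that $E \ne 0$ (else $\End_D(E) = 0 \ne K$). The base case $R = K$ is immediate: then $\theta \colon F \to E$ is an isomorphism of $D$-modules, so $\End_{D_K}(F) = \End_D(E) = K$. For the inductive step, choose a small extension $\pi \colon R' \to R$ whose kernel $I$ is one-dimensional over $K$ and annihilated by $\m_{R'}$, let $(R', F', \theta') \in \Def_E$ be given, and let $(R, F, \theta)$ be its pushforward along $\pi$, so that $\End_{D_R}(F) = R$ by the inductive hypothesis. Flatness of $F'$ over $R'$ turns $0 \to I \to R' \to R \to 0$ into a short exact sequence of $D_{R'}$-modules
\[ 0 \to IF' \to F' \to F \to 0, \qquad IF' \cong I \otimes_{R'} F' \cong F'/\m_{R'}F' \cong E, \]
the last isomorphism being induced by $\theta'$ (here $E$ and $F$ are regarded as $D_{R'}$-modules through $D_{R'} \to D$ and $D_{R'} \to D_R$). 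Applying $\Hom_{D_{R'}}(F', -)$ and using that $\m_{R'}$ kills $E$ while $I$ kills $F$, the outer terms collapse: $\Hom_{D_{R'}}(F', IF') = \Hom_D(E,E) = K$ and $\Hom_{D_{R'}}(F', F) = \Hom_{D_R}(F,F) = R$. Since the composite $R' \to \End_{D_{R'}}(F') \to R$ (scalars, then the map above) is just $\pi$, the map $\End_{D_{R'}}(F') \to R$ is surjective, giving an exact sequence $0 \to K \to \End_{D_{R'}}(F') \to R \to 0$. Thus $\dim_K \End_{D_{R'}}(F') = 1 + \dim_K R = \dim_K R'$; and $F'$ is faithful over $R'$ (it is flat with $F' \otimes_{R'} K = E \ne 0$), so the scalar embedding $R' \hookrightarrow \End_{D_{R'}}(F')$ must be an isomorphism, completing the induction.

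The only points needing care are the bookkeeping identifications: that $IF' \cong E$ as a $D_{R'}$-module — which is exactly where flatness and the defining property of $\theta'$ enter — and the surjectivity of $\End_{D_{R'}}(F') \to \Hom_{D_R}(F,F)$, which reduces to the trivial observation that scalar multiplication by $r' \in R'$ maps to $\pi(r')$. I do not expect a real obstacle here: the whole proof is a dévissage along small extensions powered by the Schur-type hypothesis $\End_D(E) = K$.
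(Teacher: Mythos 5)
Your argument is correct but takes a genuinely different route through the inductive step than the paper does. Both proofs proceed by d\'evissage along small extensions and both spend the hypothesis $\End_D(E) = K$ exactly once per step, but the paper argues element-wise: given $\phi \in \End_{D_R}(F)$, it subtracts the scalar lifting the image of $\phi$ over the smaller ring, observes that the difference factors through a $D$-linear map $E = F/\m_R F \to IF \cong E$, and invokes $\End_D(E) = K$ to conclude that the difference is again a scalar multiple of $\epsilon$. You instead apply $\Hom_{D_{R'}}(F',-)$ to the short exact sequence $0 \to IF' \to F' \to F \to 0$, identify the outer Hom groups as $K$ and $R$, establish surjectivity onto $R$ via the scalar map, and close with a dimension count together with faithfulness of $F'$ over $R'$. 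Your version involves somewhat more machinery for the same conclusion, but it has the merit of exhibiting $\End_{D_{R'}}(F')$ as an extension of $R$ by $K$, making it transparent where each piece of the data enters. Two small points worth spelling out if you write this up: the isomorphism $IF' \cong E$ uses both flatness of $F'$ and a choice of $K$-basis for the one-dimensional kernel $I$ (your displayed chain $I\otimes_{R'}F' \cong F'/\m_{R'}F'$ silently fixes one); and ``faithful'' deserves a sentence, namely that flatness of $F'$ over the local artinian $R'$ together with $F'\otimes_{R'}K = E \neq 0$ gives faithful flatness, whence $r'F' \cong r'R' \otimes_{R'} F' \neq 0$ for every nonzero $r' \in R'$.
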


\begin{proof}
	Note that the latter assertion follows from the former. Since the quotient map $R \to R/\m_R = K$ in $\Art_K$ can be factored into a series of small surjections \tag{06GE}, it suffices to prove the following: whenever $\pi : R \to R'$ is a small surjection and $(R, F, \theta) \to (R', F', \theta')$ is a map in $\Def_E$ lying over $\pi$ such that $\End_{D_{R'}}(F') = R'$, then $\End_{D_R}(F) = R$. 
	
	Suppose $\phi \in \End_{D_R}(F)$. Then the induced $D_{R'}$-linear endomorphism of $F'$ is a scalar in $R'$, which means that there exists $a \in R$ such that $\phi - a \in \End_{D_R}(F)$ induces the zero endomorphism of $F'$. In other words, if we define $\psi := \phi - a$, we have $\im(\psi) \subseteq IF$, where $I := \ker(\pi)$. This means that
	\[ \psi(\m_R F) \subseteq \m_R \im(\psi) \subseteq \m_R IF = 0 \]
	so $\psi$ naturally factors through a $\bar{\psi} : E = F/\m_R F \to IF$. 
	
	Since $\pi : R \to R'$ is a small surjection, its kernel $I$ is principally generated by some $\epsilon \in R$ such that $\epsilon \m_R = 0$. Since every element of $R$ can be written uniquely as $a + \eta$ with $a \in K$ and $\eta \in \m_R$, we have 
	\[ I = \{a\epsilon : a \in K \}. \]
	In other words, the map $K \to I$ given by $1 \mapsto \epsilon$ is an isomorphism of $R$-modules. Tensoring with $F$, we have an isomorphism of $D$-modules
	\[ \begin{tikzcd} E = K \otimes_R F \ar{r}{\sigma} & I \otimes_R F = IF. \end{tikzcd} \]
	Then $\sigma^{-1} \circ \bar{\psi} \in \End_D(E) = K$, so define $b := \sigma^{-1} \circ \bar{\psi}$. Observe that
	\[ \psi(f) = \bar{\psi}(f \bmod {\m_R F}) = (\sigma \circ \sigma^{-1} \circ \bar{\psi})(f \bmod{\m_R F}) = \sigma(b f \bmod{\m_R F}) = b\epsilon f  \]
	for any $f \in F$, which means that $\psi = b\epsilon$ and therefore $\phi = a + b\epsilon \in R$. 
\end{proof}

\begin{corollary} \label{automorphisms lift for endomorphically simples}
	Suppose $\End_D(E) = K$. If $(R', F', \theta') \to (R, F, \theta)$ in $\Def_E$ lies over a surjective map $R' \to R$, then $\Aut(F', \theta') \to \Aut(F, \theta)$ is also surjective. Thus, if $\End_D(E) = K$ and $\Ext^1_D(E,E)$ is finite dimensional, then $\overline{\Def}_E$ is prorepresentable. 
\end{corollary}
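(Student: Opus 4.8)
The plan is to deduce both assertions directly from \cref{ends-and-auts} together with the fundamental theorem of deformation theory \cref{deformation theory}. The first step is to unwind the natural map $\Aut(F',\theta') \to \Aut(F,\theta)$ attached to a morphism $(\pi,u) : (R',F',\theta') \to (R,F,\theta)$ in $\Def_E$ lying over a surjection $\pi : R' \to R$: an automorphism of $(F',\theta')$ induces, after applying $R \otimes_{R'} -$ and using the isomorphism $R \otimes_{R'} F' \xrightarrow{\sim} F$, an automorphism of $(F,\theta)$. By \cref{ends-and-auts} (which applies since $\End_D(E) = K$) we have $\Aut_{R'}(F',\theta') = 1 + \m_{R'}$ and $\Aut_R(F,\theta) = 1 + \m_R$, where an element $1 + a$ acts by scalar multiplication; under these identifications the induced map is simply $1 + a' \mapsto 1 + \pi(a')$.

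Next I would observe that a surjective homomorphism $\pi : R' \to R$ in $\Art_K$ is automatically local and carries $\m_{R'}$ onto $\m_R$: indeed $\pi^{-1}(\m_R)$ is a maximal ideal of the local ring $R'$, hence equals $\m_{R'}$, and surjectivity of $\pi$ then forces $\pi(\m_{R'}) = \m_R$. Therefore $1 + \m_{R'} \to 1 + \m_R$ is surjective, which is exactly the claim that $\Aut(F',\theta') \to \Aut(F,\theta)$ is surjective.

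For the prorepresentability statement I would invoke \cref{deformation theory}(b): since $\Def_E$ is a deformation category by \cref{result:hochschild governs module deformations}, its decategorification $\overline{\Def}_E$ is prorepresentable as soon as $T(\Def_E)$ is finite dimensional and automorphisms lift along surjective homomorphisms in $\Art_K$. The latter is precisely what was just established, and the former holds because $T(\Def_E) = \Ext^1_D(E,E)$ by \cref{result:fundamental theorem of deformation of modules}(b), which is finite dimensional by hypothesis.

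I expect essentially no obstacle here: the entire content of the corollary has already been packaged into \cref{ends-and-auts}. The only point requiring a moment's care is verifying that the map induced on automorphism groups by a morphism in $\Def_E$ agrees, under the identification of \cref{ends-and-auts}, with the map $1 + \m_{R'} \to 1 + \m_R$ coming from $\pi$ — and this is immediate once one recalls that these automorphisms act by scalars from $R'$, resp.\ $R$, and that reduction modulo $\m_{R'}$ compatibility is built into the definition of a morphism in $\Def_E$.
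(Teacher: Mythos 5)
Your proposal is correct and follows essentially the same route as the paper: both deduce surjectivity of $\Aut(F',\theta') \to \Aut(F,\theta)$ from the characterization $\Aut_R(F,\theta) = 1 + \m_R$ in \cref{ends-and-auts}, then invoke \cref{deformation theory}(b) together with $T(\Def_E) = \Ext^1_D(E,E)$ for prorepresentability. You merely spell out the (easy) details — the identification of the induced map as $1+a' \mapsto 1+\pi(a')$ and the fact that $\pi(\m_{R'}) = \m_R$ — that the paper leaves implicit.
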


\begin{proof}
	The assertion that $\Aut(F', \theta') \to \Aut(F, \theta)$ is surjective follows from the characterization of the automorphism groups in \cref{ends-and-auts}. Then, under the addition hypothesis that $\Ext^1_D(E,E)$ is finite dimensional, prorepresentability follows from the fundamental theorem of deformation theory \ref{deformation theory}. 
\end{proof}

Usually, the condition that $\End_D(E) = K$ is a consequence of the following. 

\begin{definition}
	$E$ is \emph{absolutely irreducible} if $E_{\bar{K}}$ is irreducible over $D_{\bar{K}}$, where $\bar{K}$ is an algebraic closure of $K$. 
\end{definition}

\begin{lemma} \label{endomorphisms of absolutely irreducibles}
	Suppose that $E$ has all of the following properties. 
	\begin{enumerate}[label=(\roman{*})]
		\item $\End_D(E)$ is finite dimensional over $K$. 
		\item $E$ is finitely presented over $D$. 
		\item $E$ is absolutely irreducible.
	\end{enumerate}
	Then $\End_D(E) = K$. 
\end{lemma}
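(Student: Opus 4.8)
The plan is to reduce the statement to two standard facts: first, that $\Hom$ commutes with the flat base change $K \to \bar{K}$ when the source module is finitely presented; and second, that a finite field extension $L/K$ of degree at least $2$ never remains a domain after applying $\otimes_K \bar{K}$.

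Write $\Delta := \End_D(E)$, which is finite dimensional over $K$ by hypothesis (i). The first step is to identify $\bar{K} \otimes_K \Delta$ with $\End_{D_{\bar{K}}}(E_{\bar{K}})$. Using (ii), fix a presentation $D^a \to D^b \to E \to 0$ of left $D$-modules. Applying $\Hom_D(-, E)$ gives a left-exact sequence $0 \to \Delta \to E^b \to E^a$; since $\bar{K}$ is free, hence flat, over $K$, tensoring preserves exactness and yields $0 \to \bar{K} \otimes_K \Delta \to (E_{\bar{K}})^b \to (E_{\bar{K}})^a$. On the other hand, right exactness of $\bar{K} \otimes_K -$ turns the chosen presentation into a presentation $D_{\bar{K}}^a \to D_{\bar{K}}^b \to E_{\bar{K}} \to 0$ over $D_{\bar{K}}$, and applying $\Hom_{D_{\bar{K}}}(-, E_{\bar{K}})$ gives $0 \to \End_{D_{\bar{K}}}(E_{\bar{K}}) \to (E_{\bar{K}})^b \to (E_{\bar{K}})^a$ with the same second map. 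Comparing kernels identifies $\bar{K} \otimes_K \Delta$ with $\End_{D_{\bar{K}}}(E_{\bar{K}})$, and one checks this identification is the obvious ring homomorphism $\varphi \otimes \lambda \mapsto \lambda \cdot (\mathrm{id}_{\bar{K}} \otimes \varphi)$, so it is an isomorphism of $K$-algebras.

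Next I would invoke absolute irreducibility: by (iii), $E_{\bar{K}}$ is irreducible over $D_{\bar{K}}$, so Schur's lemma shows $\End_{D_{\bar{K}}}(E_{\bar{K}})$ is a division ring; by the previous step, $\bar{K} \otimes_K \Delta$ is a division ring, and since $\Delta$ embeds into it, $\Delta$ is a domain. Suppose, for contradiction, that $\Delta \neq K$, and pick $x \in \Delta$ not lying in $K \cdot 1$. Then $K[x] \subseteq \Delta$ is a commutative, finite-dimensional (by (i)) domain over $K$, hence a field $L$ with $[L:K] \geq 2$. Tensoring the inclusion $L \hookrightarrow \Delta$ with the flat $K$-algebra $\bar{K}$ gives an injection $\bar{K} \otimes_K L \hookrightarrow \bar{K} \otimes_K \Delta$. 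But $\bar{K} \otimes_K L$ is a $\bar{K}$-algebra of dimension $[L:K] \geq 2$; a finite-dimensional domain over a field is a field, and the only finite extension of the algebraically closed field $\bar{K}$ is $\bar{K}$ itself, so $\bar{K} \otimes_K L$ must contain zero divisors. These are zero divisors inside the division ring $\bar{K} \otimes_K \Delta$, a contradiction. Hence $\Delta = K$.

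The only place where anything could genuinely go wrong is the first step: one might worry that because $\bar{K}/K$ is typically an infinite extension, one cannot pass $\otimes_K \bar{K}$ through $\Hom_D(E,-)$. This is exactly where finite presentation of $E$ over $D$ is essential, and with that hypothesis in hand the argument is routine; everything after it is elementary commutative algebra together with Schur's lemma. (Alternatively, one can first note that $E$ itself is irreducible over $D$---a proper $D$-submodule would base change to a proper $D_{\bar{K}}$-submodule of $E_{\bar{K}}$---and apply Schur over $D$ to see directly that $\Delta$ is a division algebra; but this observation is not needed.)
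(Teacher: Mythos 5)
Your proof is correct and rests on the same two pillars as the paper's: the base-change identification $\bar{K} \otimes_K \End_D(E) \simeq \End_{D_{\bar{K}}}(E_{\bar{K}})$ coming from finite presentation of $E$ over $D$, and Schur's lemma applied over $\bar{K}$. The paper isolates the base-change step as a separate lemma (\cref{result:finite presentation and endomorphisms}, proved by a functorial argument reducing to $\eta_D$), whereas you re-derive it inline by chasing a fixed presentation $D^a \to D^b \to E \to 0$; both are fine. Where you diverge is in the final step: the paper compresses the conclusion to ``$\End_{D_{\bar{K}}}(E_{\bar{K}}) = \bar{K}$ by Schur's lemma'' (tacitly using that a finite-dimensional division algebra over an algebraically closed field is the field itself) and then just reads off dimensions, while you unfold that standard fact by a contradiction: Schur gives a division ring, hence $\Delta := \End_D(E)$ is a domain, so if $\Delta \ne K$ it contains a finite field extension $L/K$ of degree $\ge 2$, and $\bar{K} \otimes_K L$ would then be a zero-divisor-free finite-dimensional commutative $\bar{K}$-algebra of dimension $\ge 2$, which is impossible. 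This is slightly longer but makes the use of hypothesis (i) (finite dimensionality of $\End_D(E)$) and of algebraic closedness more transparent, which the paper leaves implicit. Either route is a complete proof.
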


\begin{proof}
	We know that $\End_{D_{\bar{K}}}(E_{\bar{K}}) = \bar{K}$ by Schur's lemma, so we have
	\[ \dim_K \End_D(E) = \dim_{\bar{K}}( \bar{K} \otimes_K \End_D(E)) = \dim_{\bar{K}} \End_{D_{\bar{K}}}(E_{\bar{K}}) = 1 \]
	using \cref{result:finite presentation and endomorphisms} below for the second equality. Thus $\End_D(E) = K$. 
\end{proof}

\begin{lemma} \label{result:finite presentation and endomorphisms}
	Suppose $E$ is finitely presented and $F$ is a left $D$-module. For every commutative $K$-algebra $P$, we have 
	\[ P \otimes_K \Hom_D(E, F) = \Hom_{D_P}(E_P, F_P). \]
\end{lemma}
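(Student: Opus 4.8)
The plan is to use the finite presentation of $E$ to reduce the claim to the analogous statement for free modules, where it is obvious, and then to flatness of $P$ over $K$ to pass to the general case. First I would pick a finite presentation
\[ D^{\oplus n} \xrightarrow{\ \psi\ } D^{\oplus m} \longrightarrow E \longrightarrow 0 \]
of $E$ as a left $D$-module. Applying the left-exact functor $\Hom_D(-, F)$ yields an exact sequence
\[ 0 \longrightarrow \Hom_D(E, F) \longrightarrow \Hom_D(D^{\oplus m}, F) \xrightarrow{\ \psi^*\ } \Hom_D(D^{\oplus n}, F), \]
and there are canonical identifications $\Hom_D(D^{\oplus m}, F) = F^{\oplus m}$ and $\Hom_D(D^{\oplus n}, F) = F^{\oplus n}$. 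Thus $\Hom_D(E, F)$ is the kernel of an explicit $K$-linear map $F^{\oplus m} \to F^{\oplus n}$.

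Next I would apply $P \otimes_K -$ to this left-exact sequence. Since $P$ is a $K$-algebra and $K$ is a field, $P$ is flat over $K$, so tensoring preserves the exactness, giving
\[ 0 \longrightarrow P \otimes_K \Hom_D(E, F) \longrightarrow P \otimes_K F^{\oplus m} \xrightarrow{\ 1 \otimes \psi^*\ } P \otimes_K F^{\oplus n}. \]
On the other hand, base-changing the presentation of $E$ along $K \to P$ gives a presentation $D_P^{\oplus n} \to D_P^{\oplus m} \to E_P \to 0$ of $E_P$ over $D_P$ (right-exactness of base change), and applying $\Hom_{D_P}(-, F_P)$ produces the analogous left-exact sequence computing $\Hom_{D_P}(E_P, F_P)$ as the kernel of a map $F_P^{\oplus m} \to F_P^{\oplus n}$. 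It then remains to match the two sequences: one checks that under the canonical isomorphisms $P \otimes_K F^{\oplus m} \cong (F_P)^{\oplus m}$ and similarly in rank $n$, the map $1 \otimes \psi^*$ is carried to the map induced by the base-changed presentation. This is a routine compatibility of the Hom-tensor identifications with base change, ultimately because $\psi$ is given by a matrix with entries in $D$ and both maps are "multiplication by that same matrix." Comparing kernels then gives the desired identification $P \otimes_K \Hom_D(E, F) = \Hom_{D_P}(E_P, F_P)$, naturally in everything.

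The only point requiring any care — and the main (mild) obstacle — is the last compatibility step: verifying that the two left-exact sequences are genuinely identified, not merely abstractly isomorphic. This is where finite presentation is essential: it lets us replace $E$ by an explicit cokernel of a map between finite free modules, so that both Hom-spaces become concrete kernels and the comparison becomes a diagram chase rather than an appeal to any deeper structure. (Finite generation alone would give only a two-term piece $0 \to \Hom_D(E,F) \to F^{\oplus m}$, which is not enough to pin down the subspace after tensoring; one genuinely needs the relations, i.e. the map $\psi$.) Flatness of $P$ over $K$ — automatic here since $K$ is a field — is what licenses pushing $P \otimes_K -$ through the kernel.
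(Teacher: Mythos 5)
Your argument is correct, and the essential strategy — use finite presentation to reduce to the free case, using flatness of $P$ over $K$ to push $P \otimes_K -$ through the relevant kernel — is the same as the paper's. But the paper packages it so as to avoid the one step you flag as requiring care. Rather than writing out a presentation and comparing two Hom-sequences (one over $D$, one over $D_P$), the paper first invokes the restriction/extension adjunction $\Hom_{D_P}(E_P, F_P) = \Hom_D(E, F_P)$, which moves everything into the realm of contravariant functors on $\Mod_D$ alone. It then constructs a single natural transformation $\eta : P \otimes_K \Hom_D(-, F) \to \Hom_D(-, F_P)$ of left-exact functors $\Mod_D \to \Mod_P$ (exactness of $P \otimes_K -$ ensures the source is still left-exact), and observes that such a natural transformation is an isomorphism on finitely presented objects as soon as it is an isomorphism on $D$ itself — which is immediate. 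Because the comparison is phrased as a natural transformation evaluated on a presentation, the "compatibility of Hom-tensor identifications with base change" that you verify by hand is automatic; naturality does that diagram chase for you. Your route is more explicit and, for someone who wants to see the matrix, more concrete; the paper's route spends its abstraction up front (adjunction, naturality) to make the final reduction one line.
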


\begin{proof}
	Observe that $\Hom_{D_P}(E_P, F_P) = \Hom_D(E, F_P)$. The map $F \to F_P$ induces a morphism $\Hom_D(-, F) \to \Hom_D(-, F_P)$ of contravariant left-exact functors $\Mod_D \to \Mod_K$. But observe that $\Hom_D(-, F_P)$ naturally takes values in $\Mod_P$, so in fact we have a morphism $\eta : P \otimes_K \Hom_D(-, F) \to \Hom_D(-, F_P)$. Since $P \otimes_K -$ is exact, this is a morphism of contravariant left-exact functors $\Mod_D \to \Mod_P$. We want to show that $\eta_E$ is an isomorphism, but using left-exactness of both functors and the fact that $E$ is finitely presented, it suffices to show that $\eta_D$ is an isomorphism. This is clear.
\end{proof}

\begin{lemma} \label{geometric irreducibility finite extensions}
	The following are equivalent.
	\begin{enumerate}[label=(\alph{*})]
		\item $E_{\bar{K}}$ is irreducible over $D_{\bar{K}}$, where $\bar{K}$ is an algebraic closure of $K$. 
		\item $E_L$ is irreducible over $D_L$ for any finite extension $L$ of $K$. 
	\end{enumerate}
\end{lemma}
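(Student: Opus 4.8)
The plan is to exploit the fact that $K \hookrightarrow L$ and $L \hookrightarrow \bar{K}$ are extensions of fields, hence faithfully flat (indeed free), so that base change along them is exact and detects nonvanishing; combined with the observation that $\bar{K}$ is the filtered union of its finite subextensions, this reduces both implications to routine bookkeeping with submodules, and no serious obstacle is expected.

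For (a)$\Rightarrow$(b), suppose $E_{\bar{K}}$ is irreducible over $D_{\bar{K}}$ and let $L/K$ be finite. If $M \subsetneq E_L$ were a nonzero $D_L$-submodule, I would base change along $L \hookrightarrow \bar{K}$: since $\bar{K}$ is flat over $L$, the map $\bar{K} \otimes_L M \to \bar{K} \otimes_L E_L = E_{\bar{K}}$ is injective, so $\bar{K} \otimes_L M$ is a $D_{\bar{K}}$-submodule of $E_{\bar{K}}$; and since $\bar{K}$ is faithfully flat over $L$, both $\bar{K} \otimes_L M$ and the quotient $\bar{K} \otimes_L (E_L/M)$ are nonzero, so this submodule is nonzero and proper, contradicting irreducibility of $E_{\bar{K}}$.

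For (b)$\Rightarrow$(a), I would first record that $\bar{K} = \bigcup_L L$ is the filtered union of its finite subextensions $L/K$, and that each inclusion $L \hookrightarrow \bar{K}$ is split as a map of $L$-modules, so each $E_L = L \otimes_K E \hookrightarrow \bar{K} \otimes_K E = E_{\bar{K}}$ is injective and $E_{\bar{K}} = \bigcup_L E_L$, compatibly with the $D_L$-module (resp. $D_{\bar{K}}$-module) structures. Now if $N$ is a nonzero $D_{\bar{K}}$-submodule of $E_{\bar{K}}$, pick $0 \neq x \in N$, and if moreover $N \neq E_{\bar{K}}$ pick $y \in E_{\bar{K}} \setminus N$. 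Both $x$ and $y$ lie in $E_L$ for $L/K$ large enough; then $N \cap E_L$ is a $D_L$-submodule of $E_L$ (it is stable under $D_L \subseteq D_{\bar{K}}$), it contains $x \neq 0$, and it omits $y$, so it is nonzero and proper, contradicting irreducibility of $E_L$.

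The only points needing a moment's care are the compatibility of the filtered-union presentation $E_{\bar{K}} = \bigcup_L E_L$ with the module structures, and the fact that the intersection of a $D_{\bar{K}}$-submodule with $E_L$ is a $D_L$-submodule; both are immediate from the definitions. Note that, unlike \cref{endomorphisms of absolutely irreducibles} and \cref{result:finite presentation and endomorphisms}, this argument requires no finiteness hypothesis on $E$.
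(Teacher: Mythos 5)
Your proof is correct and takes essentially the same approach as the paper: faithful flatness of $\bar{K}$ over $L$ for (a)$\Rightarrow$(b), and the presentation of $\bar{K}$ as a filtered union of finite subextensions for (b)$\Rightarrow$(a). The only cosmetic difference is in the second direction, where the paper shows directly that every nonzero $e \in E_{\bar{K}}$ generates (via $D_L e = E_L$ for all large $L$), whereas you derive a contradiction from a hypothetical nonzero proper submodule $N$ by intersecting with $E_L$ for $L$ large enough to contain a witness in $N$ and a witness outside it; both amount to the same cofinality observation.
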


\begin{proof}
	Suppose $L$ is a finite extension of $K$, and choose an embedding $L \hookrightarrow \bar{K}$. Suppose $F$ is a nonzero $D_L$-submodule of $E_L$. Since $\bar{K}$ is faithfully flat over $L$, we see that $F_{\bar{K}}$ is a nonzero submodule of $E_{\bar{K}}$. This means that $E_{\bar{K}}/F_{\bar{K}} = (E_L/F)_{\bar{K}} = 0$, which means that $F = E_L$. This shows (a) implies (b).
	
	For the converse, note that $\bar{K}$ is the colimit of all subextensions $L$ finite over $K$, so $D_{\bar{K}} = \colim D_L$ and $E_{\bar{K}} = \colim E_L$. Suppose $e \in E_{\bar{K}}$ is nonzero. Then there exists an $L_0$ finite over  $K$ such that $e \in E_{L_0}$. Then for every $L$ finite over $L_0$, note that $L$ is finite over $K$ also, so $E_L$ is irreducible and $D_L e = E_L$. Since the finite extensions of $L_0$ are cofinal in the partially ordered set of finite extensions of $K$ in $\bar{K}$, this shows that $D_{\bar{K}} e = E_{\bar{K}}$. 
\end{proof}

	\section{Deformations of differential modules} \label{chapter:deformations of differential modules}

In this section, $K$ is a field of characteristic 0 and $\O$ a commutative $K$-algebra equipped with a $K$-linear derivation $\partial$. A \emph{differential module} over $\O$ is an $\O$-module $E$ equipped with a $K$-linear map $\partial : E \to E$ satisfying the Leibniz rule \[ \partial(ae) = \partial(a)e + a\partial(e). \] 
A morphism between differential modules $\phi : E \to F$ is an $\O$-module homomorphism such that $\partial \circ \phi = \phi \circ \partial$. 
\[ \begin{tikzcd} E \ar{r}{\partial} \ar{d}[swap]{\phi} & E \ar{d}{\phi} \\ F \ar{r}[swap]{\partial} & F \end{tikzcd} \]
Let $\DMod_\O$ denote the category of differential modules over $\O$. We study the deformations of a finite free differential $\O$-module. 

\subsection{Preliminaries} \label{section:deformations of differential modules preliminiaries}

The category $\DMod_\O$ of differential $\O$-modules is naturally a $K$-linear tensor category \cite[definition 5.3.2]{kedlaya_pde}.\footnotemark\ Given two differential $\O$-modules $E$ and $E'$, the internal hom $\Hom(E,E')$ is just $\Hom_\O(E, E')$, with differential $\O$-module structure determined by the equation
\[ (\partial \cdot \phi)(e) = \partial(\phi(e)) - \phi(\partial(e)) \]
for $\phi \in \Hom_\O(E,E')$ and $e \in E$. 

\footnotetext{\label{footnote:tensor category}For us, \emph{tensor category} will mean a closed symmetric monoidal abelian category in which the monoidal product is right exact in each argument. We will say that it is \emph{compact closed} if each of its objects is dualizable and the monoidal product is exact in both arguments.}

Let $D = \O[\partial]$ be the corresponding ring of differential operators: elements of $D$ can be written uniquely in the form $f\partial^i$ where $f \in \O$ and $i$ is a  nonnegative integer, and multiplication is determined by the equation
\[ [\partial, f] = \partial(f) \]
for all $f \in \O$. Then $\DMod_\O$ is naturally equivalent to the category $\Mod_D$ of left $D$-modules. 

For more about all of this, see \cite[chapter 5]{kedlaya_pde}.

\begin{definition}[de Rham complex]
	If $E$ is a differential $\O$-module, we define the \emph{de Rham complex} of $E$, denoted $\dR(\O,E)$, be the following two-term chain complex in nonnegative degrees.
	\[ \begin{tikzcd} E \ar{r}{\partial} & E \ar{r} & 0 \ar{r} & \dotsb \end{tikzcd} \]
	We then define $H^i_{\dR}(E) := H^i(\dR(\O,E))$. 
\end{definition}

\begin{example}
	It follows from definitions that $H^0_\dR(\Hom(E, E')) = \Hom_D(E,E')$. 
\end{example}

\begin{lemma}[{\cite[section 6.6]{christol}, \cite[proof of proposition 2]{goodearl}}] \label{result:resolve a finite free differential module}
	Let $E$ be a finite free differential $\O$-module and $(e_1, \dotsc, e_n)$ an $\O$-basis for $E$. Let $N$ be the corresponding matrix of $\partial$. Then we have an exact sequence of left $D$-modules
	\[ \begin{tikzcd} 0 \ar{r} & D^{\oplus n} \ar{r}{\partial I - N} & D^{\oplus n} \ar{r} &  E \ar{r} & 0 \end{tikzcd} \]
	where the map $D^{\oplus n} \to E$ carries the standard $D$-basis of $D^{\oplus n}$ onto the $\O$-basis $(e_1, \dotsc, e_n)$ of $E$, and $\partial I$ denotes the diagonal matrix with $\partial$ in all of the diagonal entries
\end{lemma}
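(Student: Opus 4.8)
The plan is to verify directly that the displayed sequence of left $D$-modules is a complex and that it is exact at each of its three spots, using the order filtration on $D = \O[\partial]$.

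Write $\varepsilon : D^{\oplus n} \to E$ for the (unique) left $D$-module map carrying the standard basis to $(e_1, \dotsc, e_n)$, and $\mu := \partial I - N : D^{\oplus n} \to D^{\oplus n}$. Since $\O \subseteq D$ and $(e_1, \dotsc, e_n)$ is an $\O$-basis of $E$, the restriction of $\varepsilon$ to the $\O$-submodule $\O^{\oplus n} \subseteq D^{\oplus n}$ is an isomorphism onto $E$; in particular $\varepsilon$ is surjective. That $\varepsilon \circ \mu = 0$ is a one-line computation: the image under $\mu$ of the $j$-th standard generator acts on the $e_i$ to produce $\partial(e_j) - \sum_i N_{ij} e_i = 0$, by the very definition of $N$ as the matrix of $\partial$ in the basis $(e_j)$. (I will have to line up the left/right matrix conventions with the left $D$-module structure carefully here, but this is routine.)

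For exactness the crucial observation is that $\mu$ raises the order of differential operators by exactly one. Filter $D$ by order, $F_k D = \{\, \sum_{j \le k} a_j \partial^j : a_j \in \O \,\}$, and filter $D^{\oplus n}$ componentwise. If $\mathbf d \in D^{\oplus n}$ has order exactly $k$ (all components in $F_k D$, some component not in $F_{k-1}D$), then the "$\partial I$ part" sends a component $d_j$ to $d_j \partial$, which has order $k+1$ with the same leading coefficient as $d_j$, whereas the "$N$ part" contributes terms $d_i N_{ij}$ with $N_{ij} \in \O$, hence of order $\le k$; so $\mu(\mathbf d)$ has order exactly $k+1$, with leading coefficients read off componentwise from those of $\mathbf d$. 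This at once yields injectivity of $\mu$ (a nonzero $\mathbf d$ has nonzero image). It also powers a division argument: given $\mathbf d \in F_k D^{\oplus n}$ with $k \ge 1$, subtract from it $\mu$ of a suitable element of $F_{k-1}D^{\oplus n}$ (chosen to match the top-order leading coefficients componentwise) to strictly lower the order; iterating gives $D^{\oplus n} = \O^{\oplus n} + \im\mu$, and the sum is in fact direct since any nonzero element of $\im\mu$ has positive order.

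Combining these finishes exactness in the middle: if $\mathbf d \in \ker\varepsilon$, write $\mathbf d = \mathbf d_0 + \mu(\mathbf c)$ with $\mathbf d_0 \in \O^{\oplus n}$; then $\varepsilon(\mathbf d_0) = \varepsilon(\mathbf d) = 0$ since $\varepsilon\circ\mu = 0$, and injectivity of $\varepsilon$ on $\O^{\oplus n}$ forces $\mathbf d_0 = 0$, so $\mathbf d \in \im\mu$. The only real obstacle is the bookkeeping in the division step — one must check that eliminating the top-order term in one component of $\mathbf d$ perturbs the remaining components only in strictly lower order — but this is immediate from the componentwise leading-term computation above. Everything else is formal. (Alternatively, one could pass to the associated graded ring $\operatorname{gr} D \cong \O[\xi]$, where the symbol of $\mu$ is multiplication by $\xi I$, and deduce exactness from exactness of $0 \to \O[\xi]^{\oplus n} \xrightarrow{\xi} \O[\xi]^{\oplus n} \to \O^{\oplus n} \to 0$; but the hands-on argument is shorter to write out.)
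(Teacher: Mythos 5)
Your proof is correct. The paper itself offers no argument for this lemma, instead citing Christol and Goodearl, and the filtration-by-order/division argument you give is precisely the standard one found in those sources: injectivity and the direct-sum decomposition $D^{\oplus n} = \O^{\oplus n} \oplus \im(\partial I - N)$ both fall out of the observation that right-multiplication by $\partial I - N$ raises the order by exactly one while preserving leading coefficients componentwise. The transpose/row-vs-column bookkeeping you flag is real but genuinely routine (the map must be right multiplication to be left $D$-linear, and the convention for $N$ in the statement has to be chosen consistently), and your associated-graded aside is a fine alternative framing, though as you say the hands-on version is shorter to write.
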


\begin{corollary}
	Suppose $E$ is a finite free differential $\O$-module. Then $E$ is finitely presented as a left $D$-module and has projective dimension at most 1.  \qed
\end{corollary}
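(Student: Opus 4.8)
The plan is to read the conclusion directly off of \cref{result:resolve a finite free differential module}. Fix an $\O$-basis $(e_1, \dotsc, e_n)$ of $E$ and let $N$ be the matrix of $\partial$ with respect to this basis. That lemma produces a short exact sequence of left $D$-modules
\[ 0 \longrightarrow D^{\oplus n} \xrightarrow{\ \partial I - N\ } D^{\oplus n} \longrightarrow E \longrightarrow 0. \]
First I would observe that $D^{\oplus n}$ is a finite free, hence finitely generated projective, left $D$-module. Since $E$ is exhibited as the cokernel of a homomorphism between two finite free left $D$-modules, it is finitely presented. The same displayed sequence is a projective resolution of $E$ of length $1$, so the projective dimension of $E$ is at most $1$.

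There is no real obstacle here: all of the work has already been done in \cref{result:resolve a finite free differential module}, whose only nonformal ingredient is the injectivity of $\partial I - N$. (If one wanted, one could remark that the projective dimension equals $1$ precisely when $E$ fails to be projective — for instance when the connection is nontrivial — but this refinement is not needed for the applications, e.g.\ for the two-term differential graded Lie algebra description in \cref{remarks on dimension 1}.)
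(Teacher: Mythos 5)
Your argument is exactly the one the paper intends: the short exact sequence of \cref{result:resolve a finite free differential module} is a length-one free resolution of $E$ over $D$, which immediately gives both finite presentation and projective dimension at most $1$. This matches the paper's (implicit) proof, which is why the corollary is stated with a \qedsymbol\ and no further argument.
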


\begin{corollary} \label{result:de rham complex is rhom}
	$\rightderived\!\Hom_D(\O, E) = \dR(\O, E)$ for any differential $\O$-module $E$. 
\end{corollary}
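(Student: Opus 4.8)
The plan is to write down an explicit short free resolution of $\O$ as a left $D$-module and apply $\Hom_D(-,E)$.

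First I would note that $\O$ is itself a finite free differential $\O$-module: it has rank $1$, with the single basis element $1 \in \O$, and the matrix $N$ of $\partial$ with respect to this basis is the $1\times 1$ zero matrix since $\partial(1)=0$. Thus \cref{result:resolve a finite free differential module} applies and produces an exact sequence of left $D$-modules
\[ 0 \to D \xrightarrow{\,\cdot\,\partial\,} D \to \O \to 0, \]
where the first arrow is right multiplication by $\partial$ and the second sends $1 \in D$ to $1 \in \O$. This exhibits a free (hence projective) resolution of $\O$ of length $1$.

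Next I would compute $\rightderived\!\Hom_D(\O,E)$ by applying $\Hom_D(-,E)$ to this resolution, which gives a two-term cochain complex $\Hom_D(D,E) \to \Hom_D(D,E)$ placed in degrees $0$ and $1$. Using the canonical identification $\Hom_D(D,E) \cong E$, $\phi \mapsto \phi(1)$, the differential is $\phi \mapsto \phi\circ(\,\cdot\,\partial\,)$; evaluating at $1$ gives $\phi(\partial) = \partial\cdot\phi(1)$ by $D$-linearity of $\phi$, so under the identification the differential sends $e \in E$ to $\partial(e)$. Hence the complex is exactly $E \xrightarrow{\partial} E$ in degrees $0,1$, which is $\dR(\O,E)$ by definition.

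The only things requiring attention are the bookkeeping of left- versus right-multiplication conventions (the map in the resolution of \cref{result:resolve a finite free differential module} is right multiplication by the matrix, and $\Hom_D(D^{\oplus n},-)$-computations are sensitive to this) and the remark that the induced differential on the $\Hom$-groups is forced by $D$-linearity; neither is a genuine obstacle, and indeed there is no hard step here — the statement is an immediate consequence of the preceding lemma. One could alternatively phrase the same argument via the identification $\O = D/D\partial$ together with the evident free resolution $0 \to D \xrightarrow{\,\cdot\,\partial\,} D \to D/D\partial \to 0$.
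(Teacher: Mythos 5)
Your proof is correct and takes the same route as the paper: apply \cref{result:resolve a finite free differential module} to $\O$ itself (rank $1$, basis $1$, $N=0$) to get the resolution $0 \to D \xrightarrow{\cdot\partial} D \to \O \to 0$, then apply $\Hom_D(-,E)$. You have simply spelled out the bookkeeping — the left-module conventions forcing right multiplication by $\partial$, and the identification $\Hom_D(D,E)\cong E$ turning precomposition by $\cdot\,\partial$ into the connection $\partial$ on $E$ — which the paper leaves implicit.
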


\begin{proof}
	Consider the free resolution of $\O$ as a $D$-module provided by \cref{result:resolve a finite free differential module}. Applying the functor $\Hom_D(-, E)$ to this free resolution gives exactly the de Rham complex $\dR(\O, E)$.  
\end{proof}

\subsection{de Rham and Hochschild complexes} \label{de rham and hochschild}

For the remainder, we fix a finite free differential $\O$-module $E$. When we write $\Def_E$, we will mean $\Def_{D,E}$ (as opposed to $\Def_{\O,E}$).

\begin{remark}[Lifting bases] \label{lifting bases}
	Suppose $(e_1, \dotsc, e_n)$ is an $\O$-basis for $E$ and suppose $(R, F, \theta) \in \Def_{E}$. Choose $f_i \in F$ such that $\theta(f_i) = e_i$ for all $i = 1, \dotsc, n$. Then the tuple $(f_1, \dotsc, f_n)$ defines an $\O_R$-module homomorphism $\phi : \O_R^{\oplus n} \to F$ and the composite
	\[ \begin{tikzcd} \O^{\oplus n} \ar{r}{1 \otimes \phi} & K \otimes_R F \ar{r}{1 \otimes \theta} & E \end{tikzcd} \]
	is clearly an isomorphism. Since $K \otimes_R F \to E$ is an isomorphism, we see that $\O^{\oplus n} \to K \otimes_R F$ must also be an isomorphism. Since $F$ is $R$-flat, $\phi$ is itself an isomorphism \cite[lemma 3.3]{schlessinger}. In other words, $F$ is a finite free differential $\O_R$-module with $\O_R$-basis $(f_1, \dotsc, f_n)$. Hereafter, a basis $(f_1, \dotsc, f_n)$ of $F$ obtained in this way will be said to be a \emph{lift} of the basis $(e_1, \dotsc, e_n)$ of $E$.
\end{remark}

Observe that $\dR(\O, \End(E))$ a differential graded $K$-algebra under composition. Our goal now is to show that $\dR(\O, \End(E))$, regarded as a differential graded $K$-Lie algebra, governs $\Def_E$. We will do this by producing a quasi-isomorphism of differential graded algebras between it and $\Hoch_K(D, \End_K(E))$. 

\begin{remark}
	It is easy to use general abstract nonsense to produce an identification \[ \dR(\O, \End(E)) = \Hoch_K(D, \End_K(E)) \]
	in the derived category of vector spaces, as we will work out momentarily, but it is not apparent that the resulting identification preserves the differential graded Lie algbera structures on both complexes. Thus, we will instead construct an explicit quasi-isomorphism, so that we can directly verify that it preserves the differential graded Lie algebra structures. 
	
	To see how to produce this identification using abstract nonsense, note that the adjunction isomorphism 
	$\Hom(\O, \Hom(E, -)) = \Hom(E,-)$ is an isomorphism of functors $\Mod_D \to \Mod_D$, so, by taking horizontal sections on both sides, we get an isomorphism \[ \Hom_D(\O, \Hom(E, -)) = \Hom_D(E, -) \] of functors $\Mod_D \to \Mod_K$. Since $\Hom(E, -)$ is exact, we right derive both sides and get
	\[ \rightderived\!\Hom_D(\O, \Hom(E,-)) = \rightderived\!\Hom_D(E,-). \]
	From \cref{result:de rham complex is rhom}, we know the left hand side is $\dR(\O, \Hom(E, -))$. Putting these two identifications together and evaluating at $E$ shows that
	\[ \dR(\O, \End(E)) = \rightderived\!\Hom_D(E, E), \]
	and the identification of the right-hand side with the Hochschild complex is a consequence of the proof of \cite[lemma 9.1.9]{weibel}. 
\end{remark}

\begin{lemma} \label{derivations on D}
	The map $s \mapsto (s|_\O, s(\partial))$ defines an injective map of vector spaces
	\[ \begin{tikzcd} \Der_K(D, \End_K(E)) \ar[hookrightarrow]{r} & \Der_K(\O, \End_K(E)) \times \End_K(E) \end{tikzcd} \]
	whose image is the subspace of $(r, v)$ such that 
	\begin{equation} \label{image}
	[r(f), \partial] + [f, v] + r(\partial(f)) = 0
	\end{equation}
	for all $f \in \O$. 
\end{lemma}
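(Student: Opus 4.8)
The plan is to deduce all three parts of the statement — well-definedness of the map, its injectivity, and the identification of the image with the subspace cut out by \eqref{image} — from the presentation of $D$ as a quotient of a coproduct of $K$-algebras.

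First I would dispose of the easy direction. Since every element of $D$ is a $K$-linear combination of monomials $f\partial^i$ with $f\in\O$, the algebra $D$ is generated over $K$ by the set $\O\cup\{\partial\}$; hence any $K$-derivation $s\colon D\to\End_K(E)$ is determined by the pair $(s|_\O,s(\partial))$, which is the asserted injectivity. The restriction $s|_\O$ is manifestly a $K$-derivation $\O\to\End_K(E)$, so the map does land in the stated target. To see its image satisfies \eqref{image}, apply $s$ to the defining relation $\partial f-f\partial=\partial(f)$ of $D$ and expand using the Leibniz rule: writing $r=s|_\O$, $v=s(\partial)$, and computing all products in $\End_K(E)$ via the structure map $D\to\End_K(E)$, one obtains $vf+\partial r(f)-r(f)\partial-fv=r(\partial(f))$, which is exactly \eqref{image} after rearranging.

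The real content is the converse: given $(r,v)$ satisfying \eqref{image}, produce a derivation of all of $D$ restricting to $(r,v)$. I would present $D$ as $(\O\ast_K K[\partial])/I$, where $\ast_K$ is the coproduct in associative $K$-algebras and $I$ is the two-sided ideal generated by the elements $\partial f-f\partial-\partial(f)$, $f\in\O$; give $\End_K(E)$ the bimodule structure pulled back along $\O\ast_K K[\partial]\to D\to\End_K(E)$. A $K$-derivation into a bimodule $M$ is the same as an algebra section of the trivial square-zero extension $(\O\ast_K K[\partial])\ltimes M\to\O\ast_K K[\partial]$, and a section out of a coproduct is a compatible pair of sections; thus a $K$-derivation $\O\ast_K K[\partial]\to\End_K(E)$ is exactly a pair consisting of a $K$-derivation $\O\to\End_K(E)$ and a $K$-derivation $K[\partial]\to\End_K(E)$, the latter being unconstrained and amounting to a single element of $\End_K(E)$ because $K[\partial]$ is free on one generator. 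So $(r,v)$ yields a derivation $\tilde s$ of $\O\ast_K K[\partial]$ with $\tilde s|_\O=r$ and $\tilde s(\partial)=v$. Finally, $\tilde s$ descends to $D$ iff $\tilde s(I)=0$; since $\tilde s$ is a derivation and $I$ is a two-sided ideal whose image in $D$, hence in $\End_K(E)$, vanishes, it is enough to check that $\tilde s$ kills the generators of $I$ — and $\tilde s(\partial f-f\partial-\partial(f))=0$ is precisely \eqref{image}. The induced $s\colon D\to\End_K(E)$ is then a derivation with $s|_\O=r$ and $s(\partial)=v$, giving surjectivity onto the stated subspace.

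The single genuinely nontrivial point is this last step — promoting the bilinear identity \eqref{image} to an honest derivation on all of $D$ — and the coproduct presentation is what makes well-definedness automatic. A more computational alternative would be to posit the explicit formula $s\big(\sum_i f_i\partial^i\big)=\sum_i\big(r(f_i)\partial^i+f_i\sum_{0\le j<i}\partial^j v\,\partial^{i-1-j}\big)$ and verify the Leibniz rule by induction on the order of differential operators, using the commutation rule $\partial^a g=\sum_k\binom{a}{k}\partial^k(g)\partial^{a-k}$ in $D$; this is correct but noticeably more tedious, so I would favor the structural argument.
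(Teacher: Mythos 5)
Your proof is correct, and it takes a genuinely different route from the paper's. The paper works directly with the explicit formula $s(f\partial^k) = r(f)\partial^k + \sum_{i=0}^{k-1} f\partial^i v\,\partial^{k-1-i}$ (the "computational alternative" you mention at the end) and verifies the Leibniz rule by a six-step induction on the structure of monomials in $D$. You instead present $D$ as $(\O \ast_K K[\partial])/I$, invoke the correspondence between $K$-derivations into a bimodule and algebra sections of the trivial square-zero extension, and use the universal property of the coproduct to obtain a derivation $\tilde{s}$ on the free product from the pair $(r,v)$ with no constraint; the descent to $D$ then follows because $\tilde{s}$ is a derivation, the generators $\partial f - f\partial - \partial(f)$ of $I$ act by zero on the bimodule $\End_K(E)$, and $\tilde{s}$ kills those generators precisely when \eqref{image} holds. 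Your approach buys conciseness and conceptual clarity — well-definedness and the Leibniz rule are absorbed into universal properties, and the converse direction collapses to one identity on ideal generators. The paper's approach buys self-containment (no appeal to square-zero extensions or coproducts of noncommutative algebras) and produces the closed-form expression for $s$ on an $\O$-basis of $D$ directly, which is convenient to have in hand for the subsequent lemmas. Both are sound; yours is the cleaner argument.
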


\begin{proof}
	This proof is fairly excruciating, but there are no surprises. Observe that $D$ is a free left $\O$-module on the basis $1, \partial, \partial^2, \dotsc$. If we have $s \in \Der_K(D, \End_K(E))$, then an easy inductive argument shows that
	\begin{equation} \label{extended leibniz}
	s(f\partial^k) = s(f)\partial^k + \sum_{i = 0}^{k-1} f \partial^i s(\partial) \partial^{k-1-i}
	\end{equation}
	for any $f \in \O$ and $k \in \N$. Since the right-hand side depends only on the pair $(s|_\O, s(\partial))$, this proves injectivity. Moreover, we have
	\begin{align*}
	0 &= s(\partial f) - s(\partial)f - \partial s(f) \\
	&= s(f\partial) + s(\partial(f)) - s(\partial)f - \partial s(f) \\
	&= s(f)\partial + fs(\partial) + s(\partial(f)) - s(\partial)f - \partial s(f) \\
	&= [s(f), \partial] + [f, s(\partial)] + s(\partial(f))
	\end{align*}
	which shows that the pair $(s|_\O, s(\partial))$ satisfies \cref{image} for all $f \in \O$. 
	
	Conversely, suppose that we have $(r, v) \in \Der_K(\O, \End_K(E)) \times \End_K(E)$ satisfying \cref{image}. Motivated by \cref{extended leibniz}, we define a function $s : D \to \End_K(E)$ by declaring
	\[ s(f\partial^k) := r(f)\partial^k + \sum_{i = 0}^{k-1} f \partial^i v \partial^{k-1-i}   \]
	for all $f \in \O$ and $k \in \N$ and then extending additively. Then we certainly have $(s|_\O, s(\partial)) = (r, v)$, so we only need to check that $s$ is actually a derivation. In other words, we need to check that 
	\begin{equation} \label{leibniz} 
	s(PQ) = s(P)Q + P s(Q)
	\end{equation}
	for all pairs $(P, Q)$ of elements of $D$. 
	
	We will do this by ``inducting'' on the complexity of an element of $D$. The key to this is the following trivial observation: if $P = P' + P''$ for some $P', P'' \in D$ and \cref{leibniz} holds for $(P', Q)$ and $(P'', Q)$, then \cref{leibniz} also holds for $(P, Q)$. There is an analogous statement when $Q$ decomposes as a sum of two elements. We will use this observation tacitly throughout. 
	
	An element $P \in D$ is a \emph{monomial} if it is of the form $f\partial^k$ for some $f \in \O$ and $k \in \N$. We say that $k$ is the \emph{degree} of the monomial $P$, and that $P$ is a \emph{monic} monomial if $f = 1$. We say that $P$ is \emph{left Leibniz} if $(P, Q)$ satisfies \cref{leibniz} for all $Q \in D$ (equivalently, all monomials $Q$). Dually, we say that $Q$ is \emph{right Leibniz} if $(P, Q)$ satisfies \cref{leibniz} for all $P \in D$ (equivalently, all monomials $P$). To complete the proof, it suffices to show that every monomial is left Leibniz. Let us proceed incrementally towards this assertion, in 6 steps.
	
	\bigskip
	
	\noindent \textit{Step 1}. First, let us show that any monic monomial $Q = \partial^k$ is right Leibniz. Let $P = f\partial^j$. 
	\[ \begin{aligned}
	s(f\partial^{j+k}) &= r(f)\partial^{j+k} + \sum_{i = 0}^{j+k-1} f\partial^i v\partial^{j+k-1-i} \\
	s(f\partial^j)\partial^k &= r(f)\partial^{j+k} + \sum_{i = 0}^{j-1} f\partial^i v \partial^{j+k-1-i} \\
	f\partial^j s(\partial^k) &= \sum_{i = j}^{j+k-1} f\partial^{i} v \partial^{j+k-1-i}
	\end{aligned} \]
	Thus \cref{leibniz} holds for $(P, \partial^k)$.
	
	\bigskip
	
	\noindent \textit{Step 2}. Let us next show that every degree 0 monomial $f \in \O$ is left Leibniz. Suppose $Q = g\partial^k$. 
	\[ \begin{aligned}
	s(fg\partial^k) &= r(fg) \partial^k + \sum_{i = 0}^{k-1} fg\partial^i v \partial^{k-1-i} \\
	&= r(f)g\partial^k + fr(g)\partial^k + \sum_{i = 0}^{k-1} fg\partial^i v \partial^{k-1-i} \\
	s(f)g\partial^k &= r(f)g\partial^k \\
	fs(g\partial^k) &= fr(g)\partial^k + \sum_{i = 0}^{k-1} fg\partial^i v \partial^{k-1-i}
	\end{aligned} \]
	Thus \cref{leibniz} holds for $(f, Q)$.
	
	\bigskip
	
	\noindent \textit{Step 3}. A calculation identical to one we did earlier shows that
	\[ s(\partial f) - s(\partial)f - \partial s(f) = [r, \partial] + [f, v] + r(\partial(f))  \]
	which means that $(r, v)$ satisfying \cref{image} is equivalent to $(\partial, f)$ satisfying \cref{leibniz}.
	
	\bigskip
	
	\noindent \textit{Step 4}. Let us now show that $\partial$ is left Leibniz. Suppose $Q = f\partial^k$. Then 
	\[ \begin{aligned}
	s(\partial f \partial^k) &= s(\partial f)\partial^k + \partial f s(\partial^k) \\
	&= s(\partial)f\partial^k + \partial s(f)\partial^k + \partial f s(\partial^k) \\
	&= s(\partial)f\partial^k + \partial s(f\partial^k)
	\end{aligned} \]
	using the fact that $\partial^k$ is right Leibniz for the first equality (step 1), the fact that $(\partial, f)$ satisfies \cref{leibniz} for the second (step 3), and then the fact that $\partial^k$ is right Leibniz again for the third equality (step 1).
	
	\bigskip
	
	\noindent \textit{Step 5}. Let us now show by induction on degree that any monic monomial is left Leibniz. Suppose that $\partial^j$ is left Leibniz for some $j \in \N$. Let $Q = f\partial^k$. 
	Note that we have
	\[ \partial^{j+1}f\partial^k = \partial^j f \partial^{k+1} + \partial^j \partial(f) \partial^k \]
	in $D$. Using this, we have
	\[  \begin{aligned}
	s(\partial^{j+1}f\partial^k) &= s(\partial^j f \partial^{k+1} + \partial^j \partial(f) \partial^k) \\
	&= s(\partial^j)f\partial^{k+1} + \partial^j s(f\partial^{k+1}) + s(\partial^j)\partial(f)\partial^k + \partial^j s(\partial(f)\partial^k) \\
	&= s(\partial^j)(f\partial^{k+1} + \partial(f)\partial^{k}) + \partial^j s(f\partial^{k+1} + \partial(f)\partial^k) \\
	&= s(\partial^j) \partial f \partial^k + \partial^j s(\partial f \partial^k) \\
	&= s(\partial^j) \partial f \partial^k + \partial^j (s(\partial) f\partial^k + \partial s(f\partial^k)) \\
	&= (s(\partial^j)\partial + \partial^j s(\partial)) f\partial^k + \partial^{j+1} s(f\partial^k) \\
	&= s(\partial^{j+1}) f\partial^k + \partial^{j+1} s(f\partial^k)
	\end{aligned} \]
	using the inductive hypothesis that $\partial^j$ is left Leibniz twice for the second equality, the fact that $\partial$ is left Leibniz for the fifth (step 4), and the fact that $\partial$ is right Leibniz for the final (step 1).
	
	\bigskip
	
	\noindent \textit{Step 6}. Finally, we show that an arbitrary monomial $f\partial^k$ is left Leibniz. For any $Q \in D$, observe that
	\[ \begin{aligned} s(f\partial^k Q) &= s(f)\partial^k Q + fs(\partial^k Q) \\
	&= s(f)\partial^k Q + fs(\partial^k)Q + f\partial^k s(Q) \\
	&= s(f\partial^k)Q + f\partial^k s(Q)
	\end{aligned} \]
	where the first and third equalities are because $f$ is left Leibniz (step 2) and the second because $\partial^k$ is left Leibniz (step 5).
\end{proof}

\begin{corollary} \label{zeta}
	There is a unique injective $K$-linear map $\zeta : \End(E) \to \Der_K(D, \End_K(E))$ such that $\zeta(\rho)(f) = 0$ for all $f \in \O$ and $\zeta(\rho)(\partial) = \rho$. Moreover, $\im(\zeta)$ is precisely the set of derivations $D \to \End_K(E)$ which annihilate $\O$. 
\end{corollary}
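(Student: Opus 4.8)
The plan is to deduce everything formally from \cref{derivations on D}. That lemma realizes $\Der_K(D, \End_K(E))$ as the subspace of pairs $(r,v) \in \Der_K(\O, \End_K(E)) \times \End_K(E)$ satisfying \eqref{image}, the identification being $s \mapsto (s|_\O, s(\partial))$. A derivation $s : D \to \End_K(E)$ annihilates $\O$ exactly when $s|_\O = 0$; substituting $r = 0$ into \eqref{image} leaves the single condition $[f,v] = 0$ for all $f \in \O$. Since the symbol $f$ in \eqref{image} denotes the image of $f$ under the structure map $\O \to D \to \End_K(E)$, this says precisely that $v$ commutes with the action of $\O$ on $E$, i.e.\ $v \in \End_\O(E)$, which is the underlying vector space of the differential module $\End(E) = \Hom(E,E)$. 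Thus, under the identification of \cref{derivations on D}, the $\O$-annihilating derivations correspond bijectively to the pairs $(0, v)$ with $v \in \End(E)$.

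First I would read off uniqueness from the injectivity half of \cref{derivations on D}: if $s, s' \in \Der_K(D, \End_K(E))$ both restrict to $0$ on $\O$ and send $\partial$ to the same $\rho$, they have the same image under $s \mapsto (s|_\O, s(\partial))$, hence $s = s'$. Next, existence: given $\rho \in \End(E)$, the pair $(0,\rho)$ satisfies \eqref{image} by the computation above, so \cref{derivations on D} produces a (necessarily unique) derivation $\zeta(\rho) \in \Der_K(D, \End_K(E))$ with $\zeta(\rho)|_\O = 0$ and $\zeta(\rho)(\partial) = \rho$. Injectivity of $\zeta$ is then free, since $\rho = \zeta(\rho)(\partial)$ is recovered from $\zeta(\rho)$. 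For $K$-linearity I would either observe that $s \mapsto (s|_\O, s(\partial))$ is a $K$-linear bijection onto the subspace cut out by \eqref{image} while $\rho \mapsto (0,\rho)$ is $K$-linear, so $\zeta$ is their composite; or, more concretely, invoke the formula $\zeta(\rho)(f\partial^k) = \sum_{i=0}^{k-1} f\partial^i \rho \partial^{k-1-i}$ obtained from \eqref{extended leibniz} by setting $s|_\O = 0$ and $s(\partial) = \rho$, which is visibly $K$-linear in $\rho$. Finally, for the description of $\im(\zeta)$: each $\zeta(\rho)$ annihilates $\O$ by construction, and conversely any $\O$-annihilating $s$ has $s(\partial) \in \End(E)$ and equals $\zeta(s(\partial))$ by uniqueness, so the image is exactly the set of derivations $D \to \End_K(E)$ killing $\O$.

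I do not expect a genuine obstacle here: the corollary is essentially a restatement of the $r = 0$ case of \cref{derivations on D}. The only point deserving a sentence of care is the translation ``$[f,v] = 0$ for all $f \in \O$'' $\iff$ ``$v \in \End_\O(E) = \End(E)$'', which is just the remark that commuting with the image of $\O$ in $\End_K(E)$ is the same as being an $\O$-module endomorphism of $E$; everything else is bookkeeping against \cref{derivations on D}.
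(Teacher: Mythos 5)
Your argument is correct and matches the paper's proof exactly: both set $r = 0$ in equation \eqref{image}, observe that the resulting condition $[f,v]=0$ for all $f\in\O$ is precisely $v \in \End_\O(E) = \End(E)$, and then invoke the bijection of \cref{derivations on D} to get existence, uniqueness, and the image description. The paper's proof is simply a more compressed version of the same reasoning.
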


\begin{proof}
	If $\rho \in \End_K(E)$, note that $(0, \rho)$ satisfies \cref{image} if and only if $\rho \in \End(E)$. So, for $\rho \in \End(E)$, let $\zeta(\rho)$ be the unique $s \in \Der_K(D, \End_K(E))$ such that $(s|_\O, s(\partial)) = (0, \rho)$.
\end{proof}

\begin{theorem} \label{result:de rham and hochschild}
	The map $\zeta$ of \cref{zeta} defines a quasi-isomorphism of differential graded $K$-algebras \[ \begin{tikzcd} \dR(\O, \End(E)) \ar{r} & \Hoch_K(D, \End_K(E)). \end{tikzcd} \]
\end{theorem}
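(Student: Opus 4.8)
The plan is to promote $\zeta$ to a morphism of complexes $\dR(\O,\End(E))\to\Hoch_K(D,\End_K(E))$, verify that it respects the differential graded algebra structures, and then verify that it is a quasi-isomorphism degree by degree. In degree $0$ the map should be the natural inclusion $\iota\colon\End(E)=\End_\O(E)\hookrightarrow\End_K(E)=\Hoch^0_K(D,\End_K(E))$; in degree $1$ it is $\zeta$, whose image lands in the $1$-cocycles $\Der_K(D,\End_K(E))$ by \cref{zeta}; in degrees $\geq 2$ it is zero, since the de Rham complex vanishes there. There is a single commuting square to check, and it follows from the uniqueness clause of \cref{zeta}: the Hochschild coboundary of $\iota(\phi)$ is the derivation $a\mapsto[a,\phi]$, which kills $\O$ (because $\phi$ is $\O$-linear) and sends $\partial$ to $[\partial,\phi]=\partial\cdot\phi$, hence coincides with $\zeta(\partial\cdot\phi)$. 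Multiplicativity is checked in the same spirit: both products are composition of endomorphisms, $\iota$ is an algebra map in degree $0$, and the cross-degree identities such as $\zeta(\phi\circ\psi)=\iota(\phi)\cup\zeta(\psi)$ hold because both sides are the derivation that annihilates $\O$ and sends $\partial$ to $\phi\circ\psi$.

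For the quasi-isomorphism statement, I would compute cohomology in each degree. In degree $0$ both sides equal $\End_D(E)$ — on the left it is $\ker(\partial\cdot)=\{\phi\in\End_\O(E):[\partial,\phi]=0\}$, on the right $\{\rho\in\End_K(E):[a,\rho]=0\text{ for all }a\in D\}$ — and $\iota$ restricts to the identity there. In degrees $\geq 2$ the source vanishes and the target is acyclic, since $H^i\Hoch_K(D,\End_K(E))=\Ext^i_D(E,E)$ (by \cite[lemma 9.1.9]{weibel} together with the fact that $K$ is a field, as in the proof of \cref{result:fundamental theorem of deformation of modules}) and $E$ has projective dimension at most $1$ over $D$ by the corollary to \cref{result:resolve a finite free differential module}. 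So everything reduces to degree $1$: one must show $\zeta$ induces an isomorphism $H^1_\dR(\End(E))\xrightarrow{\ \sim\ }\Ext^1_D(E,E)=\Der_K(D,\End_K(E))/\PDer_K(D,\End_K(E))$.

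Injectivity is immediate: if $\zeta(\phi)=\ad_\sigma$ for some $\sigma\in\End_K(E)$, then $0=\zeta(\phi)(f)=[f,\sigma]$ for all $f\in\O$ forces $\sigma\in\End_\O(E)$, whence $\phi=\zeta(\phi)(\partial)=[\partial,\sigma]=\partial\cdot\sigma$ is a de Rham coboundary. Surjectivity is the real point: given a derivation $s\colon D\to\End_K(E)$, I want to subtract a principal derivation so that the result annihilates $\O$; by \cref{zeta} such a derivation equals $\zeta$ of its value on $\partial$, so its class lies in the image. This amounts to showing that the restriction $s|_\O\colon\O\to\End_K(E)$ is inner, i.e. that the first cohomology of the Hochschild complex of $\O$ with coefficients in the $\O$-bimodule $\End_K(E)$ vanishes. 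For $E$ finite free over $\O$ this is concrete: fixing an $\O$-basis $(e_1,\dotsc,e_n)$ and writing $r=s|_\O$, the $K$-linear endomorphism $\sigma\colon\sum f_ie_i\mapsto-\sum r(f_i)(e_i)$ satisfies $\ad_\sigma=r$ by a short Leibniz computation.

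I expect the degree-$1$ surjectivity — specifically isolating and proving this inner-ness of every $K$-derivation $\O\to\End_K(E)$ — to be the main obstacle; the chain-map and algebra-map verifications, though fiddly, are forced by the uniqueness property of $\zeta$, and the vanishing in degrees $\geq 2$ is handed to us by \cref{result:resolve a finite free differential module}. As an alternative route to the quasi-isomorphism alone (not the algebra structure), one can observe that $\dR(\O,\End(E))=\Hom_D(P_\bullet,E)$ for the two-term free resolution $P_\bullet$ of \cref{result:resolve a finite free differential module} while $\Hoch_K(D,\End_K(E))=\Hom_D(B_\bullet,E)$ for the bar resolution $B_\bullet$, and that $\zeta$ is the map induced by lifting $\mathrm{id}_E$ to a chain map $B_\bullet\to P_\bullet$, which is automatically a quasi-isomorphism; the differential graded algebra compatibility would then still be verified directly.
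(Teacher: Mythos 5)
Your chain map, its commutativity with the differentials, the treatment of degrees $0$ and $\geq 2$, and the multiplicativity check all follow the paper's argument. The genuine divergence is in the degree-$1$ step. The paper does \emph{not} prove surjectivity directly: instead it observes that $\zeta$ on $H^1$ should coincide with the composition of the isomorphism $H^1_\dR(\End(E)) \simeq \Ext^1_D(E,E)$ from \cite[lemma 5.3]{kedlaya_pde} with the isomorphism $\Ext^1_D(E,E) \simeq \Der_K(D,\End_K(E))/\PDer_K(D,\End_K(E))$ worked out in \cref{derivations and extensions}, and then traces through both constructions to verify that the triangle commutes. Your route is more self-contained: you establish injectivity by noting that if $\zeta(\phi) = d\sigma$ then $\sigma$ must be $\O$-linear (since $d\sigma$ kills $\O$), so $\phi = [\partial,\sigma]$ is a de~Rham coboundary; and you establish surjectivity by reducing it to the statement that every $K$-derivation $r\colon\O\to\End_K(E)$ is inner, which you settle with the explicit formula $\sigma\left(\sum f_i e_i\right) = -\sum r(f_i)(e_i)$. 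I checked this formula: for $v = \sum g_i e_i$ one has $f\sigma(v) - \sigma(fv) = -\sum f r(g_i)(e_i) + \sum r(fg_i)(e_i) = \sum r(f)(g_ie_i) = r(f)(v)$ by the Leibniz rule for $r$, so indeed $\ad_\sigma|_\O = r$. This is a nice shortcut: it replaces the paper's reliance on an external isomorphism and a somewhat fiddly diagram chase with a two-line Hochschild computation that only uses finite freeness of $E$ over $\O$. The concluding remark about realizing $\zeta$ as a comparison map $\Hom_D(P_\bullet,E)\to\Hom_D(B_\bullet,E)$ of resolutions is also a valid (and conceptually clean) alternative derivation of the quasi-isomorphism, though as you note it leaves the DG-algebra compatibility to be checked separately.

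One caveat worth flagging on the multiplicativity step, which you handle exactly as the paper does. You (and the paper) conclude $\zeta(\phi\circ\psi)=\iota(\phi)\cup\zeta(\psi)$ from the uniqueness clause of \cref{zeta}, on the grounds that both sides annihilate $\O$ and send $\partial$ to $\phi\circ\psi$. But the uniqueness clause only distinguishes \emph{derivations} by their restriction to $\O$ and value at $\partial$, so to invoke it you must first know that $\iota(\phi)\cup\zeta(\psi)$, which a priori is just a $1$-cochain $a\mapsto\phi\circ\zeta(\psi)(a)$, actually lands in $\Der_K(D,\End_K(E))$. That is not automatic: $\iota(\phi)$ is a $0$-cocycle only when $\phi$ is horizontal, and evaluating both sides at $\partial^2$ one finds they differ by the term $[\partial,\phi]\circ\psi$. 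So this step is asserting more than the uniqueness lemma delivers, and the assertion should either be verified head-on or the target should be taken to be the truncation $\tau_{\leq 1}\Hoch_K(D,\End_K(E))$ with an independently defined multiplication, as \cref{remarks on dimension 1} suggests. This is a subtlety inherited from the paper rather than introduced by you, but since you lean on the ``uniqueness forces it'' heuristic, it is worth being aware that the heuristic does not apply as directly as in the chain-map step.
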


\begin{proof}
	Since $E$ has projective dimension at most 1 over $D$, we have $\Ext^i_D(E,E) = 0$ for all $i \geq 2$. Thus, as in \cref{remarks on dimension 1}, the natural inclusion $\tau_{\leq 1}\Hoch_K(D, \End_K(E)) \to \Hoch_K(D, \End_K(E))$ is a quasi-isomorphism of differential graded $K$-algebras. The map $\zeta$ of \cref{zeta} defines a map of complexes $\dR(\O, \End(E)) \to \tau_{\leq 1}\Hoch_K(D, \End_K(E))$ as follows (where we are using the description of the truncated Hochschild complex from \cref{remarks on dimension 1}). 
	\[ \begin{tikzcd} \End(E) \ar{r}{\partial} \ar[hookrightarrow]{d} & \End(E) \ar{r} \ar{d}{\zeta} & 0 \ar{r} \ar{d} & \dotsb \\
	\End_K(E) \ar{r}[swap]{d}  & \Der_K(D, \End_K(E)) \ar{r} & 0 \ar{r} & \dotsb \end{tikzcd} \]
	To see that this diagram commutes, note that if $\sigma \in \End(E)$, note that 
	\[ \zeta(\partial \sigma)(f) = 0 = [f, \sigma] = d\sigma(f) \]
	for any $f \in \O$ and that
	\[ \zeta(\partial \sigma)(\partial) = \zeta([\partial, \sigma])(\partial) = [\partial, \sigma] = d\sigma(\partial). \]
	In other words, $d\sigma$ and $\zeta(\partial\sigma)$ agree on $\O$ and on $\partial$, so the injectivity assertion of \cref{derivations on D} proves commutativity of the diagram. 
	
	A similar argument using the same injectivity assertion shows that $\zeta$ is a homomorphism of differential graded $K$-algebras. Suppose $\sigma, \rho \in \End(E)$, where $\sigma$ is regarded as living in degree 0 and $\rho$ in degree 1. We want to show that
	\[ \zeta(\sigma \rho) = \sigma \zeta(\rho) = \zeta(\sigma)\rho, \]
	but clearly all three annihilate $\O$ and take the value $\sigma \rho$ on $\partial$. 
	
	We now want to show that $\zeta$ induces isomorphisms on cohomology. This is clear in degree 0, so we focus on degree 1. Note that we have a diagram as follows. 
	\[ \begin{tikzcd} H^1_{\dR}(\End(E)) \ar{r} \ar[bend right]{dr}[swap]{\zeta} & \Ext^1_D(E, E) \ar{d} \\ & H^1(\Hoch_K(D, \End_K(E))) \end{tikzcd}  \]
	Here, the vertical map is the isomorphism detailed in \cref{derivations and extensions}, and the horizontal map is the isomorphism of \cite[lemma 5.3]{kedlaya_pde}. To show that $\zeta$ is an isomorphism, it suffices to prove that this diagram is commutative. 
	
	Let us begin by recalling the explicit construction of the horizontal isomorphism displayed above as it is described in \cite[proof of lemma 5.3]{kedlaya_pde}. Suppose we have $\rho \in \End(E)$ representing a cohomology class in $H^1_{\dR}(\End(E))$. Its image in $\Ext^1_D(E, E)$ is denoted $E \oplus_\rho E$. As an $\O$-module, it is just $E \oplus E$, but $\partial$ acts by
	\[ \partial (e, e') := (\partial e + \rho(e'), \partial e'). \]
	It is straightforward to verify that the Leibniz rule 
	\[ \partial (a \cdot (e, e')) = \partial(a) \cdot (e,e') + a \cdot \partial (e, e') \]
	is satisfied, and that the inclusion $E \to E \oplus_\rho E$ into the first coordinate and the projection $E \oplus_\rho E \to E$ onto the second coordinate are both homomorphisms of differential $\O$-modules. In other words, $E \oplus_\rho E$ is in fact an extension of $E$ by itself. 
	
	Now let us find the image of this extension in 
	\[ H^1(\Hoch_K(D, \End_K(E))) = \Der_K(D, \End_K(E))/\PDer_K(D, \End_K(E)). \]
	To do this, we use the description in \cref{derivations and extensions}. Note that the underlying $\O$-module of $E \oplus_\rho E$ is $E \oplus E$. In other words, there is a natural pair of $K$-linear (even $\O$-linear) splittings for this extension. Thus the image of $E \oplus_\rho E$ in $H^1(\Hoch_K((D, \End_K(E))))$ is the class of the derivation $s \in \Der_K(D, \End_K(E))$ where, for any $P \in D$ and $e \in E$, $s(P)(e)$ is the projection of $P \cdot (0, e)$ onto the first coordinate. 
	
	Note that taking $P = f$ for some $f \in \O$, then $f \cdot (0, e) = (0, fe)$. Thus $s|_\O = 0$. Moreover, taking $P = \partial$ shows that $s(\partial) = \rho$. It follows from the injectivity assertion of \cref{derivations on D} that $s = \zeta(\rho)$. This proves that the diagram is commutative. \qedhere 
\end{proof}

\begin{corollary} \label{result:governing complex connection}
	$\dR(\O, \End(E))$ governs $\Def_{E}$. \qed
\end{corollary}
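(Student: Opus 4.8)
The plan is to chain together the two preceding comparison results and appeal to quasi-isomorphism invariance. By definition, $\dR(\O, \End(E))$ governs $\Def_E$ precisely when there exists an equivalence $\Def_{\dR(\O, \End(E))} \to \Def_E$ of categories over $\Art_K$, so it suffices to produce such an equivalence as a composite of two equivalences supplied by the results above.

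First, \cref{result:de rham and hochschild} gives a quasi-isomorphism of differential graded $K$-algebras $\zeta : \dR(\O, \End(E)) \to \Hoch_K(D, \End_K(E))$. Passing to the underlying differential graded Lie algebras, where the bracket on each side is the graded commutator associated to the associative multiplication, $\zeta$ remains a morphism of differential graded Lie algebras (because it is an algebra homomorphism) and it is still a quasi-isomorphism (the underlying complexes are unchanged). Hence \cref{result:quasi-isomorphism invariance} yields an equivalence of deformation categories $\Def_{\dR(\O, \End(E))} \to \Def_{\Hoch_K(D, \End_K(E))}$.

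Second, \cref{result:hochschild governs module deformations} asserts exactly that $\Hoch_K(D, \End_K(E))$ governs $\Def_{D,E}$, which, per the convention fixed at the start of \cref{de rham and hochschild}, is what we mean here by $\Def_E$; that is, there is an equivalence $\Def_{\Hoch_K(D, \End_K(E))} \to \Def_E$ over $\Art_K$. Composing the two equivalences produces the desired equivalence $\Def_{\dR(\O, \End(E))} \to \Def_E$. There is essentially no obstacle: the only point deserving attention is that $\zeta$ respects the Lie structures, which is precisely why \cref{result:de rham and hochschild} was stated as a quasi-isomorphism of differential graded \emph{algebras} rather than merely of complexes.
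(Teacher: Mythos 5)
Your argument is exactly the intended one: the corollary is stated with an immediate \qed because it follows by composing the equivalence of \cref{result:quasi-isomorphism invariance} applied to the quasi-isomorphism $\zeta$ of \cref{result:de rham and hochschild} with the equivalence of \cref{result:hochschild governs module deformations}. Your attention to the fact that $\zeta$ is a morphism of differential graded algebras, hence of the associated differential graded Lie algebras, is precisely the point that makes the chain go through.
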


\begin{remark} \label{explicit governance}
	It is worth describing the equivalence $\Theta : \Def_{\dR(\O, \End(E))} \to \Def_E$ explicitly. For $R \in \Art_K$, the objects of $\Def_{\dR(\O, \End(E))}(R)$ are elements of
	\[ \m_R \otimes \End(E) = \ker(\End(E_R) \to \End(E)). \]
	Given $\mu \in \ker(\End(E_R) \to \End(E))$, let $E_{R,\mu}$ denote the differential $\O_R$-module whose underlying $\O_R$-module is $E_R$, and where $\partial$ acts by $1 \otimes \partial + \mu$. Since $\mu$ reduces to 0 modulo $\m_R$, the natural $\O_R$-module homomorphism $\theta : E_{R,\mu} \to E$ is actually a $D_R$-module homomorphism. 

	The equivalence $\Theta : \Def_{\dR(\O, \End(E))} \to \Def_E$ is given by $\mu \mapsto (R, E_{R,\mu}, \theta)$ on the level of objects. On morphisms, $\Theta$ acts ``by exponentiation.'' See the proof of \cref{result:hochschild governs module deformations} for details about this. 
\end{remark}

\subsection{Trace and determinant} \label{trace and determinant}

\begin{lemma} \label{trace preserves algebra structure}
	The trace map $\tr :  \End(E) \to \O$ is a split surjective homomorphism of differential $\O$-modules. Moreover, the induced map $\tr : \dR(\O, \End(E)) \to \dR(\O,\O)$ is a homomorphism of differential graded $K$-Lie algebras.\footnote{Note that it is \emph{not} a homomorphism of differential graded $K$-algebras: it preserves the commutator bracket, but not multiplication itself.}
\end{lemma}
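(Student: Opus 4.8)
The plan is to work in a fixed $\O$-basis $(e_1,\dots,e_n)$ of $E$, writing $N$ for the matrix of $\partial$ and identifying $\End(E)=M_n(\O)$. Unwinding the defining formula $(\partial\cdot\phi)(e)=\partial(\phi(e))-\phi(\partial(e))$ in coordinates (exactly the computation that produces the matrix $\partial I - N$ in \cref{result:resolve a finite free differential module}) shows that the differential module structure on $\End(E)$ sends a matrix $\Phi$ to $\partial(\Phi)+[N,\Phi]$, where $\partial(\Phi)$ denotes entrywise application of $\partial$ and $[N,\Phi]=N\Phi-\Phi N$. Alternatively one can avoid coordinates entirely: $E$ is dualizable in the tensor category $\DMod_\O$, $\End(E)\cong E^\vee\otimes E$ as differential modules, and $\tr$ is the evaluation pairing $E^\vee\otimes E\to\O=\mathbf 1$, which is a morphism in $\DMod_\O$ by the duality axioms; I will mention this but carry out the verification in coordinates, since it is short. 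Note that $n=\rank_\O E$ is a nonzero integer, hence invertible in $\O$ because $K$ has characteristic $0$ (the case $\O=0$ being vacuous), so $a\mapsto n^{-1}aI$ is an $\O$-linear section of $\tr$.

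First I would check that $\tr$ is a morphism of differential $\O$-modules: for $\Phi\in M_n(\O)$ we have $\tr(\partial\cdot\Phi)=\tr(\partial(\Phi))+\tr([N,\Phi])=\partial(\tr\Phi)+0=\partial(\tr\Phi)$, using that the trace of a commutator of matrices over a commutative ring vanishes and that $\tr$ commutes with the entrywise $\partial$. The same formula shows $\partial\cdot(n^{-1}aI)=\partial(n^{-1}a)I+[N,n^{-1}aI]=n^{-1}\partial(a)\,I$, so the section is also a morphism of differential modules. Hence $\tr$ is split surjective as a homomorphism of differential $\O$-modules, and in particular as a homomorphism of $\O$-modules.

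It remains to analyze the induced map of de Rham complexes $\tr\colon\dR(\O,\End(E))\to\dR(\O,\O)$. It is a chain map because $\tr$ commutes with $\partial$ in each of degrees $0$ and $1$ by the previous paragraph. For bracket-compatibility the key observation is that both complexes are concentrated in degrees $0$ and $1$, and the graded commutator bracket on $\dR(\O,\O)$ is identically zero: in degrees $0$ and between degrees $0$ and $1$ this is because $\O$ is commutative, and there is simply no $1\times1\to2$ component since degree $2$ vanishes. So it suffices to show that $\tr$ annihilates the graded commutator of the differential graded algebra $(\dR(\O,\End(E)),\circ)$ in degrees $0$ and $1$; that is, $\tr\bigl(\Phi\Psi-(-1)^{|\Phi||\Psi|}\Psi\Phi\bigr)=0$ whenever $\Phi,\Psi$ are homogeneous with $|\Phi|+|\Psi|\le 1$. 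In that range $|\Phi||\Psi|=0$, so this is precisely the cyclic invariance $\tr(\Phi\Psi)=\tr(\Psi\Phi)$ of the ordinary matrix trace; when $|\Phi|=|\Psi|=1$ the bracket lands in degree $2$, where both complexes are zero, so there is nothing to verify. Combining, $\tr$ is a chain map that preserves the bracket, hence a homomorphism of differential graded $K$-Lie algebras.

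There is no serious obstacle here: every step is a routine computation. The only thing to be careful about is the sign bookkeeping in the graded commutator together with the observation that the target is an abelian differential graded Lie algebra concentrated in degrees $0$ and $1$, which is what makes the entire content of the second assertion collapse to the cyclicity of the trace.
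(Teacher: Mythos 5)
Your proposal is correct and follows essentially the same route as the paper: fix a basis, observe the action of $\partial$ on $\End(E)$ is $\Phi\mapsto\partial(\Phi)+[N,\Phi]$, deduce that $\tr$ commutes with $\partial$, and reduce bracket-compatibility to vanishing of the trace of a commutator. The one minor point of difference is the section: the paper simply observes that $f\mapsto\mu_f$ (multiplication by $f$) is a differential-module homomorphism $\O\to\End(E)$ and calls this a splitting, whereas $\tr\circ\mu_\bullet = n\cdot\mathrm{id}$, so strictly one must rescale by $n^{-1}$ (invertible since $\characteristic K = 0$); you do this explicitly, which is slightly more careful. Your degree-by-degree analysis of the graded commutator (the bracket on $\dR(\O,\O)$ vanishes identically, the degree $1\times 1$ bracket lands in the zero degree $2$ piece, so everything reduces to cyclicity of the trace) is a correct elaboration of the paper's one-line remark $\tr([\alpha,\beta])=0=[\tr\alpha,\tr\beta]$.
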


\begin{proof}
	Observe that the natural embedding $\O \to \End(E)$, carrying an element $f \in \O$ to the multiplication by $f$ map, is a homomorphism of differential $\O$-modules. Indeed, if we let $\mu_f$ denote the multiplication by $f$ map, then
	\[ (\partial \cdot \mu_f)(e) = \partial \mu_f (e) - \mu_f \partial(e) = \partial(fe) - f\partial(e) = \partial(f)e = \mu_{\partial(f)}(e). \]
	Now let us show that the trace map also preserves the differential module structure. We choose a basis $(e_1, \dotsc, e_n)$ for $E$, and then observe that if $\phi \in \End(E)$ has matrix $M$ with respect to this basis, then $\partial\phi$ has matrix $\partial(M) + [N, M]$ where $N$ is the matrix of action of $\partial$ and $\partial(M)$ denotes entry-wise application of $\partial$ to $M$. Then
	\[ \tr(\partial \phi) = \tr(\partial(M) + [N, M]) = \tr(\partial(M)) = \partial(\tr(M)) = \partial(\tr(\phi)). \]
	Clearly the map $\O \to \End(E)$ splits the trace map, so this completes the proof of the first assertion. The second assertion follows from the observation that 
	\[ \tr([\alpha, \beta]) = 0 = [\tr(\alpha), \tr(\beta)] \]
	for any $\alpha, \beta \in \End(E)$. 
\end{proof}

\begin{lemma} \label{matrix of determinant}
	If $N$ is the matrix of $\partial$ on $E$ with respect to an $\O$-basis $(e_1, \dotsc, e_n)$, then $\tr(N)$ is the matrix of $\partial$ on $\det(E)$ with respect to $e_1 \wedge \dotsb \wedge e_n$.
\end{lemma}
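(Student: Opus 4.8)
The plan is a direct computation using the Leibniz rule on the top exterior power. Recall that $\det(E) = \bigwedge^n E$ inherits a differential $\O$-module structure from the tensor category structure on $\DMod_\O$ (cf. \cite[definition 5.3.2]{kedlaya_pde}): it is a quotient of $E^{\otimes n}$, on which $\partial$ acts diagonally via the Leibniz rule, and this descends to the antisymmetrization, so that on a wedge of sections one has the graded Leibniz formula
\[ \partial(v_1 \wedge \dotsb \wedge v_n) = \sum_{j=1}^n v_1 \wedge \dotsb \wedge \partial(v_j) \wedge \dotsb \wedge v_n. \]
Thus the first step is simply to record that the matrix of $\partial$ on the rank-one module $\det(E)$ with respect to the basis element $e_1 \wedge \dotsb \wedge e_n$ is the scalar $c \in \O$ determined by $\partial(e_1 \wedge \dotsb \wedge e_n) = c\,(e_1 \wedge \dotsb \wedge e_n)$, and our task is to show $c = \tr(N)$.

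Next I would expand using the Leibniz formula above together with $\partial(e_j) = \sum_{i=1}^n N_{ij} e_i$, giving
\[ \partial(e_1 \wedge \dotsb \wedge e_n) = \sum_{j=1}^n e_1 \wedge \dotsb \wedge \Bigl( \sum_{i=1}^n N_{ij} e_i \Bigr) \wedge \dotsb \wedge e_n = \sum_{j=1}^n \sum_{i=1}^n N_{ij}\, e_1 \wedge \dotsb \wedge \underset{j\text{-th slot}}{e_i} \wedge \dotsb \wedge e_n. \]
The key observation is that, in the inner sum, every term with $i \neq j$ has a repeated basis vector and hence vanishes, so only the diagonal terms $i = j$ survive, yielding $\sum_{j=1}^n N_{jj}\, e_1 \wedge \dotsb \wedge e_n = \tr(N)\, e_1 \wedge \dotsb \wedge e_n$. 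Comparing with the previous paragraph gives $c = \tr(N)$, which is the assertion.

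I do not expect any real obstacle here; the only point requiring a moment's care is the identification of the differential on $\det(E)$ with the Leibniz-rule differential, which is immediate from the definition of the monoidal structure on $\DMod_\O$. If one prefers to avoid invoking the exterior power structure directly, the same computation can be carried out in $E^{\otimes n}$ (where $\partial$ acts by $\sum_j 1 \otimes \dotsb \otimes \partial \otimes \dotsb \otimes 1$) and then pushed through the canonical surjection $E^{\otimes n} \twoheadrightarrow \det(E)$, with the off-diagonal terms again dying upon antisymmetrization.
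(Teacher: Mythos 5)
Your proof is correct and follows essentially the same route as the paper's: apply the Leibniz rule for $\partial$ on the top exterior power, expand $\partial e_j$ in the basis using $N$, and note that only the diagonal terms survive the antisymmetrization. The paper's version is more terse (it does not spell out why the off-diagonal terms vanish), but the argument is identical.
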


\begin{proof}
	Recall from \cite[definition 5.3.2]{kedlaya_pde} that
	\[ \partial (e_1 \wedge \dotsb \wedge e_n) = \sum_{i = 1}^n e_1 \wedge \dotsb \wedge e_{i-1} \wedge \partial e_i \wedge e_{i+1} \wedge \dotsb \wedge e_n. \]
	By definition of $N$, we have 
	\[ \partial e_i = \sum_{j = 1}^n N_{j,i} e_j, \]
	so
	\[ \partial(e_1 \wedge \dotsb \wedge e_n) = \sum_{i = 1}^n N_{i,i} (e_1 \wedge \dotsb \wedge e_n) = \tr(N) (e_1 \wedge \dotsb \wedge e_n). \qedhere \]
\end{proof}

\begin{lemma} \label{result:trace and determinant}
	The following diagram 2-commutes. 
	\[ \begin{tikzcd}
	\Def_{\dR(\O, \End(E))} \ar{r}{\tr} \ar{d} & \Def_{\dR(\O, \O)} \ar{d} \\ \Def_{E} \ar{r}[swap]{\det} & \Def_{\det(E)}
	\end{tikzcd} \]
\end{lemma}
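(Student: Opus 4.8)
The plan is to check that the two composite functors around the square are, up to the canonical base-change identification of top exterior powers, literally equal. First I would spell out the four edges. The two vertical arrows are the equivalences $\Theta$ of \cref{explicit governance}: over $R \in \Art_K$, the left one carries $\mu \in \m_R \otimes \End(E)$ to $(R, E_{R,\mu}, \theta)$, where $E_{R,\mu}$ is $E_R$ with $\partial$ acting by $1 \otimes \partial + \mu$; and the right one carries $\nu \in \m_R \otimes \O$ to $(R, (\det E)_{R,\nu}, \theta)$, once we use the canonical identification $\O = \End(\det E)$ of differential $\O$-modules (so that $\dR(\O,\O) = \dR(\O, \End(\det E))$, which governs $\Def_{\det E}$ by \cref{result:governing complex connection} applied to the rank-one module $\det E$). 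The top arrow is the functor induced by the $2$-functoriality of $L \mapsto \Def_L$ (\cref{section:def construction is 2-functorial}) from the differential graded Lie algebra homomorphism $\tr \colon \dR(\O, \End(E)) \to \dR(\O,\O)$ of \cref{trace preserves algebra structure}; on objects it is $\mu \mapsto \tr(\mu)$, and on gauge-group elements it is $\eta \mapsto \tr(\eta)$. The bottom arrow sends $(R,F,\theta)$ to $\bigl(R, \bigwedge^n_{\O_R} F, \bigwedge^n \theta\bigr)$; this is well defined because, by \cref{lifting bases}, $F$ is finite free over $\O_R$, so $\bigwedge^n_{\O_R} F$ is finite free over $\O_R$, hence $R$-flat, and $K \otimes_R \bigwedge^n_{\O_R} F = \bigwedge^n_\O E = \det E$.

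Next I would compare the composites on objects. Fix an $\O$-basis $(e_1, \dotsc, e_n)$ of $E$ with matrix $N$ for $\partial$; then $e_1 \wedge \dotsb \wedge e_n$ is a basis of $\det E$, and by \cref{matrix of determinant} the matrix of $\partial$ on $\det E$ is $\tr(N)$. If $\mu$ has matrix $M$ (over $\O$, with entries in $\m_R \otimes \O$), then $\partial$ on $E_{R,\mu}$ has matrix $N + M$ with respect to $(1 \otimes e_i)$, so \cref{matrix of determinant}, applied over the differential ring $\O_R$, gives that $\partial$ on $\bigwedge^n_{\O_R}(E_{R,\mu})$ has matrix $\tr(N+M) = \tr(N) + \tr(M)$ with respect to $1 \otimes (e_1 \wedge \dotsb \wedge e_n)$. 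On the other hand $\tr(\mu)$ acts on $\det E$ as multiplication by $\tr(M)$, so $(\det E)_{R,\tr\mu}$ has $\partial$-matrix $\tr(N) + \tr(M)$ in the same basis; and both augmentations to $\det E$ are reduction modulo $\m_R$. Hence, after identifying $\bigwedge^n_{\O_R}(E_R)$ with $(\det_\O E)_R$ canonically, the two composites agree on objects.

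It remains to compare the composites on morphisms. A morphism $\mu' \to \mu$ over $\pi \colon R' \to R$ in $\Def_{\dR(\O, \End(E))}$ is a gauge-group element $\eta \in \m_R \otimes \End(E)$ with $\eta * \pi(\mu') = \mu$, and (proof of \cref{result:hochschild governs module deformations}) $\Theta$ sends it to the composite $E_{R',\mu'} \xrightarrow{\pi \otimes 1} E_{R,\pi(\mu')} \xrightarrow{e^\eta} E_{R,\mu}$. Applying $\bigwedge^n_{\O_R}$, using that the $n$-th exterior power of the reduction map is again reduction and that $\bigwedge^n_{\O_R}(e^\eta)$ is multiplication by $\det(e^\eta) = e^{\tr(\eta)}$ (the identity $\det \exp = \exp \tr$, valid here since $\eta$ is nilpotent), we see that $\det \circ \Theta$ sends the morphism to reduction followed by multiplication by $e^{\tr\eta} \in \O_R$. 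Going the other way, $\tr$ sends the morphism to the gauge-group element $\tr(\eta)$ over $\pi$, and $\Theta$ sends that to $(\det E)_{R',\tr\mu'} \xrightarrow{\pi \otimes 1} (\det E)_{R,\pi(\tr\mu')} \xrightarrow{e^{\tr\eta}} (\det E)_{R,\tr\mu}$, where $e^{\tr\eta}$ is the exponential of the scalar $\tr\eta$, again multiplication by $e^{\tr\eta} \in \O_R$. So the two composites agree on morphisms too, and the square $2$-commutes.

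The one genuinely fiddly point is this morphism comparison: keeping the canonical base-change identifications of the functor $\bigwedge^n$ straight while tracking the gauge action and the exponentials, and invoking $\det \exp = \exp \tr$. Everything else is a direct application of \cref{matrix of determinant}, \cref{trace preserves algebra structure}, \cref{explicit governance}, and the $2$-functoriality of $\Def$.
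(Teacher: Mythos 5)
Your proof is correct and follows the same strategy as the paper's: both identify $\End(\det E) = \O$, both describe the four edges via \cref{explicit governance} and the $2$-functoriality of $\Def$, both construct the $2$-isomorphism via the canonical base-change isomorphism $\eta_R\colon(\det E)_R \xrightarrow{\sim}\det(E_R)$, and both use \cref{matrix of determinant} as the key calculation showing that $\eta_R$ intertwines the two differential structures $\tr(N)+\tr(M)$ and $\tr(N+M)$.

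The one place you are noticeably more careful than the paper is the morphism check. The paper, having established that $\eta_R$ is a differential-module isomorphism for each $\mu$ and is natural with respect to base change $\pi\otimes 1$, asserts that the $2$-commutativity ``follows immediately.'' Strictly, naturality in $R$ only handles compatibility with opcartesian morphisms; one still needs compatibility with the gauge action, i.e.\ that $\eta_R \circ e^{\tr\eta} = \det(e^\eta)\circ\eta_R$ as $\O_R$-linear maps. Since $\eta_R$ is $\O_R$-linear and both sides reduce to multiplication by scalars, this reduces to the identity $\det(\exp\eta) = \exp(\tr\eta)$ for nilpotent $\eta$, exactly as you invoke. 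So your explicit step with $\det\exp=\exp\tr$ fills in a detail that the paper's ``immediately'' compresses; both arguments are correct, and yours is the more transparent at that point.

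One minor stylistic observation: you phrase your conclusion as showing the two composites are ``literally equal'' after the canonical identification, which is fine, but it is worth keeping in mind that what is really being produced is a specified $2$-isomorphism $\eta_R$ between the composites, which is the precise content of $2$-commutativity; you do track this correctly throughout.
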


\begin{proof}
	Since $\det(E)$ is of rank 1, we have $\End(\det(E)) = \O$. The vertical arrows are the equivalences of \cref{result:governing complex connection}, which are described in more detail in \cref{explicit governance}. The horizontal arrow on top is induced by the homomorphism $\tr : \dR(\O, \End(E)) \to \dR(\O, \O)$ of \cref{trace preserves algebra structure}, and the horizontal arrow on the bottom is given by 
	\[ (R, F, \theta) \mapsto (R, \det(F), \det(\theta)). \]
	The 2-commutativity of the square is straightforward to verify at this point; once we choose bases, the key observation is \cref{matrix of determinant} above. The details follow.  
	
	For any $R \in \Art_K$, observe that we have a canonical isomorphism $\eta_R : \det(E)_R \to \det(E_R)$ of $\O_R$-modules. Explicitly, if we choose a basis $(e_1, \dotsc, e_n)$ of $E$, then 
	\[ \eta_R(1 \otimes (e_1 \wedge \dotsb \wedge e_n)) = (1 \otimes e_1) \wedge \dotsb \wedge (1 \otimes e_n), \]
	but $\eta_R$ does not depend on the choice of basis. Chasing the basis $1 \otimes (e_1 \wedge \dotsb \wedge e_n)$ around shows that $\eta_R$ makes the following diagram commute.  
	\begin{equation} \label{eqn:compatible with reduction}
	\begin{tikzcd} \det(E)_{R} \ar{r}{\eta_{R}} \ar{d}[swap]{\theta} & \det(E_{R}) \ar{d}{\det(\theta)} \\ \det(E) \ar{r}[swap]{1} & \det(E) \end{tikzcd}
	\end{equation}
	Moreover, the isomorphism $\eta_R$ is evidently natural in $R$ in the sense that, if $\pi : R' \to R$ is a homomorphism in $\Art_K$, we have a commutative diagram as follows.
	\begin{equation} \label{eqn:naturality of isomorphism}
	\begin{tikzcd} \det(E)_{R'} \ar{r}{\eta_{R'}} \ar[swap]{d}{\pi \otimes 1} & \det(E_{R'}) \ar{d}{\det(\pi \otimes 1)} \\ \det(E)_R \ar{r}[swap]{\eta_R} & \det(E_R) \end{tikzcd}
	\end{equation}
	Now suppose $\mu \in \ker(\End(E_R) \to \End(E))$. We claim that $(1, \eta_R)$ is an isomorphism 
	\[ \begin{tikzcd} (R, \det(E)_{R,\tr(\mu)}, \theta) \ar{r} & (R, \det(E_{R,\mu}), \det(\theta)) \end{tikzcd} \]
	in $\Def_{\det(R)}$. In fact, in light of the commutative diagram \eqref{eqn:compatible with reduction}, it is sufficient to show that $\eta_R$ is an isomorphism of differential $\O_R$-modules $\det(E)_{R,\tr(\mu)} \to \det(E_{R,\mu})$. Once we show this, it follows immediately from the diagram \eqref{eqn:naturality of isomorphism} that the collection $(1, \eta_R)$ defines a 2-morphism that makes the square in the statement of the lemma 2-commute.
	
	To prove that $\eta_R : \det(E)_{R,\tr(\mu)} \to \det(E_{R,\mu})$ is an isomorphism of differential modules, we choose a basis $(e_1, \dotsc, e_n)$ for $E$. Let $N$ be the matrix of $\partial$ acting on $E$ and let $M$ be the $n \times n$ matrix with coefficients in $\m_R \otimes \O$ representing $\mu$. Observe that $(1 \otimes e_1, \dotsc, 1 \otimes e_n)$ is a basis for $E_{R,M}$ and $1 \otimes N + M$ is the matrix of action of $\partial$ on this basis. Applying \cref{matrix of determinant}, we see that $\partial$ acts on $(1 \otimes e_1) \wedge \dotsb \wedge (1 \otimes e_n)$ by 
	\[ \tr(1 \otimes N + M) = 1 \otimes \tr(N) + \tr(M). \]
	This is precisely the matrix with which $\partial$ acts on the basis $1 \otimes (e_1 \wedge \dotsb \wedge e_n)$ of $\det(E)_{R,\tr(M)}$, proving the claim.
\end{proof}

\subsection{Trivialized and trivializable deformations} \label{section:trivialized and trivializable}

We continue to fix $\O$ and $E$ as above. Moreover, we assume in addition that $\O^\sharp$ is an another commutative $K$-algebra, $\O \to \O^\sharp$ is a homomorphism of $K$-algebras, and that there is a $K$-linear derivation on $\O^\sharp$, again denoted $\partial$, which extends the action of $\partial$ on $\O$.

We let $D^\sharp := \O^\sharp[\partial]$ be the corresponding ring of differential operators, and we regard $E^\sharp := \O^\sharp \otimes_\O E$ as a finite free differential $\O^\sharp$-module. In other words, $\Def_{E^\sharp}$ means $\Def_{D^\sharp, E^\sharp}$. For any $(R, F, \theta) \in \Def_{E}$, we let \[ F^\sharp := \O^\sharp \otimes_\O F \]
regarded as a differential $\O_R^\sharp$-module. Since $F$ is $R$-flat, so is $F^\sharp$. Letting $\theta^\sharp : F^\sharp \to E^\sharp$ be the natural map induced by $\theta$, we observe that $(R, F, \theta) \mapsto (R, F^\sharp, \theta^\sharp)$ defines a functor $\Def_{E} \to \Def_{E^\sharp}$.

We can give a description of this functor in terms of the differential graded Lie algebras which govern the deformation categories $\Def_E$ and $\Def_{E^\sharp}$. Observe that the map $E \to E^\sharp$ induces a homomorphism $\dR(\O, E) \to \dR(\O^\sharp, E^\sharp)$ of complexes.
\[ \begin{tikzcd} E \ar{d} \ar{r}{\partial} & E \ar{d} \\
E^\sharp \ar{r}[swap]{\partial} &  E^\sharp \end{tikzcd} \]
In particular, since 
\[ \End(E^\sharp) = \O^\sharp \otimes_\O \End(E), \]
there is a natural map $\dR(\O, \End(E)) \to \dR(\O^\sharp, \End(E^\sharp))$, which is a homomorphism of differential graded $K$-algebras. It is this map that induces the functor $\Def_E \to \Def_{E^\sharp}$, in the following sense. 

\begin{lemma} \label{result:equivalences and base change}
	The following diagram 2-commutes.
	\[ \begin{tikzcd} \Def_{\dR(\O, \End(E))} \ar{r} \ar{d} & \Def_{\dR(\O^\sharp, \End(E^\sharp))} \ar{d} \\ \Def_E \ar{r} & \Def_{E^\sharp} \end{tikzcd} \]
\end{lemma}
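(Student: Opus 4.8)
The plan is to prove the lemma by following the pattern of the proof of \cref{result:trace and determinant}: make the two vertical equivalences explicit via \cref{explicit governance}, exhibit a canonical isomorphism between the two composite functors around the square, and check that it is natural. Recall that the equivalence $\Theta\colon\Def_{\dR(\O,\End(E))}\to\Def_E$ sends $\mu\in\m_R\otimes\End(E)$ to $(R,E_{R,\mu},\theta)$, where $E_{R,\mu}$ is $E_R$ with $\partial$ acting by $1\otimes\partial+\mu$, and acts on morphisms by exponentiation; the equivalence $\Theta^\sharp$ for $E^\sharp$ is described the same way. By the $2$-functoriality of $L\mapsto\Def_L$ recalled in \cref{section:def construction is 2-functorial}, the top horizontal functor is the one induced by the differential graded algebra homomorphism $\dR(\O,\End(E))\to\dR(\O^\sharp,\End(E^\sharp))$, so on objects it sends $\mu$ to its image $\mu^\sharp$ under $\m_R\otimes\End(E)\to\m_R\otimes\End(E^\sharp)$, and on morphisms it sends $\eta$ to $\eta^\sharp$. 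Hence $\Theta^\sharp\circ(\text{top})$ sends $\mu$ to $(R,(E^\sharp)_{R,\mu^\sharp},\theta^\sharp)$, while $(\text{bottom})\circ\Theta$ sends $\mu$ to $(R,(E_{R,\mu})^\sharp,\theta^\sharp)$. For every $R\in\Art_K$ there is a canonical isomorphism of $\O_R^\sharp$-modules $c_R\colon(E_R)^\sharp\to(E^\sharp)_R$, coming from the associativity and commutativity of tensor products (both sides are canonically $\O^\sharp\otimes_\O R\otimes_K E$); it is compatible with reduction to $E^\sharp$ and natural in $R$, just as $\eta_R$ was in the diagrams \eqref{eqn:compatible with reduction} and \eqref{eqn:naturality of isomorphism}. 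Note $c_R$ does not depend on $\mu$, and $(E_{R,\mu})^\sharp$, $(E^\sharp)_{R,\mu^\sharp}$ have underlying $\O^\sharp_R$-modules $(E_R)^\sharp$, $(E^\sharp)_R$ respectively, so it makes sense to ask whether $c_R$ is compatible with the two twisted derivations.

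The crux is precisely that: to show that $c_R$ is an isomorphism of \emph{differential} $\O^\sharp_R$-modules $(E_{R,\mu})^\sharp\to(E^\sharp)_{R,\mu^\sharp}$ for every $\mu$. As in \cref{result:trace and determinant}, I would check this in coordinates. Choose an $\O$-basis $(e_1,\dotsc,e_n)$ of $E$, let $N$ be the matrix of $\partial$ on $E$, and let $M$ (with entries in $\m_R\otimes\O$) be the matrix of $\mu$ with respect to the basis $(1\otimes e_1,\dotsc,1\otimes e_n)$ of $E_R$. Then $1\otimes N+M$ is the matrix of $\partial$ on $E_{R,\mu}$, and its image $1\otimes N^\sharp+M^\sharp$ in $\O^\sharp_R$ is the matrix of $\partial$ on $(E_{R,\mu})^\sharp$ in the lifted basis. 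On the other hand $N^\sharp$ is the matrix of $\partial$ on $E^\sharp$, and $M^\sharp$ is the matrix of $\mu^\sharp$, so $1\otimes N^\sharp+M^\sharp$ is also the matrix of $\partial$ on $(E^\sharp)_{R,\mu^\sharp}$ in the corresponding basis. Since $c_R$ identifies these two bases, it intertwines the two differential module structures.

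Granting this, assembling the $2$-morphism is routine. Compatibility of $c_R$ with reduction to $E^\sharp$ (the analogue of \eqref{eqn:compatible with reduction}) shows that $(1_R,c_R)$ is an isomorphism in $\Def_{E^\sharp}$ between the two objects produced above, with the correct structure maps $\theta^\sharp$; naturality of $c_R$ in $R$ (the analogue of \eqref{eqn:naturality of isomorphism}), together with the facts that the functor $\O^\sharp\otimes_\O(-)$ preserves composition of endomorphisms and carries $e^\eta$ to $e^{\eta^\sharp}$, shows that the family $(1_R,c_R)$ also intertwines the two composite functors on morphisms, hence is a $2$-morphism making the square $2$-commute. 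I do not expect a real obstacle here: the whole argument is bookkeeping, and the only point needing genuine care is the verification in the middle paragraph that the canonical identification of underlying modules sends the untwisted derivation $1\otimes\partial$ to $1\otimes\partial$ (which is where one uses that the derivation on $\O^\sharp$ extends the one on $\O$) and the $\mu$-twist to the $\mu^\sharp$-twist (which is just the definition of $\mu^\sharp$ as the image of $\mu$).
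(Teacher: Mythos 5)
Your proof is correct and takes essentially the same approach as the paper: both construct the canonical tensor-symmetry isomorphism between $(E_R)^\sharp$ and $(E^\sharp)_R$, verify it intertwines the twisted derivations, and then cite compatibility-with-reduction and naturality-in-$R$ (as in \cref{result:trace and determinant}) to assemble the $2$-morphism. The only difference is that you carry out the central verification in coordinates, whereas the paper does it coordinate-free by tracking how $\partial = 1_R\otimes 1_{\O^\sharp}\otimes\partial + \mu^\sharp$ transports under the symmetry isomorphism to $1_{\O^\sharp}\otimes(1_R\otimes\partial + \mu)$; your basis computation is a faithful reformulation of the same fact.
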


\begin{proof}
	This is very pedantic and the proof is very similar in structure to that of \cref{result:trace and determinant}, so we write down fewer details. Observe that there is a canonical isomorphism $\eta_R : (E^\sharp)_R \to (E_R)^\sharp$ of $\O^\sharp_R$-modules coming from the symmetry of the tensor product:
	\[ \begin{tikzcd} (E^\sharp)_R = R \otimes_K (\O^\sharp \otimes_\O E) \ar{r}{\eta_R} & \O^\sharp \otimes_\O (R \otimes_K E) = (E_R)^\sharp. \end{tikzcd} \]
	The content is to show that $\eta_R$ is actually an isomorphism of differential $\O_R^\sharp$-modules $(E^{\sharp})_{R,\mu^\sharp} \to (E_{R,\mu})^\sharp$ for any $\mu \in \ker(\End(E_R) \to \End(E))$. Note that $\mu^\sharp = 1_{\O^\sharp} \otimes \mu$ and $\partial$ acts on the domain by
	\[ 1_R \otimes 1_{\O^\sharp} \otimes \partial + \mu^\sharp. \]
	Under the symmetry isomorphism $\eta_R$, this corresponds precisely to
	\[ 1_{\O^\sharp} \otimes (1_R \otimes \partial + \mu) \]
	which is precisely how $\partial$ acts on $(E_{R,\mu})^\sharp$. 
\end{proof}

In other words, the functor $\Def_E \to \Def_{E^\sharp}$ is the one induced by the homomorphism of differential graded algebras $\dR(\O, \End(E)) \to \dR(\O^\sharp, \End(E^\sharp))$. Thus, we can set up a diagram as in \cref{backbone example}, where $\Gamma$ denotes the residual gerbe of $\Def_{E^\sharp}$. 
\[ \begin{tikzcd} \Def_E^{\sharp, +} \ar{r} \ar{d} & \Def_E^{\sharp} \ar{r} \ar{d} & \Def_E \ar{d} \\ h_K \ar{r} & \Gamma \ar{r} & \Def_{E^\sharp}  \end{tikzcd} \]
We can describe the deformation categories $\Def_E^{\sharp,+}$ and $\Def_E^\sharp$ explicitly. First, let's look at $\Def_E^{\sharp,+}$. Its objects are tuples $(R, F, \theta, \tau)$ where $(R, F, \theta) \in \Def_{E}$ and $\tau : (E^\sharp_R, 1) \to (F^\sharp, \theta^\sharp)$ is in $\Def_{E^\sharp}(R)$.
Morphisms $(R', F', \theta', \tau') \to (R, F, \theta, \tau)$ in $\Def_{E}^{\sharp,+}$ are morphisms $(\pi, u) : (R', F', \theta') \to (R, F, \theta)$ in $\Def_E$ such that the following diagram commutes.
\[ \begin{tikzcd} E_{R'}^\sharp \ar{r} \ar{d}[swap]{\tau'} & E_R^\sharp \ar{d}{\tau} \\ F'^\sharp \ar{r}[swap]{u^\sharp} & F^\sharp \end{tikzcd} \]

\begin{definition}[Trivialized deformations]
	Objects of $\Def_E^{\sharp,+}(R)$ will be called \emph{$\O^\sharp$-trivialized deformations} of $E$ over $R$.
\end{definition}

Now $\Def_E^\sharp$ is the full subcategory of $\Def_{E}$ consisting of objects $(R, F, \theta)$ such that there exists a morphism $\tau : (E^\sharp_R, 1) \to (F^\sharp, \theta^\sharp)$ in $\Def_{E^\sharp}(R)$, but we do \emph{not} fix $\tau$ as a part of the data.

\begin{definition}[Trivializable deformations]
	The objects of $\Def_{E}^{\sharp}(R)$ will be called \emph{$\O^\sharp$-trivializable deformations} of $E$ over $R$.
\end{definition}

\begin{lemma} \label{smoothness of trivialized and trivializable}
	$\Def_E^{\sharp,+}$ is smooth if and only if $\Def_E^{\sharp}$ is smooth.
\end{lemma}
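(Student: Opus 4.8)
The plan is to read the lemma off the structure established in \cref{backbone example}. Recall from the discussion immediately preceding the statement that $\Def_E^{\sharp,+}$ and $\Def_E^{\sharp}$ are precisely the deformation categories $\Def_{(\alpha,0)}$ and $\Def_{(\alpha,i)}$ of \cref{backbone example}, applied to the homomorphism $\alpha : \dR(\O,\End(E)) \to \dR(\O^\sharp,\End(E^\sharp))$ of differential graded algebras, whose induced functor is identified with $\Def_E \to \Def_{E^\sharp}$ via \cref{result:equivalences and base change}. In particular, the forgetful morphism $p : \Def_E^{\sharp,+} \to \Def_E^{\sharp}$ is both smooth (by the standard smoothness criterion \cref{result:standard smoothness criterion}, as spelled out at the end of \cref{backbone example}) and essentially surjective (being a pullback of the essentially surjective morphism $h_K \to \Gamma$). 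We also use the following characterization: a deformation category $\scF$ is smooth exactly when, for every small extension $\pi : R' \to R$ in $\Art_K$, every object of $\scF(R)$ lifts to $\scF(R')$ along $\pi$ (this being smoothness of the morphism from $\scF$ to the terminal deformation category $h_K$; it suffices to check small extensions by \tag{06GE}).

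First I would show that smoothness of $\Def_E^{\sharp}$ implies smoothness of $\Def_E^{\sharp,+}$. Given a small extension $\pi : R' \to R$ and an object $x \in \Def_E^{\sharp,+}(R)$, smoothness of $\Def_E^{\sharp}$ produces a lift $y' \in \Def_E^{\sharp}(R')$ of $p(x)$, i.e.\ a morphism $y' \to p(x)$ lying over $\pi$. Since $p$ is smooth, this morphism lifts through $p$: there is an $x' \in \Def_E^{\sharp,+}(R')$ together with a morphism $x' \to x$ over $\pi$ (with $p(x') \cong y'$). Thus $x$ lifts along $\pi$, so $\Def_E^{\sharp,+}$ is smooth.

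For the converse, assume $\Def_E^{\sharp,+}$ is smooth, and let $\pi : R' \to R$ be a small extension with $y \in \Def_E^{\sharp}(R)$. Since $p$ is essentially surjective over $R$, there is an $x \in \Def_E^{\sharp,+}(R)$ with $p(x) \cong y$; smoothness of $\Def_E^{\sharp,+}$ then lifts $x$ to some $x' \in \Def_E^{\sharp,+}(R')$ with a morphism $x' \to x$ over $\pi$. Applying $p$ yields a morphism $p(x') \to p(x) \cong y$ over $\pi$, so $y$ lifts to $p(x') \in \Def_E^{\sharp}(R')$, and $\Def_E^{\sharp}$ is smooth.

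Since everything reduces to facts already recorded in \cref{backbone example}, I do not anticipate any genuine obstacle; the only point requiring care is keeping straight the identifications $\Def_E^{\sharp,+} = \Def_{(\alpha,0)}$ and $\Def_E^{\sharp} = \Def_{(\alpha,i)}$ so as to legitimately import the smoothness and essential surjectivity of $p$, and this bookkeeping has already been carried out in the text immediately before the lemma.
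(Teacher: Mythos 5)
Your proof is correct and follows the same strategy as the paper: it relies on the smoothness and essential surjectivity of the morphism $\Def_E^{\sharp,+} \to \Def_E^{\sharp}$ established in \cref{backbone example}. The paper simply cites \tag{06HM} (and the fact that a composite of smooth morphisms is smooth) where you unwind the lifting characterization of smoothness explicitly; the content is the same.
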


\begin{proof}
	As we noted in \cref{backbone example}, the map $\Def_E^{\sharp,+} \to \Def_E^{\sharp}$ is smooth and essentially surjective. Apply \tag{06HM}.
\end{proof}

\begin{proposition} \label{rank 1 smoothness}
	Suppose $H^0_{\dR}(\O) = K$ and $H^1_{\dR}(\O)$ is finite dimensional. If $E$ is of rank 1, then $\Def^\sharp_E$ and $\Def^{\sharp,+}_E$ are both smooth. 
\end{proposition}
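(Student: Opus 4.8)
The plan is to use the rank $1$ hypothesis to reduce to a purely linear situation. Since $E$ has rank $1$, the map $\O \to \End(E)$ sending $f$ to multiplication by $f$ is an isomorphism of differential $\O$-modules, and likewise $\O^\sharp \xrightarrow{\sim} \End(E^\sharp)$. Hence the differential graded Lie algebras governing $\Def_E$ and $\Def_{E^\sharp}$ (via \cref{result:governing complex connection} and \cref{result:equivalences and base change}) are just $L := \dR(\O, \O)$ and $M := \dR(\O^\sharp, \O^\sharp)$, each a two-term complex $[\,\O \xrightarrow{\partial} \O\,]$ in degrees $0$ and $1$; moreover, since $\O$ and $\O^\sharp$ are commutative and the degree $2$ parts vanish, the graded commutator bracket on each of $L$ and $M$ is identically zero. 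By \cref{smoothness of trivialized and trivializable} it suffices to prove that $\Def_E^\sharp$ is smooth.

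Next I would compute the decategorified functor $\overline{\Def_E^\sharp}$. Set up the diagram of \cref{backbone example} with $\alpha \colon L \to M$ the base change map, so $\Def_E^\sharp = \Def_{(\alpha, i)} = \Def_L \times_{\Def_M} \Gamma$, where $\Gamma$ is the residual gerbe of $\Def_M$. Because $L$ is abelian and two-term, $\Def_L(R)$ is the translation groupoid $[(\m_R \otimes \O)/(\m_R \otimes \O)]$ with $\eta$ acting by $x \mapsto x - \partial\eta$, so its set of isomorphism classes is $\m_R \otimes H^1_\dR(\O)$; similarly for $M$, and the induced map on isomorphism classes is $1 \otimes (-)$ applied to the map $H^1_\dR(\O) \to H^1_\dR(\O^\sharp)$ coming from $\O \to \O^\sharp$. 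Chasing isomorphism classes through the $2$-fiber product — using that every object of the residual gerbe $\Gamma$ is isomorphic to the base point, so that a triple $(x, \gamma, \phi) \in \Def_E^\sharp(R)$ exists exactly when the class of $x$ dies in $\m_R \otimes H^1_\dR(\O^\sharp)$, and two such triples are isomorphic precisely when the classes of their $x$-components agree — one finds, naturally in $R$,
\[ \overline{\Def_E^\sharp}(R) \;=\; \ker\!\big(\m_R \otimes H^1_\dR(\O) \longrightarrow \m_R \otimes H^1_\dR(\O^\sharp)\big) \;=\; \m_R \otimes_K V, \qquad V := \ker\big(H^1_\dR(\O) \to H^1_\dR(\O^\sharp)\big). \]
Since $H^1_\dR(\O)$ is finite dimensional, so is $V$, and therefore $\overline{\Def_E^\sharp} \simeq h_{\hat P}$ with $\hat P = \widehat{\Sym(V^\vee)}$ a power series ring in $\dim_K V$ variables (cf. \cref{easy dgla}), which is formally smooth.

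To finish: $\Def_E^\sharp$ has the form $[F/G]$ for deformation functors $F$ and $G$ (it is a $\Def_\phi$), so $\Def_E^\sharp \to \overline{\Def_E^\sharp}$ is smooth by \tag{06HK}; composing with the smooth morphism $\overline{\Def_E^\sharp} = h_{\hat P} \to h_K$ shows that $\Def_E^\sharp$ is smooth, whence \cref{smoothness of trivialized and trivializable} gives that $\Def_E^{\sharp,+}$ is smooth as well. I expect the one genuinely delicate point to be the computation of $\overline{\Def_E^\sharp}$: decategorification does not commute with $2$-fiber products in general, so one must argue directly via the gerbe property of $\Gamma$ rather than by naively intersecting kernels. (Incidentally, the hypothesis $H^0_\dR(\O) = K$ plays essentially no role in this argument beyond identifying $\Inf(\Def_E^\sharp) = K$; it is the finite dimensionality of $H^1_\dR(\O)$ that is used, to make $\overline{\Def_E^\sharp}$ prorepresentable by a noetherian power series ring.)
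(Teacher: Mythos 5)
Your proof is correct and takes a genuinely different route from the paper's. The paper's proof is explicit and computational: it uses $H^0_\dR(\O) = K$ to get prorepresentability of $\Def_E$, chooses a basis $(h_1,\dotsc,h_r)$ of $H^1_\dR(\O)$ adapted to the kernel of $H^1_\dR(\O) \to H^1_\dR(\O^\sharp)$, writes the connection matrix of an arbitrary deformation in terms of this basis, characterizes trivializability by the vanishing of certain matrix coefficients via an induction on small extensions, and then constructs a lift by hand. Your proof instead exploits the fact that in the rank $1$ case the governing differential graded Lie algebras $L = \dR(\O,\O)$ and $M = \dR(\O^\sharp,\O^\sharp)$ are abelian two-term complexes, so the whole deformation problem linearizes: you compute the decategorified functor $\overline{\Def_E^\sharp}$ exactly, identify it with $R \mapsto \m_R \otimes_K V$ for $V = \ker(H^1_\dR(\O) \to H^1_\dR(\O^\sharp))$, and observe that this is prorepresented by a power series ring. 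What the paper's approach buys is a concrete description of the trivializable locus in terms of connection matrices, which is close in spirit to how one would actually produce a lift in practice; what your approach buys is a shorter, more structural argument that avoids basis choices entirely. Your aside is also accurate: the hypothesis $H^0_\dR(\O) = K$ is not actually used in your argument (whereas the paper does invoke it, through prorepresentability of $\Def_E$), so your proof is in fact marginally more general. The one step worth being careful about --- the claim that two objects $(x,\tau)$ and $(x',\tau')$ of $\Def_E^\sharp(R)$ are isomorphic precisely when $x$ and $x'$ have the same class in $\m_R \otimes H^1_\dR(\O)$ --- does check out: given $x' = x - d\eta_1$, the required gauge element $\eta_2 \in \m_R \otimes H^0_\dR(\O^\sharp)$ is forced to be $\tau' - \tau + \alpha(\eta_1)$, and the constraint $\alpha(x) = d\tau$, $\alpha(x') = d\tau'$ guarantees this element is horizontal.
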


\begin{proof}
	Note that $\End(E) = \O$ since $E$ is of rank 1, so $\dR(\O, \O)$ governs $\Def_E$ by \cref{result:governing complex connection}. We have assumed that 
	\[ \End_D(E) = H^0_{\dR}(\End(E)) = H^0_{\dR}(\O) = K, \]
	so automorphisms lift over small extensions by \cref{automorphisms lift for endomorphically simples}. The tangent space $T(\Def_E) = H^1_{\dR}(\O)$ is also finite dimensional by assumption, so in fact $\Def_E$ is prorepresentable by the fundamental theorem of deformation theory \ref{deformation theory}. It is smooth, since the governing complex $\dR(\O, \O)$ vanishes outside degrees 0 and 1. 
	
	Choose a list $(h_1, \dotsc, h_s)$ in $\O$ whose image in $H^1_{\dR}(\O)$ is a basis for $\ker(H^1_{\dR}(\O) \to H^1_{\dR}(\O^\sharp))$. Then choose $(h_{s+1}, \dotsc, h_r)$ in $\O$ whose images in $H^1_{\dR}(\O^\sharp)$ form a basis for $\im(H^1_{\dR}(\O) \to H^1_{\dR}(\O^\sharp))$. Then the image of $(h_1, \dotsc, h_s, h_{s+1}, \dotsc, h_r)$ in $H^1_{\dR}(\O)$ is a basis. Let $e \in E$ be an $\O$-basis and suppose $N \in \O$ is such that $\partial e = Ne$. 
	
	Since $\Def_{E}$ is smooth and prorepresentable and $(h_1, \dotsc, h_r)$ gives a basis for the tangent space $H^1_{\dR}(\O)$, for any $(R, F, \theta) \in \Def_E$ there is a lift $f \in F$ of $e$ such that
	\[ \partial(f) = \left( 1 \otimes N +  \sum_i \alpha_i \otimes h_i \right) f \]
	for some $\alpha_1, \dotsc, \alpha_r \in \m_R$. We claim that $(R, F, \theta)$ is trivializable if and only if 
	\[ \alpha_{s+1} = \dotsb = \alpha_r = 0. \]
	To see this, observe that an $\O_R^\sharp$-module isomorphism $\tau : E_R^\sharp \to F^\sharp$ compatible with 1 and $\theta^\sharp$ must be given by $1 \otimes e \mapsto (1+s)f$ for some $s \in \m_R \otimes_K \O^\sharp$, and it is easy to check  that $\tau$ is $D_R^\sharp$-linear if and only if
	\[ \partial(s) + (1+s)\sum_i \alpha_i \otimes h_i = 0. \]
	Note that $\alpha_i \equiv 0 \bmod{\m_R}$ for all $i$. Since we can factor the surjection $R \to K$ into a composite of small extensions, let us assume inductively that there exists some $\epsilon \in \m_R$ such that $\epsilon \m_R = 0$ and $\alpha_i \equiv 0 \bmod{\epsilon R}$ for all $i > s$. Since $s \in \m_R \otimes_K \O^\sharp$, we have $\alpha_i s = 0$ for all $i > s$, so the above equation reads
	\[ \partial(s) + (1+s)\left( \sum_{i \leq s} \alpha_i \otimes h_i \right) + \sum_{i > s} \alpha_i \otimes h_i = 0. \]
	Choose a direct sum complement $Q$ for $\epsilon R$ inside $\m_R$, so that any element $\eta \in \m_R$ can be written uniquely as $\eta_0 \epsilon + \tilde{\eta}$ where $\eta_0 \in K$ and $\tilde{\eta} \in Q$. Then we have $\alpha_i = \alpha_{i,0}\epsilon + \tilde{\alpha}_i$ for all $i$, and similarly we can write $s = \epsilon \otimes s_0 + \tilde{\eta} \otimes \tilde{s}$ for some $\tilde{\eta} \in Q$ and $s_0, \tilde{s} \in \O^\sharp$. Again recalling that $\epsilon \m_R = 0$, the ``epsilon part'' of the above equation is
	\[ \partial(s_0) + \sum_i \alpha_{i,0} h_i = 0, \]
	which is an equation in $\O^\sharp$. Passing to $H^1_{\dR}(\O^\sharp)$, clearly $\partial(s_0)$ disappears, as do the terms corresponding to $h_1, \dotsc, h_s$ since these terms were chosen to be in $\ker(H^1_{\dR}(\O) \to H^1_{\dR}(\O^\sharp))$. But the image of $(h_{s+1}, \dotsc, h_r)$ is linearly independent in $H^1_{\dR}(\O^\sharp)$ by construction, so we have $\alpha_{i,0} = 0$ for all $i > s$. But we also had assumed inductively that $\alpha_i \equiv 0 \bmod{\epsilon R}$ for all $i > s$, so in fact $\alpha_i = 0$ for all $i > s$.
	
	Now suppose $(R, F, \theta)$ is trivializable and choose a basis $f \in F$ as above. Let $F'$ be a free $\O_{R'}$-module of rank 1 with basis $f'$. Choose lifts $\alpha'_i \in \m_{R'}$ of $\alpha_i$ and define an action of $\partial$ on $F'$ by
	\[ \partial f' = \left( 1 \otimes N + \sum_{i \leq s} \alpha_i' \otimes h_i \right) f'. \]
	Let $u : F' \to F$ be given by $f' \mapsto f$ and $\theta' = \theta \circ u$. Then $(R', F', \theta')$ is trivializable by our observations above and defines a lift of $(R, F, \theta)$. Thus $\Def_E^{\sharp}$ is smooth. Smoothness of $\Def_E^{\sharp,+}$ follows from \cref{smoothness of trivialized and trivializable}.  
\end{proof}


\subsection{Compactly supported and parabolic cohomology}

We continue to fix $\O^\sharp$ as above. 

\begin{definition} \label{definition:abstract compactly supported and parabolic cohomologies}
	We define
	\[ \begin{aligned} 
	C^+(E) &= \Cone(\dR(\O, E) \to \dR(\O^\sharp, E^\sharp))[-1] \\
	C(E) &= \Cone(\dR(\O, E) \oplus H^0_{\dR}(E^\sharp) \to \dR(\O^\sharp, E^\sharp))[-1]
	\end{aligned} \]
	and then set
	\[ H^i_c(E) = H^i(C^+(E))  \text{ and } H^i_p(E) = H^i(C(E)) \]
	for all $i$.
\end{definition}

It follows from the associated long exact sequences as in \cref{backbone example} that $H^i_c(E) = H^i_p(E)$ for all $i \geq 2$, that $H^0_p(E) = H^0_{\dR}(E)$, and that
\[ H^1_p(E) = \im(H^1_c(E) \to H^1_{\dR}(E)) = \ker(H^1_{\dR}(E) \to H^1_{\dR}(E^\sharp)). \]
Moreover, in the cases of interest to us, we will have $H^0_c(E) = 0$ for the following reason. 

\begin{lemma} \label{result:zeroth compactly supported cohomology vanishes}
	If $\O \to \O^\sharp$ is injective, then $H^0_c(E) = 0$. 
\end{lemma}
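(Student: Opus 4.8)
The plan is to read off $H^0_c(E)$ from the long exact cohomology sequence attached to the triangle defining $C^+(E)$, and then to reduce the vanishing to the injectivity of $\O \to \O^\sharp$.

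Recall that $C^+(E) = \Cone(\dR(\O, E) \to \dR(\O^\sharp, E^\sharp))[-1]$ fits into a distinguished triangle $C^+(E) \to \dR(\O, E) \to \dR(\O^\sharp, E^\sharp) \to C^+(E)[1]$ in the derived category of $K$-vector spaces, where the middle map is the natural morphism of de Rham complexes. Both $\dR(\O, E)$ and $\dR(\O^\sharp, E^\sharp)$ are concentrated in degrees $0$ and $1$, so in particular $H^{-1}(\dR(\O^\sharp, E^\sharp)) = 0$. Hence the associated long exact sequence begins $0 \to H^0_c(E) \to H^0_{\dR}(E) \to H^0_{\dR}(E^\sharp)$, which identifies $H^0_c(E)$ with the kernel of $H^0_{\dR}(E) \to H^0_{\dR}(E^\sharp)$. (This is exactly the computation already invoked, in parallel form, in the remark preceding the lemma.)

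It therefore suffices to show that $H^0_{\dR}(E) \to H^0_{\dR}(E^\sharp)$ is injective. Since $H^0_{\dR}(E) = \ker(\partial : E \to E)$ is a $K$-subspace of $E$ and the map in question is the restriction of the natural $\O^\sharp$-linearization map $E \to E^\sharp = \O^\sharp \otimes_\O E$, it is enough to check that $E \to E^\sharp$ is injective. But $E$ is finite free over $\O$: choosing an $\O$-basis $(e_1, \dotsc, e_n)$ identifies $E \to E^\sharp$ with the $n$-fold direct sum of the structure map $\O \to \O^\sharp$, which is injective by hypothesis. Thus $H^0_c(E) = 0$.

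There is no genuine obstacle here: the only point needing care is the degree bookkeeping for the shifted mapping cone, which is precisely why the vanishing of $H^{-1}$ of the $\O^\sharp$-complex must be recorded so that $H^0_c(E) \hookrightarrow H^0_{\dR}(E)$. Everything else is the formal observation that base change of a free module along an injection of rings remains injective, together with the fact that horizontal sections form a subspace of the ambient module.
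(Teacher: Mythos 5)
Your argument is correct and follows the same path as the paper: extract the long exact sequence from the defining distinguished triangle, and observe that $E \to E^\sharp$ is injective because $E$ is finite free (the paper phrases this as "flat," which is equivalent for the present purpose). The only cosmetic difference is that you pass through an explicit basis where the paper invokes flatness abstractly; the substance is identical.
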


\begin{proof}
	Since $E$ is free over $\O$ and therefore flat, the map $E \to E^\sharp = \O^\sharp \otimes_\O E$ is also injective. The distinguished triangle
	\[ \begin{tikzcd} C^+ \ar{r} & \dR(\O, E) \ar{r} & \dR(\O^\sharp, E^\sharp) \ar{r}{+} & \mbox{} \end{tikzcd} \]
	then induces a long exact sequence as follows.
	\[ \begin{tikzcd} 0 \ar{r} & H^0_c(E) \ar{r} & H^0_{\dR}(E) \ar{r} \arrow[d, phantom, ""{coordinate, name=Z}] & H^0_{\dR}(E^\sharp) \arrow[dll,
	rounded corners,
	to path=
	{ -- ([xshift=2ex]\tikztostart.east)
		|- (Z) [near end]\tikztonodes
		-| ([xshift=-2ex]\tikztotarget.west)
		-- (\tikztotarget)}
	] \\
	& H^1_c(E) \ar{r} & H^1_{\dR}(E) \ar{r} \arrow[d, phantom, ""{coordinate, name=Y}] & H^1_{\dR}(E^\sharp) \arrow[dll,
	rounded corners,
	to path=
	{ -- ([xshift=2ex]\tikztostart.east)
		|- (Y) [near end]\tikztonodes
		-| ([xshift=-2ex]\tikztotarget.west)
		-- (\tikztotarget)}
	] \\
	& H^2_c(E) \ar{r} & 0 \end{tikzcd} \]
	It follows that $H^0_c(E) = 0$.
\end{proof}

The following is an immediate consequence of the definitions plus our observations in \cref{backbone example}. 

\begin{lemma} \label{result:infinitesimal deformation theory of sharp categories}
	We have the following.
	\[ \begin{aligned} 
	& \Inf(\Def_E^{\sharp,+}) = H^0_c(\End(E)) & & T(\Def_E^{\sharp,+}) = H^1_c(\End(E)) \\ 
	& \Inf(\Def_E^\sharp) = H^0_{\dR}(\End(E)) & & T(\Def_E^\sharp) = H^1_p(\End(E)) \end{aligned} \]
	Moreover, $H^2_c(\End(E))$ is compatibly an obstruction space for both $\Def_E^{\sharp,+}$ and $\Def_E^\sharp$. \qed
\end{lemma}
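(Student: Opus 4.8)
The plan is to reduce the statement to the abstract situation analyzed in \cref{backbone example}. Set $L := \dR(\O,\End(E))$ and $M := \dR(\O^\sharp,\End(E^\sharp))$, and let $\alpha : L \to M$ be the homomorphism of differential graded $K$-algebras discussed above. By \cref{result:governing complex connection} there are equivalences $\Def_L \simeq \Def_E$ and $\Def_M \simeq \Def_{E^\sharp}$, and by \cref{result:equivalences and base change} these fit into a 2-commutative square with the base-change functor $\Def_E \to \Def_{E^\sharp}$ and the functor $\Def_L \to \Def_M$ induced by $\alpha$. Since $\Def_E^{\sharp,+}$ and $\Def_E^\sharp$ were defined (just above the lemma) as the 2-fiber products $\Def_E \times_{\Def_{E^\sharp}} h_K$ and $\Def_E \times_{\Def_{E^\sharp}} \Gamma$ with $\Gamma$ the residual gerbe of $\Def_{E^\sharp}$, transporting along these equivalences and invoking \cref{fiber product} — together with $h_K = \Def_0$ and, by \cref{residual gerbe governed by h0}, $\Gamma \simeq \Def_{H^0(M)}$ — identifies $\Def_E^{\sharp,+}$ with $\Def_{(\alpha,0)}$ and $\Def_E^\sharp$ with $\Def_{(\alpha,i)}$, where $i : H^0(M) \hookrightarrow M$ is the inclusion. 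This is exactly the setup of \cref{backbone example}.

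Next I would match the governing complexes. With $C^+ := \Cone(\alpha)[-1]$ and $C := \Cone(\alpha - i : L \oplus H^0(M) \to M)[-1]$ as in \cref{backbone example}, and observing that $H^0(M) = H^0_\dR(\End(E^\sharp))$, we have literally $C^+ = C^+(\End(E))$ and $C = C(\End(E))$ in the notation of \cref{definition:abstract compactly supported and parabolic cohomologies}. Now apply \cref{deformations and fiber product}: its technical hypothesis $\phi_2(\m_R \otimes L_2^1) \subseteq \MC_M(R)$ is vacuous in both instances, since the relevant second differential graded Lie algebra is $0$ (for $\Def_{(\alpha,0)}$) or concentrated in degree $0$ (for $\Def_{(\alpha,i)}$). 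This yields $\Inf(\Def_E^{\sharp,+}) = H^0(C^+) = H^0_c(\End(E))$, $T(\Def_E^{\sharp,+}) = H^1(C^+) = H^1_c(\End(E))$, and that $H^2(C^+) = H^2_c(\End(E))$ is an obstruction space for $\Def_E^{\sharp,+}$; likewise $T(\Def_E^\sharp) = H^1(C) = H^1_p(\End(E))$ and $H^2(C) = H^2_p(\End(E))$ is an obstruction space for $\Def_E^\sharp$. The remaining identification $\Inf(\Def_E^\sharp) = H^0_\dR(\End(E))$ is read off from the explicit computation already carried out in \cref{backbone example}, where $H^0(C) = \ker(\alpha - 1 : H^0(L) \oplus H^0(M) \to H^0(M)) = \{(x,\alpha(x)) : x \in H^0(L)\} = H^0(L) = H^0_\dR(\End(E))$.

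For the final clause, I would note that $H^2_c(\End(E)) = H^2(C^+)$ and $H^2_p(\End(E)) = H^2(C)$ agree (the associated long exact sequences, as in \cref{backbone example}, give $H^i_c = H^i_p$ for all $i \geq 2$), so $H^2_c(\End(E))$ simultaneously carries both obstruction-space structures; and its compatibility with the morphism $\Def_E^{\sharp,+} \to \Def_E^\sharp$ in the sense of \cref{definition:compatible} is precisely the assertion, already verified in \cref{backbone example}, that the isomorphism $H^2(C^+) \simeq H^2(C)$ is compatible with $\Def_{(\alpha,0)} \to \Def_{(\alpha,i)}$. The only genuinely nontrivial ingredients are \cref{deformations and fiber product} and the bookkeeping of \cref{backbone example}, both already in hand; the one point requiring care is confirming the technical hypothesis of \cref{deformations and fiber product} in each case, which holds for the degree reasons just noted. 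Consequently I anticipate no real obstacle: the proof is entirely a matter of assembling prior results.
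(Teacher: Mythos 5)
Your argument is correct and is exactly the paper's own approach: the paper's proof of this lemma is literally the one-line remark ``immediate consequence of the definitions plus our observations in \cref{backbone example}'' followed by $\qed$, and what you have written is the fleshed-out version of that appeal. You correctly match the two cones in \cref{backbone example} with $C^+(\End(E))$ and $C(\End(E))$ (using $\End(E^\sharp) = \End(E)^\sharp$ and $H^0(M) = H^0_{\dR}(\End(E^\sharp))$), correctly observe that the technical hypothesis of \cref{deformations and fiber product} is vacuous since $L_2^1 = 0$ in both instances (the paper itself flags this), read off $\Inf$ and $T$ from the computations already recorded in \cref{backbone example}, and correctly lean on the degree-reason identification $H^2_c = H^2_p$ together with the compatibility of $H^2(C^+) \simeq H^2(C)$ with $\Def_{(\alpha,0)} \to \Def_{(\alpha,i)}$, both of which the paper establishes in advance. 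Nothing is missing.
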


\subsection{Duality pairing}

\begin{definition}[Duality pairing] \label{definition:duality pairing}
	Observe that $H^2_c(E)$ is the cokernel of the sum of the differential $\partial : E^\sharp \to E^\sharp$ with the natural map $E \to E^\sharp$, which we will denote $e \mapsto e^\sharp$. There is a $K$-bilinear map
	\begin{equation} \label{cup product}
	\begin{tikzcd} H^0_\dR(E^\vee) \times H^2_c(E) \ar{r} & H^2_c(\O). \end{tikzcd}
	\end{equation}
	which we call the \emph{duality pairing}, given by 
	\[ (\phi, e) \mapsto \langle \phi, e\rangle := \phi^\sharp(e), \]
	where $e \in E^\sharp$ represents an element in $H^2_c(E)$. 
\end{definition}

\begin{proof}[Proof that the above pairing is well-defined]
	If $e = f^\sharp$ for some $f \in E$, then 
	\[ \phi^\sharp(e) = \phi^\sharp(f^\sharp) = \phi(f)^\sharp \]
	is in the image of $\O \to \O^\sharp$ and therefore vanishes in $H^2_c(\O)$. If $e = \partial f$ for some $f \in E^\sharp$, then 
	\[ \phi^\sharp(e) = \phi^\sharp(\partial f) = \partial \phi^\sharp(f), \] since $\phi \in H^0_{\dR}(E^\vee)$, so we see that $\phi^\sharp(e)$ is in the image of $\partial : \O^\sharp \to \O^\sharp$ and therefore vanishes in $H^2_c(\O)$. 
\end{proof}

\begin{example}[Trace pairing] \label{example:trace pairing}
	Note that have a perfect\footnotemark\ pairing
	\[ \begin{tikzcd}
	\End(E) \times \End(E) \ar{r} & \O
	\end{tikzcd} \]
	given by $(\alpha, \beta) \mapsto \tr(\alpha \circ \beta)$. This yields an identification $\End(E)^\vee = \End(E)$. 
	Combining this identification and the natural identification $\O^\vee = \O$, the dual of the trace map $\tr : \End(E) \to \O$ is exactly the inclusion $\O \to \End(E)$ carrying $f \in E$ to the multiplication by $f$ map. Using the identification $\End(E)^\vee = \End(E)$, the duality pairing \eqref{cup product} becomes the $K$-bilinear map
	\begin{equation} \label{cup product endomorphisms}
	\begin{tikzcd} H^0_\dR(\End(E)) \times H^2_c(\End(E)) \ar{r} & H^2_c(\O). \end{tikzcd}
	\end{equation}
	given by $\langle \alpha, \beta \rangle = \tr(\alpha^\sharp \circ \beta)$.
\end{example}

\begin{example}
	Applying \cref{example:trace pairing} with $E = \O$, we have a pairing $H^0_{\dR}(\O) \times H^2_c(\O) \to H^2_c(\O)$ given by 
	\[ \langle f, g \rangle = f^\sharp g. \]
	This pairing is always non-degenerate.\textsuperscript{\ref{footnote:nondeg perfect}} Indeed, suppose we have some $g \in H^2_c(\O)$ such that $\langle f, g \rangle = f^\sharp g = 0$ for all $f \in H^0_{\dR}(\O)$. Taking $f = 1$ shows that we must have $g = 0$. If $H^2_c(\O)$ is nonzero and the pairing is perfect, then we must have $H^0_{\dR}(\O) = K$. 
	
	\footnotetext{\label{footnote:nondeg perfect} Let $A$ be a commutative ring and $\mu : M \times N \to L$ an $A$-bilinear map. Then $\mu$ is \emph{non-degenerate} (resp. \emph{perfect}) if the induced map $N \to \Hom_A(M, L)$ is injective (resp. bijective).} 
\end{example}

The following tells us that the trace pairing annihilates all of the obstruction classes for $\Def^\sharp$ in $H^2_c(\End(E))$.

\begin{lemma} \label{obstructions annihilated}
	Suppose $H^0_{\dR}(\O) = K$ and $H^1_{\dR}(\O)$ is finite dimensional. If $\pi : R' \to R$ is a small extension in $\Art_K$ and $(F, \theta) \in \Def_E^\sharp(R)$, then
	\[ \langle 1, o(\pi, (F, \theta)) \rangle = 0. \]
\end{lemma}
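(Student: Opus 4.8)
The plan is to reduce to the rank-one case by passing to determinants. By \cref{result:trace and determinant} together with the base-change compatibility of \cref{result:equivalences and base change}, the trace map $\tr\colon \dR(\O,\End(E)) \to \dR(\O,\O)$ induces a morphism between the ``backbone'' diagrams of \cref{backbone example} for $E$ and for $\det(E)$; in particular the ``take determinant'' functor $\Def_E \to \Def_{\det(E)}$ of \cref{result:trace and determinant} carries $\O^\sharp$-trivializable deformations to $\O^\sharp$-trivializable ones (apply $\det$ to a trivialization and use the canonical identifications $\det(E^\sharp_R) = (\det E)^\sharp_R$ and $\det(F^\sharp) = (\det F)^\sharp$), hence restricts to a functor $\det\colon \Def_E^\sharp \to \Def_{\det(E)}^\sharp$ sending $(F,\theta)$ to $(\det F, \det\theta)$.

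First I would check that $\det\colon \Def_E^\sharp \to \Def_{\det(E)}^\sharp$ is compatible, in the sense of \cref{definition:compatible}, with the map $H^2_c(\End(E)) \to H^2_c(\O)$ (these are the obstruction spaces for $\Def_E^\sharp$ and $\Def_{\det(E)}^\sharp$ by \cref{result:infinitesimal deformation theory of sharp categories}) induced by $\tr\colon \End(E) \to \O$ on the cone complexes $C^+(-)$ of \cref{definition:abstract compactly supported and parabolic cohomologies}. Since both obstruction theories are the ones produced in \cref{deformations and fiber product}, this is verified by tracking the trace through the explicit cocycle representing the obstruction class there, exactly as the analogous compatibility for a homomorphism of differential graded Lie algebras is verified in \cref{section:def construction is 2-functorial}; the only inputs are that $\tr$ is a homomorphism of differential graded Lie algebras (\cref{trace preserves algebra structure}) and that it commutes with base change along $\O \to \O^\sharp$, hence with the maps $Q(-)$ and with the gauge actions appearing in that construction. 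On degree-two cochains the induced map is $\End(E^\sharp) \to \O^\sharp$, $\beta \mapsto \tr(\beta)$, so, under the self-duality $\End(E)^\vee = \End(E)$ of \cref{example:trace pairing} and since $1^\sharp = \mathrm{id}_{E^\sharp}$, the resulting map on $H^2_c$ is exactly $\langle 1, -\rangle$.

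Now $\det(E)$ has rank one, so $\End(\det E) = \O$, and the hypotheses $H^0_{\dR}(\O) = K$ and $\dim_K H^1_{\dR}(\O) < \infty$ are precisely those of \cref{rank 1 smoothness} applied to $\det(E)$; therefore $\Def_{\det(E)}^\sharp$ is smooth. A smooth deformation category lifts objects along any small extension, so by axiom (O2) the obstruction class of $(\det F, \det\theta)$ in $\ker(\pi) \otimes_K H^2_c(\O)$ vanishes. Combining this with the compatibility of the previous paragraph,
\[
\langle 1,\, o(\pi,(F,\theta)) \rangle = (1_{\ker(\pi)} \otimes \tr)(o(\pi,(F,\theta))) = o(\pi,(\det F, \det\theta)) = 0 ,
\]
which is the claim. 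The genuinely computational step is the compatibility assertion in the middle paragraph — chasing the trace through the obstruction cocycle of \cref{deformations and fiber product}; once that is in place, the result is a formal consequence of \cref{rank 1 smoothness}.
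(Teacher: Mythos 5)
Your proposal is correct and follows essentially the same route as the paper: reduce to rank one by taking determinants, observe that the trace map is compatible (in the sense of the obstruction-theory formalism) with the $\det$ functor by combining the $\tr$–$\det$ compatibility and the base-change compatibility to produce a 2-commutative cube, identify the induced map on $H^2_c$ with $\langle 1,-\rangle$, and invoke \cref{rank 1 smoothness}. The paper phrases the compatibility step via the 2-commutative cube and 2-functoriality of $L \mapsto \Def_L$, whereas you propose to track the trace directly through the obstruction cocycle of \cref{deformations and fiber product}; these are the same argument in mildly different packaging.
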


\begin{proof}
	Observe that we have a commutative diagram of differential graded $K$-algebras as follows.
	\[ \begin{tikzcd} \dR(\O, \End(E)) \ar{r}{\tr} \ar{d} & \dR(\O, \O) \ar{d} \\ \dR(\O^\sharp, \End(E^\sharp)) \ar{r}[swap]{\tr} & \dR(\O, \O^\sharp) \end{tikzcd} \]
	Since $L \mapsto \Def_L$ is 2-functorial, this induces a 2-commutative square that becomes the back of the following 2-commuatative cube. 
	\setlength{\perspective}{10pt}
	\[ \begin{tikzcd}[row sep={40,between origins}, column sep={40,between origins}]
	&[-\perspective] &[-\perspective] \Def_{\dR(\O, \End(E))} \ar{rrr}{\tr} \ar{dd} \ar{dll} &[\perspective] &[-\perspective] &[-\perspective] \Def_{\dR(\O, \O)} \ar{dd} \ar{dll} \\[-\perspective]
	\Def_E \ar[crossing over]{rrr}{\det} & & & \Def_{\det(E)} \\[\perspective]
	& & \Def_{\dR(\O^\sharp, \End(E^\sharp))}  \ar{rrr}[swap]{\tr} \ar{dll} & & & \Def_{\dR(\O^\sharp, \O^\sharp)} \ar{dll} \\[-\perspective]
	\Def_{E^\sharp} \ar{rrr}[swap]{\det} \ar[from=uu,crossing over] & & & \Def_{\det(E^\sharp)} \ar[from=uu,crossing over]
	\end{tikzcd}\]
	The front face of this cube evidently 2-commutes; the top and bottom faces 2-commute by \cref{result:trace and determinant}, and the left and right faces 2-commute by \cref{result:equivalences and base change}. 
	
	The conclusion of this is that the map $H^2_c(\End(E)) \to H^2_c(\O)$ induced by the trace map $\tr : \dR(\O^\sharp, \End(E^\sharp)) \to  \dR(\O^\sharp, \O^\sharp)$ is compatible with the morphism of deformation categories $\det : \Def_E^\sharp \to \Def_{\det(E)}^\sharp$ in the sense of \cref{definition:compatible}. Clearly the map $H^2_c(\End(E)) \to H^2_c(\O)$ induced by the trace map is exactly $\langle 1, - \rangle$, as one can see from our explicit description of the pairing above in \cref{example:trace pairing}. Compatibility therefore says precisely that
	\[ \langle 1, o(\pi, (F, \theta)) \rangle = o(\pi, \det(F, \theta)). \]
	We know from \cref{rank 1 smoothness} that $\Def_{\det(E)}^{\sharp}$ is smooth, so $o(\pi, \det(F, \theta)) = 0$. 
\end{proof}

\begin{definition}[Duality pairing] \label{definition:duality pairing parabolic}
	One can define a \emph{duality pairing}
	\[ \begin{tikzcd} H^1_p(E^\vee) \times H^1_p(E) \ar{r} & H^2_c(\O). \end{tikzcd} \]
	as follows. Suppose $\phi \in H^1_p(E^\vee)$ and $e \in H^1_p(E)$. Then there exists $\alpha \in (E^\sharp)^\vee$ and $f \in E^\sharp$ such that $\partial \alpha = \phi^\sharp$ and $\partial f = e^\sharp$, and we define
	\begin{equation} \label{eqn:duality pairing parabolic definition} \langle \phi, e \rangle = \alpha(e^\sharp) - \phi^\sharp(f) = \alpha(\partial f) - (\partial \alpha)(f).
	\end{equation}
\end{definition}

\begin{proof}[Proof that this pairing is well-defined]
	Suppose first that we have $\alpha, \alpha'$ such that $\partial \alpha = \partial \alpha' = \phi^\sharp$. Then $\alpha - \alpha'$ is horizontal, so
	\[ \begin{aligned} (\alpha(\partial f) - (\partial \alpha)(f)) - (\alpha'(\partial f) - (\partial \alpha')(f)) &= (\alpha - \alpha')(\partial f) \\
	&= \partial ((\alpha - \alpha')(f) )  \end{aligned} \]
	which is in the image of $\partial : \O^\sharp \to \O^\sharp$. Similarly, if we have $f, f'$ such that $\partial f = \partial f' = e^\sharp$, then $f - f'$ is horizontal and 
	\[ \begin{aligned} (\alpha(\partial f) - (\partial \alpha)(f)) - (\alpha(\partial f') - (\partial \alpha)(f')) &= (\partial \alpha)(f' - f) \\ &= \partial (\alpha(f' - f)), \end{aligned} \]
	which again is in the image of $\partial : \O^\sharp \to \O^\sharp$. In other words, formula \eqref{eqn:duality pairing parabolic definition} yields a well-defined pairing 
	\[ \begin{tikzcd} \{ \phi \in E^\vee : \phi^\sharp \in \im(\partial) \} \times \{ e \in E : e^\sharp \in \im(\partial) \} \ar{r} & H^1_{\dR}(\O^\sharp). \end{tikzcd} \]
	Now observe that if $\phi = \partial \psi$ for some $\psi \in E^\vee$, then 
	\[ \begin{aligned} \langle \phi, e \rangle &= \psi^\sharp(\partial f) - (\partial \psi^\sharp)(f) \\
	&= 2\psi(e)^\sharp - \partial(\psi^\sharp(f)), \end{aligned} \]
	where the first term is in the image of $\O \to \O^\sharp$ and the second in the image of $\partial :  \O^\sharp \to \O^\sharp$. Thus $\langle \phi, e \rangle$ vanishes in $H^2_c(\O)$. Similarly, if $e = \partial h$ for some $h \in E$, then 
	\[ \begin{aligned} \langle \phi, e \rangle &= \alpha(\partial h^\sharp) - (\partial \alpha)(h^\sharp) \\
	&= \partial(\alpha(h^\sharp)) - 2\phi(h)^\sharp
	 \end{aligned} \]
	and now the first term is in the image of $\partial : \O^\sharp \to \O^\sharp$ and the second is in the image of $\O \to \O^\sharp$, so again $\langle \phi, e \rangle$ vanishes in $H^2_c(\O)$. 
\end{proof}

\begin{lemma} \label{result:duality pairing is alternating}
	Under the identification $\End(E)^\vee = \End(E)$ resulting from the trace pairing as in \cref{example:trace pairing}, the duality pairing
	\[ \begin{tikzcd} H^1_p(\End(E)) \times H^1_p(\End(E)) \ar{r} & H^2_c(\O) \end{tikzcd} \]
	is alternating. 
\end{lemma}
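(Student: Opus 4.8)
The plan is to unwind the parabolic duality pairing of \cref{definition:duality pairing parabolic} through the trace identification $\End(E)^\vee=\End(E)$ of \cref{example:trace pairing} until it is expressed by an explicit formula involving a single composition and a trace, and then to observe that the diagonal values of that formula vanish by cyclicity of the trace.

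First I would record that the trace pairing $\End(E)\times\End(E)\to\O$, $(\alpha,\beta)\mapsto\tr(\alpha\circ\beta)$, is a pairing of differential $\O$-modules: this follows from the Leibniz rule for composition of internal homs together with the fact (\cref{trace preserves algebra structure}) that $\tr\colon\End(E)\to\O$ is a morphism of differential modules. Hence the induced isomorphism $\End(E)^\vee\cong\End(E)$ is an isomorphism of differential modules, it is symmetric, and it commutes with base change along $\O\to\O^\sharp$; thus under the resulting identification $\End(E^\sharp)^\vee=\End(E^\sharp)$ the functional $\phi^\sharp$ attached to $\phi\in\End(E)^\vee$ corresponds to $a^\sharp$ whenever $\phi$ corresponds to $a\in\End(E)$, and the lift $\alpha\in(E^\sharp)^\vee$ of $\phi^\sharp$ appearing in \cref{definition:duality pairing parabolic} may be taken to correspond to some $\tilde a\in\End(E^\sharp)$ with $\partial\tilde a=a^\sharp$.

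With these identifications in place, let $\phi,\psi\in H^1_p(\End(E))$ be represented by $a,b\in\End(E)$; by definition of $H^1_p$ there exist $\tilde a,\tilde b\in\End(E^\sharp)$ with $\partial\tilde a=a^\sharp$ and $\partial\tilde b=b^\sharp$. Unwinding \cref{definition:duality pairing parabolic} then gives
\[ \langle\phi,\psi\rangle=\big[\,\tr(\tilde a\circ b^\sharp)-\tr(a^\sharp\circ\tilde b)\,\big]\in H^2_c(\O) \]
where $[\,\cdot\,]$ denotes the class of an element of $\O^\sharp$ in $H^2_c(\O)$; independence of the choices of $\tilde a$ and $\tilde b$ is exactly the well-definedness already proved after \cref{definition:duality pairing parabolic}. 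To conclude that the pairing is alternating it now suffices to specialize $\psi=\phi$: taking $b=a$ and $\tilde b=\tilde a$, the right-hand side becomes $\big[\,\tr(\tilde a\circ a^\sharp)-\tr(a^\sharp\circ\tilde a)\,\big]=0$ by cyclicity of the trace of endomorphisms of the finite free $\O^\sharp$-module $E^\sharp$. (The same cyclicity, applied to $\langle\psi,\phi\rangle$, shows in passing that the pairing is antisymmetric, but the alternating property is obtained directly, so no hypothesis on the characteristic is needed here.)

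I expect the only real work to be the bookkeeping in the displayed formula: one must keep straight the two occurrences of the trace identification and of the base change $\O\to\O^\sharp$ on the two factors, and verify that the auxiliary lift $\alpha$ of \cref{definition:duality pairing parabolic} may be chosen to correspond to a $\tilde a$ with $\partial\tilde a=a^\sharp$ under the differential isomorphism $\End(E)^\vee\cong\End(E)$. Once the formula is established the conclusion is immediate, so I anticipate no genuine obstruction beyond this verification.
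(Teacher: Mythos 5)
Your proposal is correct and follows essentially the same route as the paper: unwind the parabolic duality pairing through the trace identification to obtain the formula $\langle\phi,\psi\rangle = \tr(\tilde a\circ\partial\tilde b - \partial\tilde a\circ\tilde b)$, then set $\tilde b = \tilde a$ so that the expression becomes the trace of a commutator, which vanishes. The extra care you take in checking that the trace identification is a differential isomorphism compatible with base change along $\O\to\O^\sharp$ is exactly the bookkeeping the paper compresses into the phrase ``unwinding everything.''
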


\begin{proof}
	Unwinding everything, we find that the duality pairing in this case is described as follows. Given $\phi, \psi \in H^1_p(\End(E))$, we find $\alpha, \beta \in \End(E)$ such that $\partial \alpha = \phi^\sharp$ and $\partial \beta = \psi^\sharp$, and then
	\[ \langle \phi, \psi \rangle = \tr( \alpha \circ (\partial \beta) - (\partial \alpha) \circ \beta). \]
	If $\phi = \psi$, then we can take $\alpha = \beta$ and we see that $\langle \phi, \phi \rangle$ is the trace of a commutator, which must vanish. 
\end{proof}
	
	\section{Preliminaries on isocrystals} \label{isocrystal prelims}

\begin{notation} \label{main notation}
	Suppose now that $K$ is a complete discrete valuation field of characteristic 0 with perfect residue field $k$ of positive characteristic $p$. Let $X$ denote the projective line over $k$, $Z$ an effective Cartier divisor in $X$ and $U = X \setminus Z$ its complement. Let $\frX$ denote the formal projective line over the ring of integers $K^\circ$ in $K$. We define 
	\[ \O := \Gamma(\frX, \O_{\frX}(^\dagger Z)_\Q). \]
	In other words, $\O$ is the ring of overconvergent functions on the tube $]U[$ inside the generic fiber $\frX_K$. Fix a coordinate $t$ on $\frX$ such that $\infty \in Z$. If $h \in K^\circ[t]$ is a separable polynomial whose reduction in $k[t]$ vanishes along $Z$, then $\O = K\langle t, h^{-1}\rangle^\dagger$. More precisely, if $k\langle t, u\rangle^\dagger$ is the two variable Washnitzer algebra (as in \cite[section 7.5]{fresnel} or \cite[section 1.2]{grosse-klonne}), then $\O$ is the quotient of $k\langle t, u \rangle^\dagger$ by the ideal generated by $hu - 1$. 
	
	Let $\partial$ denote the derivation on $\O$ dual to $dt$. Then $\ker(\partial) = K$. Let $D$ denote the corresponding ring of differential operators. 
\end{notation}

\subsection{Properties of $\O$}

\begin{proposition} \label{pid}
	$\O$ is a principal ideal domain, and every maximal ideal of $\O$ is generated by an irreducible $f \in K[t]$. 
\end{proposition}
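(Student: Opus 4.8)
The plan is to show that $\O$ is a Dedekind domain all of whose maximal ideals are generated by irreducible polynomials in $K[t]$, and then to invoke the elementary fact that a Dedekind domain in which every maximal ideal is principal has trivial class group, hence is a principal ideal domain. Throughout I would exploit that $\O$ is the overconvergent (dagger) ring of functions on the affinoid $\tube{U}$, which is the closed unit disk with the finitely many open residue disks around the points of $Z$ deleted (see \cref{main notation}); in particular $\tube{U}$ is a smooth connected curve of dimension one.

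The first step is to record that $\O$ is a regular Noetherian domain of Krull dimension one, hence a Dedekind domain. Noetherianity holds for dagger affinoid algebras \cite{grosse-klonne} (equivalently, $\O$ is a weak completion of $K^\circ[t, h^{-1}]$ in the sense of Monsky--Washnitzer); it is a domain because the smooth curve $\tube{U}$ is connected, hence integral; and it is regular of dimension one because $\tube{U}$ is smooth over $K$ of dimension one --- concretely, $\Omega_{\O/K}$ is free of rank one on $dt$, with dual basis the derivation $\partial$ of \cref{main notation}.

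The second and main step is to describe the maximal ideals. Given a maximal ideal $\mathfrak{m}$, the Nullstellensatz for dagger affinoid algebras gives that $L := \O/\mathfrak{m}$ is finite over $K$; let $\beta \in L$ be the image of $t$. Passing to the affinoid completion $K\langle t, h^{-1}\rangle$, in which $t$, $h$, and $h^{-1}$ all have supremum norm one on $\tube{U}$ and are therefore power-bounded, one finds $|\beta| \leq 1$ and $|h(\beta)| = 1$; so $\beta$ --- and, since the Galois action is isometric and fixes the coefficients of $h$, every $\mathrm{Gal}(\bar{K}/K)$-conjugate of $\beta$ --- defines a point of $\tube{U}$ over $\bar{K}$. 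Let $f \in K[t]$ be the minimal polynomial of $\beta$; it is monic, irreducible, and lies in $\mathfrak{m}$ since $f(\beta) = 0$. Because $f$ is separable (characteristic zero) and all of its roots lie in $\tube{U}$, its zero locus on $\tube{U}$ is exactly the reduced closed point corresponding to $\mathfrak{m}$, so $\O/(f)$ is a reduced Artinian $K$-algebra supported at $\mathfrak{m}$, namely $L$ itself. Thus $(f)$ is already maximal, and the inclusion $(f) \subseteq \mathfrak{m}$ forces $\mathfrak{m} = (f)$. (Alternatively, $\O/(f) = K[t]/(f)$ can be obtained from a Weierstrass division of $\O$ by the $t$-distinguished polynomial $f$.)

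Putting these together finishes the proof: $\O$ is Dedekind with every maximal ideal principal, hence a principal ideal domain, and the generators may be taken to be irreducible polynomials in $K[t]$. I expect the main obstacle to be the second step --- specifically the need to marshal several foundational facts about the dagger affinoid algebra $\O$ and its affinoid completion (Noetherianity, the finiteness of residue fields over $K$, power-boundedness of $t$ and $h^{-1}$), together with the one genuinely geometric point that if a single Galois conjugate of $\beta$ lies in $\tube{U}$ then all of them do. It is this last point that prevents $f$ from acquiring spurious zeros and is what makes $(f)$ exactly maximal rather than merely contained in $\mathfrak{m}$.
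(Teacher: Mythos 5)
Your proof shares the same skeleton as the paper's (show $\O$ is Dedekind, then show every maximal ideal is principal), but it reaches both conclusions by a noticeably different route, and the comparison is instructive.

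For the Dedekind part, the paper goes through completions: $\O_\m^\wedge = \O'_\m{}^\wedge = (K\langle t\rangle_\m)^\wedge$, transferring regularity from the Tate algebra. You instead appeal to smoothness of the dagger affinoid $\tube{U}$ via freeness of $\Omega_{\O/K}$. This is correct in spirit, but it quietly invokes a Jacobian criterion for dagger (not just finite type) $K$-algebras, which deserves a citation; for a ring like $\O$ that is not finitely generated over $K$, freeness of $\Omega_{\O/K}$ alone does not formally imply regularity without such a criterion.

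For the principality part, the paper leans on two black boxes — Grosse-Kl\"onne's result that a maximal ideal of $K[t]^\dagger$ is generated by a polynomial, and injectivity of $\mathrm{MaxSpec}(\O) \to \mathrm{MaxSpec}(K[t]^\dagger)$ with the accompanying isomorphism of completed local rings — which together immediately give $\m = f\O$ (the second fact yields $v_\m(f)=1$). Your argument is more geometric and self-contained: the Nullstellensatz gives $L=\O/\m$ finite over $K$, power-boundedness of $t,h,h^{-1}$ locates $\beta$ and its Galois conjugates in $\tube{U}$, and this pins down the set-theoretic zero locus of $f$ to the single point $\m$. The one soft spot is the passage from this to $(f)=\m$ rather than $(f)=\m^n$ for some $n>1$: "$f$ is separable so $\O/(f)$ is reduced" is the right intuition but not by itself a proof, since separability of $f$ controls $K[t]/(f)$, not $\O/(f)$. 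What rescues it is exactly the parenthetical alternative you mention — a Weierstrass division in $\O$ (which needs a word since $\O$ is $K\langle t,h^{-1}\rangle^\dagger$, not $K\langle t\rangle$), or equivalently a cotangent-space computation: $df = f'(t)\,dt$ with $f'(\beta)\neq 0$ shows $f\notin\m^2$, or a length-counting after base change to $\bar{K}$ using that $f$ has simple zeros there. Any of these closes the gap; as written it's asserted rather than argued. Overall your proof is a valid alternative that trades the paper's reliance on specific facts about $K[t]^\dagger$ and its completion for a Nullstellensatz-plus-Galois-orbit argument, but it should make the multiplicity-one step explicit rather than attribute it to separability.
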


\begin{proof}
	We first check that $\O$ is a Dedekind domain. Note that $\O$ is noetherian \cite[discussion following lemma 7.5.1]{fresnel}, so it suffices to show that $\O_\m$ is regular of Krull dimension 1 for every maximal ideal $\m \subseteq \O$. The noetherian local ring $\O_\m$ is regular of Krull dimension 1 if and only if its $\m$-adic completion $(\O_\m)^\wedge$ is regular of Krull dimension 1 \cite[corollary 11.19, proposition 11.24]{atiyah}. But $(\O_\m)^\wedge = (\O'_\m)^\wedge$, where $\O'$ denotes the completion of $\O$ for the Gauss norm \cite[theorem 1.7(2)]{grosse-klonne}. Note that $\O' = K\langle t, h^{-1}\rangle = K\langle t, u \rangle/(hu-1)$ for some $h$, so $\m$ corresponds to a maximal ideal of $K\langle t \rangle$ which we abusively denote by $\m$ again \cite[section 4.1]{fresnel}. Then note that $(\O'_\m)^\wedge = (K\langle t \rangle_{\m})^\wedge$ \cite[remark  4.1.5(2)]{fresnel}. Now $(K\langle t \rangle_\m)^\wedge$ is regular of Krull dimension 1 since $K\langle t \rangle$ is \cite[theorem 3.2.1(2)]{fresnel}, and this completes the proof that $\O$ is a Dedekind domain.
	
	To show that $\O$ is a principal ideal domain, it suffices to show that every prime ideal is principal (cf. \cite[proposition 3.17]{lam}). Clearly the zero ideal is principal. Since $\O$ is a Dedekind domain, every nonzero prime ideal is maximal, so it suffices to show that every maximal ideal $\m$ of $\O$ is principal. Note that $\m \cap K[t]^\dagger$ is a maximal ideal of $K[t]^\dagger$, so it must be generated by a polynomial $f \in K[t]$ \cite[proposition 1.5]{grosse-klonne}. Injectivity of the map $\text{MaxSpec}(\O) \to \text{MaxSpec}(K[t]^\dagger)$ (cf. \cite[remark 4.1.5(2)]{fresnel}) implies $\m = f\O$. Clearly $f$ must be irreducible.
\end{proof}

\begin{corollary}
 $\O$ is differentially simple.
\end{corollary}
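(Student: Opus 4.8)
The plan is to prove this by contradiction, extracting everything from \cref{pid}. Suppose $I\subseteq\O$ is a nonzero ideal with $\partial(I)\subseteq I$ and $I\ne\O$. Since $\O$ is a principal ideal domain, hence a unique factorization domain, I would write $I=(f)$ with $f$ a nonzero non-unit and factor $f=u\,g_1^{e_1}\cdots g_r^{e_r}$ with $u\in\O^\times$, $r\ge 1$, each $e_i\ge 1$, and the $g_i$ pairwise non-associate irreducibles. By \cref{pid} every maximal ideal of $\O$ is generated by an irreducible polynomial in $K[t]$, so after absorbing units into $u$ I may take each $g_i\in K[t]$ irreducible, necessarily of degree $\ge 1$ since a nonzero constant is a unit in $\O$. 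Because $\partial$ is dual to $dt$, its restriction to $K[t]$ is the formal derivative $g\mapsto g'$; as $\characteristic K=0$ this gives $g_i'\ne 0$ and $\deg g_i'<\deg g_i$, so $g_i\nmid g_i'$ in $K[t]$ and $\gcd_{K[t]}(g_i,g_i')=1$, i.e.\ there are $b_i,c_i\in K[t]$ with $b_ig_i+c_ig_i'=1$.

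Next I would run the standard logarithmic-derivative computation. The hypothesis $\partial(f)\in(f)$ says $\partial(f)/f\in\O$, while the Leibniz rule gives in $\Frac(\O)$
\[ \frac{\partial(f)}{f}=\frac{\partial(u)}{u}+\sum_{i=1}^{r}e_i\,\frac{g_i'}{g_i}, \]
and $\partial(u)/u=u^{-1}\partial(u)\in\O$, so $\sum_{i=1}^{r}e_i\,g_i'/g_i\in\O$. Then I would evaluate this against the discrete valuation $v$ attached to the maximal ideal $(g_1)$ (legitimate because $\O$ is Dedekind by \cref{pid}, so $\O_{(g_1)}$ is a DVR). From $b_1g_1+c_1g_1'=1$ one gets $v(g_1')=0$, hence $v(g_1'/g_1)=-1$; for $i\ge 2$ one has $(g_i)\ne(g_1)$, so $v(g_i)=0$ and $v(g_i'/g_i)\ge 0$; and $v(e_1)=0$ since $e_1$ is a positive integer, hence a unit in $K$. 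By the ultrametric inequality the sum has valuation exactly $-1$, so it cannot lie in $\O$ — contradiction. Therefore no proper nonzero differential ideal exists.

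There is an alternative packaging worth mentioning: in characteristic $0$ the minimal primes over a differential ideal are again differential (Keigher), which combined with $\dim\O=1$ would reduce matters to a maximal differential ideal $\m=(g)$ with $g\in K[t]$ irreducible, where $\partial(g)=g'\in(g)$ already contradicts $\gcd_{K[t]}(g,g')=1$; but the computation above avoids this machinery. I do not anticipate a real obstacle here now that \cref{pid} is in hand — the only points needing care, which I would spell out in full, are the identification of $\partial|_{K[t]}$ with the ordinary derivative and the valuation bookkeeping showing the $i=1$ term strictly dominates the others; the rest is routine.
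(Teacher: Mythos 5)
Your proof is correct but takes a genuinely different and more self-contained route than the paper's. The paper invokes a general result of Block [4.4] to reduce at once to checking that no \emph{maximal} ideal of $\O$ is a $D$-submodule, then applies Bezout in $K[t]$ directly to the irreducible generator $f$: from $af + b\partial(f) = 1$ and $\partial(f) \in (f)$ one gets $1 \in (f)$. You instead factor an arbitrary principal differential ideal $(f)$ into irreducibles (legitimate since $\O$ is a PID by \cref{pid}), pass to the logarithmic derivative $\partial(f)/f = \partial(u)/u + \sum_i e_i g_i'/g_i$, and use the discrete valuation at $(g_1)$ to force $\partial(f)/f \notin \O$. Both arguments hinge on the same kernel: maximal ideals of $\O$ are generated by irreducible polynomials $g \in K[t]$, and in characteristic zero $\gcd_{K[t]}(g, g') = 1$, which is exactly what gives $v_{(g_1)}(g_1') = 0$ in your version and the contradiction $1 \in \m$ in the paper's. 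The ``alternative packaging'' you sketch at the end — reduce via Keigher/Seidenberg to maximal differential ideals — is essentially what the paper's Block citation accomplishes, so you correctly identified both routes; your direct valuation computation trades that external reference for a modest amount of bookkeeping, which is a reasonable exchange.
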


\begin{proof}
	Since $\O$ is a principal ideal domain by \ref{pid}, it suffices to check that no maximal ideal $\m$ of $\O$ is a $D$-submodule \cite[4.4]{block}. But we know that $\m = f\O$ for an irreducible $f \in K[t]$. If $\m$ were a $D$-submodule, then we would have $\partial(f) \in \m$ also. But since $f$ is irreducible, there exist $a, b \in K[t]$ such that $af + b\partial(f) = 1$, which then forces $\m$ to be the unit ideal, yielding a contradiction.
\end{proof}

This has a number of consequences. 
\begin{itemize}
	\item The ring $D$ has left global dimension 1 \cite[5]{goodearl}.
	
	\item Every differential $\O$-module finite over $\O$ is finite \emph{free} over $\O$. Indeed, projectivity follows from \cite[4.3]{maurischat}, and then freeness follows from \ref{pid}. It follows immediately that the category of finite free differential $\O$-modules is abelian.  
	
	\item For any finite free differential $\O$-module $E$, we have \[ \dim H^0_{\dR}(E) \leq \rank E. \]
	Indeed, suppose $S$ is a $K$-basis for $H^0_{\dR}(E)$. Then $1 \otimes S$ is still $K$-linearly independent in $H^0_{\dR}(L \otimes_\O E)$, where $L := \Frac(\O)$. But note that $H^0_{\dR}(L) = K$ \cite[3.1]{maurischat}. Thus
	\[ |S| = |1 \otimes S| \leq \dim_K H^0_{\dR}(L \otimes_\O E) \leq \dim_{L} (L \otimes_\O E) = \rank E, \]
	where the first equality is because $E \to L \otimes_\O E$ is injective, the first inequality is because $1 \otimes S$ is linearly independent in $H^0_{\dR}(L \otimes_\O E)$, and the second inequality is a consequence of \cite[5.1.5]{kedlaya_pde}.
\end{itemize}

\begin{definition}
	For any $z \in Z$, let $\Robba_z$ denote the Robba ring at $z$ (see, for instance, \cite[definition 3.1]{lestum}). Observe that there are natural flat restrictions maps $\O \to \Robba_z$ for each $z$. If $E$ is a differential $\O$-module, then 
	\[ E_z := \Robba_z \otimes_\O E \]
	is the \emph{Robba fiber} of $E$ at $z$. It is a differential $\Robba_z$-module, and the functor $E \mapsto E_z$ is exact. 
\end{definition}

\begin{notation}
	We define $\MC^f(\tube{U})$ to be the category of finite free differential $\O$-modules, and $\MC^\dagger(\tube{U})$ to be the full subcategory of $\MC^f(\tube{U})$ whose objects are the differential $\O$-modules whose Robba fibers are solvable.\footnote{More precisely, ``solvability'' here means ``solvability at 1'' in the sense of \cite[d\'efinition 4.1--1]{christol_mebkhout3} or \cite[definition 12.6.1]{kedlaya_pde}.}
\end{notation}

\begin{proposition} \label{result:translate isocrystals}
	If $\Isoc^\dagger(U)$ denotes the category of overconvergent isocrystals on $U$ over $K$ as in \cite[d\'efinition 2.3.6]{berthelot_rigide2}, then there is a tensor equivalence  \[ \begin{tikzcd} \Isoc^\dagger(U) \ar{r}{\iota} & \MC^\dagger(\tube{U}). \end{tikzcd} \] If $E \in \Isoc^\dagger(U)$, then $E$ is irreducible if and only if $\iota(E)$ is irreducible in $\DMod_\O$, and \[ H^i_\rig(U, E) = H^i_{\dR}(\iota(E)) \]
	where $H^i_\rig(U, E)$ is absolute rigid cohomology \cite[definition 8.2.5]{lestum_book}.
\end{proposition}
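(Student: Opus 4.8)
The plan is to obtain $\iota$ from Berthelot's realization functor for the frame $(U \hookrightarrow X \hookrightarrow \frX)$. Up to the usual colimit over strict neighborhoods of $\tube{U}$ in $\frX_K$, an object $E$ of $\Isoc^\dagger(U)$ is a coherent module with integrable overconvergent connection on such a neighborhood; its germ along $\tube{U}$ is then a coherent $\O$-module together with an action of $\partial$, which I take to be $\iota(E)$. First I would verify that $\iota(E)\in\MC^f(\tube{U})$: coherence over the noetherian ring $\O$ gives finite presentation, differential simplicity of $\O$ together with \cite[4.3]{maurischat} forces $\iota(E)$ to be projective, and since $\O$ is a principal ideal domain (\cref{pid}) a finitely generated projective $\O$-module is finite free. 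Compatibility of realization with tensor products then makes $\iota$ a tensor functor, provided the essential image is closed under $\otimes$, which I address below.

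Next I would show $\iota$ is an equivalence onto $\MC^\dagger(\tube{U})$. Full faithfulness is the statement that overconvergent isocrystals form a full subcategory of modules with integrable connection on $\tube{U}$: a horizontal morphism over a strict neighborhood is determined by, and conversely extends from, its restriction to $\tube{U}$, because horizontal sections of an overconvergent isocrystal are overconvergent; this is internal to Berthelot's formalism. The essential image is where the substance lies: I claim a finite free differential $\O$-module $E$ arises from an overconvergent isocrystal precisely when each Robba fiber $E_z$, $z\in Z$, is solvable --- which is exactly the condition cutting out $\MC^\dagger(\tube{U})$. The equivalence ``overconvergence $\Leftrightarrow$ solvability at every point of $Z$'' in dimension one is the core input from the theory of $p$-adic differential equations; I would extract it from Christol--Mebkhout (\cite{christol_mebkhout3}) and the treatment in \cite{kedlaya_pde}. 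Closure of $\MC^\dagger(\tube{U})$ under $\otimes$ reduces to the fact that the tensor product of two solvable differential modules over a Robba ring is solvable (full generic radius of convergence is preserved by tensor product), which justifies the tensor-functoriality above.

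For the irreducibility claim, since $\iota$ is an equivalence it is enough to check that any differential $\O$-submodule $E'\subseteq\iota(E)$ of an object of $\MC^\dagger(\tube{U})$ again lies in $\MC^\dagger(\tube{U})$; then irreducibility in $\DMod_\O$ and in $\MC^\dagger(\tube{U})$ agree. Indeed $E'$ is finitely generated ($\O$ is noetherian), hence finite free over $\O$, again by differential simplicity, \cite[4.3]{maurischat}, and \cref{pid}; and the exact Robba fiber functor realizes $E'_z$ as a differential submodule of the solvable module $\iota(E)_z$, which is therefore itself solvable (solvability passes to subobjects). An object of $\MC^\dagger(\tube{U})$ is irreducible exactly when it has no nonzero proper differential $\O$-submodule, which is the definition of irreducibility in $\DMod_\O$.

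Finally, for cohomology: by construction $H^i_\rig(U,E)$ is the $i$-th cohomology of the de Rham complex of the realization of $E$ with overconvergent coefficients (\cite[definition 8.2.5]{lestum_book}); passing to the germ along $\tube{U}$ and using that $\frX$ is one-dimensional with coordinate $t$, this de Rham complex becomes the two-term complex $[\,\iota(E)\xrightarrow{\ \partial\ }\iota(E)\,]=\dR(\O,\iota(E))$, and the relevant colimit over strict neighborhoods being exact yields $H^i_\rig(U,E)=H^i_\dR(\iota(E))$ for all $i$. I expect the identification of the essential image --- the equivalence of overconvergence with solvability of all Robba fibers --- to be the main obstacle, being the one genuinely non-formal ingredient; the remaining steps are bookkeeping with Berthelot's formalism and the ring theory of $\O$ established above.
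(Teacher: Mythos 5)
Your proposal is correct and essentially reconstructs the arguments that the paper's citations (Le Stum 6.7--6.8, Berthelot 2.2.7(c), Crew eq.\ (8.1.1)) would provide: you correctly isolate the overconvergence-vs-solvability equivalence as the single non-formal input, and your PID\,$\to$\,free observation for the image and for submodules is exactly the point the paper highlights as new. The paper's own proof is little more than these three citations plus the PID remark, so this is the same route with the bookkeeping spelled out.
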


\begin{proof}
	The first statement is more or less well-known (cf. \cite[6.7--6.8]{lestum}). The only point to note here is that, since $\O$ is a principal ideal domain by \ref{pid}, finite differential $\O$-modules must actually be \emph{free} (and not just projective, as is the case in \cite{lestum}). The second assertion follows from \cite[2.2.7(c)]{berthelot_rigide2}, and the third from \cite[equation (8.1.1)]{crew_finiteness}.
\end{proof}

Hereafter, we will freely regard overconvergent isocrystals on $U$ as differential $\O$-modules with solvable Robba fibers using the equivalence $\iota$ above. 

\subsection{Compactly supported and parabolic cohomology} 
\label{section:compactly supported and parabolic}

Let 
\[ \O^\sharp := \prod_{z \in Z} \Robba_z.  \]
We have a natural homomorphism of differential rings $\O \to \O^\sharp$. This allows us to use the notation introduced in \cref{definition:abstract compactly supported and parabolic cohomologies}. 

\begin{lemma}[{\cite[section 8.1]{crew_finiteness}}]
	Suppose $E \in \Isoc^\dagger(U)$. Then 
	\[ H^i_{\rig, c}(U, E) = H^i_c(\iota(E)) \]
	where $\iota$ is the equivalence of \cref{result:translate isocrystals} and $H^i_{\rig, c}(U, E)$ is absolute rigid cohomology with compact supports \cite[definition 8.2.5]{lestum_book}. 
\end{lemma}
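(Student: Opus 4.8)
The final statement to prove is the lemma asserting that $H^i_{\rig,c}(U,E) = H^i_c(\iota(E))$ for $E \in \Isoc^\dagger(U)$, where $H^i_c$ is the cone-based compactly supported cohomology defined abstractly in Definition~\ref{definition:abstract compactly supported and parabolic cohomologies}, applied to the homomorphism $\O \to \O^\sharp = \prod_{z \in Z}\Robba_z$.

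The plan is to identify both sides via the local-global exact sequences for rigid cohomology with compact supports. First I would recall from the previous proposition (\cref{result:translate isocrystals}) that $H^i_\rig(U,E) = H^i_{\dR}(\iota(E))$, i.e.\ ordinary rigid cohomology is computed by the de Rham complex $\dR(\O,\iota(E))$. Next I would invoke the standard excision/localization triangle relating compactly supported rigid cohomology on $U$ to rigid cohomology on $U$ and the ``boundary'' contributions at the points of $Z$ — this is precisely the content of Crew's analysis in \cite[section 8.1]{crew_finiteness}, where he shows that $H^i_{\rig,c}(U,E)$ fits into a long exact sequence with $H^i_\rig(U,E)$ and the local cohomology $\bigoplus_{z\in Z} H^i(\Robba_z, E_z)$ of the Robba fibers (the relevant local term being the two-term de Rham complex $\dR(\Robba_z, E_z)$, whose cohomology is $E_z^{\nabla}$ in degree $0$ and $E_z/\partial E_z$ in degree $1$).

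The key step is then to match this with the abstract definition: $C^+(\iota(E)) = \Cone(\dR(\O,\iota(E)) \to \dR(\O^\sharp, \iota(E)^\sharp))[-1]$, and since $\O^\sharp = \prod_z \Robba_z$ we have $\dR(\O^\sharp, \iota(E)^\sharp) = \bigoplus_z \dR(\Robba_z, E_z)$. Thus the distinguished triangle defining $C^+(\iota(E))$,
\[ \begin{tikzcd} C^+(\iota(E)) \ar{r} & \dR(\O, \iota(E)) \ar{r} & \bigoplus_{z\in Z}\dR(\Robba_z, E_z) \ar{r}{+} & \mbox{} \end{tikzcd} \]
should be shown to agree (in the derived category) with Crew's localization triangle for $H^\bullet_{\rig,c}(U,E)$. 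Concretely, the comparison map on the outer terms is furnished by \cref{result:translate isocrystals} on the left and by the identification of the rigid-cohomology ``local factor at $z$'' with the Robba-fiber de Rham complex on the right; one checks the square commutes, and then the five lemma (or just the induced map on cones) gives $H^i_c(\iota(E)) \cong H^i_{\rig,c}(U,E)$ for all $i$. I would extract all of this essentially by citing \cite[section 8.1]{crew_finiteness}, since that is where the compatibility of $H^\bullet_{\rig,c}$ with this cone construction is worked out; the lemma is really a translation of Crew's results into the notation of \cref{chapter:deformations of differential modules}.

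The main obstacle is making the identification of Crew's local term at each $z \in Z$ with the de Rham complex $\dR(\Robba_z, E_z)$ precise and checking that the connecting maps agree — that is, verifying that Crew's localization triangle is literally (not just abstractly isomorphic to) the cone triangle above, including signs and the identification of the restriction map $H^\bullet_\rig(U,E) \to \bigoplus_z H^\bullet(\Robba_z, E_z)$ with the map $\dR(\O,\iota(E)) \to \dR(\O^\sharp, \iota(E)^\sharp)$ induced by $\O \to \O^\sharp$. Since both maps are ``restrict a function/section to a punctured formal neighbourhood,'' this is a compatibility of the same geometric operation described in two languages, and should follow from the explicit complexes in \cite[section 8.1]{crew_finiteness} together with \cref{result:translate isocrystals}; I would keep the proof short by citing these and noting the remaining check is routine.
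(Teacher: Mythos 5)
The paper gives no proof of this lemma beyond the bracketed citation to \cite[section~8.1]{crew\_finiteness}; your proposal elaborates exactly what that citation amounts to — matching Crew's localization triangle for compactly supported rigid cohomology with the cone defining $C^+(\iota(E))$, using \cref{result:translate isocrystals} for the de Rham side and the Robba-fiber identification for the local terms — and is correct. You take essentially the same approach as the paper, just spelled out in more detail.
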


\begin{definition}[{\cite[equation (8.1.5)]{crew_finiteness}}]
	For $E \in \Isoc^\dagger(U)$, we define the \emph{parabolic cohomology} of $E$, denoted $H^1_{\rig,p}(U, E)$, to be $H^1_p(\iota(E))$, where $\iota$ is the equivalence of \cref{result:translate isocrystals}.
\end{definition}

Observe that, for any finite free differential $\O$-module $E$, we have a distinguished triangle
\[ \begin{tikzcd} C^+(E) \ar{r} &  \dR(\O, E) \ar{r} & \displaystyle \prod_{z \in Z} \dR(\Robba_z, E_z) \ar{r}{+} & \mbox{} \end{tikzcd} \]
where $C^+(E)$ is as in \cref{definition:abstract compactly supported and parabolic cohomologies}. This gives rise to a long exact sequence as follows, where $H^0_c(E) = 0$ by \cref{result:zeroth compactly supported cohomology vanishes} since the homomorphism $\O \to \O^\sharp = \prod_{z \in Z} \Robba_z$ is injective. 
\begin{equation} \label{eqn:crew six term exact sequence}
\begin{tikzcd} 0 \ar{r} & \cancel{H^0_c(E)} \ar{r} & H^0_{\dR}(E) \ar{r} \arrow[d, phantom, ""{coordinate, name=Z}] & \displaystyle \prod_{z \in Z} H^0_{\dR}(E_z) \arrow[dll,
rounded corners,
to path=
{ -- ([xshift=2ex]\tikztostart.east)
	|- (Z) [near end]\tikztonodes
	-| ([xshift=-2ex]\tikztotarget.west)
	-- (\tikztotarget)}
] \\
& H^1_c(E) \ar{r} & H^1_{\dR}(E) \ar{r} \arrow[d, phantom, ""{coordinate, name=Y}] & \displaystyle \prod_{z \in Z} H^1_{\dR}(E_z) \arrow[dll,
rounded corners,
to path=
{ -- ([xshift=2ex]\tikztostart.east)
	|- (Y) [near end]\tikztonodes
	-| ([xshift=-2ex]\tikztotarget.west)
	-- (\tikztotarget)}
] \\
& H^2_c(E) \ar{r} & 0 \end{tikzcd}
\end{equation}
This is Crew's six-term exact sequence \cite[equation (8.1.4)]{crew_finiteness}.

\subsection{Duality pairing}

There is a trace map $H^2_c(\O) \to K$ \cite[equation (8.1.7)]{crew_finiteness} which is an isomorphism \cite[discussion following theorem 9.5]{crew_finiteness}. For $E \in \MC^f(\tube{U})$, composing the duality pairing of \cref{definition:duality pairing} with the trace map yields exactly the pairing
\begin{equation} \label{eqn:duality pairing} \begin{tikzcd} H^0_{\dR}(E^\vee) \times H^2_c(E) \ar{r} & K \end{tikzcd} \end{equation}
of \cite[equation (8.1.8)]{crew_finiteness}. Similarly, composing the duality pairing of \cref{definition:duality pairing parabolic} with the trace map yields exactly the pairing
\begin{equation} \label{eqn:duality pairing parabolic} \begin{tikzcd} H^1_p(E^\vee) \times H^1_p(E) \ar{r} & K \end{tikzcd} \end{equation}
of \cite[equation (8.1.9)]{crew_finiteness}. In order for these pairings to be perfect, we need the following definition. 

\begin{definition}
	We say that $E$ is \emph{strict} if the Robba fiber $E_z$ is strict\footnotemark\ for all $z \in Z$. We write $\MC^s(\tube{U})$ for the full subcategory of $\MC^f(\tube{U})$ whose objects are the strict $E \in \MC^f(\tube{U})$. 
\end{definition}

\footnotetext{A differential module $E$ over the Robba ring is \emph{strict} if $H^1_{\dR}(E)$ is finite dimensional. A more technical definition is given by Crew in \cite[discussion preceding theorem 6.3]{crew_finiteness}, but \cite[theorem 6.3]{crew_finiteness} combines with the more recent observation of Crew \cite[lemma 1]{crew_rigidity} to show that the two definitions are equivalent.}

\begin{lemma} \label{result:stability of strictness}
	$\MC^s(\tube{U})$ is a Serre subcategory \tag{02MN} of $\MC^f(\tube{U})$ which contains the unit object $\O$ and is stable under duality. 
\end{lemma}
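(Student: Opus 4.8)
The claim has three parts: (1) $\MC^s(\tube{U})$ is closed under subobjects, quotients, and extensions inside $\MC^f(\tube{U})$; (2) $\O \in \MC^s(\tube{U})$; and (3) $E \in \MC^s(\tube{U})$ implies $E^\vee \in \MC^s(\tube{U})$. The whole argument reduces everything to the single Robba ring $\Robba_z$: a differential $\O$-module $E$ is strict exactly when each $E_z$ has finite-dimensional $H^1_{\dR}$, the functor $E \mapsto E_z$ is exact (as recorded just before \cref{result:translate isocrystals}), and restriction commutes with duality since $\Robba_z$ is flat over $\O$ and $E$ is finite free, so $(E^\vee)_z = (E_z)^\vee$. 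So the plan is: prove each of the three properties for a single differential module over $\Robba := \Robba_z$, then take finite products over $z \in Z$.

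For part (2): $\O_z = \Robba_z$, the trivial differential module, and $H^1_{\dR}(\Robba_z) = \Robba_z / \partial \Robba_z$ is one-dimensional (this is standard; it is the residue map, and is in any case implicit in the fact that the trace map $H^2_c(\O) \to K$ of Crew is an isomorphism). Hence $\O$ is strict. For part (3): given $E$ strict, I want $H^1_{\dR}(E_z^\vee)$ finite-dimensional. Here the cleanest route is the duality pairing: $\MC(\Robba)$ is a compact closed category (every object is dualizable), and for a strict module $E_z$ over the Robba ring, Crew's local duality (the local version of the perfectness of \eqref{eqn:duality pairing parabolic}, or equivalently Crew–Kedlaya local duality) gives a perfect pairing $H^1_{\dR}(E_z^\vee) \times H^1_{\dR}(E_z) \to H^2_c$ — actually it is simpler to note that $H^0_{\dR}(E_z^\vee) = \Hom(\O, E_z^\vee) = \Hom(E_z, \O)$ and $H^1_{\dR}(E_z^\vee)$ are interchanged with $H^1_{\dR}(E_z), H^0_{\dR}(E_z)$ under the local duality pairing; since a finite free differential module over $\Robba$ always has $H^0_{\dR}$ and $H^1_{\dR}$ of the same (finite or infinite) "size" by this duality, $H^1_{\dR}(E_z)$ finite forces $H^1_{\dR}(E_z^\vee)$ finite. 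Alternatively, and perhaps more self-containedly given what is cited, I can invoke \cite[theorem 6.3]{crew_finiteness} together with \cite[lemma 1]{crew_rigidity}: strictness of $E_z$ is equivalent to both $H^0_{\dR}(E_z)$ and $H^1_{\dR}(E_z)$ being finite-dimensional and the duality pairings being perfect, and that characterization is manifestly self-dual.

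For part (1): let $0 \to E' \to E \to E'' \to 0$ be exact in $\MC^f(\tube{U})$. Restricting to $z$ and taking de Rham cohomology gives a long exact sequence
\[ 0 \to H^0_{\dR}(E'_z) \to H^0_{\dR}(E_z) \to H^0_{\dR}(E''_z) \to H^1_{\dR}(E'_z) \to H^1_{\dR}(E_z) \to H^1_{\dR}(E''_z) \to 0, \]
the last zero because $\dR$ is concentrated in degrees $0,1$. All the $H^0$ terms are automatically finite-dimensional (each $H^0_{\dR}$ of a rank-$n$ differential module over $\Robba$ has dimension $\le n$, which follows from the Robba-ring analog of the displayed inequality $\dim H^0_{\dR}(E) \le \rank E$ proved above for $\O$, or directly from \cite[5.1.5]{kedlaya_pde} applied over $\Frac(\Robba)$). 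So if $E$ is strict, $H^1_{\dR}(E'_z)$ and $H^1_{\dR}(E''_z)$ are both finite quotients/subs of things built from $H^1_{\dR}(E_z)$ and finite-dimensional $H^0$ terms, hence finite; and conversely if $E'$ and $E''$ are strict then $H^1_{\dR}(E_z)$ is squeezed between finite-dimensional terms, hence finite. This gives closure under subobjects, quotients, and extensions, i.e. the Serre property.

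The main obstacle is part (3), stability under duality: this is the one place where finite-dimensionality of $H^0$ is not by itself enough, and one genuinely needs local duality over the Robba ring (Crew's \cite[theorem 6.3]{crew_finiteness}, or Crew–Kedlaya) to transfer finiteness of $H^1_{\dR}(E_z)$ to finiteness of $H^1_{\dR}(E_z^\vee)$. Everything else is a formal consequence of exactness of $E \mapsto E_z$, the long exact sequence of the two-term complex $\dR$, and the a priori bound on $H^0_{\dR}$.
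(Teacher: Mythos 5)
Your proposal is correct and follows essentially the same route as the paper: reduce to a single Robba ring via exactness of $E \mapsto E_z$, observe $\dim H^1_{\dR}(\Robba) = 1$, deduce the Serre property from the long exact sequence together with automatic finiteness of $H^0_{\dR}$, and appeal to Crew's local duality (\cite[theorem 6.3]{crew_finiteness}) for stability under duals. (Your first formulation of the duality pairing, as $H^1_{\dR}(E_z^\vee) \times H^1_{\dR}(E_z) \to H^2_c$, puts the pairing in the wrong degrees, but you self-correct and end up invoking the same theorem the paper cites, which already asserts the desired self-duality of strictness.)
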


\begin{proof}
	Since $E \mapsto E_z$ is an exact tensor functor, it is sufficient to show that the category $\DMod^s_{\Robba}$ of strict differential modules over the Robba ring $\Robba$ is a Serre subcategory of the category $\DMod^f_{\Robba}$ of finite free differential modules over $\Robba$ which contains the unit object $\Robba$ and is stable under duality. The fact that $\Robba$ is strict is clear, since $\dim H^1_{\dR}(\Robba) = 1$. The fact that $\DMod_\Robba^s$ is stable under duality is \cite[theorem 6.3]{crew_finiteness}. If 
	\[ \begin{tikzcd} 0 \ar{r} & E' \ar{r} & E \ar{r} & E'' \ar{r} & 0 \end{tikzcd} \]
	is an exact sequence in $\DMod^f_{\Robba}$, it induces an exact sequence as follows. 
	\[ \begin{tikzcd} 0 \ar{r} & H^0_{\dR}(E') \ar{r} & H^0_{\dR}(E) \ar{r} \arrow[d, phantom, ""{coordinate, name=Z}] & H^0_{\dR}(E'') \arrow[dll,
	rounded corners,
	to path=
	{ -- ([xshift=2ex]\tikztostart.east)
		|- (Z) [near end]\tikztonodes
		-| ([xshift=-2ex]\tikztotarget.west)
		-- (\tikztotarget)}
	] \\
	& H^1_{\dR}(E') \ar{r} & H^1_{\dR}(E) \ar{r} \arrow[d, phantom, ""{coordinate, name=Y}] & H^1_{\dR}(E'') \arrow[dll,
	rounded corners,
	to path=
	{ -- ([xshift=2ex]\tikztostart.east)
		|- (Y) [near end]\tikztonodes
		-| ([xshift=-2ex]\tikztotarget.west)
		-- (\tikztotarget)}
	] \\
	& 0 & \mbox{} \end{tikzcd} \]
	Since $H^0_{\dR}(E'')$ is always finite dimensional \cite[proposition 6.2]{crew_finiteness}, it follows that $H^1_{\dR}(E)$ is finite dimensional if and only if $H^1_{\dR}(E')$ and $H^1_{\dR}(E'')$ are finite dimensional. Thus $\DMod^s_\Robba$ is a Serre subcategory. 
\end{proof}

\begin{remark} \label{remark:strict robba modules}
	Let us make some further observations about the category $\DMod^s_\Robba$ of strict differential modules over the Robba ring $\Robba$. For any $\alpha \in K$, let $E_\alpha$ denote the differential module defined by the differential equation $t\partial - \alpha$. Then $E_\alpha \in \DMod_\Robba^s$ if and only if $\alpha$ is $p$-adic non-Liouville \cite[proposition 6.10]{crew_finiteness}. 
	\begin{itemize}
		\item $\DMod^s_\Robba$ is \emph{not} stable under tensor products: if we choose $p$-adic non-Liouville numbers $\alpha, \beta$ whose sum $\alpha + \beta$ is $p$-adic Liouville, then $E_{\alpha}$ and $E_{\beta}$ are both strict but $E_{\alpha} \otimes E_{\beta} \simeq E_{\alpha + \beta}$ is not. 
		
		\item $\DMod^s_\Robba$ is incomparable with the category $\DMod^\dagger_\Robba$ of solvable differential modules over $\Robba$ \cite[d\'efinition 4.1--1]{christol_mebkhout3}. Indeed, $E_\alpha \in \DMod^\dagger_\Robba$ if and only if $\alpha \in \Z_p$ \cite[example 9.5.2]{kedlaya_pde}. So, for example, if $\alpha \in \Z_p$ is $p$-adic Liouville, then $E_\alpha$ is solvable but not strict. Conversely, if $\alpha \in K \setminus \Z_p$, then $E_\alpha$ is strict but not solvable. 
	\end{itemize}
\end{remark}

\begin{remark} \label{remark:frobenius robba modules}
	Let $\DMod_\Robba^F$ denote the category of finite differential modules over the Robba ring $\Robba$ that can be equipped with Frobenius structures potentially after a finite extension of $K$ \cite[d\'efinition 2.5--2]{christol_mebkhout4}. By way of example, if $E_\alpha$ is as in \cref{remark:strict robba modules} above, then $E_\alpha \in \DMod_\Robba^F$ if and only if $\alpha \in \Z_{(p)}$ \cite[corollaire 6.0--23]{christol_mebkhout4}. 
	
	It follows immediately from \cite[corollaire 6.0--20]{christol_mebkhout4} that $\DMod^F_\Robba$ is a tannakian subcategory of the tannakian\footnotemark\ category $\DMod_\Robba^f$ over $K$. Moreover, we have
	\[ \DMod_\Robba^F \subseteq \DMod_\Robba^\dagger \cap \DMod_\Robba^s. \]
	The inclusion $\DMod_\Robba^F \subseteq \DMod_\Robba^\dagger$ is a theorem of Dwork's \cite[theorem 17.2.1]{kedlaya_pde}. The inclusion $\DMod_\Robba^F \subseteq \DMod_\Robba^s$ is a consequence of the $p$-adic local monodromy theorem (which is due independently to Andr\'e \cite{andre_hasse-arf}, Kedlaya \cite[theorem 20.1.4]{kedlaya_pde}, and Mebkhout \cite{mebkhout}).
	
	\footnotetext{\label{footnote:finite diff robba modules tannakian}It follows from \cite[proposition 6.1]{crew_finiteness} that the category $\DMod_\Robba^f$ of finite differential modules over $\Robba$ is stable under subquotients, extensions, tensor products, and internal homs, and it evidently contains the unit object $\Robba$. The endomorphisms of the unit object $\Robba$ are $H^0_{\dR}(\Robba) = K$, and the ``tannakian dimension'' \cite[section 7]{deligne_tannakiennes} coincides with the rank as a finite free module over $\Robba$, which is always a nonnegative integer. Thus $\DMod_\Robba^f$ is tannakian over $K$ \cite[theorem 7.1]{deligne_tannakiennes}.}
\end{remark}

\begin{remark} \label{remark:frobenius implies strict}
	In particular, it follows from \cref{remark:frobenius robba modules} that if $E \in \MC^f(\tube{U})$ can be equipped with a Frobenius structure, it is overconvergent and its Robba fibers along $Z$ are strict. This is the most important case. 
\end{remark}

\begin{theorem}[Crew's finiteness theorem {\cite[theorem 9.5]{crew_finiteness}}] \label{result:crew finiteness}
	Suppose $E \in \MC^s(\tube{U})$.\footnotemark\ Then all of the terms in the exact sequence \eqref{eqn:crew six term exact sequence} are finite dimensional, and both of the duality pairings \eqref{eqn:duality pairing} and \eqref{eqn:duality pairing parabolic} are perfect.   
\end{theorem}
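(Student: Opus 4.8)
The plan is to deduce the theorem from Crew's finiteness theorem \cite[theorem 9.5]{crew_finiteness} once the notation of this section has been matched with Crew's. First I would observe that $E \in \MC^s(\tube{U})$ means by definition that each Robba fiber $E_z$ is strict, i.e.\ $H^1_{\dR}(E_z)$ is finite dimensional; by the footnote accompanying the definition of strictness this coincides with Crew's notion via \cite[theorem 6.3]{crew_finiteness} and \cite[lemma 1]{crew_rigidity}. Since $\O$ is a principal ideal domain by \cref{pid}, every object of $\MC^f(\tube{U})$ is genuinely finite free over $\O$, so $E$ and its Robba fibers are exactly the objects to which Crew's machinery applies; moreover the de Rham complexes $\dR(\O, E)$ and $\dR(\Robba_z, E_z)$ compute the rigid cohomologies appearing in Crew's work, by \cref{result:translate isocrystals} together with the lemma at the start of \cref{section:compactly supported and parabolic}.

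Next I would check finite dimensionality of all six terms in \eqref{eqn:crew six term exact sequence}. The groups $H^0_{\dR}(E_z)$ are finite dimensional for any finite free $E_z$ by \cite[proposition 6.2]{crew_finiteness}, and $H^1_{\dR}(E_z)$ is finite dimensional precisely because $E_z$ is strict; the groups $H^0_{\dR}(E)$ are finite dimensional by the rank bound already recorded above. Crew's finiteness theorem then supplies finite dimensionality of $H^i_{\dR}(E) = H^i_\rig(U, E)$ and of $H^i_c(E) = H^i_{\rig, c}(U, E)$. Finiteness of the remaining entries follows formally from exactness of \eqref{eqn:crew six term exact sequence}, established in \cref{section:compactly supported and parabolic}, together with the vanishing $H^0_c(E) = 0$ from \cref{result:zeroth compactly supported cohomology vanishes}.

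For the perfectness claims I would invoke the identifications recorded in \cref{section:compactly supported and parabolic}: there is a trace isomorphism $H^2_c(\O) \to K$ coming from \cite[theorem 9.5]{crew_finiteness} and the surrounding discussion, and composing it with the duality pairing of \cref{definition:duality pairing} (resp.\ of \cref{definition:duality pairing parabolic}) recovers Crew's pairing \cite[equation (8.1.8)]{crew_finiteness} (resp.\ \cite[equation (8.1.9)]{crew_finiteness}), which Crew's theorem asserts to be perfect under the strictness hypothesis. The one point demanding care — and the main obstacle — is confirming that the explicit formulas written down here, namely $\langle \phi, e \rangle = \phi^\sharp(e)$ in bidegree $(0,2)$ and the cone-cocycle expression $\langle \phi, e \rangle = \alpha(\partial f) - (\partial \alpha)(f)$ in bidegree $(1,1)$, reproduce Crew's cup products on the nose and not merely up to a nonzero scalar or an automorphism. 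I would settle this by a direct mapping-cone computation: resolve $E$ and each $E_z$ by the two-term free resolutions of \cref{result:resolve a finite free differential module}, identify $C^+(E)$ and $C(E)$ with the associated mapping cones, and match the induced cup-product formula with \cref{definition:duality pairing} and \cref{definition:duality pairing parabolic} term by term.
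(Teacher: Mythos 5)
Your overall strategy — reduce the theorem to a citation of Crew's theorem 9.5 after matching notation — is the right one, and in fact this is essentially all the paper does: the theorem is stated as a citation, and the paper does not re-prove it. But you have missed the single substantive observation the paper does make, and it is exactly the point where your deduction would fail. Crew's theorem 9.5 is stated under the hypothesis that $E$ be both strict \emph{and overconvergent}, whereas the statement you are proving asserts the conclusion for every $E \in \MC^s(\tube{U})$ — strict, but not necessarily overconvergent. Indeed, \cref{remark:strict robba modules} shows strictness and solvability are incomparable conditions, so $\MC^s(\tube{U})$ is genuinely larger than the class to which Crew's stated theorem applies. Your phrase ``$E$ and its Robba fibers are exactly the objects to which Crew's machinery applies'' papers over this mismatch. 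The paper handles it in the footnote attached to the theorem: one inspects Crew's proof and verifies that overconvergence is never actually used, so the result holds for all strict $E$. Without that remark (or some substitute for it) your argument only establishes the theorem for $E \in \MC^s(\tube{U}) \cap \MC^\dagger(\tube{U})$.

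Two smaller remarks. First, the ``on the nose'' verification you flag at the end — that the cone-level formulas in \cref{definition:duality pairing} and \cref{definition:duality pairing parabolic} reproduce Crew's pairings exactly — is not carried out in the paper; it is asserted in \cref{section:compactly supported and parabolic} as a matter of unwinding definitions. Your proposed mapping-cone computation would supply a more careful justification, but it is also more than the theorem needs: perfectness is invariant under rescaling the pairing or precomposing with automorphisms, so even agreement up to such a change would suffice. Second, your remark that finiteness of the remaining terms ``follows formally from exactness'' is fine, but note that the paper simply takes finiteness of all six terms directly from Crew rather than re-deriving some of them from the others.
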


\footnotetext{Technically, \cite[theorem 9.5]{crew_finiteness} is only stated for $E \in \MC^f(\tube{U})$ both strict and overconvergent, but overconvergence is not used anywhere in the proof.}

\subsection{Dimension of parabolic cohomology} \label{section:dimension parabolic}

\newcommand{\artin}{\textnormal{Ar}}
\newcommand{\irr}{\textnormal{Irr}}

\begin{definition}
	For $E \in \MC^\dagger(\tube{U})$ and $z \in Z$, we define the \emph{Artin conductor} of $E$ at $z$, denoted $\artin_z(E)$, by the formula
	\[ \artin_z(E) = \irr_z(E) + \rank(E) - \dim H^0_{\dR}(E_z), \]
	where $\irr_z(E)$ is the $p$-adic irregularity of the Robba fiber $E_z$ \cite[d\'efinition 8.3--8]{christol_mebkhout3}. Observe that we always have $\irr_z(E) \geq 0$ and $\dim H^0_{\dR}(E_z) \leq \rank(E)$, so $\artin_z(E) \geq 0$. 
\end{definition}

\begin{lemma} \label{result:dimension parabolic}
	If $E \in \MC^\dagger(\tube{U})$ admits a Frobenius structure, we have
	\[ \dim H^1_p(E) = \dim H^0_{\dR}(E) + \dim H^2_c(E) - 2 \rank(E) + \sum_{z \in Z} \artin_z(E). \]
\end{lemma}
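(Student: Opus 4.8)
The plan is to deduce the identity from Crew's six-term exact sequence \eqref{eqn:crew six term exact sequence} by a dimension count, feeding in one substantial external ingredient: the $p$-adic Euler--Poincar\'e (Christol--Mebkhout index) formula for the rigid cohomology of $E$ on $U$.

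First, all the dimensions make sense: since $E$ admits a Frobenius structure, \cref{remark:frobenius implies strict} gives $E \in \MC^s(\tube{U})$, so Crew's finiteness theorem \cref{result:crew finiteness} makes every term of \eqref{eqn:crew six term exact sequence} finite-dimensional, and $H^0_c(E) = 0$ by \cref{result:zeroth compactly supported cohomology vanishes} (the restriction $\O \to \O^\sharp = \prod_{z \in Z}\Robba_z$ is injective). Now I diagram-chase \eqref{eqn:crew six term exact sequence}. Exactness at $H^0_{\dR}(E)$ gives an injection $H^0_{\dR}(E) \hookrightarrow \prod_z H^0_{\dR}(E_z)$; hence, by exactness at $\prod_z H^0_{\dR}(E_z)$, the image of $\prod_z H^0_{\dR}(E_z) \to H^1_c(E)$ has dimension $\sum_z \dim H^0_{\dR}(E_z) - \dim H^0_{\dR}(E)$; and since this image is the kernel of $H^1_c(E) \to H^1_{\dR}(E)$, the identification $H^1_p(E) = \im\!\big(H^1_c(E) \to H^1_{\dR}(E)\big)$ recorded after \cref{definition:abstract compactly supported and parabolic cohomologies} yields
\[ \dim H^1_p(E) = \dim H^1_c(E) + \dim H^0_{\dR}(E) - \sum_{z \in Z} \dim H^0_{\dR}(E_z). \]
Because $H^0_c(E) = 0$, the compactly supported Euler characteristic is $\chi_{\rig,c}(U,E) = \dim H^2_c(E) - \dim H^1_c(E)$, so the only remaining task is to compute $\chi_{\rig,c}(U,E)$.

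This is the one genuinely nontrivial step. As $E$ is an overconvergent isocrystal on $U$ (its Robba fibers are solvable), the Christol--Mebkhout index formula \cite[th\'eor\`eme 8.4--1]{christol_mebkhout3} --- the $p$-adic avatar of Grothendieck--Ogg--Shafarevich, which can also be assembled from the local index theorems at the points of $Z$ together with the alternating-sum relation in \eqref{eqn:crew six term exact sequence} --- gives $\chi_{\rig,c}(U,E) = \chi_c(U)\rank(E) - \sum_{z \in Z}\irr_z(E)$ with $\chi_c(U) = \chi(\P^1) - \#Z = 2 - \#Z$. Hence $\dim H^1_c(E) = \dim H^2_c(E) - (2 - \#Z)\rank(E) + \sum_{z}\irr_z(E)$. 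Substituting this into the displayed formula and then rewriting $\irr_z(E) - \dim H^0_{\dR}(E_z)$ as $\artin_z(E) - \rank(E)$ (the definition of the Artin conductor) collapses the terms involving $\#Z$: the summand $\sum_z \rank(E) = \#Z\cdot\rank(E)$ cancels against $-(2-\#Z)\rank(E)$, leaving $-2\rank(E) = -\chi(\P^1)\rank(E)$, and one obtains $\dim H^1_p(E) = \dim H^0_{\dR}(E) + \dim H^2_c(E) - 2\rank(E) + \sum_{z \in Z}\artin_z(E)$. The points to watch are that one must use the \emph{compactly supported} Euler characteristic here (for an irregular isocrystal $\chi_{\rig}$ and $\chi_{\rig,c}$ differ by $\sum_z\irr_z(E)$) and keep $\chi(\P^1) = 2$ straight; the rest is bookkeeping.
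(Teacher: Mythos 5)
Your proposal is correct and follows essentially the same route as the paper: extract the four-term exact sequence ending in $H^1_p(E)$ from Crew's six-term sequence, take dimensions, apply the Christol--Mebkhout index formula for $\chi_{\rig,c}(U,E)$, and simplify via the definition of $\artin_z(E)$. The only difference is that you spell out the finite-dimensionality justification and the $H^0_c(E) = 0$ input a bit more explicitly, but the key steps and their order match the paper's proof exactly.
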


\begin{proof}
	Crew's six-term exact sequence \eqref{eqn:crew six term exact sequence} and the definition of parabolic cohomology give us an exact sequence as follows.
	\[ \begin{tikzcd} 0 \ar{r} & H^0_{\dR}(E) \ar{r} & \displaystyle \prod_{z \in Z} H^0_{\dR}(E_z) \ar{r} & H^1_c(E) \ar{r} & H^1_p(E) \ar{r} & 0 \end{tikzcd} \]
	Taking dimensions, we find that
	\[ \dim H^1_p(E) = \dim H^1_c(E) - \sum_{z \in Z} \dim H^0_{\dR}(E_z) + \dim H^0_{\dR}(E). \]
	The Christol-Mebkhout index formula \cite[th\'eor\`eme 8.4--1]{christol_mebkhout3} says that
	\[ -\dim H^1_c(E) + \dim H^2_c(E) = \chi_c(U, E) = \chi_c(U) \rank(E) - \sum_{z \in Z} \irr_z(E) \]
	where $\chi_c(U, E) = 2 - \# Z$. We now put these equations together. 
\end{proof}

\begin{remark}
	Note that the above calculation also applies more generally under the same ``non-Liouville hypotheses'' that are necessary for the Christol-Mebkhout index formula \cite[th\'eor\`eme 8.4--1]{christol_mebkhout3}.
\end{remark}

\subsection{Parabolic cohomology and restriction to open subsets} \label{section:parabolic restriction}

The following shows that parabolic cohomology is an invariant of the ``generic fiber'' of an isocrystal. In particular, if $E \in \Isoc^\dagger(U)$ is cohomologically rigid in the sense of \cref{cohomologically rigid isocrystal} below, then so is $E|_V$ for any dense open $V \subseteq U$. 

\begin{proposition} \label{result:parabolic restriction}
	For any dense open subset $V \subseteq U$ and $E \in \MC^\dagger(\tube{U})$, there is a natural isomorphism 
	\[ H^1_p(E) = H^1_p(E|_V). \]
\end{proposition}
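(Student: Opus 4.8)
The plan is to reduce the statement to a comparison of de Rham cohomology spaces and then apply the kernel description of parabolic cohomology recorded in the discussion after \cref{definition:abstract compactly supported and parabolic cohomologies}. Write $W := U \setminus V$, a finite set of closed points of $U$, and $Z' := Z \cup W$, an effective Cartier divisor in $X$ with $V = X \setminus Z'$; let $\O_V := \Gamma(\frX, \O_{\frX}(^\dagger Z')_\Q)$ be the ring of overconvergent functions on $\tube V$, so that $E|_V = \O_V \otimes_\O E \in \MC^f(\tube V)$. For $z \in Z$ one has $\Robba_z \otimes_{\O_V}(E|_V) = \Robba_z \otimes_\O E = E_z$, so $E|_V$ again lies in $\MC^\dagger(\tube V)$ and $H^1_p(E|_V)$ is defined. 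By loc.\ cit., $H^1_p(E) = \ker\bigl(H^1_{\dR}(\O,E) \to \prod_{z\in Z}H^1_{\dR}(\Robba_z, E_z)\bigr)$ and $H^1_p(E|_V) = \ker\bigl(H^1_{\dR}(\O_V, E|_V) \to \prod_{z\in Z'}H^1_{\dR}(\Robba_z,(E|_V)_z)\bigr)$, and the restriction map $r_V \colon H^1_{\dR}(\O,E)\to H^1_{\dR}(\O_V,E|_V)$ commutes with restriction to the Robba fibers at $Z$ (the map $\O\to\Robba_z$ factors through $\O_V$). It therefore suffices to prove that $r_V$ is injective with cokernel $\bigoplus_{w\in W}H^1_{\dR}(\Robba_w,E_w)$, the cokernel map being restriction to the Robba fibers at $W$; naturality in $E$ is then automatic, since $r_V$ is.

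The essential local input is that $E$ is trivial near each $w \in W$. Indeed, by \cref{result:translate isocrystals} the object $E$ underlies an overconvergent --- hence convergent --- isocrystal on $U$, and a convergent isocrystal restricts on the residue disk $\tube w$ of a closed point $w \in U$ to the trivial differential module: if $\O_w$ denotes the ring of analytic functions on the open disk $\tube w$ and $E_w^+ := \O_w \otimes_\O E$, then $E_w^+ \cong \O_w^{\oplus n}$ with $n = \rank E$ (this is the defining convergence property on residue disks; see e.g.\ \cite{lestum_book}, or \cite{kedlaya_pde} for the statement that solvability of the Robba fibers at the boundary forces the radius of convergence to be $1$ throughout $\tube w$). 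Since $\partial$ is surjective on $\O_w$, this gives $H^1_{\dR}(\O_w, E_w^+) = 0$; moreover $H^0_{\dR}(\O_w, E_w^+) \to H^0_{\dR}(\Robba_w, E_w)$ is an isomorphism (both sides consist of the constant sections and are $n$-dimensional), while $H^1_{\dR}(\Robba_w, E_w)$ is $n$-dimensional via residues.

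Now feed this into the localization (excision) triangle of rigid cohomology for $V \hookrightarrow U$ with closed complement $W$, combined with the comparison of rigid and de Rham cohomology of \cref{result:translate isocrystals} and the computation of the local cohomology at a closed point of a curve (all of this is contained in \cite{crew_finiteness}, of which Crew's six-term exact sequence \eqref{eqn:crew six term exact sequence} is a special case); it produces a distinguished triangle
\[ \dR(\O, E) \longrightarrow \dR(\O_V, E|_V) \oplus \bigoplus_{w \in W} \dR(\O_w, E_w^+) \longrightarrow \bigoplus_{w \in W} \dR(\Robba_w, E_w) \xrightarrow{\ +1\ } \]
--- the Mayer--Vietoris triangle for the tubes $\tube V$ and $\tube w$ ($w\in W$) with overlaps the boundary annuli. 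All four complexes are concentrated in degrees $0$ and $1$, so the associated long exact sequence has seven terms. By the previous paragraph the summand $\bigoplus_w H^1_{\dR}(\O_w, E_w^+)$ vanishes and $\bigoplus_w H^0_{\dR}(\O_w, E_w^+) \to \bigoplus_w H^0_{\dR}(\Robba_w, E_w)$ is an isomorphism; hence the connecting map $\bigoplus_w H^0_{\dR}(\Robba_w, E_w) \to H^1_{\dR}(\O, E)$ vanishes and the sequence degenerates to
\[ 0 \longrightarrow H^1_{\dR}(\O, E) \xrightarrow{\ r_V\ } H^1_{\dR}(\O_V, E|_V) \xrightarrow{\ q\ } \bigoplus_{w \in W} H^1_{\dR}(\Robba_w, E_w) \longrightarrow 0, \]
where $q$ is restriction to the Robba fibers at $W$. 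This is exactly what was needed: a class of $H^1_{\dR}(\O_V, E|_V)$ lies in $H^1_p(E|_V)$ iff it is killed by $q$ --- hence is $r_V$ of a unique class of $H^1_{\dR}(\O,E)$ --- and that class is killed by restriction to the Robba fibers at $Z$, i.e.\ lies in $H^1_p(E)$. Thus $r_V$ restricts to the desired natural isomorphism $H^1_p(E) \xrightarrow{\sim} H^1_p(E|_V)$.

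The one genuinely substantive point is the local triviality of $E$ on the residue disks $\tube w$ (together with identifying the correct overconvergent local rings appearing in the excision triangle); everything else is bookkeeping with the long exact sequence and the kernel descriptions of parabolic cohomology. If a self-contained argument is preferred, one can replace the appeal to the isocrystal dictionary by a direct proof that solvability of the Robba fibers of $E$ at $Z$ forces the generic radius of convergence of $E$ to equal $1$ at every interior closed point of $U$, from which the triviality on $\tube w$ follows.
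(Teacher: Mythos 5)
Your argument takes a genuinely different route than the paper. The paper's proof runs Tsuzuki's two excision sequences for rigid cohomology (compactly supported and local, \cite[propositions 2.5.1, 2.1.1(3)]{tsuzuki_gysin}), deduces that $H^1_{\rig,c}(V,E|_V)\twoheadrightarrow H^1_{\rig,c}(U,E)$ and $H^1_\rig(U,E)\hookrightarrow H^1_\rig(V,E|_V)$, and finishes with a short diagram chase using \emph{both} the image description ($H^1_p$ as image of $H^1_{\rig,c}$) and the kernel description ($H^1_p$ as kernel of restriction to Robba fibers). You instead work entirely with the kernel description and aim at a four-term exact sequence
\[ 0 \to H^1_{\dR}(\O,E) \to H^1_{\dR}(\O_V,E|_V) \to \textstyle\bigoplus_{w}H^1_{\dR}(\Robba_w,E_w) \to 0, \]
obtained from a Mayer--Vietoris triangle of de Rham complexes together with the local triviality of a convergent isocrystal on a residue disk. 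Given that sequence, your bookkeeping at the end is correct and gives the statement cleanly.

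The one place where your write-up falls short of a proof is the assertion of the Mayer--Vietoris triangle itself. You present
\[ \dR(\O, E) \longrightarrow \dR(\O_V, E|_V) \oplus \bigoplus_{w \in W} \dR(\O_w, E_w^+) \longrightarrow \bigoplus_{w \in W} \dR(\Robba_w, E_w) \xrightarrow{\ +1\ } \]
as something that ``the excision triangle of rigid cohomology \dots\ produces,'' but none of the references you point to states a triangle in this form, and the claim is not a formal consequence of them: it is a precise statement about overconvergent sections on $]U[$, $]V[$, and the residue disks, and the surjectivity in degree $0$ (i.e.\ that every Robba-ring section is a difference of an overconvergent section on $]V[$ and one on $]w[$) is a nontrivial Mittag--Leffler-type decomposition that needs to be justified with the overconvergence conditions in hand. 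The cleanest rigorous route to your sequence is in fact to run Tsuzuki's local-cohomology excision
\[ \cdots\to H^1_{W,\rig}(U,E)\to H^1_\rig(U,E)\to H^1_\rig(V,E|_V)\to H^2_{W,\rig}(U,E)\to\cdots \]
and then compute $H^i_{W,\rig}(U,E)$ via the local triviality at $w$ (this is where \cite[corollary 4.1.2]{tsuzuki_gysin} enters, asserting $H^i_{W,\rig}(U,E)=0$ for $i\ne 2$). In other words, once you make your Mayer--Vietoris claim rigorous, you have essentially reconstructed the paper's use of Tsuzuki's sequences, so the two proofs converge; what your version adds is the pleasant explicit four-term sequence and the identification of the ``local contribution'' with Robba-ring $H^1$, which is a nice thing to record even if it is more than the proposition strictly needs. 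Your appeal to the triviality of $E$ on residue disks, and the consequent vanishing $H^1_{\dR}(\O_w,E_w^+)=0$ and isomorphism on $H^0$, is correct.
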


\begin{proof}
	We will use the identification $\MC^\dagger(\tube{U}) = \Isoc^\dagger(U)$ of \cref{result:translate isocrystals} in order to apply cohomological machinery like excision and so forth. Note that we have a commutative diagram as follows.
	\begin{equation} \label{square}
	\begin{tikzcd} H^1_{\rig,c}(V, E|_V) \ar[twoheadrightarrow]{r} \ar{d} & H^1_p(E|_V) \ar[hookrightarrow]{r} & H^1_\rig(V, E|_V) \\ 
	H^1_{\rig,c}(U, E) \ar[twoheadrightarrow]{r} & H^1_p(E) \ar[hookrightarrow]{r} & H^1_\rig(U, E) \ar{u} \end{tikzcd}
	\end{equation}
	We construct an isomorphism $\tau : H^1_p(E|_V) \to H^1_p(E)$ by doing the only thing one could think to do in this setting: we define $\tau(\alpha)$ for $\alpha \in H^1_p(E|_V)$ to be the image in $H^1_p(E)$ of a lift $\alpha' \in H^1_{\rig,c}(V, E|_V)$ of $\alpha$. The content of this proof is to check that this actually defines a bijection, and then $K$-linearity follows immediately. 
	
	Let $S := U \setminus V$, and note that we have an excision exact sequence for compactly supported rigid cohomology \cite[proposition 2.5.1]{tsuzuki_gysin}, and that $H^i_{\rig,c}(S, E|_S) = 0$ for all $i > 0$ since $\dim(S) = 0$. 
	\[ \begin{tikzcd} \dotsb \ar{r} & H^1_{\rig,c}(S, E|_S) \ar{r} & H^1_{\rig,c}(U, E) \ar{r} & \cancel{H^1_{\rig,c}(S, E|_S)} \ar{r} & \dotsb \end{tikzcd} \]
	It follows that the vertical map $H^1_{\rig,c}(V, E|_V) \to H^1_{\rig,c}(U, E)$ on the left-hand side of the commutative square \eqref{square} is surjective. 
	
	We can analogously show that the vertical map $H^1_\rig(U, E) \to H^1_\rig(V/K, E|_V)$ on the right-hand side of the square \eqref{square} is injective. We again have an excision exact sequence \cite[proposition 2.1.1(3)]{tsuzuki_gysin}, and $H^i_{S,\rig}(U, E) = 0$ for all $i \neq 2$ \cite[corollary 4.1.2]{tsuzuki_gysin}.
	\[ \begin{tikzcd} \dotsb \ar{r} & \cancel{H^1_{Z,\rig}(U, E)} \ar{r} & H^1_\rig(U, E) \ar{r} & H^1_\rig(V, E|_V) \ar{r} & \dotsb \end{tikzcd} \]
	
	Now to see that $\tau$ is well-defined and injective, observe that $\alpha \in H^1_{\rig,c}(V, E|_V)$ vanishes in $H^1_p(E|_V)$ if and only its image in $H^1_p(E)$ vanishes: this is an elementary diagram chase that uses the fact that $H^1_\rig(U, E) \to H^1_\rig(V, E|_V)$ is injective. Surjectivity of $\tau$ follows immediately from surjectivity of $H^1_{\rig,c}(V, E|_V) \to H^1_{\rig,c}(U, E)$. 
\end{proof}

\subsection{Intermediate extensions of arithmetic D-modules}

\newcommand{\perf}{\textnormal{perf}}
\newcommand{\bbD}{\mathbb{D}}
\newcommand{\Soln}{\textnormal{Soln}}

In this subsection, we relate parabolic cohomology to the intermediate extension operation in the theory of arithmetic D-modules. 

Recall from \cref{main notation} that $\frX$ denotes the formal projective line over $K^\circ$. We then have Berthelot's sheaf $\scrD^\dagger_{\frX,\Q}$ of differential operators of infinite order and finite level \cite[2.4]{berthelot1}. Also, for any closed subset $T \subset \P^1_k$, we also have the sheaf $\scrD^\dagger_{\frX,\Q}(^\dagger T)$ of differential operators of infinite order and finite level with overconvergent singularities along $T$ \cite[4.2.5]{berthelot1}. We can and will freely regard overconvergent isocrystals on $\P^1 \setminus T$ as $\O_{\frX}(^\dagger T)_\Q$-coherent $\scrD^\dagger_{\frX}(^\dagger T)_\Q$-modules \cite[th\'eor\`eme 2.2.12]{caro}. 

There is a natural homomorphism $\scrD^\dagger_{\frX,\Q} \to \scrD^\dagger_{\frX}(^\dagger T)_\Q$ of sheaves of rings on $\frX$ and restriction of scalars along this homomorphism is exact and naturally induces a functor on the derived categories of perfect complexes
\[ \begin{tikzcd} D_\perf(\scrD^\dagger_\frX(^\dagger T)_\Q) \ar{r}{j_+} & D_\perf(\scrD^\dagger_{\frX,\Q}), \end{tikzcd} \]
called the \emph{ordinary direct image} along the inclusion $j : \P^1 \setminus T \hookrightarrow \P^1$. This functor is right adjoint to the \emph{ordinary inverse image} functor 
\[ \begin{tikzcd} D_\perf(\scrD^\dagger_{\frX,\Q}) \ar{r}{j^+} &  D_\perf(\scrD^\dagger_\frX(^\dagger T)_\Q) \end{tikzcd} \]
where $j^+E = \scrD^\dagger_\frX(^\dagger T)_\Q \otimes_{\scrD^\dagger_{\frX,\Q}}^{\mathbb{L}}  E$. 

We also have the Verdier duality functor \cite[d\'efinition 3.2]{virrion}
\[ \begin{tikzcd} D_\perf(\scrD^\dagger_\frX(^\dagger T)_\Q) \ar{r}{\bbD} & D_\perf(\scrD^\dagger_\frX(^\dagger T)_\Q). \end{tikzcd} \]
This functor is an involution: there is a natural isomorphism $\bbD^2 = 1$ \cite[chapitre II, th\'eor\`eme 3.6]{virrion}. Further, it commutes with the ordinary inverse image functor: there is a natural isomorphism $\bbD \circ j^+ = j^+ \circ \bbD$ \cite[chapitre II, proposition 4.4]{virrion}. 

We then define the \emph{extraordinary direct image} functor 
\[ j_! := \bbD j_+ \bbD. \]
There is a natural morphism of functors $j_! \to j_+$ \cite[paragraph 1.3.4]{abe_weights}.

Furthermore, we have
\[ \begin{aligned} \rightderived\Gamma_{\rig}(\P^1,-) := \rightderived\Gamma(\frX,  \Omega^\bullet_{\frX} \otimes_{\O_{\frX}} - ) &= \rightderived\Gamma(\frX,  \rightderived\mathscr{H}\mathrm{om}_{\scrD^\dagger_{\frX,\Q}}(\O_{\frX,\Q}, -)) = \rightderived\!\Hom_{\scrD^\dagger_{\frX,\Q}}(\O_{\frX,\Q}, -), \end{aligned} \]
as functors on $D_\perf(\scrD^\dagger_{\frX,\Q})$. Here, we have used the fact that the arithmetic Spencer complex resolves $\O_{\frX,\Q}$ \cite[proposition 4.3.3]{berthelot2}.

Suppose $E \in \Isoc^\dagger(\P^1 \setminus T)$ admits a Frobenius structure. Then $j_+E$ is a holonomic $\scrD^\dagger_{\frX,\Q}$-module \cite[proposition 3.1]{huyghe-trihan}.\footnotemark\ Also, applying the duality functor $\D$ to $E$ yields the usual tannakian dual $E^\vee$, which also admits a Frobenius structure, and duality preserves holonomicity \cite[proposition 2.15]{caro_frobless}, so it follows that $j_!E$ is also a holonomic $\scrD^\dagger_{\frX,\Q}$-module. As in \cite[definition 1.4.1]{abe_weights}, we define the \emph{intermediate extension} $j_{!+}E$ of $E$ by
\[ j_{!+}E := \im(j_!E \to j_+ E). \]
This is also a holonomic $\scrD^\dagger_{\frX,\Q}$-module, since the category of holonomic $\scrD^\dagger_{\frX,\Q}$-modules is abelian \cite[proposition 2.14]{caro_frobless}.

\footnotetext{In \cite{huyghe-trihan}, the $\scrD^\dagger_{\frX}(^\dagger T)$-module associated to an overconvergent isocrystal $E$ with Frobenius structure is denoted $\tilde{\scrD}^\dagger(E)$, and $j_+E$ is denoted $\scrD^\dagger(E)$.}

\begin{theorem} \label{result:parabolic and middle}
	If $E \in \Isoc^\dagger(U)$ admits a Frobenius structure, then
	\[ H^1_{\rig,p}(U, E) = H^1_{\rig}(\P^1, j_{!+}E ), \]
	where $j$ denotes the inclusion $U \hookrightarrow \P^1$.
\end{theorem}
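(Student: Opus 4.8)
The plan is to play Crew's six-term exact sequence \eqref{eqn:crew six term exact sequence} against the two short exact sequences of holonomic $\scrD^\dagger_{\frX,\Q}$-modules that surround $j_{!+}E$. Since $E$ admits a Frobenius structure, $j_+E$, $j_!E$ and $j_{!+}E$ are all holonomic, and by definition $j_{!+}E=\im(j_!E\to j_+E)$ in the abelian category of holonomic $\scrD^\dagger_{\frX,\Q}$-modules. So we have short exact sequences
\[ 0\to K_0\to j_!E\to j_{!+}E\to 0 \qquad\text{and}\qquad 0\to j_{!+}E\to j_+E\to C_0\to 0. \]
Applying the exact functor $j^+$ and using that $j^+j_!E=j^+j_+E=E$ with $j^+$ of the canonical arrow $j_!E\to j_+E$ the identity of $E$, we see that $K_0$ and $C_0$ are supported on $Z$.

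Next I would observe that $\rightderived\Gamma_\rig(\P^1,-)=\rightderived\!\Hom_{\scrD^\dagger_{\frX,\Q}}(\O_{\frX,\Q},-)$ kills a holonomic module supported on the zero-dimensional $Z$ outside cohomological degree $1$: by Kashiwara's theorem for arithmetic $\scrD$-modules such a module is $i_{Z,+}V$ for $V$ an isocrystal on $Z$, and one computes $\rightderived\!\Hom_{\scrD^\dagger_{\frX,\Q}}(\O_{\frX,\Q},i_{Z,+}V)\cong V[-1]$ — the naive de Rham complex of a ``delta module'' along the codimension-one $Z$ has cohomology concentrated in its top degree. In particular $H^2_\rig(\P^1,K_0)=0$ and $H^0_\rig(\P^1,C_0)=0$. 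The long exact sequences attached to the two short exact sequences above then produce a surjection $H^1_\rig(\P^1,j_!E)\twoheadrightarrow H^1_\rig(\P^1,j_{!+}E)$ and an injection $H^1_\rig(\P^1,j_{!+}E)\hookrightarrow H^1_\rig(\P^1,j_+E)$ whose composite is induced by the canonical morphism $j_!E\to j_+E$, whence
\[ H^1_\rig(\P^1,j_{!+}E)=\im\bigl(H^1_\rig(\P^1,j_!E)\to H^1_\rig(\P^1,j_+E)\bigr). \]

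It then remains to identify the outer terms and the map. The right-hand term is handled exactly as in the paragraph preceding the statement: by the adjunction $j^+\dashv j_+$ and the formula for $\rightderived\Gamma_\rig$, together with \cref{result:translate isocrystals}, $H^1_\rig(\P^1,j_+E)=H^1_\rig(U,E)=H^1_{\dR}(\iota(E))$. For the left-hand term, using $j_!=\bbD j_+\bbD$, Verdier duality on the proper curve $\P^1$, and Poincar\'e duality for rigid cohomology of $U$ (equivalently, the compatibility of the six-functor formalism of Caro and Abe with Berthelot's rigid cohomology with compact supports), $H^1_\rig(\P^1,j_!E)=H^1_{\rig,c}(U,E)=H^1_c(\iota(E))$. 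The genuine obstacle — the step I expect to require real care — is to verify that under these identifications the map induced by $j_!E\to j_+E$ is the natural ``forget supports'' map $H^1_{\rig,c}(U,E)\to H^1_\rig(U,E)$ of \cref{eqn:crew six term exact sequence}, i.e., via $\iota$, the tautological map $H^1_c(\iota(E))\to H^1_{\dR}(\iota(E))$ coming from the cone description in \cref{definition:abstract compactly supported and parabolic cohomologies}; this is a compatibility between the arithmetic-$\scrD$-module incarnation of (compactly supported) rigid cohomology and Crew's Robba-ring model, which I would extract from the comparison results of Caro and Abe and from Crew's own identifications.

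Granting this, the conclusion is immediate: combining the displayed identity with the description of $H^1_p$ recorded just after \cref{definition:abstract compactly supported and parabolic cohomologies},
\[ H^1_\rig(\P^1,j_{!+}E)=\im\bigl(H^1_c(\iota(E))\to H^1_{\dR}(\iota(E))\bigr)=H^1_p(\iota(E))=H^1_{\rig,p}(U,E). \]
As a consistency check one can also note that both sides carry perfect alternating pairings — $H^1_\rig(\P^1,j_{!+}E)$ because $\bbD j_{!+}E=j_{!+}(\bbD E)=j_{!+}(E^\vee)$ (easily checked from the definitions since $\bbD$ is an exact anti-equivalence with $\bbD^2=1$) and $\P^1$ is proper, and $H^1_{\rig,p}(U,E)$ by \cref{result:crew finiteness} together with \cref{result:duality pairing is alternating} (applicable since a Frobenius structure forces strictness, \cref{remark:frobenius implies strict}) — which is consistent with, though does not by itself establish, the asserted equality.
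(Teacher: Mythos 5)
Your argument takes a genuinely different route from the paper's. The paper proves the theorem using only the right-hand short exact sequence $0\to j_{!+}E\to j_+E\to C_0\to 0$, but crucially it does not treat $C_0$ as a black box: it invokes Li's explicit identification (\cite[proposition 5.1]{li}, with a correction recorded in the footnote) $C_0=\prod_{z\in Z}i_{z!}H^1_{\dR}(E_z)$. After applying $\rightderived\Gamma_\rig(\P^1,-)$ and the skyscraper computation (\cref{de rham of skyscraper}), this yields $0\to H^1_\rig(\P^1,j_{!+}E)\to H^1_\rig(U,E)\to\prod_{z}H^1_{\dR}(E_z)$, and the result then follows by matching this against the last two terms of Crew's six-term sequence. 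You instead use both short exact sequences around $j_{!+}E$, observe only that $K_0$ and $C_0$ are supported on $Z$, and combine a Kashiwara-type degree-$1$ concentration with the identifications $H^1_\rig(\P^1,j_+E)=H^1_\rig(U,E)$ and $H^1_\rig(\P^1,j_!E)=H^1_{\rig,c}(U,E)$ to realize $H^1_\rig(\P^1,j_{!+}E)$ as the image of the forget-supports map. Your approach is more symmetric and avoids Li's computation, but it buys that economy at the cost of a second nontrivial comparison input: the identification of $j_!$-pushforward cohomology with compactly supported rigid cohomology (through $\bbD j_+\bbD$ and Poincar\'e/Verdier duality) and, as you correctly flag, the compatibility of the induced map $H^1_\rig(\P^1,j_!E)\to H^1_\rig(\P^1,j_+E)$ with the forget-supports map $H^1_c\to H^1_{\dR}$ in Crew's model. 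The paper's route needs the analogous but arguably lighter compatibility, namely that the edge map $H^1_\rig(U,E)\to\prod_z H^1_{\dR}(E_z)$ produced by Li's sequence is Crew's Robba-fiber restriction. Both routes are sound in outline; yours is more conceptual and less dependent on the specific structure of the quotient, while the paper's is more concrete and leans on a single published ingredient for the hard part. Your remark that the map compatibility is ``the step requiring real care'' is accurate and is the one point you would need to either prove or locate a citation for before the argument is complete; the closing consistency check via self-duality of $j_{!+}E$ and the symplectic pairing is a useful sanity check but, as you note, does not substitute for it.
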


\begin{proof}
	For every $z \in Z = \P^1 \setminus U$, we let
	\[ \Soln_z(E) := H^0_{\dR}((E_z)^\vee) = H^0_{\dR}(\Hom(E_z, \Robba)). \]
	As we noted in \cref{remark:frobenius implies strict}, each $E_z$ is strict since $E$ admits a Frobenius structure. Thus $(E_z)^\vee$ is strict as well by \cref{result:stability of strictness}. Moreover, we have a natural identification \[ \Soln_z(E)^\vee = H^1_{\dR}(E_z)  \]
	using the local duality pairing of \cite[theorem 6.3]{crew_finiteness}. Combining this with \cite[proposition 5.1]{li}\footnotemark\ tells us that we have a natural exact sequence
	\[ \begin{tikzcd} 0 \ar{r} & j_{!+}E \ar{r} & j_+E \ar{r} & \displaystyle \prod_{z \in Z} i_{z !} H^1_{\dR}(E_z) \ar{r} & 0 \end{tikzcd} \]
	of $\scrD^\dagger_{\frX,\Q}$-modules. We now apply $\rightderived\Gamma_{\rig}(\P^1, -)$ to get a distinguished triangle of vector spaces over $K$, and then we consider the resulting long exact sequence. We know that $H^1_{\rig}(\P^1, j_+E) = H^1_\rig(U, E)$ \cite[corollaire 4.1.7]{berthelot_survey}. Together with \cref{de rham of skyscraper} below, we obtain an exact sequence
	\[ \begin{tikzcd} 0 \ar{r} & H^1_{\rig}(\P^1, j_{!+}E) \ar{r} & H^1_\rig(U, E) \ar{r} & \displaystyle \prod_{z \in Z} H^1_{\dR}(E_z). \end{tikzcd} \]
	Comparing against Crew's six-term exact sequence \eqref{eqn:crew six term exact sequence} and the definition of parabolic cohomology, we obtain the result.
\end{proof}

\footnotetext{
	There is a minor error in \cite{li}. Lemma 4.2 in \emph{loc. cit.} should state that \[ H^{1-s}(i^!\mathscr{E}) = \Ext^s_{\scrD^\dagger}(\mathscr{E},\O^\an)^\vee \]
	for $s = 0, 1$ (in \emph{loc. cit.}, the dual seems to be missing), and then the same correction applies to lemma 4.3. The result of this is that proposition 5.1 should assert that 
	\[ j_!\mathscr{E}/j_{!+}\mathscr{E} = \delta_\alpha \otimes_{K_\alpha} \Soln_\alpha^\vee \]
	(again, the dual in \emph{loc. cit.} is missing). Indeed, the fourth display in the proof should say
	\[ \Hom_{\scrD_{\overline{\frX}}(\infty)}(j_{!+}\mathscr{E},\O^\an) = \Ext^1_{\scrD_{\overline{\frX}}(\infty)}(j_+\mathscr{E}/j_{!+}\mathscr{E},\O^\an) = H^0(i_\alpha^!i_{\alpha!}V)^\vee = V^\vee \]
	using the aforementioned correction of lemma 4.3, so then the final display of the proof should start with $V^\vee$ (instead of $V$). These corrections do not affect the main result of \emph{loc. cit.}, since for that only dimensions matter. 
}

\begin{lemma} \label{de rham of skyscraper}
	For any closed point $z \in \P^1$ and any vector space $V$ over $K$, we have
	\[ H^i_{\rig}(\P^1, i_{z!}V) = \begin{cases} V & \text{if } i = 1, \text{ and} \\ 0 & \text{otherwise.} \end{cases} \]
\end{lemma}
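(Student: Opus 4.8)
The plan is to reduce to the unit coefficient and then read the answer off the excision triangle for the closed point $z$. Since $i_z:z\hookrightarrow\P^1$ is a closed immersion, $i_{z!}$ agrees with the ordinary direct image $i_{z+}$ and is exact; and if $K_z$ denotes the residue field of $z$, regarded as the unit object on $z$, then for any $K_z$-vector space $V$ one has $i_{z!}V=i_{z!}K_z\otimes_{K_z}V$. Because $\rightderived\Gamma_{\rig}(\P^1,-)=\rightderived\!\Hom_{\scrD^\dagger_{\frX,\Q}}(\O_{\frX,\Q},-)$ commutes with $-\otimes_{K_z}V$ (for $V$ finite dimensional, or more generally since it commutes with direct sums), it suffices to treat $V=K_z$; a possibly non-$k$-rational $z$ is then harmless, as rigid cohomology is compatible with the finite unramified base change $K\to K_z$.

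To handle $V=K_z$, I would start from the short exact sequence of $\scrD^\dagger_{\frX,\Q}$-modules
\[ 0 \longrightarrow \O_{\frX,\Q} \longrightarrow \O_{\frX}(^\dagger z)_\Q \longrightarrow i_{z!}K_z \longrightarrow 0, \]
whose middle term is the ordinary direct image $j_+\O_{\frX,\Q}$ along the open immersion $j:\P^1\setminus\{z\}\hookrightarrow\P^1$, and whose cokernel, being supported at $z$, is of the form $i_{z+}K_z=i_{z!}K_z$ by Kashiwara's theorem for $\scrD^\dagger$-modules (a local comparison with $\O(\ast 0)/\O$ pins down the coefficient as $K_z$, up to a Tate twist that does not affect underlying $K$-vector spaces). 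Applying $\rightderived\Gamma_{\rig}(\P^1,-)$, and using that $\rightderived\Gamma_{\rig}(\P^1,\O_{\frX,\Q})$ is the rigid cohomology of $\P^1$ (concentrated in degrees $0$ and $2$, where it is $K$) while $\rightderived\Gamma_{\rig}(\P^1,\O_{\frX}(^\dagger z)_\Q)=\rightderived\Gamma_{\rig}(\P^1\setminus\{z\},\O)$ by \cite[corollaire 4.1.7]{berthelot_survey} (already invoked in the proof of \cref{result:parabolic and middle}), one identifies $\rightderived\Gamma_{\rig}(\P^1,i_{z!}K_z)$ with the cone of the restriction map $\rightderived\Gamma_{\rig}(\P^1,\O_{\frX,\Q})\to\rightderived\Gamma_{\rig}(\P^1\setminus\{z\},\O)$.

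Finally, that cone is a shift by $1$ of the local cohomology $\rightderived\Gamma_{\{z\},\rig}(\P^1,\O_{\frX,\Q})$, by the excision triangle \cite[proposition 2.1.1(3)]{tsuzuki_gysin}; and by purity \cite[corollary 4.1.2]{tsuzuki_gysin} (Gysin for the smooth codimension-$1$ closed subset $z$) this local cohomology is concentrated in degree $2$, where Gysin identifies it with $H^0_{\rig}(z)(-1)\cong K_z$. Hence $\rightderived\Gamma_{\rig}(\P^1,i_{z!}K_z)$ is concentrated in degree $1$ with value $K_z$, and tensoring back with $V$ finishes the proof. The main obstacle here is bookkeeping — normalizing the Tate twist in $i_{z!}K_z=\O_{\frX}(^\dagger z)_\Q/\O_{\frX,\Q}$ and the Gysin isomorphism — rather than anything substantive. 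Alternatively one can bypass excision entirely by restricting to an affine formal open $\mathfrak U\ni z$ of $\frX$ on which $i_{z!}V$ is $\Gamma$-acyclic: there $\rightderived\Gamma_{\rig}(\P^1,i_{z!}V)$ becomes the two-term de Rham complex $[\,i_{z!}V\xrightarrow{\partial}i_{z!}V\,]$ in degrees $0$ and $1$, with $\partial$ acting on the delta module injectively and with cokernel $V$, so its cohomology is $0$ in degree $0$, $V$ in degree $1$, and $0$ in all other degrees.
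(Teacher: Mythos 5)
Your proposal is correct, but your primary route is genuinely different from the paper's. The paper gives a direct computation: it writes down $\delta_z$ explicitly as the skyscraper whose stalk is the space of overconvergent series $\sum a_i\partial^{[i]}$, notes that $\rightderived\Gamma_\rig(\P^1,\delta_z\otimes V)$ localizes to the two-term complex $\bigl[\Gamma(Y,\delta_z)\xrightarrow{\nabla}\Gamma(Y,\Omega^1\otimes\delta_z)\bigr]$ on any affine open $Y\ni z$ because $\delta_z$ is a skyscraper, and then reads off $\ker\nabla=0$ and $\coker\nabla=K$ from the fact that $\partial$ shifts the generators $\partial^{[i]}$ up by one. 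Your "alternative" paragraph at the end is precisely this argument, so you did land on the paper's proof. Your main route instead starts from the short exact sequence $0\to\O_{\frX,\Q}\to\O_\frX(^\dagger z)_\Q\to i_{z!}K_z\to 0$, applies $\rightderived\Gamma_\rig$, and recognizes the resulting cone as local cohomology via the excision triangle and then as $K_z[-2](-1)$ via purity/Gysin. This buys you a more conceptual derivation that avoids choosing coordinates and never has to describe $\delta_z$ explicitly, at the cost of importing the excision and Gysin machinery from \cite{tsuzuki_gysin} and of having to justify the Kashiwara-type identification of the quotient $\O_\frX(^\dagger z)_\Q/\O_{\frX,\Q}$ with $i_{z!}K_z$ (including the Tate twist normalization), which the paper sidesteps by simply asserting the explicit model for $i_{z!}V$. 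One small bookkeeping point you should make explicit: after reducing to $V=K_z$ you obtain $K_z$ in degree $1$, whereas the statement asks for $V$ over $K$; this matches the paper's claim only after you tensor back and observe that the relevant comparison is of underlying $K$-vector spaces, and both your argument and the paper's are implicitly cleanest when $z$ is $k$-rational.
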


\begin{proof}
	Let $\delta_z$ be the $\scrD^\dagger_{\frX,\Q}$-module which is a skyscraper sheaf at $z$ with
	\begin{equation} \label{delta}
	\Gamma(Y, \delta_z) = \left\{ \left. \sum_{i = 0}^\infty a_i \partial ^{[i]} \, \right| \begin{matrix} a_i \in K \text{ for all } i \in \N \text{ and there exist } c > 0 \text{ and } \\ 0 < \eta < 1 \text{ such that } |a_i| < c\eta^i \text{ for all } i \in \N  \end{matrix} \right\}
	\end{equation}
	for any affine open neighborhood $Y$ of $z$. We then have $i_{z!}V = \delta_z \otimes V$ for any vector space $V$ over $K$. Thus it suffices to prove the assertion of the lemma when $V = K$. 
	
	In other words, we would like to compute $\rightderived\Gamma(\frX,\Omega^\bullet_{\frX} \otimes_{\O_{\frX}} \delta_z)$. Since $\delta_z$ is a skyscraper sheaf at $z$, it is sufficient to compute the cohomology of the complex
	\[ \begin{tikzcd} \Gamma(Y, \delta_s) \ar{r}{\nabla} & \Gamma \left(Y, \Omega^1_{\frX} \otimes_{\O_{\frX}} \delta_z \right) \end{tikzcd} \]
	where $Y$ is an affine open neighborhood of $z$. Using the description of $\delta_z$ given above in \cref{delta}, we see that
	and $\nabla$ is given by $P \mapsto dt \otimes \partial P$ for $P \in \Gamma(Y, \delta_s)$. It is clear from this description that $\ker(\nabla) = 0$ and $\coker(\nabla) = K$, spanned by the image of $dt \otimes 1$.
\end{proof}

	\section{Deformations of isocrystals} \label{chapter:deformations of isocrystals}

\subsection{Deformations of isocrystals}

We continue to use \cref{main notation}.

\begin{theorem} \label{result:main part 1}
	Suppose $E \in \MC^f(\tube{U})$. Then we have the following.
	\[ \begin{aligned} 
	& \Inf(\Def_E) = H^0_{\dR}(\End(E)) & & T(\Def_E) = H^1_{\dR}(\End(E)) \\ 
	& \Inf(\Def_E^{\sharp,+}) = 0 & & T(\Def_E^{\sharp,+}) = H^1_{c}(\End(E)) \\
	& \Inf(\Def_E^\sharp) = H^0_{\dR}(\End(E)) & & T(\Def_E^{\sharp}) = H^1_{p}(\End(E))
	\end{aligned} \]
	$\Def_E$ is smooth, and $H^2_c(\End(E))$ is compatibly an obstruction space for both $\Def_E^{\sharp,+}$ and $\Def_E^\sharp$. Moreover, if $\End(E)$ is strict, then all three of the deformation categories $\Def_E$, $\Def_E^{\sharp,+}$ and $\Def_E^\sharp$ have hulls, and the duality pairing on $H^1_p(\End(E))$ is symplectic, so $\dim T(\Def_E^\sharp)$ is even. 
\end{theorem}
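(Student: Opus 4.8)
The plan is to read off the theorem from the structural results of \cref{chapter:deformations of differential modules} together with Crew's finiteness theorem, since every clause is a specialization of an earlier statement.

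I would first dispose of $\Def_E$. By \cref{result:governing complex connection} the differential graded Lie algebra $\dR(\O, \End(E))$ governs $\Def_E$, so \cref{result:fundamental theorem of dglas} identifies $\Inf(\Def_E) = H^0_{\dR}(\End(E))$ and $T(\Def_E) = H^1_{\dR}(\End(E))$, with $H^2(\dR(\O, \End(E)))$ an obstruction space. Since the de Rham complex is concentrated in degrees $0$ and $1$, this obstruction space is $0$, and a deformation category with vanishing obstruction space is smooth: condition (O2) then forces a lift along every small extension, hence along every surjection (the argument already used in the proof of \cref{rank 1 smoothness}). For the two ``$\sharp$'' categories, the identifications
\[ \Inf(\Def_E^{\sharp,+}) = H^0_c(\End(E)), \qquad T(\Def_E^{\sharp,+}) = H^1_c(\End(E)), \]
\[ \Inf(\Def_E^\sharp) = H^0_{\dR}(\End(E)), \qquad T(\Def_E^\sharp) = H^1_p(\End(E)), \]
together with the claim that $H^2_c(\End(E))$ is compatibly an obstruction space for both, are exactly \cref{result:infinitesimal deformation theory of sharp categories}. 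Finally $H^0_c(\End(E)) = 0$: the restriction homomorphism $\O \to \O^\sharp = \prod_{z \in Z} \Robba_z$ is injective, so \cref{result:zeroth compactly supported cohomology vanishes} applies with $\End(E)$ in place of $E$.

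Next I would treat the strictness clause. Since $E$ is finite free over $\O$, so is $\End(E) = \End_\O(E)$, and the hypothesis that $\End(E)$ is strict says exactly that $\End(E) \in \MC^s(\tube{U})$. Crew's finiteness theorem \cref{result:crew finiteness}, applied to $\End(E)$, then makes all the spaces appearing above finite dimensional; in particular $T(\Def_E)$, $T(\Def_E^{\sharp,+})$ and $T(\Def_E^\sharp)$ are finite dimensional, so part (a) of the fundamental theorem of deformation theory \cref{deformation theory} gives each of $\Def_E$, $\Def_E^{\sharp,+}$, $\Def_E^\sharp$ a hull.

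For the symplectic structure on $T(\Def_E^\sharp) = H^1_p(\End(E))$, I would combine two facts already in place. The parabolic duality pairing of \cref{definition:duality pairing parabolic}, composed with the trace isomorphism $H^2_c(\O) \simeq K$ and transported along the identification $\End(E)^\vee = \End(E)$ furnished by the (perfect) trace pairing of \cref{example:trace pairing}, is a $K$-bilinear form on $H^1_p(\End(E))$; it is perfect by Crew's finiteness theorem \cref{result:crew finiteness} (again using $\End(E) \in \MC^s(\tube{U})$) and alternating by \cref{result:duality pairing is alternating}. A perfect alternating form on a finite dimensional vector space is symplectic, and such a space has even dimension, so $\dim T(\Def_E^\sharp)$ is even. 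There is no real obstacle here: the argument is a matter of invoking the correct earlier result at each step, and the only points needing a moment's thought are checking that $\End(E)$ genuinely lies in $\MC^s(\tube{U})$ (freeness plus the definition of strictness) and recording the injectivity of $\O \hookrightarrow \O^\sharp$.
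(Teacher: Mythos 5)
Your proposal is correct and follows essentially the same route as the paper's proof: read off the $\Def_E$ claims from \cref{result:governing complex connection} (with the vanishing $H^2$ giving smoothness), quote \cref{result:zeroth compactly supported cohomology vanishes} and \cref{result:infinitesimal deformation theory of sharp categories} for the $\sharp$ categories, and then invoke Crew's finiteness theorem together with \cref{deformation theory} and \cref{result:duality pairing is alternating} for the strictness clause and the symplectic form. The only difference is that you spell out a few steps (zero obstruction space implies smoothness; $\End(E)$ finite free) that the paper leaves implicit, which is harmless.
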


\begin{proof}
	The observations about $\Def_E$ follow immediately from \cref{result:governing complex connection}. The observations about $\Def_E^{\sharp,+}$ and $\Def_E^\sharp$ are consequences of \cref{result:zeroth compactly supported cohomology vanishes,result:infinitesimal deformation theory of sharp categories,}. If $\End(E)$ is strict then all of the tangent spaces above are finite dimensional by Crew's finiteness theorem \ref{result:crew finiteness}, so all of the above functors have hulls by the fundamental theorem of deformation theory \ref{deformation theory}. Finally, we saw in \cref{result:duality pairing is alternating} that the duality pairing on $H^1_p(\End(E))$ is alternating, and Crew's finiteness theorem \ref{result:crew finiteness} guarantees that it is perfect; in other words, it is symplectic. 
\end{proof}

\begin{theorem} \label{result:main part 2}
	Suppose $E \in \MC^f(\tube{U})$ is absolutely irreducible and $\End(E)$ is strict. Then $\Def_E^\sharp$ and $\Def_E^{\sharp,+}$ are both smooth, and $\overline{\Def}_E$ and $\overline{\Def}_E^{\sharp}$ are both prorepresentable. 
\end{theorem}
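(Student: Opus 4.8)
The plan is to reduce all four assertions to the machinery already in place, the one step requiring genuine input being a trace argument for smoothness. First I would record the structural facts about $E$: since $E$ is a finite free differential $\O$-module it is finitely presented over $D$ (the corollary following \cref{result:resolve a finite free differential module}), and since $\End(E)$ is strict, Crew's finiteness theorem \ref{result:crew finiteness} makes $\End_D(E) = H^0_{\dR}(\End(E))$ finite dimensional over $K$. Combined with absolute irreducibility, \cref{endomorphisms of absolutely irreducibles} then gives $\End_D(E) = K$, i.e.\ $H^0_{\dR}(\End(E)) = K$. Because $\Ext^1_D(E,E) = T(\Def_E) = H^1_{\dR}(\End(E))$ is also finite dimensional by \cref{result:crew finiteness}, prorepresentability of $\overline{\Def}_E$ is then immediate from \cref{automorphisms lift for endomorphically simples}.

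Next, smoothness of $\Def_E^\sharp$. By \cref{result:main part 1}, $H^2_c(\End(E))$ is an obstruction space for $\Def_E^\sharp$, so it suffices to show that every obstruction class vanishes. The point is that this obstruction space is one-dimensional with $\langle 1, -\rangle$ injective on it: by \cref{result:crew finiteness} the duality pairing \eqref{eqn:duality pairing} for $\End(E)$ is perfect, and under the trace identification $\End(E)^\vee = \End(E)$ of \cref{example:trace pairing} it reads $H^0_{\dR}(\End(E)) \times H^2_c(\End(E)) \to K$ with $\langle \alpha, \beta \rangle = \tr(\alpha^\sharp \circ \beta)$; since $H^0_{\dR}(\End(E)) = K\cdot\mathrm{id}$, perfectness forces $\beta \mapsto \langle 1, \beta\rangle = \tr(\beta)$ to be injective. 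On the other hand the hypotheses of \cref{obstructions annihilated} are in force here: $H^0_{\dR}(\O) = \ker(\partial) = K$ by \cref{main notation}, and $H^1_{\dR}(\O)$ is finite dimensional because the unit object $\O$ is strict (its Robba fibers are the $\Robba_z$, each with $\dim H^1_{\dR}(\Robba_z) = 1$) so that \cref{result:crew finiteness} applies to $\O$. Hence $\langle 1, o(\pi, (F,\theta))\rangle = 0$ for every small extension $\pi$ and every $(F,\theta) \in \Def_E^\sharp$, and injectivity gives $o(\pi,(F,\theta)) = 0$. Thus all obstruction classes vanish and $\Def_E^\sharp$ is smooth.

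Smoothness of $\Def_E^{\sharp,+}$ then follows from \cref{smoothness of trivialized and trivializable}. For prorepresentability of $\overline{\Def}_E^\sharp$ I would invoke \cref{deformation theory}(b): its tangent space $T(\Def_E^\sharp) = H^1_p(\End(E))$ is finite dimensional by \cref{result:crew finiteness}, and the automorphism-lifting hypothesis holds because $\Def_E^\sharp$ is by construction a full subcategory of $\Def_E$, so the automorphism group of any object of $\Def_E^\sharp$ coincides with its automorphism group in $\Def_E$ — namely $1 + \m_R$ by \cref{ends-and-auts}, using $\End_D(E) = K$ — and surjectivity of $\Aut(F',\theta') \to \Aut(F,\theta)$ along surjective maps in $\Art_K$ is exactly \cref{automorphisms lift for endomorphically simples}.

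I expect the only step that is more than bookkeeping to be the smoothness of $\Def_E^\sharp$, and even there the real content is already packaged in \cref{obstructions annihilated} and the perfectness of Crew's duality pairing; the thing to be careful about is verifying that each hypothesis needed to apply \cref{result:crew finiteness} and \cref{obstructions annihilated} actually holds (strictness of $\End(E)$ is used essentially, and one must separately note strictness of the unit object $\O$), together with pinning down the identification $H^0_{\dR}(\End(E)) = K$, on which both prorepresentability statements and the one-dimensionality of the obstruction space rest. There is no geometric or analytic difficulty beyond what the cited results supply.
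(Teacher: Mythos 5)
Your proposal is correct and follows essentially the same route as the paper: identify $\End_D(E)=K$ via \cref{endomorphisms of absolutely irreducibles}, kill obstructions in the one-dimensional space $H^2_c(\End(E))$ using \cref{obstructions annihilated} and perfectness of the duality pairing, transfer smoothness to $\Def_E^{\sharp,+}$ via \cref{smoothness of trivialized and trivializable}, and deduce prorepresentability from \cref{automorphisms lift for endomorphically simples} and Crew's finiteness theorem. Where you differ it is only in the direction of filling in details the paper leaves implicit (the injectivity of $\langle 1,-\rangle$ on $H^2_c(\End(E))$, and the verification that $\O$ itself is strict so that the hypotheses of \cref{obstructions annihilated} apply), and in using Crew's theorem rather than the elementary bound $\dim H^0_{\dR}(\End(E)) \le \rank\End(E)$ to see $\End_D(E)$ is finite dimensional — both harmless.
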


\begin{proof}
	Observe that
	\[ \dim_K \End_D(E) = \dim_K H^0_{\dR}(\End(E)) \leq \dim_\O \End(E), \]
	so $\End_D(E)$ is finite dimensional. Since $E$ is absolutely irreducible, \[ H^0_{\dR}(\End(E)) = \End_D(E) = K \]
	by \cref{endomorphisms of absolutely irreducibles}. By Crew's finiteness theorem \ref{result:crew finiteness}, we know that $H^2_c(\End(E))$ is dual to $H^0_{\dR}(\End(E))$, so 
	\[ \dim H^2_c(\End(E)) = 1. \]
	Applying \cref{obstructions annihilated}, we see that all of the obstruction classes vanish, so $\Def_E^{\sharp}$ is smooth. Thus $\Def_E^{\sharp,+}$ is also smooth by \cref{smoothness of trivialized and trivializable}. 
	
	Since $\End_D(E) = K$, the map $\Aut(F', \theta') \to \Aut(F, \theta)$ is surjective for every $(R', F', \theta') \to (R, F, \theta)$ in $\Def_E$ lying over a surjective $R' \to R$ in $\Art_K$, by \cref{automorphisms lift for endomorphically simples}. Now $\Def_E^{\sharp}$ is a full subcategory of $\Def_E$, so the same is true for every $(R', F', \theta') \to (R, F, \theta)$ in $\Def_E^{\sharp}$. Since $\End(E)$ is strict, we know that \[ H^1_{\dR}(\End(E)) = T(\Def_E) \text{ and } H^1_p(\End(E)) = T(\Def_E^{\sharp}) \] are finite dimensional by Crew's finiteness theorem \ref{result:crew finiteness}. Thus the functors $\overline{\Def}_E$ and $\overline{\Def}_E^{\sharp}$ are prorepresentable by the fundamental theorem of deformation theory \ref{deformation theory}. 
\end{proof}

\subsection{Algebraizing deformations of isocrystals}

We now observe that infinitesimal deformations of an isocrystal can usually be ``algebraized.'' Let $\mathbb{X}$ denote the scheme-theoretic projective line over $K^\circ$ and let $\mathbb{U}$ be an affine open subset whose special fiber is $U$. Let 
\[ \O^\alg := \Gamma(\mathbb{U}_K, \O_{\mathbb{X}_K}) \]
be the ring of algebraic functions on the generic fiber $\mathbb{U}_K$. We can then regard $\O^\alg$ as a finite type $K$-subalgebra of $\O$ which is stable under the derivation $\partial$. For a differential $\O^\alg$-module $E^\alg$, when we write $\Def_{E^\alg}^\sharp$ and $\Def_{E^\alg}^{\sharp,+}$, we mean with respect to the homomorphism 
\[ \begin{tikzcd} \O^\alg \ar{r} & \displaystyle \O^\sharp = \prod_{z \in Z} \Robba_z. \end{tikzcd} \]

\begin{theorem} \label{result:algebraizing infinitesimal deformations}
	Suppose $E \in \MC^f(\tube{U})$ admits a Frobenius structure. Then there exists a differential $\O^\alg$-module $E^\alg$ such that $E = \O \otimes_{\O^\alg} E^\alg$, and the functor
	\[ \begin{tikzcd} \Def_{E^\alg} \ar{r} & \Def_{E} \end{tikzcd} \]
	is an equivalence of deformation categories, as are
	\[ \begin{tikzcd} \Def_{E^\alg}^{\sharp} \ar{r} & \Def_{E}^\sharp \end{tikzcd} \text{ and  } \begin{tikzcd} \Def_{E^\alg}^{\sharp,+} \ar{r} & \Def_{E}^{\sharp,+}. \end{tikzcd} \]
\end{theorem}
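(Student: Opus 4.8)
The plan is to produce $E^\alg$ first, and then to reduce all three equivalences to the single statement that the governing differential graded Lie algebras (and the relevant homomorphisms between them) are quasi-isomorphic, invoking \cref{result:quasi-isomorphism invariance} and \cref{backbone example}. For the construction of $E^\alg$: since $E$ admits a Frobenius structure, it is an overconvergent isocrystal (via \cref{result:translate isocrystals}), and overconvergent isocrystals with Frobenius structure on $U$ are known to be ``algebraizable'' — concretely, the associated arithmetic $\scrD$-module $j_+E$ on $\frX$ is holonomic (as recalled just before \cref{result:parabolic and middle}, citing \cite{huyghe-trihan}), and by GAGA-type comparisons (or directly, since $\O^\alg \hookrightarrow \O$ and the connection matrix of $E$ in a suitable basis can be taken to have entries in $\O^\alg$ after a harmless change of basis — this is essentially the statement that overconvergent connections on $\tube{U}$ with Frobenius structure descend to the algebraic affine curve $\mathbb{U}_K$) one obtains a finite free differential $\O^\alg$-module $E^\alg$ with $E = \O \otimes_{\O^\alg} E^\alg$. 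I would isolate this as the one external input and cite it precisely; everything after is formal.

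Given $E^\alg$, the first equivalence $\Def_{E^\alg} \to \Def_E$ follows from \cref{result:governing complex connection}: both categories are governed by de Rham complexes, $\dR(\O^\alg, \End(E^\alg))$ and $\dR(\O, \End(E)) = \dR(\O, \O \otimes_{\O^\alg} \End(E^\alg))$ respectively, and the natural map between them is a homomorphism of differential graded $K$-algebras. By \cref{result:quasi-isomorphism invariance} it suffices to check it is a quasi-isomorphism, i.e. that $H^i_{\dR}(\End(E^\alg)) \to H^i_{\dR}(\End(E))$ is an isomorphism for $i = 0, 1$ (both complexes are concentrated in degrees $0,1$ since $\O^\alg$ and $\O$ each have the property that finite free differential modules have projective dimension $\le 1$ — for $\O$ this is noted in \cref{isocrystal prelims}, and for $\O^\alg$ it follows because $\mathbb{U}_K$ is a smooth affine curve). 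This $H^\bullet_{\dR}$-comparison is the algebraic-to-rigid comparison for de Rham/rigid cohomology of a connection on an affine curve with Frobenius structure — this is the genuinely substantive cohomological point, and I expect it to be the main obstacle: one needs that $H^i_{\dR}(\mathbb{U}_K, \mathcal{E}) \to H^i_{\dR}(\tube{U}, E)$ is an isomorphism, which for overconvergent (hence "of finite type at the boundary") connections is standard but requires citing the right comparison theorem (e.g. via the Christol–Mebkhout / Baldassarri–Chiarellotto type results, or via the comparison $H^i_{\dR}(\mathbb{U}_K,-) = H^i_{\rig}(U,-)$ for Frobenius-equipped isocrystals).

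For the two $\sharp$-variants I would argue purely diagrammatically. The homomorphism $\O^\alg \to \O^\sharp = \prod_{z\in Z}\Robba_z$ factors through $\O \to \O^\sharp$, so for each of $\Def^{\sharp}$ and $\Def^{\sharp,+}$ we have, by the construction in \cref{backbone example} applied to $\alpha: \dR(\O^\alg,\End(E^\alg)) \to \dR(\O^\sharp,\End(E^\sharp))$ and to $\alpha: \dR(\O,\End(E)) \to \dR(\O^\sharp,\End(E^\sharp))$, a strictly commuting square of homomorphisms of differential graded Lie algebras inducing the square
\[ \begin{tikzcd} \Def_{E^\alg}^{\sharp,+} \ar{r} \ar{d} & \Def_{E}^{\sharp,+} \ar{d} \\ \Def_{E^\alg}^{\sharp} \ar{r} & \Def_{E}^{\sharp}, \end{tikzcd} \]
and it suffices to show the horizontal maps are equivalences. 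Since $E^\sharp = \O^\sharp \otimes_{\O^\alg} E^\alg = \O^\sharp \otimes_{\O} E$ is literally the same object in both pictures, the cones $C^+$ and $C$ of \cref{backbone example} for the $\O^\alg$-picture map to those for the $\O$-picture, and the induced maps on $C^+$, $C$ are quasi-isomorphisms precisely because $\dR(\O^\alg,\End(E^\alg)) \to \dR(\O,\End(E))$ is (the third leg $\dR(\O^\sharp,\End(E^\sharp))$ being fixed, and $H^0(M)$ in \cref{backbone example} being the same). Then \cref{result:quasi-isomorphism invariance} — applied via the fiber-product description $\Def^{\sharp} = \Def_E \times_{\Def_{E^\sharp}} \Gamma$ and $\Def^{\sharp,+} = \Def_E \times_{\Def_{E^\sharp}} h_K$ of \cref{fiber product,backbone example}, together with the fact that an equivalence on each factor of a 2-fiber product induces an equivalence on the product — gives both remaining equivalences. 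I would close by remarking that the compatibility of obstruction spaces (needed only if one wants the smoothness statements to transport, which is not asserted here) is immediate from the functoriality in \cref{section:def construction is 2-functorial}, but strictly speaking the theorem as stated only claims the equivalences, so the quasi-isomorphism input above suffices.
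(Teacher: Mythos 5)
Your proposal follows essentially the same path as the paper: algebraization of $E$ to $E^\alg$ using a Frobenius-structure input (the paper invokes the Christol--Mebkhout algebraization theorem, which applies because the exponents of $E$ and $\End(E)$ lie in $\Z_{(p)}$), then the quasi-isomorphism $\dR(\O^\alg,\End(E^\alg)) \to \dR(\O,\End(E))$ (the paper cites Andr\'e's comparison, valid under the same exponent condition), then \cref{result:quasi-isomorphism invariance} for the first equivalence, and then the 2-fiber-product description to transport the equivalence to the $\sharp$- and $\sharp,+$-variants. One small economy: once $\Def_{E^\alg}\to\Def_E$ is known to be an equivalence, pulling back along the fixed maps $\Gamma\to\Def_{E^\sharp}$ and $h_K\to\Def_{E^\sharp}$ gives the other two equivalences immediately, so the intermediate cone-quasi-isomorphism step you sketch is superfluous (and one could not apply \cref{result:quasi-isomorphism invariance} directly to the cones, which are not differential graded Lie algebras).
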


\begin{proof}
	Since $E$ admits a Frobenius structure, so does $\End(E)$. Thus all exponents of $E$ and of $\End(E)$ are in $\Z_{(p)}$ \cite[th\'eor\`eme 5.5--3]{christol_mebkhout2}, so the Christol-Mebkhout algebraization theorem \cite[th\'eor\`eme 5.0--10]{christol_mebkhout4} guarantees the existence of $E^\alg$ such that $E = \O \otimes_{\O^\alg} E^\alg$. Moreover, since all of the exponents are in $\Z_{(p)}$, the natural map 
	\[ \begin{tikzcd} \dR(\O^\alg, \End(E^\alg)) \ar{r} & \dR(\O, \End(E)) \end{tikzcd} \]
	is a quasi-isomorphism of differential graded $K$-algebras \cite[chapter 4, proposition 5.2.4]{andre}. We know from \cref{result:governing complex connection} that the domain and codomain govern $\Def_{E^\alg}$ and $\Def_E$, respectively. Moreover, quasi-isomorphisms of differential graded algebras induce isomorphisms on deformation categories by \cref{result:quasi-isomorphism invariance}. This gives us the first equivalence in the statement of the theorem. 
	
	Now observe that we have a 2-commutative diagram of deformation categories as follows.
	\begin{equation} \label{eqn:square} \begin{tikzcd} \Def_{E^\alg} \ar{r} \ar{d} & \Def_E \ar{d} \\ \Def_{E^\sharp} \ar[equals]{r} & \Def_{E^\sharp} \end{tikzcd}
	\end{equation}
	Letting $\Gamma$ denote the residual gerbe of $\Def_{E^\sharp}$, we obtain 2-commutative diagram as follows, in which the square \eqref{eqn:square} is the face on the far right. 
	\setlength{\perspective}{5pt}
	\[ \begin{tikzcd}[row sep={40,between origins}, column sep={40,between origins}]
	&[-\perspective] \Def_{E^\alg}^{\sharp,+} \ar{rr} \ar{dd} \ar[dotted]{dl} &[-\perspective] &[-\perspective] \Def_{E^\alg}^\sharp \ar{rr} \ar{dd} \ar[dotted]{dl} &[\perspective] &[-\perspective] \Def_{E^\alg} \ar{dd} \ar{dl} \\[-\perspective]
	\Def_E^{\sharp,+} \ar[crossing over]{rr} \ar{dd} & & \Def_{E}^\sharp \ar[crossing over]{rr} & & \Def_E \\[\perspective]
	& h_K \ar{rr} \ar[equals]{dl} & & \Gamma  \ar{rr} \ar[equals]{dl} & &  \Def_{E^\sharp} \ar[equals]{dl} \\[-\perspective]
	h_K \ar{rr} & & \Gamma \ar{rr} \ar[from=uu,crossing over] && \Def_{E^\sharp} \ar[from=uu,crossing over]
	\end{tikzcd}\]
	The dotted map $\Def_{E^\alg}^\sharp \to \Def_E^\sharp$ is the natural one
	\[ \begin{tikzcd} \Def_{E^\alg}^\sharp = \Gamma \times_{\Def_{E^\sharp}} \Def_{E^\alg} \ar{r} & \Gamma \times_{\Def_{E^\sharp}} \Def_E = \Def_E^\sharp \end{tikzcd} \]
	induced by the equivalence $\Def_{E^\alg} \to \Def_E$, so it is an equivalence as well. Similarly the dotted map $\Def_{E^\alg}^{\sharp,+} \to \Def_E^{\sharp,+}$ must be an equivalence as well, using $h_K$ in place of $\Gamma$. 
\end{proof}

\subsection{Cohomologically rigid isocrystals}

\begin{definition} \label{cohomologically rigid isocrystal}
	We say $E \in \MC^f(\tube{U})$ is \emph{cohomologically rigid} if $H^1_p(\End(E)) = 0$. 
\end{definition}

\begin{example} \label{example:regular singularities}
	Suppose $E \in \MC^\dagger(\tube{U}) = \Isoc^\dagger(U)$ is absolutely irreducible of rank 2, admits a Frobenius structure (or, more generally, has exponents whose differences are non-Liouville), and has regular singularities along $Z$ (i.e., $\irr_z(E) = 0$ for all $z \in Z$). Let us compute $\dim H^1_p(\End(E))$, thereby finding a criterion for $E$ to be cohomologically rigid. 
	
	Fix $z \in Z$. Note that $E_z$ being regular means precisely that it is pure of slope 0. Since $E$ admits a Frobenius structure, the exponents $\textnormal{Exp}(E_z) = \{\bar{\alpha}, \bar{\beta}\}$ of $E_z$ are in $\Z_{(p)}/\Z$ \cite[th\'eor\`eme 5.5--3]{christol_mebkhout3}. If we choose representatives $\alpha, \beta \in \Z_{(p)}$ for $\bar{\alpha}, \bar{\beta} \in \Z_p/\Z$, then Christol-Mebkhout's $p$-adic Fuchs theorem \cite[th\'eor\`eme 2.2--1]{christol_mebkhout4} tells us that there is a basis of $E_z$ with respect to which the derivation $\partial$ acts by an upper triangular matrix of the form
	\[ \begin{bmatrix} \alpha & * \\ 0 & \beta \end{bmatrix}, \]
	and that if $\bar{\alpha} \neq \bar{\beta}$, then we can even take $* = 0$. We will say that $E$ \emph{has a scalar singularity at $z$} if there is a basis such that $\partial$ acts by a scalar matrix on $E$. 
	
	If $E$ has a scalar singularity at $z$, it splits into a direct sum of two isomorphic differential modules of rank 1 (both corresponding to the differential equation $t\partial - \alpha$), and then it is clear that $\End(E_z)$ must be a constant differential module over the Robba ring $\Robba_z$. In other words, we have $\artin_z(\End(E)) = 0$. 
	
	Otherwise, there are two cases. 
	\begin{itemize}
		\item If $E_z$ has two distinct exponents, then $E_z$ splits into a direct sum of two non-isomorphic differential modules over $\Robba_z$ of rank 1. It is then clear that \[ \dim H^0_{\dR}(\End(E_z)) = 2, \]
		so $\artin_z(\End(E)) = 2$.
		
		\item If not, then $E_z$ has just one exponent $\bar{\alpha}$ of multiplicity 2, but does not have a scalar singularity at $z$. We know that there is a basis $(e_1, e_2)$ such that $\partial$ acts by a matrix of the form
		\[ \begin{bmatrix} \alpha & * \\ 0 & \alpha \end{bmatrix}. \]
		Since $E_z$ does not have a scalar singularity at $z$, the function $*$ must not have an antiderivative in $\Robba_z$. If it did have an antiderivative $f \in E_z$, then $(e_1, e_2 - fe_1)$ would be a basis on which $\partial$ would act by a scalar matrix. 
		
		We can then compute that $\dim H^0_{\dR}(\End(E)) = 2$, as follows. Note that $\End(E_z)$ splits as a direct sum of the identity component and the trace-zero component $\End^0(E_z)$. Now $\End^0(E_z)$ is spanned by the endomorphisms $\sigma_e, \sigma_f, \sigma_h$ of $F^0$ given by
		\[ [\sigma_e] = \begin{bmatrix} 0 & 1 \\ 0 & 0 \end{bmatrix}, [\sigma_f] = \begin{bmatrix} 0 & 0 \\ 1 & 0 \end{bmatrix}, [\sigma_h] = \begin{bmatrix} 1 & 0 \\ 0 & -1 \end{bmatrix}. \]
		Now $H^0_{\dR}(\End^0(E_z))$ is spanned by $\sigma_e$ over $K$. To see this, suppose we have a horizontal $\sigma = u \sigma_e + v \sigma_f + w \sigma_h$ for some $u, v, w \in \Robba_z$. We compute the following.
		\begin{align*}
		\sigma \partial (e_1) &= a w e_1 + a v e_2 \\
		\partial \sigma (e_1) &= (a w + * v + \partial(w) ) e_1 + (\partial(v) + a v) e_2 \\
		\sigma \partial (e_2) &= (* w + a u)e_1 + (* v - a w) e_2 \\
		\partial \sigma (e_2) &= ( \partial(u) + a u - * w )e_1 + (-\partial(w) -a w) e_2 
		\end{align*}
		Equating the first two, we see first that we must have $\partial(v) = 0$, so $v$ is a scalar. Then we must have $\partial(w) = -v*$. Then we see that in fact we must have $v = 0$, since otherwise $-w/v$ would be an antiderivative of $*$. This then forces $w$ to be scalar. Then, equating the second two equations above, we see that $\partial(u) = 2w*$. Again this forces $w = 0$, since otherwise $*$ would have an antiderivative, and further it must be that $u$ is scalar. This proves that $H^0_\dR(\End^0(E_z))$ is spanned by $\sigma_e$ over $K$. Thus $H^0_{\dR}(\End(E_z))$ is spanned by $1$ and $\sigma_e$, proving that
		\[ \dim H^0_{\dR}(\End(E_z)) = 2, \]
		whence $\artin_z(\End(E)) = 2$.  
	\end{itemize}
	Since $\End(E)$ is self-dual via the trace pairing of \cref{example:trace pairing}, we know that $H^0_{\dR}(\End(E))$ and $H^2_c(\End(E))$ are dual by Crew's finiteness theorem \ref{result:crew finiteness}. Moreover, since $E$ is absolutely irreducible, we know that $\dim H^0_{\dR}(\End(E)) = 1$. Thus \cref{result:dimension parabolic} tells us that
	\[ \dim H^1_p(\End(E)) = -6 + \sum_{z \in Z} \artin_z(\End(E)) = 2(m-3) \]
	where $m$ is the number of points $z \in Z$ such that $E$ has non-scalar singularities at $z$.  
	
	In other words, if all of the singularities of $E$ are non-scalar, then $E$ is cohomologically rigid if and only if $\# Z = 3$. This is analogous to what we saw in \cref{result:irreducible rank 2}. 
\end{example}

	\appendix
	\section{Isolated points of algebraic stacks}

\newcommand{\scrX}{\mathscr{X}}
\newcommand{\ftpts}{\textnormal{ft-pts}}
\newcommand{\red}{\textnormal{red}}

Throughout, let $\scrX$ be an algebraic stack \tag{026O}. We make the following definition. 

\begin{definition}
	A point $x \in |\scrX|$ is \emph{isolated} if $\{x\}$ is open and closed in $|\scrX|$.\footnote{If $|\scrX|$ is locally connected, then this is equivalent to requiring that $\{x\}$ is a connected component of $|\scrX|$. This is implied, for instance, by the condition that $\scrX$ be locally noetherian \cite[\taglink{04MF, 0DQI}]{stacks}.}
\end{definition}

\begin{lemma}
	Any isolated point of $\scrX$ must be a point of finite type.
\end{lemma}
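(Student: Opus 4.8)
The plan is to reduce this to the standard fact that the finite-type points of an algebraic stack are dense, or more precisely to the characterization of points of finite type from the Stacks Project. Recall \tag{06U9}: a point $x \in |\scrX|$ is of finite type if and only if there exists a morphism $\Spec(\kappa) \to \scrX$ of finite type, with $\kappa$ a field, whose image is $x$; equivalently, there is a locally closed substack structure on $\{x\}$ that is of finite type over the base, or --- most useful here --- $x$ is in the image of a finite-type point of a smooth presentation. The key input is \tag{06UA} (or its neighborhood), which says that for a quasi-compact algebraic stack every point specializes to a finite-type point, and that the set of finite-type points is dense; but for an arbitrary algebraic stack one should argue locally.

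First I would choose a smooth presentation: pick a scheme $U$ with a smooth surjection $p : U \to \scrX$, and note that $|p| : |U| \to |\scrX|$ is open and surjective. Since $\{x\}$ is open in $|\scrX|$, its preimage $|p|^{-1}(\{x\})$ is a nonempty open subset of $|U|$. Shrinking, I can find an affine open $\Spec(A) \subseteq U$ meeting this preimage, so that there is a point $u \in \Spec(A)$ with $|p|(u) = x$. Now within the affine (hence quasi-compact) scheme $\Spec(A)$, the point $u$ admits a specialization to a point $u_0$ which is closed in some nonempty affine open, hence of finite type over the base field/ring --- this is the classical statement that a scheme locally of finite type over a field or excellent base has a dense set of finite-type (e.g. closed) points, \tag{02IL}. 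Actually, since $\{x\}$ is also \emph{closed} in $|\scrX|$, its preimage is closed in $|U|$, so I may intersect with a quasi-compact open and work with a point that is already suitably generic; in any case I can arrange a finite-type point $u_0 \in |U|$ with $|p|(u_0) = x$, because $|p|^{-1}(\{x\})$ is open and any nonempty open of a scheme contains a finite-type point.

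Then I would invoke the compatibility of finite-type points with smooth (indeed any locally of finite presentation) morphisms: by \tag{06UA}, if $u_0 \in |U|$ is of finite type and $p : U \to \scrX$ is locally of finite type, then $|p|(u_0) \in |\scrX|$ is of finite type. Applying this with $u_0$ as produced above gives that $x = |p|(u_0)$ is a finite-type point, which is exactly the claim.

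The main obstacle --- really the only subtlety --- is making sure the relevant open subset $|p|^{-1}(\{x\}) \subseteq |U|$ genuinely contains a finite-type point of the scheme $U$. This needs that $U$ (equivalently, the base of the presentation, or at worst $\Spec(\Z)$) is such that finite-type points are dense in every scheme locally of finite type over it; since algebraic stacks here are by convention over a base that is locally Noetherian or even a field in the applications, \tag{02IL} applies and this is automatic. If one wants the statement in full generality over an arbitrary base, one should instead cite directly \tag{06UB}, which states that for any algebraic stack the set of finite-type points, intersected with any open, is nonempty provided the open is --- and then $x$, being its own open neighborhood, must itself be of finite type. I would phrase the final proof around this last citation to keep it short.
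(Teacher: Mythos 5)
Your argument is correct, and in your last paragraph you land on precisely the paper's own proof: the paper simply invokes \tag{06G2} (for any nonempty open $W \subseteq |\scrX|$ the intersection $W \cap \scrX_\ftpts$ is nonempty) applied to $W = \{x\}$. Your longer route through a smooth presentation $p : U \to \scrX$ --- pulling back the open set $\{x\}$, locating a finite-type point $u_0$ of the scheme $U$ inside the (open, nonempty) preimage, and pushing forward via the fact that locally-of-finite-type morphisms carry finite-type points to finite-type points \tag{06G0} --- is essentially an unwinding of the proof of that density lemma for stacks, so it is correct but strictly longer than what is needed; it is the argument you would reach for if the stack-level density lemma were not already in the library. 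One caveat: your concern near the end about needing a Noetherian, Jacobson, or excellent base is unfounded. The density of finite-type points of a scheme holds with no hypotheses at all --- any nonempty locally closed subset of any scheme contains a finite-type point \tag{02J4}, since for instance any nonempty affine open has a closed point, which is of finite type, and open immersions preserve finite-type points --- so neither the scheme-level nor the stack-level input is restricted to the bases you mention. Finally, a small observation: your brief detour through the closedness of $\{x\}$ and through specialization within $\Spec(A)$ is not needed and is in fact a little off (a specialization of $u$ inside $\Spec(A)$ can leave the open preimage of $\{x\}$); only the openness of $\{x\}$ is used, exactly as in the paper.
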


\begin{proof}
	If $x$ is an isolated point, then $\{x\} \cap \scrX_\ftpts$ must be nonempty \tag{06G2}.  
\end{proof}

\begin{lemma} \label{result:unique ftpt}
	Suppose $\scrX$ has a unique finite type point $x$. Then $|\scrX| = \{x\}$. In particular, $x$ is an isolated point of $\scrX$.
\end{lemma}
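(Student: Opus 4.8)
The key point is that every point of an algebraic stack is a specialization of a finite type point, so if there is only one finite type point then $|\scrX|$ cannot contain anything else. Concretely, I would argue as follows. Recall from \tag{06G2} that for any point $y \in |\scrX|$, the intersection $\overline{\{y\}} \cap \scrX_\ftpts$ is nonempty; that is, the closure of $\{y\}$ contains a finite type point. Given the hypothesis that $x$ is the \emph{unique} finite type point, this forces $x \in \overline{\{y\}}$ for every $y \in |\scrX|$.

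Now I would deduce $|\scrX| = \{x\}$ from this. Suppose for contradiction that there exists $y \in |\scrX|$ with $y \neq x$. We have just shown $x \in \overline{\{y\}}$, so $y$ generizes $x$ (equivalently, $x$ specializes $y$). On the other hand, I want to also run the argument "in the other direction" to reach a contradiction, but specialization is not symmetric, so instead I would pass to a presentation: choose a smooth surjection $U \to \scrX$ with $U$ a scheme, and lift. Pick a point $u \in U$ mapping to $y$. Since $U$ is a scheme (hence locally has generizations that are generic points of irreducible components, and in any case $U \to \scrX$ is open), the image of $U \to \scrX$ is all of $|\scrX|$, and smooth morphisms are open, so generizations lift along $U \to \scrX$ \tag{{0DQP}} — actually the cleaner route is: the set of finite type points of $\scrX$ is exactly the image of the closed points of $U$ under $U \to \scrX$ in the locally Noetherian case, but we do not have Noetherian hypotheses here. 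So instead I would use directly that $\scrX_\ftpts$ is "dense in codimension zero" via \tag{06G2}: every nonempty closed subset of $|\scrX|$ meets $\scrX_\ftpts$. Apply this to the closed subset $\overline{\{y\}}$: it is nonempty, hence contains a finite type point, which must be $x$; this is the statement $x \in \overline{\{y\}}$ we already have, so no new information. The genuine contradiction must instead come from applying \tag{06G2} to a \emph{different} closed set.

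Here is the cleaner finish. Since $x$ is a finite type point, $\{x\}$ is a locally closed subset of $|\scrX|$ \tag{06FW}; write its closure as $\overline{\{x\}}$. Consider $C := |\scrX| \setminus \{x\}$ if it happened to be closed we would be done by \tag{06G2} (it would be a nonempty closed set with no finite type point, contradiction), but it need not be closed. Instead, suppose $C \neq \emptyset$ and let $y \in C$. The closure $\overline{\{y\}}$ is a nonempty closed set, so by \tag{06G2} it contains a finite type point, necessarily $x$; thus $x \in \overline{\{y\}}$. But the locally closed subset $\{x\}$ is, being a single point that is a finite type point, actually \emph{closed} in $|\scrX|$: indeed, a finite type point of a \emph{quasi-compact} piece is closed, and more to the point, $\{x\}$ is closed in its closure $\overline{\{x\}}$ trivially, and $x \in \overline{\{y\}}$ with $\overline{\{x\}}$ being the smallest closed set containing $x$ gives $\overline{\{x\}} \subseteq \overline{\{y\}}$; conversely $y \in \overline{\{x\}}$? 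Not necessarily. So let me instead invoke that $x$ being the unique finite type point makes $\{x\}$ closed: the complement of a point $x$ is open iff $x$ is closed, and $\overline{\{x\}} \setminus \{x\}$, if nonempty, is a nonempty set each of whose points has $x$ in its closure — applying \tag{06G2} to the closed set $\overline{\{x\}}$, which contains $x$, gives nothing new, but applying it to $\overline{\{z\}}$ for $z \in \overline{\{x\}}\setminus\{x\}$ forces $x \in \overline{\{z\}}$, so $\overline{\{x\}} = \overline{\{z\}}$, meaning $z$ is a generic point of $\overline{\{x\}}$ distinct from $x$ — this is allowed in general, so $\{x\}$ need not be closed. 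I expect the \textbf{main obstacle} to be exactly this: without a Noetherian or quasi-compact hypothesis one cannot assume finite type points are closed, so the honest proof must go through a smooth presentation $U \to \scrX$, transport the question to the scheme $U$ (where the analogous statement — a scheme all of whose points specialize to a single closed point, when there is a unique such — is handled by \tag{01OH} type reasoning combined with the fact that $U \to \scrX$ detects finite type points \tag{06G0}), and then descend. I would structure the final write-up around: (1) $x \in \overline{\{y\}}$ for all $y$, via \tag{06G2}; (2) pull back along a smooth presentation to reduce to the scheme case; (3) in the scheme case conclude $|U|$ maps into $\{x\}$ because every point of $U$ has its (unique, by hypothesis on $\scrX$) finite type specialization equal to $x$ and $U$ is covered by affine opens on which closed points are finite type points \tag{01TB}; (4) surjectivity of $|U| \to |\scrX|$ finishes it.
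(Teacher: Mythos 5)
Your opening move — invoking \tag{06G2} to get $x \in \overline{\{y\}}$ for every $y \in |\scrX|$, and recognizing that this alone does not finish — is correct and shows good instincts. But your step~(3) has a genuine gap that you never close. You argue: every $u \in U$ has a finite type specialization $v \in U$, which maps to a finite type point of $\scrX$, which must be $x$; therefore $f(u) = x$. The conclusion does not follow from the premise. What you have shown is that $x = f(v) \in \overline{\{f(u)\}}$, i.e.\ you have merely re-derived $x \in \overline{\{f(u)\}}$, which you already knew. Passing to the smooth presentation $U$ does not by itself turn a specialization relation into an equality.

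What is actually needed — and what the paper supplies — is a \emph{constructibility} statement. The locus $Z := \{u \in U : f(u) \neq x\}$ contains no finite type points of $U$ (by \tag{06G0}, finite type points of $U$ map to finite type points of $\scrX$, hence to $x$). But ``a set with no finite type points is empty'' is only true for locally closed (or, in a quasi-compact scheme, constructible) subsets; an arbitrary subset of $U$ with no finite type points can perfectly well be nonempty. The paper closes this gap by bringing in the residual gerbe $\Gamma_x$: the inclusion $\Gamma_x \hookrightarrow \scrX$ is a monomorphism locally of finite type, so its base change $R_x := \Gamma_x \times_{\scrX} U \to U$ is a locally finite type monomorphism of algebraic spaces, hence representable by a scheme \tag{0418}. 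Chevalley's theorem \tag{054K} then shows the image of $R_x \to U$ is constructible once $U$ is replaced by a nonempty affine open, so its complement $Z$ is constructible, hence a finite union of locally closed sets, hence (if nonempty) contains a finite type point — contradiction. This residual-gerbe-plus-Chevalley step is precisely the ingredient your sketch is missing, and without it the argument does not go through. (Your earlier digression about whether $\{x\}$ is closed is a red herring; it need not be, and the paper's proof never establishes it — $|\scrX| = \{x\}$ is proved directly.)
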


\begin{proof}
	Suppose $f : U \to \scrX$ is any smooth map with $U$ a nonempty scheme. The image of $U$ is a nonempty open subset of $|\scrX|$ \tag{04XL},  so it must contain $x$ \tag{06G2}. Moreover, the complement is a closed subset of $|\scrX|$ containing no finite type points, so the complement must be empty. In other words, $f$ must be surjective. This means that we can replace $U$ with a nonempty affine open subscheme and $f$ will still be surjective. 
	
	Let $\Gamma_x$ be the residual gerbe at the unique finite type point $x$ \tag{06G3}. Then the inclusion $\Gamma_x \hookrightarrow \scrX$ is a locally of finite type monomorphism, so its pullback $R_x := \Gamma_x \times_{\scrX} U \to U$ is a locally finite type monomorphism of algebraic spaces. This must be representable \tag{0418}. In other words, $R_x$ is actually a scheme, and we have a cartesian diagram as follows. 
	\[ \begin{tikzcd} R_x \ar[hookrightarrow]{r} \ar{r} \ar[twoheadrightarrow]{d} & U \ar[twoheadrightarrow]{d}{f} \\ \Gamma_x \ar[hookrightarrow]{r} & \scrX \end{tikzcd} \]
	Since $|\Gamma_x| = \{x\}$, clearly it is sufficient to show that $R_x \to U$ is surjective. 
	
	Chevalley's theorem \tag{054K} guarantees that the image of $R_x  \to U$ is a locally constructible subset of $U$, but $U$ is affine so in fact the image must be constructible \tag{054C}. Then the complement $Z$ of the image is also constructible \tag{005H}. In particular, $Z$ is a finite union of locally closed subsets. Thus $Z \cap U_\ftpts$ must be dense in $Z$. If $Z$ were nonempty, there would have to exist some $u \in Z \cap U_{\ftpts}$. But then $f(u)$ would have to be a finite type point of $\scrX$ \tag{06G0}, and it could not equal $x$ since $u$ is constructed to not be in the image of $R_x \to U$. This contradicts the assumption that $\scrX_\ftpts = \{x\}$. Thus $Z$ must be empty, proving that $R_x \to U$ is surjective. 
\end{proof}

\begin{lemma} \label{result:isolated implies nonpositive dimension}
	Suppose $\scrX$ is quasi-separated and locally of finite type over a field $k$, and $x \in |\scrX|$ is isolated. Then $\dim_x(\scrX) \leq 0$. 
\end{lemma}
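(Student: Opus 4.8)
The plan is to reduce $\dim_x(\mathscr{X})$ to the dimension of the residual gerbe $\Gamma_x$ at $x$, and then to observe that $\Gamma_x$ is a gerbe over $\Spec$ of a field, so its dimension equals minus the dimension of its automorphism group, in particular is $\le 0$.

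First I would shrink to an open neighborhood. Since $x$ is isolated, $\{x\}$ is open in $|\mathscr{X}|$, so there is an open substack $\mathscr{U}\hookrightarrow\mathscr{X}$ with $|\mathscr{U}|=\{x\}$, still quasi-separated and locally of finite type over $k$. The dimension of an algebraic stack at a point is computed in any open neighborhood, so $\dim_x(\mathscr{X})=\dim_x(\mathscr{U})$ and we may replace $\mathscr{X}$ by $\mathscr{U}$; thus from now on $|\mathscr{X}|=\{x\}$. Next I would identify $\mathscr{X}_{\mathrm{red}}$ with $\Gamma_x$: the closed immersion $\mathscr{X}_{\mathrm{red}}\hookrightarrow\mathscr{X}$ is a monomorphism whose source is reduced, locally Noetherian (being locally of finite type over the field $k$), and supported on the single point $x$, so by the characterization of the residual gerbe this exhibits $\mathscr{X}_{\mathrm{red}}=\Gamma_x$ (the residual gerbe exists here since $x$ is a point of finite type — every isolated point is, by the lemma above — or because $\mathscr{X}$ is quasi-separated). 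As $\dim_x$ does not see nilpotents, $\dim_x(\mathscr{X})=\dim_x(\Gamma_x)$, and it remains to prove $\dim_x(\Gamma_x)\le 0$.

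Finally I would compute $\dim_x(\Gamma_x)$ with a smooth atlas. Because $x$ is of finite type, its residue field $\kappa$ is a finite extension of $k$ (Zariski's lemma), $\Gamma_x$ is a gerbe over $\Spec\kappa$, and the automorphism group $\mathcal{G}:=\underline{\Aut}_x$ is a group scheme locally of finite type over $\kappa$, hence of some finite dimension $d\ge 0$. Pick any smooth surjection $W\to\Gamma_x$ from a scheme and a point $w\in W$ over $x$. Because $\Gamma_x$ is a gerbe over a point, the first projection $R:=W\times_{\Gamma_x}W\to W$ is, fppf-locally on $W$, a torsor under the pullback of $\mathcal{G}$; in particular its fiber $R_w$ is smooth over $\kappa(w)$ of local dimension $\dim_w(W)+d$ at the tautological point $\delta(w)\in R_w$ over $w$. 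The defining formula for the dimension of a stack then gives
\[ \dim_x(\Gamma_x)=\dim_w(W)-\dim_{\delta(w)}(R_w)=\dim_w(W)-\bigl(\dim_w(W)+d\bigr)=-d\le 0. \]
(Alternatively, one can avoid residual gerbes and argue directly with an affine smooth atlas $V\to\mathscr{U}$: it is automatically surjective by the argument in \cref{result:unique ftpt}, the second projection identifies the fiber of the first projection $V\times_{\mathscr{U}}V\to V$ over $v$ with an $\underline{\Aut}$-torsor over $V$, and the same dimension count applies.)

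The step requiring the most care is the last one: invoking the Stacks Project conventions for the dimension of an algebraic stack and its insensitivity to shrinking and to nilpotents, and making precise that the fibers of a smooth atlas of a gerbe over a point are torsors under the automorphism group — equivalently, that a gerbe over a field has dimension equal to minus the dimension of its band. The bookkeeping with local dimensions at possibly non-closed points of $W$ also needs a little attention, but the geometric content is elementary.
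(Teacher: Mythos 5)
Your proof takes a genuinely different route from the paper's. The paper works directly with a smooth groupoid presentation: after shrinking to $|\scrX|=\{x\}$, it chooses an irreducible affine atlas $U$, forms $R=U\times_{\scrX}U$, uses Chevalley's theorem to show that the "orbit'' $O(\eta)$ of the generic point $\eta\in U$ is constructible and hence contains a dense open set, picks a finite type point $v$ in that orbit so that the restriction $t|_{T_v}:T_v\to U$ is dominant, and concludes from the inequality $\dim(U)\le\dim(T_v)$. Your proof instead identifies $\scrX_{\mathrm{red}}$ with the residual gerbe $\Gamma_x$ once $|\scrX|=\{x\}$, notes that local dimension ignores nilpotents, and reduces to the assertion that a gerbe over a field has dimension $-\dim(\underline{\Aut})\le 0$. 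Both are valid; yours is more structural and in fact yields the stronger conclusion $\dim_x(\scrX)=-\dim\underline{\Aut}_x$, whereas the paper's avoids residual gerbes and gerbe theory entirely and is more self-contained.

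One formulation in your last step should be tightened. The assertion that ``the first projection $R:=W\times_{\Gamma_x}W\to W$ is, fppf-locally on $W$, a torsor under the pullback of $\mathcal{G}$'' is not quite right: $R\to W$ is a groupoid, and its fibers have dimension $\dim_w W+d$, not $d$, so $R$ cannot be fppf-locally isomorphic to $\mathcal{G}\times W$. The clean statement that delivers $\dim_{\delta(w)}(R_w)=\dim_w(W)+d$ is: after base change to $L=\kappa(w)$, the gerbe $\Gamma_x\times_{\kappa}L$ acquires the section $w$ and so is neutral, $\Gamma_x\times_\kappa L\simeq B\underline{\Aut}(\xi)$; then $R_w=\Spec L\times_{\Gamma_x}W\to W_L$ (the \emph{second} projection, after base change) is the pullback of the universal torsor $\Spec L\to B\underline{\Aut}(\xi)$, hence a torsor under $\underline{\Aut}(\xi)$ over $W_L$, in particular faithfully flat of relative dimension $d$; combining this with the invariance of local dimension under field extension gives the formula. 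Since you flagged that this step needs care, I would call this a formulation to fix rather than a gap, but as written it would not compile into a proof.
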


\begin{proof}
	Notice first that if $\mathscr{U}$ is the open substack corresponding to the open subset $\{x\} \subseteq |\scrX|$ \tag{06FJ}, then
	\[ \dim_x(\scrX) = \dim_x(\mathscr{U}). \]
	In other words, by replacing $\scrX$ with $\mathscr{U}$ if necessary, we can assume that $|\scrX| = \{x\}$. 
	
	Now if $f : U \to \scrX$ is any smooth map with $U$ a nonempty locally noetherian scheme, then clearly $f$ must be surjective. Thus we may assume that $U$ is affine and of finite type over $k$. Then $U$ has finitely many irreducible components $Z_1, \dotsc, Z_n$, and we can replace $U$ with the nonempty open subset $U \setminus (Z_2 \cup \dotsb \cup Z_n)$ in order to assume that $U$ is also irreducible. 
	
	Since $\scrX$ is quasi-separated, $R := U \times_{\scrX} U$ is a finite type algebraic space over $k$. Moreover, we have a smooth groupoid in algebraic spaces 
	\[ (U, R, s, t, c, e, i)  \]
	and an equivalence $\scrX = [U/R]$ \tag{04T5}. For every $u \in U$, let $T_u$ be the connected component of $R_u := s^{-1}(u)$ containing $e(u)$. Since $R_u$ is smooth over $\kappa(u)$, the connected component $T_u$ is also an irreducible component of $R_u$. 
	
	The image of $T_u$ under $t$ is therefore an irreducible subset $O(u)$ of $U$ which contains $u$. It is constructible by Chevalley's theorem \tag{0ECX}. As $u$ varies, we obtain a partition of $U$ into irreducible constructible subsets of the form $O(u)$. 
	
	If $\eta \in U$ is the generic point, then $O(\eta)$ is a constructible subset of $U$ containing $\eta$, so in fact $O(\eta)$ contains a dense open subset of $U$ \tag{005K}. Then there exists $v  \in \O(\eta) \cap U_{\ftpts}$ \tag{02J4}. Moreover, since $v \in O(\eta) \cap O(v)$, we must have $O(\eta) = O(v)$. 
	
	In other words, $T_v \to U$ is a dominant morphism of algebraic spaces of finite type over $k$. Thus $\dim(U) \leq \dim(T_v)$ \cite[IV\textsubscript{2}, corollaire 2.3.5(i)]{ega}. Since $f(v) = x$, we see from the definition of dimension \tag{0AFN} that
	\[ \dim_x(\scrX) = \dim_v(U) - \dim_{e(v)}(R_u) = \dim(U) - \dim(T_v) \leq 0. \qedhere \] 
\end{proof}
	
	
	\bibliographystyle{alphaurl}
	\bibliography{references}

\begin{thebibliography}{{Sta}18}

\bibitem[AB01]{andre}
Yves Andr\'e and Francesco Baldassarri.
\newblock {\em De {R}ham cohomology of differential modules on algebraic
  varieties}, volume 189 of {\em Progress in Mathematics}.
\newblock Birkh\"auser Verlag, Basel, 2001.
\newblock \href {http://dx.doi.org/10.1007/978-3-0348-8336-8}
  {\path{doi:10.1007/978-3-0348-8336-8}}.

\bibitem[AC17]{abe_weights}
Tomoyuki Abe and Daniel Caro.
\newblock Theory of weights in $p$-adic cohomology.
\newblock 2017.
\newblock URL: \url{https://arxiv.org/abs/1303.0662v4}.

\bibitem[Agr18]{agrawal_thesis}
Shishir Agrawal.
\newblock {\em Deformations of overconvergent isocrystals on the projective
  line}.
\newblock PhD thesis, University of California, Berkeley, 2018.
\newblock URL: \url{https://escholarship.org/uc/item/1sp926n0}.

\bibitem[AM69]{atiyah}
Michael~F. Atiyah and Ian~G. Macdonald.
\newblock {\em Introduction to commutative algebra}.
\newblock Addison-Wesley Publishing Co., Reading, Mass.-London-Don Mills, Ont.,
  1969.

\bibitem[And02]{andre_hasse-arf}
Yves Andr\'e.
\newblock Filtrations de type {H}asse-{A}rf et monodromie {$p$}-adique.
\newblock {\em Inventiones Mathematicae}, 148(2):285--317, 2002.
\newblock URL: \url{https://arxiv.org/abs/math/0203248v1}, \href
  {http://dx.doi.org/10.1007/s002220100207} {\path{doi:10.1007/s002220100207}}.

\bibitem[BE04]{bloch}
Spencer Bloch and H\'el\`ene Esnault.
\newblock Local {F}ourier transforms and rigidity for {$\mathscr{D}$}-modules.
\newblock {\em Asian Journal of Mathematics}, 8(4):587--605, 2004.
\newblock See remarks about theorem 4.10 at
  \url{http://www.mi.fu-berlin.de/users/esnault/preprints/helene/71_cor.pdf}.
\newblock URL: \url{https://arxiv.org/abs/math/0312343v1}.

\bibitem[Ber86]{berthelot_rigide}
Pierre Berthelot.
\newblock G\'eom\'etrie rigide et cohomologie des vari\'et\'es alg\'ebriques de
  caract\'eristique {$p$}.
\newblock {\em M\'emoires de la Soci\'et\'e Math\'ematique de France. Nouvelle
  S\'erie}, (23):3, 7--32, 1986.
\newblock Introductions aux cohomologies $p$-adiques (Luminy, 1984).

\bibitem[Ber90]{berthelot_survey}
Pierre Berthelot.
\newblock Cohomologie rigide et th\'eorie des {${\scrD}$}-modules.
\newblock In {\em {$p$}-adic analysis ({T}rento, 1989)}, volume 1454 of {\em
  Lecture Notes in Math.}, pages 80--124. Springer, Berlin, 1990.
\newblock URL: \url{https://doi.org/10.1007/BFb0091135}, \href
  {http://dx.doi.org/10.1007/BFb0091135} {\path{doi:10.1007/BFb0091135}}.

\bibitem[Ber96a]{berthelot_rigide2}
Pierre Berthelot.
\newblock Cohomologie rigide et cohomologie \`a supports propres. {P}remi\`ere
  partie.
\newblock 1996.
\newblock URL:
  \url{https://perso.univ-rennes1.fr/pierre.berthelot/publis/Cohomologie_Rigide_I.pdf}.

\bibitem[Ber96b]{berthelot1}
Pierre Berthelot.
\newblock {$\mathscr{D}$}-modules arithm\'etiques. {I}. {O}p\'erateurs
  diff\'erentiels de niveau fini.
\newblock {\em Annales Scientifiques de l'\'Ecole Normale Sup\'erieure.
  Quatri\`eme S\'erie}, 29(2):185--272, 1996.
\newblock URL: \url{http://www.numdam.org/item?id=ASENS_1996_4_29_2_185_0}.

\bibitem[Ber00]{berthelot2}
Pierre Berthelot.
\newblock {$\scrD$}-modules arithm\'etiques. {II}. {D}escente par {F}robenius.
\newblock {\em M\'emoires de la Soci\'et\'e Math\'ematique de France. Nouvelle
  S\'erie}, (81), 2000.

\bibitem[Blo81]{block}
Richard~E. Block.
\newblock The irreducible representations of the {L}ie algebra
  {$\mathfrak{sl}(2)$} and of the {W}eyl algebra.
\newblock {\em Advances in Mathematics}, 39(1):69--110, 1981.
\newblock \href {http://dx.doi.org/10.1016/0001-8708(81)90058-X}
  {\path{doi:10.1016/0001-8708(81)90058-X}}.

\bibitem[Car06]{caro}
Daniel Caro.
\newblock Fonctions {$L$} associ\'ees aux {$\mathscr{D}$}-modules
  arithm\'etiques. {C}as des courbes.
\newblock {\em Compositio Mathematica}, 142(1):169--206, 2006.
\newblock \href {http://dx.doi.org/10.1112/S0010437X05001880}
  {\path{doi:10.1112/S0010437X05001880}}.

\bibitem[Car11]{caro_frobless}
Daniel Caro.
\newblock Holonomie sans structure de {F}robenius et crit\`eres d'holonomie.
\newblock {\em Universit\'e de Grenoble. Annales de l'Institut Fourier},
  61(4):1437--1454 (2012), 2011.
\newblock \href {http://dx.doi.org/10.5802/aif.2645}
  {\path{doi:10.5802/aif.2645}}.

\bibitem[Chr11]{christol}
Gilles Christol.
\newblock {\em Le th\'{e}or\`{e}me de {Turrittin} $p$-adique}.
\newblock 2011.
\newblock URL: \url{http://webusers.imj-prg.fr/~gilles.christol/courspdf.pdf}.

\bibitem[CM97]{christol_mebkhout2}
Gilles Christol and Zoghman Mebkhout.
\newblock Sur le th\'eor\`eme de l'indice des \'equations diff\'erentielles
  {$p$}-adiques. {II}.
\newblock {\em Annals of Mathematics. Second Series}, 146(2):345--410, 1997.
\newblock \href {http://dx.doi.org/10.2307/2952465}
  {\path{doi:10.2307/2952465}}.

\bibitem[CM00]{christol_mebkhout3}
Gilles Christol and Zoghman Mebkhout.
\newblock Sur le th\'eor\`eme de l'indice des \'equations diff\'erentielles
  {$p$}-adiques. {III}.
\newblock {\em Annals of Mathematics. Second Series}, 151(2):385--457, 2000.
\newblock \href {http://dx.doi.org/10.2307/121041} {\path{doi:10.2307/121041}}.

\bibitem[CM01]{christol_mebkhout4}
Gilles Christol and Zoghman Mebkhout.
\newblock Sur le th\'eor\`eme de l'indice des \'equations diff\'erentielles
  {$p$}-adiques. {IV}.
\newblock {\em Inventiones Mathematicae}, 143(3):629--672, 2001.
\newblock \href {http://dx.doi.org/10.1007/s002220000116}
  {\path{doi:10.1007/s002220000116}}.

\bibitem[Cre98]{crew_finiteness}
Richard Crew.
\newblock Finiteness theorems for the cohomology of an overconvergent
  isocrystal on a curve.
\newblock {\em Annales Scientifiques de l'\'Ecole Normale Sup\'erieure.
  Quatri\`eme S\'erie}, 31(6):717--763, 1998.
\newblock \href {http://dx.doi.org/10.1016/S0012-9593(99)80001-9}
  {\path{doi:10.1016/S0012-9593(99)80001-9}}.

\bibitem[Cre17]{crew_rigidity}
Richard Crew.
\newblock Rigidity and {F}robenius structure.
\newblock {\em Documenta Mathematica}, 22:287--296, 2017.
\newblock URL: \url{http://arxiv.org/abs/1606.02246v2}.

\bibitem[Del90]{deligne_tannakiennes}
Pierre Deligne.
\newblock Cat\'egories tannakiennes.
\newblock In {\em The {G}rothendieck {F}estschrift, {V}ol.\ {II}}, volume~87 of
  {\em Progr. Math.}, pages 111--195. Birkh\"auser Boston, Boston, MA, 1990.

\bibitem[DG67]{ega}
Jean Dieudonn{\'e} and Alexander Grothendieck.
\newblock \'{E}l\'ements de g\'eom\'etrie alg\'ebrique.
\newblock {\em Publications Math\'ematiques. Institut de Hautes \'Etudes
  Scientifiques}, 1961--1967.

\bibitem[EG18]{esnault-groechenig}
H\'el\`ene Esnault and Michael Groechenig.
\newblock Cohomologically rigid local systems and integrality.
\newblock 2018.
\newblock URL: \url{https://arxiv.org/abs/1711.06436v3}.

\bibitem[FM98]{fantechi}
Barbara Fantechi and Marco Manetti.
\newblock Obstruction calculus for functors of {A}rtin rings. {I}.
\newblock {\em Journal of Algebra}, 202(2):541--576, 1998.
\newblock \href {http://dx.doi.org/10.1006/jabr.1997.7239}
  {\path{doi:10.1006/jabr.1997.7239}}.

\bibitem[FM07]{fiorenza}
Domenico Fiorenza and Marco Manetti.
\newblock {$L_\infty$} structures on mapping cones.
\newblock {\em Algebra \& Number Theory}, 1(3):301--330, 2007.
\newblock URL: \url{https://arxiv.org/abs/math/0601312v3}, \href
  {http://dx.doi.org/10.2140/ant.2007.1.301}
  {\path{doi:10.2140/ant.2007.1.301}}.

\bibitem[Fu17]{fu}
Lei Fu.
\newblock Deformations and rigidity for $\ell$-adic sheaves.
\newblock 2017.
\newblock URL: \url{https://arxiv.org/abs/1611.03964v5}.

\bibitem[FvdP04]{fresnel}
Jean Fresnel and Marius van~der Put.
\newblock {\em Rigid analytic geometry and its applications}, volume 218 of
  {\em Progress in Mathematics}.
\newblock Birkh\"auser Boston, Inc., Boston, MA, 2004.
\newblock \href {http://dx.doi.org/10.1007/978-1-4612-0041-3}
  {\path{doi:10.1007/978-1-4612-0041-3}}.

\bibitem[Ger63]{gerstenhaber}
Murray Gerstenhaber.
\newblock The cohomology structure of an associative ring.
\newblock {\em Annals of Mathematics. Second Series}, 78:267--288, 1963.
\newblock \href {http://dx.doi.org/10.2307/1970343}
  {\path{doi:10.2307/1970343}}.

\bibitem[GK00]{grosse-klonne}
Elmar Grosse-Kl\"onne.
\newblock Rigid analytic spaces with overconvergent structure sheaf.
\newblock {\em Journal f\"ur die Reine und Angewandte Mathematik. [Crelle's
  Journal]}, 519:73--95, 2000.
\newblock URL: \url{http://arxiv.org/abs/1408.3329v1}, \href
  {http://dx.doi.org/10.1515/crll.2000.018} {\path{doi:10.1515/crll.2000.018}}.

\bibitem[GM88]{goldman}
William~M. Goldman and John~J. Millson.
\newblock The deformation theory of representations of fundamental groups of
  compact {K}\"ahler manifolds.
\newblock {\em Institut des Hautes \'Etudes Scientifiques. Publications
  Math\'ematiques}, (67):43--96, 1988.
\newblock URL: \url{http://www.numdam.org/item?id=PMIHES_1988__67__43_0}.

\bibitem[Goo74]{goodearl}
Ken~R. Goodearl.
\newblock Global dimension of differential operator rings.
\newblock {\em Proceedings of the American Mathematical Society}, 45:315--322,
  1974.
\newblock \href {http://dx.doi.org/10.2307/2039951}
  {\path{doi:10.2307/2039951}}.

\bibitem[GRR06]{rim}
Alexandre Grothendieck, Michel Raynaud, and Dock~Sang Rim.
\newblock {\em Groupes de Monodromie en Geometrie Algebrique: Seminaire de
  Geometrie Algebrique du Bois-Marie 1967-1969.(SGA 7 I)}, volume 288.
\newblock Springer, 2006.

\bibitem[HT07]{huyghe-trihan}
Christine Huyghe and Fabien Trihan.
\newblock Sur l'holonomie de $\mathscr{D}$-modules arithm\'etiques associ\'es
  \`a des {$F$}-isocristaux surconvergents sur des courbes lisses.
\newblock {\em Annales de la Facult\'e des Sciences de Toulouse.
  Math\'ematiques. S\'erie 6}, 16(3):611--634, 2007.
\newblock URL: \url{https://arxiv.org/pdf/0705.0416v3.pdf}, \href
  {http://dx.doi.org/10.5802/afst.1161} {\path{doi:10.5802/afst.1161}}.

\bibitem[Kat96]{katz}
Nicholas~M. Katz.
\newblock {\em Rigid local systems}, volume 139 of {\em Annals of Mathematics
  Studies}.
\newblock Princeton University Press, Princeton, NJ, 1996.
\newblock URL: \url{https://web.math.princeton.edu/~nmk/wholebookRLScorr.pdf},
  \href {http://dx.doi.org/10.1515/9781400882595}
  {\path{doi:10.1515/9781400882595}}.

\bibitem[Ked10]{kedlaya_pde}
Kiran~S. Kedlaya.
\newblock {\em {$p$}-adic differential equations}, volume 125 of {\em Cambridge
  Studies in Advanced Mathematics}.
\newblock Cambridge University Press, Cambridge, 2010.
\newblock \href {http://dx.doi.org/10.1017/CBO9780511750922}
  {\path{doi:10.1017/CBO9780511750922}}.

\bibitem[Kon97]{kontsevich}
Maxim Kontsevich.
\newblock Deformation quantization of {Poisson} manifolds, i.
\newblock 1997.
\newblock URL: \url{https://arxiv.org/abs/q-alg/9709040v1}.

\bibitem[Laz16]{lazda_incarnations}
Christopher Lazda.
\newblock Incarnations of {B}erthelot's conjecture.
\newblock {\em Journal of Number Theory}, 166:137--157, 2016.
\newblock URL: \url{https://arxiv.org/abs/1508.06787v2}, \href
  {http://dx.doi.org/10.1016/j.jnt.2016.02.028}
  {\path{doi:10.1016/j.jnt.2016.02.028}}.

\bibitem[Li10]{li}
Bin Li.
\newblock The {E}uler characteristic of arithmetic $\mathscr{D}$-modules on
  curves.
\newblock {\em International Mathematics Research Notices. IMRN},
  (15):2867--2888, 2010.
\newblock \href {http://dx.doi.org/10.1093/imrn/rnp218}
  {\path{doi:10.1093/imrn/rnp218}}.

\bibitem[LR08]{lam}
Tsit~Yuen Lam and Manuel~L. Reyes.
\newblock A prime ideal principle in commutative algebra.
\newblock {\em Journal of Algebra}, 319(7):3006--3027, 2008.
\newblock \href {http://dx.doi.org/10.1016/j.jalgebra.2007.07.016}
  {\path{doi:10.1016/j.jalgebra.2007.07.016}}.

\bibitem[LS07]{lestum_book}
Bernard Le~Stum.
\newblock {\em Rigid cohomology}, volume 172 of {\em Cambridge Tracts in
  Mathematics}.
\newblock Cambridge University Press, Cambridge, 2007.
\newblock \href {http://dx.doi.org/10.1017/CBO9780511543128}
  {\path{doi:10.1017/CBO9780511543128}}.

\bibitem[LS14]{lestum}
Bernard Le~Stum.
\newblock Constructible {$\nabla$}-modules on curves.
\newblock {\em Selecta Mathematica. New Series}, 20(2):627--674, 2014.
\newblock \href {http://dx.doi.org/10.1007/s00029-013-0130-x}
  {\path{doi:10.1007/s00029-013-0130-x}}.

\bibitem[Lur10]{lurie_icm}
Jacob Lurie.
\newblock Moduli problems for ring spectra.
\newblock In {\em Proceedings of the {I}nternational {C}ongress of
  {M}athematicians. {V}olume {II}}, pages 1099--1125. Hindustan Book Agency,
  New Delhi, 2010.

\bibitem[Man99]{manetti_basics}
Marco Manetti.
\newblock Deformation theory via differential graded {L}ie algebras.
\newblock In {\em Algebraic {G}eometry {S}eminars, 1998--1999 ({I}talian)
  ({P}isa)}, pages 21--48. Scuola Norm. Sup., Pisa, 1999.
\newblock URL: \url{http://arxiv.org/abs/math/0507284v1}.

\bibitem[Man07]{manetti_morphisms}
Marco Manetti.
\newblock Lie description of higher obstructions to deforming submanifolds.
\newblock {\em Annali della Scuola Normale Superiore di Pisa. Classe di
  Scienze. Serie V}, 6(4):631--659, 2007.
\newblock URL: \url{https://arxiv.org/abs/math/0507287v2}.

\bibitem[Mau14]{maurischat}
Andreas Maurischat.
\newblock Picard-{V}essiot theory of differentially simple rings.
\newblock {\em Journal of Algebra}, 409:162--181, 2014.
\newblock URL: \url{https://arxiv.org/abs/1304.7663v2}, \href
  {http://dx.doi.org/10.1016/j.jalgebra.2014.04.005}
  {\path{doi:10.1016/j.jalgebra.2014.04.005}}.

\bibitem[Meb02]{mebkhout}
Zoghman Mebkhout.
\newblock Analogue {$p$}-adique du th\'eor\`eme de {T}urrittin et le
  th\'eor\`eme de la monodromie {$p$}-adique.
\newblock {\em Inventiones Mathematicae}, 148(2):319--351, 2002.
\newblock \href {http://dx.doi.org/10.1007/s002220100208}
  {\path{doi:10.1007/s002220100208}}.

\bibitem[Nit09]{nitsure_deformation}
Nitin Nitsure.
\newblock Deformation theory for vector bundles.
\newblock In {\em Moduli spaces and vector bundles}, volume 359 of {\em London
  Math. Soc. Lecture Note Ser.}, pages 128--164. Cambridge Univ. Press,
  Cambridge, 2009.
\newblock URL:
  \url{https://www.newton.ac.uk/files/seminar/20110105153016301-152666.pdf}.

\bibitem[Sch68]{schlessinger}
Michael Schlessinger.
\newblock Functors of {A}rtin rings.
\newblock {\em Transactions of the American Mathematical Society},
  130:208--222, 1968.
\newblock \href {http://dx.doi.org/10.2307/1994967}
  {\path{doi:10.2307/1994967}}.

\bibitem[Ser92]{serre}
Jean-Pierre Serre.
\newblock {\em Lie algebras and {L}ie groups}, volume 1500 of {\em Lecture
  Notes in Mathematics}.
\newblock Springer-Verlag, Berlin, second edition, 1992.
\newblock 1964 lectures given at Harvard University.

\bibitem[{Sta}18]{stacks}
The {Stacks Project Authors}.
\newblock {Stacks Project}.
\newblock \url{http://stacks.math.columbia.edu}, 2018.

\bibitem[Tsu98]{tsuzuki}
Nobuo Tsuzuki.
\newblock Finite local monodromy of overconvergent unit-root
  $f$-iso\-crys\-tals on a curve.
\newblock {\em American Journal of Mathematics}, 120(6):1165--1190, 1998.
\newblock URL:
  \url{http://muse.jhu.edu/journals/american_journal_of_mathematics/v120/120.6tsuzuki.pdf}.

\bibitem[Tsu99]{tsuzuki_gysin}
Nobuo Tsuzuki.
\newblock On the {G}ysin isomorphism of rigid cohomology.
\newblock {\em Hiroshima Mathematical Journal}, 29(3):479--527, 1999.
\newblock URL: \url{http://projecteuclid.org/euclid.hmj/1206124853}.

\bibitem[Vir00]{virrion}
Anne Virrion.
\newblock Dualit\'e locale et holonomie pour les {$\scrD$}-modules
  arithm\'etiques.
\newblock {\em Bulletin de la Soci\'et\'e Math\'ematique de France},
  128(1):1--68, 2000.
\newblock URL: \url{http://www.numdam.org/item?id=BSMF_2000__128_1_1_0}.

\bibitem[Wei94]{weibel}
Charles~A. Weibel.
\newblock {\em An introduction to homological algebra}.
\newblock Cambridge University Press, Cambridge, 1994.

\bibitem[Yau05]{yau}
Donald Yau.
\newblock Deformation theory of modules.
\newblock {\em Communications in Algebra}, 33(7):2351--2359, 2005.
\newblock URL: \url{http://arxiv.org/abs/math/0501514v1}, \href
  {http://dx.doi.org/10.1081/AGB-200063608} {\path{doi:10.1081/AGB-200063608}}.

\bibitem[Yek02]{yekutieli_hochschild}
Amnon Yekutieli.
\newblock {The continuous Hochschild cochain complex of a scheme}.
\newblock {\em {Canadian Journal of Mathematics}}, 54(6):1319--1337, 2002.
\newblock URL: \url{https://arxiv.org/abs/math/0111094}, \href
  {http://dx.doi.org/10.4153/CJM-2002-051-8}
  {\path{doi:10.4153/CJM-2002-051-8}}.

\bibitem[Yek12]{yekutieli}
Amnon Yekutieli.
\newblock M{C} elements in pronilpotent {DG} {L}ie algebras.
\newblock {\em Journal of Pure and Applied Algebra}, 216(11):2338--2360, 2012.
\newblock URL: \url{http://arxiv.org/abs/1103.1035v4}, \href
  {http://dx.doi.org/10.1016/j.jpaa.2012.03.002}
  {\path{doi:10.1016/j.jpaa.2012.03.002}}.

\end{thebibliography}
	
\end{document}